\def\Diff{\mathop{\rm Diff}\nolimits}
\def\End{\mathop{\rm End}\nolimits}
\def\Id{\mathop{\rm Id}\nolimits}
\def\Hom{\mathop{\rm Hom}\nolimits}
\def\Tot{\mathop{\rm Tot}\nolimits}
\def\Cb{{\mathbb C}}
\def\Rb{{\mathbb R}}
\def\Zb{{\mathbb Z}}
\def\Fc{{\cal F}}
\def\Hc{{\cal H}}
\def\Lc{{\cal L}}
\def\Mc{{\cal M}}
\def\Uc{{\cal U}}
\def\Vc{{\cal V}}
\def\Lc{{\cal L}}
\def\a{\alpha}
\def\b{\beta}
\def\d{\delta}
\def\D{\Delta}
\def\s{\sigma}
\def\t{\theta}
\def\z{\zeta}
\def\ve{\varepsilon}
\def\x{\xi}
\def\0b{\bf 0}
 \def\Zb{\mathbb{Z}}
\def\ot{\otimes}
\def\ra{\rightarrow}
\def\rt{\triangleright}
\def\lt{\triangleleft}
\def\cl{\blacktriangleright\hspace{-4pt} < }
\def\al{>\hspace{-4pt}\vartriangleleft}
\def\acl{\blacktriangleright\hspace{-4pt}\vartriangleleft }
\def\bi{\bowtie}
\def\hd{\overset{\ra}{\partial}}
\def \vd{\uparrow\hspace{-4pt}\partial}
\def \vs{\uparrow\hspace{-4pt}\sigma}
\def\hta{\overset{\ra}{\tau}}
\def \vta{\uparrow\hspace{-4pt}\tau}
\def\hb{\overset{\ra}{b}}
\def \vb{\uparrow\hspace{-4pt}b}
\def\hB{\overset{\ra}{B}}
\def \vB{\uparrow\hspace{-4pt}B}
\def\p{\partial}
\def\0D{\Delta^{(0)}}
\def\1D{\Delta^{(1)}}
\def\Db{\blacktriangledown}
\newcommand{\Fa}{\mathfrak{a}}
\newcommand{\Fg}{\mathfrak{g}}
\newcommand{\Fh}{\mathfrak{h}}
\newcommand{\Fl}{\mathfrak{l}}
\newcommand{\Fn}{\mathfrak{n}}
\newcommand{\FZ}{\mathfrak{Z}}
\newcommand{\wbar}[1]{\overline{#1}}
 \newcommand{\ie}{{\it i.e., }\ }
\newtheorem{theorem}{Theorem}[section]
\newtheorem{remark}[theorem]{Remark}
\newtheorem{proposition}[theorem]{Proposition}
\newtheorem{lemma}[theorem]{Lemma}
\newtheorem{corollary}[theorem]{Corollary}
\newtheorem{example}[theorem]{Example}
\newtheorem{definition}[theorem]{Definition}
\def\build#1_#2^#3{\mathrel{
\mathop{\kern 0pt#1}\limits_{#2}^{#3}}}
\newcommand{\ps}[1]{~\hspace{-4pt}_{_{(#1)}}}
\newcommand{\pr}[1]{~\hspace{-4pt}_{_{\{#1\}}}}
\newcommand{\ns}[1]{~\hspace{-4pt}_{_{{\langle#1\rangle}}}}
\newcommand{\sns}[1]{~\hspace{-4pt}_{_{{\langle\overline{#1}\rangle}}}}
\newcommand{\snsb}[1]{~\hspace{-4pt}_{_{{[\overline{#1}]}}}}
\newcommand{\nsb}[1]{~\hspace{-4pt}_{_{[#1]}}}
\def\odots{\ot\dots\ot}
\newcommand{\nm}[1]{{\mid}#1{\mid}}
\def\one{{\bf 1}}
\def\bfR{{\bf R}}
\def\wg{\wedge}
\def\td{\tilde}
\def\cop{{^{\rm cop}}}
\def\b{\beta}
\def\D{\Delta}
\def\d{\delta}
\def\d{\delta}
\def\nin{\noindent}
\numberwithin{equation}{section}
\begin{document}

\title{\bf SAYD modules over Lie-Hopf algebras}
\author{
\begin{tabular}{cc}
Bahram Rangipour \thanks{Department of Mathematics  and   Statistics,
     University of New Brunswick, Fredericton, NB, Canada      Email: bahram@unb.ca, \quad and \quad ssutlu@ unb.ca }
     \quad and  \quad  Serkan S\"utl\"u $~^\ast$
      \end{tabular}}

\maketitle
\abstract{\noindent In this paper a general van Est type  isomorphism is established. The isomorphism is between the Lie algebra cohomology of a bicrossed sum Lie algebra and the Hopf cyclic cohomology of its Hopf algebra. We  first prove  a one to one correspondence between stable-anti-Yetter-Drinfeld (SAYD) modules over the total Lie algebra  and SAYD modules over the associated Hopf algebra. In  contrast to the non-general  case done in our  previous work, here the  van Est isomorphism is found  at the first level of a natural spectral sequence,  rather than at the level of complexes.  It is proved that the Connes-Moscovici Hopf algebras do not admit any finite dimensional  SAYD modules  except the unique one-dimensional one found by Connes-Moscovici in 1998. This is done by extending  our techniques to work with the infinite dimensional  Lie algebra of formal vector fields.  At the end, the one to one  correspondence is applied to construct a highly nontrivial four dimensional  SAYD module over the Schwarzian  Hopf algebra. We then illustrate the whole theory on this example. Finally  explicit  representative  cocycles of the cohomology  classes for this example are calculated.}
\section{Introduction}
Hopf cyclic cohomology was invented by Connes-Moscovici in 1998 \cite{ConnMosc98}.    It is now beyond dispute
that this cohomology is a fundamental tool in  noncommutative geometry.   Admitting coefficients is one of the  most significant properties of this theory \cite{HajaKhalRangSomm04-I,HajaKhalRangSomm04-II,JaraStef}. These coefficients are called  stable-anti-Yetter-Drinfeld (SAYD) modules \cite{HajaKhalRangSomm04-I}.

\medskip

\nin   A ``geometric" Hopf algebra is a Hopf algebra associated to  (Lie) algebraic group or Lie algebra via  certain functors.   Such  Hopf algebras  are defined as representative (smooth) polynomial functions on the object in question  or as the  universal enveloping algebras of the Lie algebra  or even as a bicrossed product of  such Hopf algebras. The latter procedure is called semi-dualization.   The resulting Hopf algebra via semi-dualization is usually neither commutative nor cocommutative \cite{Maji}.

\medskip

\nin The study of SAYD modules over ``geometric" Hopf algebras begins in \cite{RangSutl}, where  we proved that any representation of the Lie algebra induces  a SAYD module over the associated Hopf algebra.  Therefore those  SAYD modules are called induced modules \cite{RangSutl}. We also proved that the Hopf cyclic cohomology of the associated Hopf algebra is isomorphic to the Lie algebra cohomology of the Lie algebra with coefficients in the original representation.

\medskip

\nin In \cite{RangSutl-II},  the notion  of SAYD modules over Lie algebras was defined and studied.  It was  observed that the corresponding cyclic complex has been known with different names for different SAYD modules. As the main example we proved that
 the (truncated)  polynomial algebra  of a Lie algebra is a SAYD module. The corresponding cyclic complex is identified with the (truncated) Weil algebra \cite{RangSutl-II}. In the same paper we identify the category of SAYD modules over the enveloping algebra of a Lie algebra with those on the Lie algebra.

\medskip
\nin Let us recall the main result of \cite{RangSutl-II} as follows.  For an arbitrary Lie algebra $\Fg$,  the comultiplication  of $U(\Fg)$ does not use  the Lie algebra structure of $\Fg$.  This fact  has been discouraged attention in  comodules over $U(\Fg)$.  It is shown that such comodules are in one to one correspondence with the nilpotent modules over the symmetric algebra $S(\Fg^\ast)$ .     Using this fundamental fact we can identify AYD modules over $U(\Fg)$ with modules over the semi-direct product Lie algebra $\widetilde\Fg=\Fg^\ast\al \Fg$. Here  $\Fg^\ast=\Hom(\Fg, \Cb)$ is  considered to be a commutative Lie algebra and to  be acted upon by $\Fg$ via the  coadjoint representation. We show that the notion of comodule over Lie algebras make sense. Furthermore, SAYD modules over Lie algebras  and the cyclic cohomology of a Lie algebra with coefficients in such modules is defined. It is shown that SAYD modules over $U(\Fg)$ and over $\Fg$ have a one-to-one correspondence and their cyclic homologies are identified.

\medskip
\nin Let  $\Fg=\Fg_1\bi\Fg_2$ be a bicrossed sum Lie algebra.  Let us denote $R(\Fg_2)$  and $U(\Fg_1)$ by $\Fc$ and $\Uc$  respectively. Here $R(\Fg_2)$ is the Hopf algebra of all representative functions on $\Fg_2$, and $U(\Fg_1)$ is the universal enveloping algebra of $\Fg_1$.  A module-comodule over $\Hc:=\Fc\acl \Uc$ is naturally a module-comodule over $\Uc$ and comodule-module  over  $\Fc$.
In \cite{RangSutl}, we completely determined those module-comodule  whose $\Uc$-coaction and $\Fc$-action is trivial. It is proved that such a module-comodule  is induced by a module   over $\Fg$ if and only if it is a  YD module over $\Hc$.

\medskip

\nin  Continuing our study in \cite{RangSutl, RangSutl-II}, we completely  determine  SAYD modules over the bicrossed product Hopf algebra $\Hc=\Fc\acl \Uc$. Roughly speaking,  we show that SAYD modules over  $\Hc$ and SAYD module over $\Fg$ are the same.   We then take advantage of a spectral sequence in  \cite{JaraStef} to prove a van Est isomorphism between   the Hopf cyclic cohomology of $\Hc$ with coefficients in $~^\s{M}_\d= M\ot~ ^\s\Cb_\d$  and the Lie algebra cohomology of $\Fg$ relative to a Levi subalgebra with coefficients in a $\Fg$-module $M$.

\medskip

\nin One of the results of this paper is about the SAYD modules over Connes-Moscovici Hopf algebras $\Hc_n$.  We know that    $\Hc_n$ is the bicrossed product Hopf algebra of $\Fc(N)$ and $U(gl_n^{\rm aff})$  \cite{MoscRang09}. However, the group $N$ is not of finite type. So we cannot apply our theory freely on $\Hc_n$. We overcome this problem by carefully analyzing  the SAYD modules over $\Hc_n$ to reduce the case to a finite type problem. As a result,   we prove that $\Hc_n$ has no AYD module except the most natural one,  $\Cb_\d$,  which was found by Connes-Mosocovici in \cite{ConnMosc98}.

\medskip

\nin To illustrate our theory in a nontrivial example we introduce a SAYD module over the Schwarzian Hopf algebra $\Hc_{1\rm S}$ introduced in \cite{ConnMosc98}.  By definition, $\Hc_{1\rm S}$ is a quotient  Hopf algebra of  $\Hc_1$ by the Hopf ideal generated by
$$\d_2-\frac{1}{2}\d_1^2 .$$
Here $\d_i$ are generators of $\Fc(N)$.  So the Hopf algebra $\Hc_{\rm 1S}$ is generated by
$$X,\quad Y,\quad  \d_1$$

\medskip

\nin As we know, $\Hc_{1\rm S}\cop$ is isomorphic to $R(\Cb)\acl U(gl_1^{\rm aff})$. So our theory guarantees that any suitable SAYD module $M$ over $sl_2=gl_1^{\rm aff}\bi \Cb$ will produce a SAYD module $M_\d$ over $\Hc_{1\rm S}\cop$. We take the truncated polynomial algebra $M=S(sl_2)\nsb{2}$ as our candidate.  The resulting 4-dimensional SAYD module $M_\d$ is then generated by

$$ \one, \quad \bfR^X, \quad \bfR^Y,\quad  \bfR^Z,$$

\nin with the $\Hc_{1\rm S}\cop$ action and coaction   defined by
$$
\left.
  \begin{array}{c|ccc}

    \lhd & X & Y & \d_1 \\[.2cm]
    \hline
    &&&\\[-.2cm]
    \one & 0 & 0 & \bfR^Z \\[.1cm]

    \bfR^X & -\bfR^Y & 2\bfR^X & 0 \\[.1cm]
    \bfR^Y & -\bfR^Z & \bfR^Y & 0 \\[.1cm]
    \bfR^Z & 0 & 0 & 0 \\
  \end{array}
\right.\qquad
  \begin{array}{rl}
  &\Db: M_\d \longrightarrow \mathcal{H}_{\rm 1S}\cop \ot M_\d \\[.2cm]
\hline
  &\\[-.2cm]
& \one \mapsto 1 \ot \one + X \ot \bfR^X + Y \ot \bfR^Y \\
& \bfR^X \mapsto 1 \ot \bfR^X \\
& \bfR^Y \mapsto 1 \ot \bfR^Y + \delta_1 \ot \bfR^X \\
& \bfR^Z \mapsto 1 \ot \bfR^Z + \delta_1 \ot \bfR^Y + \frac{1}{2}\delta_1^2 \ot \bfR^X.
  \end{array}
$$

\nin The surprises here are the nontriviality  of the action of $\d_1$  and the appearance of $X$ and $Y$ in the coaction. In other words this is not an induced module \cite{RangSutl}.

\medskip

\nin We illustrate our results in this paper on this example. We then apply the machinery   developed in \cite{MoscRang09} by  Moscovici and one of the  authors  to  prove that the following two cocycles generates  the  Hopf cyclic cohomology of $\Hc_{1\rm S}\cop$ with coefficients in $M_\d$.

\begin{align*}
&c^{\rm odd} = \one \ot \delta_1 + \bfR^Y \ot X + \bfR^X \ot \delta_1X + \bfR^Y \ot \delta_1Y + 2 \bfR^Z \ot Y , \\[.4cm]
& c^{{\rm even}} = \one \ot X \ot Y - \one \ot Y \ot X + \one \ot Y \ot \d_1Y - \bfR^X \ot XY \ot X \\
& - \bfR^X \ot Y^2 \ot \d_1X - \bfR^X \ot Y \ot X^2 + \bfR^Y \ot XY \ot Y + \bfR^Y \ot Y^2 \ot \d_1Y \\
& + \bfR^Y \ot X \ot Y^2 + \bfR^Y \ot Y \ot \d_1Y^2 - \bfR^Y \ot Y \ot X - \bfR^X \ot XY^2 \ot \d_1 \\
& - \frac{1}{3}\bfR^X \ot Y^3 \ot {\d_1}^2 + \frac{1}{3} \bfR^Y \ot Y^3 \ot \d_1 - \frac{1}{4} \bfR^X \ot Y^2 \ot {\d_1}^2 - \frac{1}{2} \bfR^Y \ot Y^2 \ot \d_1.
\end{align*}

\nin As can be seen by the inspection, the expression of the above cocycles cannot be easily found with bare hands.  It is the mentioned machinery in \cite{MoscRang09}  which allows to arrive at this elaborate formulae.

\tableofcontents

\section{Matched pair of Lie algebras and SAYD modules over double crossed sum Lie algebras}
In this section, matched pair of Lie algebras and their bicrossed sum Lie algebras are reviewed. We also recall double crossed product of Hopf algebras from \cite{Maji}. Next, we provide a brief account  of SAYD modules over Lie algebras from \cite{RangSutl-II}. Finally we investigate the relation between  SAYD modules over the double crossed sum Lie algebra of a matched pair of Lie algebras and SAYD modules over the individual Lie algebras.

\subsection{Matched pair of Lie algebras and mutual pair of Hopf algebras}

Let us recall the notion of matched pair of Lie algebras from \cite{Maji}. A pair of Lie algebras  $(\Fg_1, \Fg_2)$  is called a matched pair if there
are linear maps

\begin{equation}\label{g-1-g-2 action}
\alpha: \Fg_2\ot \Fg_1\ra \Fg_2, \quad \alpha_X(\z)=\z\lt X, \quad \beta:\Fg_2\ot \Fg_1\ra \Fg_1, \quad \beta_\z(X)=\z\rt X,
\end{equation}

\nin satisfying the following conditions,

\begin{align}\label{mp-L-1}
&[\z,\x]\rt X=\z\rt(\x\rt X)-\x\rt(\z\rt X),\\\label{mp-L-2}
 & \z\lt[X, Y]=(\z\lt X)\lt Y-(\z\lt Y)\lt X, \\\label{mp-L-3}
 &\z\rt[X, Y]=[\z\rt X, Y]+[X,\z\rt Y] +
(\z\lt X)\rt Y-(\z\lt Y)\rt X, \\\label{mp-L-4}
&[\z,\x]\lt X=[\z\lt X,\x]+[\z,\x\lt X]+ \z\lt(\x\rt X)-\x\lt(\z\rt X).
\end{align}

\nin Given a matched pair of  Lie algebras $(\Fg_1,\Fg_2)$, one defines a double crossed sum Lie algebra
 $\Fg_1\bowtie \Fg_2$. Its underlying vector space is $\Fg_1\oplus\Fg_2$ and its Lie bracket
is defined  by:

\begin{equation}
[X\oplus\z,  Z\oplus\x]=([X, Z]+\z\rt Z-\x\rt X)\oplus ([\z,\x]+\z\lt Z-\x\lt X).
\end{equation}

\nin Both $\Fg_1$ and $\Fg_2$ are Lie subalgebras of $\Fg_1\bowtie\Fg_2$ via obvious inclusions. Conversely,  if for a
Lie algebra $\Fg$ there are  two Lie subalgebras $\Fg_1$ and $\Fg_2$ so that $\Fg=\Fg_1\oplus\Fg_2$ as vector  spaces,  then
 $(\Fg_1, \Fg_2)$  forms a matched pair of Lie algebras and $\Fg\cong \Fg_1\bowtie \Fg_2$ as Lie algebras \cite{Maji}.
  In this case, the actions of $\Fg_1$ on $\Fg_2$ and $\Fg_2$ on $\Fg_1$  for $\z\in \Fg_2$ and $X\in\Fg_1$ are uniquely determined  by

\begin{equation}\label{lie-actions}
[\z,X]=\z\rt X+\z\lt X, \qquad \z\in \Fg_2, \quad X\in\Fg_1.
\end{equation}

\nin Next, we recall the notion of double crossed product Hopf algebra.  Let $(\Uc,\Vc)$ be a pair of Hopf algebras such that $\Vc$ is a
right $\Uc-$module coalgebra and $\Uc$ is left  $\Vc-$module coalgebra. We call them mutual pair if   their actions  satisfy the following  conditions.
\begin{align}\label{mutual-1}
&v\rt(u^1u^2)= (v\ps{1}\rt u^1\ps{1})((v\ps{2}\lt u^1\ps{2})\rt  u^2),\quad  1\lt u=\ve(u),\\ \label{mutual-2}
&(v^1v^2)\lt u= (v^1\lt(v^2\ps{1}\rt u\ps{1}))(v^2\ps{2}\lt u\ps{2}),\quad
v\rt 1=\ve(v),\\\label{mutual-3}
 &\sum v\ps{1}\lt u\ps{1}\ot v\ps{2}\rt u\ps{2}=\sum v\ps{2}\lt u\ps{2}\ot v\ps{1}\rt u\ps{1}.
\end{align}

\nin Having a  mutual   pair of Hopf algebras, one constructs the double crossed product Hopf algebra $\Uc \bowtie \Vc$. As a coalgebra, $\Uc \bowtie \Vc$ is  $\Uc\ot\Vc $. However, its algebra structure is defined by the rule
\begin{equation}
(u^1 \bowtie v^1)(u^2 \bowtie v^2):= u^1(v^1\ps{1}\rt u^2\ps{1}) \bowtie (v^1\ps{2}\lt u^2\ps{2})v^2,
\end{equation}
together with $1 \bowtie 1$ as its unit. The antipode of $\Uc \bowtie \Vc$ is defined by
\begin{equation}
S(u \bowtie v)=(1 \bowtie S(v))(S(u)\bowtie 1)= S(v\ps{1})\rt S(u\ps{1})\bowtie S(v\ps{2})\lt S(u\ps{2}).
\end{equation}

\nin It is shown in \cite{Maji} that if $\Fa=\Fg_1\bowtie\Fg_2$ is a double crossed sum of Lie algebras, then the
enveloping algebras  $(U(\Fg_1),U(\Fg_2))$  becomes a mutual pair of Hopf algebras. Moreover, $U(\Fa)$ and $ U(\Fg_1) \bowtie U(\Fg_2)$ are isomorphic as Hopf algebras.

\medskip

\nin In terms of the inclusions

\begin{equation}
i_1:U(\Fg_1) \to U(\Fg_1 \bowtie \Fg_2) \quad \mbox{ and } \quad i_2:U(\Fg_2) \to U(\Fg_1 \bowtie \Fg_2),
\end{equation}

\nin the Hopf algebra isomorphism mentioned above is

\begin{equation}
\mu\circ (i_1 \ot i_2):U(\Fg_1) \bowtie U(\Fg_2) \to U(\Fa).
\end{equation}

\nin Here $\mu$ is the multiplication on $U(\Fg)$. We easily observe that there is a linear map

\begin{equation}
\Psi:U(\Fg_2) \bowtie U(\Fg_1) \to U(\Fg_1) \bowtie U(\Fg_2),
\end{equation}

\nin satisfying

\begin{equation}
\mu \circ (i_2 \ot i_1) = \mu \circ (i_1 \ot i_2) \circ \Psi\,.
\end{equation}

\nin The mutual actions of $U(\Fg_1)$ and $U(\Fg_2)$ are defined as follows

\begin{equation}
\rhd := (\Id_{U(\Fg_2)} \ot \ve) \circ \Psi \quad \mbox{ and } \quad \lhd := (\ve \ot \Id_{U(\Fg_1)}) \circ \Psi\,.
\end{equation}

\subsection{SAYD modules over double crossed sum Lie algebras}

We first review the Lie algebra coactions and SAYD modules over Lie algebras. To this end, let us first introduce the notion of comodule over a Lie algebra.

\begin{definition}
\cite{RangSutl-II}. A vector space $M$ is a left comodule over a Lie algebra $\Fg$ if there is a map $\Db_{\Fg}: M \ra \Fg \ot M, \quad m \mapsto m\nsb{-1}\ot m\nsb{0}$ such that
\begin{equation}\label{g-comod}
m\nsb{-2}\wg m\nsb{-1}\ot m\nsb{0}=0,
\end{equation}
where $$m\nsb{-2}\ot m\nsb{-1}\ot m\nsb{0}= m\nsb{-1}\ot (m\nsb{0})\nsb{-1}\ot (m\nsb{0})\nsb{0}.$$
\end{definition}

\nin By \cite[Proposition 5.2]{RangSutl-II}, corepresentations of a Lie algebra $\Fg$ are nothing but the representations of the symmetric algebra $S(\Fg^\ast)$. The most natural corepresentation of a Lie algebra $\Fg$, with a basis $\Big\{X_1, \ldots ,X_N\Big\}$ and dual basis $\Big\{\t^1,\ldots ,\t^N\Big\}$, is $M = S(\Fg^\ast)$ via $m \mapsto X_i \ot m\t^i$. This is called the Koszul coaction. The corresponding representation on $M = S(\Fg^\ast)$ coincides with the initial multiplication of the symmetric algebra.

\medskip

\nin Next,  let $\Db_{\Fg}:M \to \Fg \ot M$ be a left $\Fg$-comodule structure on the linear space $M$. If the $\Fg$-coaction is locally conilpotent, \ie for any $m \in M$ there exists $n \in \mathbb{N}$ such that $\Db^n_{\Fg}(m) = 0$, then it is possible to construct a $U(\Fg)$-coaction structure $\Db_{U}:M \to U(\Fg) \ot M$ on $M$, \cite[Proposition 5.7]{RangSutl-II}. Conversely, any comodule over $U(\Fg)$ results a locally conilpotent comodule over $\Fg$  via its composition with the canonical projection $\pi:U(\Fg)\ra \Fg$ as follows:

$$
\xymatrix {
\ar[dr]_{\Db_{\Fg}} M \ar[r]^{\Db_U\;\;\;\;\;\;} & U(\Fg) \ot M \ar[d]^{\pi \ot \Id} \\
& \Fg \ot M
}
$$
\nin We denote the category of locally conilpotent left $\Fg$-comodules by $^{\Fg}\rm{conil}\Mc$, and we have $^{\Fg}\rm{conil}\Mc = \, ^{U(\Fg)}\Mc$, \cite[Proposition 5.8]{RangSutl-II}.

\begin{definition}
\cite{RangSutl-II}. Let $M$ be a right module and left comodule over a Lie algebra $\Fg$. We call $M$ a right-left AYD module over $\Fg$ if
\begin{equation}\label{AYD-condition}
\Db_{\Fg}(m \cdot X) = m\nsb{-1} \ot m\nsb{0} \cdot X + [m\nsb{-1}, X] \ot m\nsb{0}.
\end{equation}
Moreover, $M$ is called stable if
\begin{equation}\label{stability-condition}
m\nsb{0} \cdot m\nsb{-1} = 0.
\end{equation}
\end{definition}

\begin{example}\label{ex-1}\rm{
Let $\Fg$ be a Lie algebra with a basis $\Big\{X_1, \ldots, X_N\Big\}$ and a dual basis $\Big\{\t^1,\ldots,\t^N\Big\}$, and $M = S(\Fg^\ast)$ be the symmetric algebra of $\Fg^\ast$. We consider the following action of $\Fg$ on $S(\Fg^\ast)$:
\begin{equation}\label{action-1}
S(\Fg^\ast) \ot \Fg \ra S(\Fg^\ast), \quad m \ot X \mapsto m \lhd X := -\Lc_X(m) + \d(X)m
\end{equation}
Here, $\Lc:\Fg \ra \End S(\Fg^\ast)$ is the coadjoint representation of $\Fg$ on $S(\Fg^\ast)$ and $\d \in \Fg^\ast$ is the trace of the adjoint representation of the Lie algebra $\Fg$ on itself. Via the action \eqref{action-1} and the Koszul coaction
\begin{equation}\label{coaction-1}
M \ra \Fg \ot M, \quad m \mapsto X_i \ot m\t^i,
\end{equation}
$M = S(\Fg^\ast)$ is a SAYD module over the Lie algebra $\Fg$.
}\end{example}

\begin{example}\label{ex-2}\rm{
Let $\Fg$ be a Lie algebra and $M = S(\Fg^\ast)\nsb{2q}$ be a truncation of the symmetric algebra of $\Fg^\ast$. Then by the action \eqref{action-1} and the coaction \eqref{coaction-1}, $M$ becomes an SAYD module over the Lie algebra $\Fg$. Note that in this case the coaction is locally conilpotent.
}\end{example}

\nin We recall from \cite{HajaKhalRangSomm04-I} the definition of a  right-left  stable-anti-Yetter-Drinfeld module over a Hopf algebra $\Hc$. Let $M$ be a right module and left comodule over a Hopf algebra $\Hc$. We say that it is a stable-anti-Yetter-Drinfeld (SAYD) module over $\Hc$ if
\begin{align}\label{AYD-Hopf}
&\Db(m\cdot h)= S(h\ps{3})m\ns{-1}h\ps{1}\ot m\ns{0}\cdot h\ps{2},\\\label{stability-Hopf}
&m\ns{0} \cdot m\ns{-1}=m,
\end{align}
for any $v\in V$ and $h\in \Hc$.

\nin According to \cite[Proposition 5.10]{RangSutl-II}, AYD modules over a Lie algebra $\Fg$ with locally conilpotent coaction are in one to one correspondence with  AYD modules over the universal enveloping algebra $U(\Fg)$. In this case, while it is possible to carry the $\Fg$-stability to $U(\Fg)$-stability \cite[Lemma 5.11]{RangSutl-II}, the converse is not necessarily true \cite[Example 5.12]{RangSutl-II}.

\medskip

\nin A family of examples of SAYD modules over a Lie algebra $\Fg$ is given by the modules over the Weyl algebra $D(\Fg)$, \cite[Corollary 5.14]{RangSutl-II}. As for finite dimensional examples, it is proven in \cite{RangSutl-II} that there is no non-trivial $sl_2$-coaction that makes a simple two dimensional $sl_2$-module an SAYD module over $sl_2$.

\medskip
\nin Let $(\Fg_1,\Fg_2)$ be a matched pair of Lie algebras, with  $\Fa := \Fg_1 \bowtie \Fg_2$ as their  double crossed sum Lie algebra.   A vector space $M$ is a module over $\Fa$ if and only if it is a module over  $\Fg_1$ and $\Fg_2$, such that

\begin{align}
(m \cdot Y) \cdot X - (m \cdot X) \cdot Y = m \cdot (Y \rhd X) + m \cdot (Y \lhd X)
\end{align}
is satisfied. In the converse argument one considers the $\Fa$ action on $M$ by
\begin{equation}\label{module on doublecrossed sum}
m \cdot (X \oplus Y) = m \cdot X + m \cdot Y.
\end{equation}

\nin For the comodule structures we have the following analogous result. If $M$ is a comodule over $\Fg_1$ and $\Fg_2$  via
\begin{equation}
m \mapsto m\nsb{-1} \ot m\nsb{0} \in \Fg_1 \ot M \quad \text{ and } \quad m \mapsto m\ns{-1} \ot m\ns{0} \in \Fg_2 \ot M,
\end{equation}
\nin then we define the following linear map

\begin{equation}\label{comodule on doublecrossed sum}
m \mapsto m\nsb{-1} \ot m\nsb{0} + m\ns{-1} \ot m\ns{0} \in \Fa \ot M.
\end{equation}

\nin Conversely, if $M$ is a $\Fa$-comodule via  $\Db_\Fa:M\ra \Fa\ot M$,  then we define the  linear maps with the help of projections

\begin{equation}\label{auxy2}
\xymatrix{
\ar[r]^{\Db_{\Fa}} M \ar[dr]_{\Db_{\Fg_1}} & \Fa \ot M \ar[d]^{p_1 \ot \Id} \\
& \Fg_1 \ot M}
\qquad \text{and }\qquad \xymatrix{
\ar[r]^{\Db_{\Fa}} M \ar[dr]_{\Db_{\Fg_2}} & \Fa \ot M \ar[d]^{p_2 \ot \Id} \\
& \Fg_2 \ot M}
\end{equation}

\begin{proposition}\label{proposition-comodule-doublecrossed-sum}
 A vector space $M$ is an $\Fa$-comodule if and only if it is a $\Fg_1$-comodule and $\Fg_2$-comodule such that
\begin{equation}\label{aux-18}
m\nsb{-1} \ot m\nsb{0}\ns{-1} \ot m\nsb{0}\ns{0} = m\ns{0}\nsb{-1} \ot m\ns{-1} \ot m\ns{0}\nsb{0}.
\end{equation}

\end{proposition}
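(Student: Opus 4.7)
The plan is to leverage the vector space decomposition $\Fa = \Fg_1 \oplus \Fg_2$ in order to split the single Lie coaction condition \eqref{g-comod} for $\Db_\Fa$ into three separate pieces: the coaction conditions for $\Db_{\Fg_1}$ and $\Db_{\Fg_2}$ defined via \eqref{auxy2}, together with the compatibility \eqref{aux-18}. The key observation is that \eqref{g-comod} is equivalent to requiring $(\Id \ot \Db_\Fa)\circ \Db_\Fa(m) \in \Fa \ot \Fa \ot M$ to be symmetric in its first two tensor factors.

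Using the induced decomposition $\Fa \ot \Fa = \bigoplus_{i,j \in \{1,2\}} \Fg_i \ot \Fg_j$ and writing $\Db_\Fa(m) = m\nsb{-1}\ot m\nsb{0} + m\ns{-1}\ot m\ns{0}$ with $m\nsb{-1}\in\Fg_1$ and $m\ns{-1}\in\Fg_2$, a direct expansion yields
\begin{align*}
(\Id \ot \Db_\Fa)\Db_\Fa(m) &= m\nsb{-1}\ot m\nsb{0}\nsb{-1}\ot m\nsb{0}\nsb{0} + m\nsb{-1}\ot m\nsb{0}\ns{-1}\ot m\nsb{0}\ns{0} \\
&\quad + m\ns{-1}\ot m\ns{0}\nsb{-1}\ot m\ns{0}\nsb{0} + m\ns{-1}\ot m\ns{0}\ns{-1}\ot m\ns{0}\ns{0},
\end{align*}
with the four summands sitting respectively in $\Fg_1\ot\Fg_1\ot M$, $\Fg_1\ot\Fg_2\ot M$, $\Fg_2\ot\Fg_1\ot M$, and $\Fg_2\ot\Fg_2\ot M$.

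Imposing symmetry in the first two tensor factors component by component, the $\Fg_1 \ot \Fg_1 \ot M$ piece being symmetric is precisely the Lie coaction identity for $\Db_{\Fg_1}$, and analogously for $\Db_{\Fg_2}$. The two cross-type summands are interchanged by the flip $\tau$ on $\Fa \ot \Fa$, so the symmetry requirement demands that the $\Fg_2 \ot \Fg_1 \ot M$ summand equal $\tau$ applied to the $\Fg_1 \ot \Fg_2 \ot M$ summand; after rewriting this is exactly \eqref{aux-18}. This establishes the forward direction.

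For the converse, given $\Fg_1$- and $\Fg_2$-comodule structures on $M$ satisfying \eqref{aux-18}, define $\Db_\Fa$ by \eqref{comodule on doublecrossed sum}; the same computation, read backwards, shows that each of the four components of $(\Id \ot \Db_\Fa)\circ \Db_\Fa(m)$ is symmetric in its first two tensor factors, yielding \eqref{g-comod} for $\Db_\Fa$. The argument is essentially bookkeeping rather than real geometric content; the only mild subtlety is to keep the two Sweedler notations $\nsb{\cdot}$ and $\ns{\cdot}$ straight throughout and to recognize that imposing symmetry on the two mixed-type summands collapses to the single equation \eqref{aux-18}, rather than producing two independent compatibility conditions.
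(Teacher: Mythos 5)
Your proof is correct. The underlying strategy is the same block decomposition of the iterated coaction that the paper uses, but your execution is genuinely more elementary: you replace the paper's detour through the antisymmetrization map $\alpha:\Fa\wg\Fa\to U(\Fa)\ot U(\Fa)$ (followed by $\Id\ot\ve_{U(\Fg_1)}\ot\ve_{U(\Fg_2)}\ot\Id$ to extract \eqref{aux-18}, and the injectivity of $\alpha$ on $\Fg_1\wg\Fg_1$ to get the sub-coaction conditions) with the single linear-algebra observation that, in characteristic zero, a tensor in $\Fa\ot\Fa\ot M$ has vanishing image in $\wg^2\Fa\ot M$ iff it is fixed by the flip of the first two legs. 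Decomposing $\Fa\ot\Fa$ into the four blocks $\Fg_i\ot\Fg_j$ then delivers the two comodule conditions (diagonal blocks) and \eqref{aux-18} (off-diagonal blocks, which the flip interchanges) in one stroke, and the converse is literally the same computation read backwards — whereas the paper handles the forward extraction of \eqref{aux-18}, the verification that $p_i\circ\Db_\Fa$ are coactions, and the converse as three separate steps. What the paper's route buys is that the passage through $U(\Fa)\ot U(\Fa)$ is the same device used in the subsequent propositions relating $\Fa$-comodules to $U(\Fa)$-comodules, so it sets up notation reused later; what your route buys is brevity and the explicit recognition that the two mixed blocks yield a single compatibility condition rather than two.
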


\begin{proof}
Assume first that $M$ is an $\Fa$-comodule. By the $\Fa$-coaction compatibility, we have

\begin{align}\label{aux-20}
\begin{split}
& m\nsb{-2} \wg m\nsb{-1} \ot m\nsb{0} + m\nsb{-1} \wg m\nsb{0}\ns{-1} \ot m\nsb{0}\ns{0} \\
& + m\ns{-1} \wg m\ns{0}\nsb{-1} \ot m\ns{0}\nsb{0} + m\ns{-2} \wg m\ns{-1} \ot m\ns{0} = 0.
\end{split}
\end{align}

\nin Applying the antisymmetrization map $\alpha:\Fa \wg \Fa \to U(\Fa) \ot U(\Fa)$ we get

\begin{align}\label{aux-19}
\begin{split}
& (m\nsb{-1} \ot 1) \ot (1 \ot m\nsb{0}\ns{-1}) \ot m\nsb{0}\ns{0} - (1 \ot m\nsb{0}\ns{-1}) \ot (m\nsb{-1} \ot 1) \ot m\nsb{0}\ns{0} \\
& + (1 \ot m\ns{-1}) \ot (m\ns{0}\nsb{-1} \ot 1) \ot m\ns{0}\nsb{0} - (m\ns{0}\nsb{-1} \ot 1) \ot (1 \ot m\ns{-1}) \ot m\ns{0}\nsb{0} = 0.
\end{split}
\end{align}

\nin Finally, applying $\Id \ot \ve_{U(\Fg_1)} \ot \ve_{U(\Fg_2)} \ot \Id$ on the both hand sides of the above equation to get the  equation \eqref{aux-18}.

\medskip
\nin Let $m \mapsto m\pr{-1} \ot m\pr{0} \in \Fa \ot M$ denote the $\Fa$-coaction on $M$. Also let $p_1:\Fa \to \Fg_1$ and $p_2:\Fa \to \Fg_2$ be the projections onto the subalgebras $\Fg_1$ and $\Fg_2$ respectively. Then the $\Fa$-coaction is

\begin{equation}
m\pr{-1} \ot m\pr{0} = p_1(m\pr{-1}) \ot m\pr{0} + p_2(m\pr{-1}) \ot m\pr{0}.
\end{equation}

\nin Next, we  shall  prove that
\begin{equation}
m \mapsto p_1(m\pr{-1}) \ot m\pr{0} \in \Fg_1 \ot M \quad \text{ and } \quad m \mapsto p_2(m\pr{-1}) \ot m\pr{0} \in \Fg_2 \ot M
\end{equation}

\nin are coactions. To this end, we observe that the
\begin{equation}
\alpha(p_1(m\pr{-2}) \wg p_1(m\pr{-1})) \ot m\pr{0} = (p_1 \ot p_1)(\alpha(m\pr{-2} \wg m\pr{-1})) \ot m\pr{0} = 0,
\end{equation}
\nin for $M$ is an $\Fa$-comodule.

\medskip
\nin Since the antisymmetrization map $\alpha:\Fg_1 \wg \Fg_1 \to U(\Fg_1) \ot U(\Fg_1)$ is injective, we have

\begin{equation}
p_1(m\pr{-2}) \wg p_1(m\pr{-1}) \ot m\pr{0} = 0,
\end{equation}

\nin proving that $m \mapsto p_1(m\pr{-1}) \ot m\pr{0}$ is a $\Fg_1$-coaction. Similarly $m \mapsto p_2(m\pr{-1}) \ot m\pr{0}$ is a $\Fg_2$-coaction on $M$.

\medskip
\nin Conversely, assume that $M$ is a $\Fg_1$-comodule and $\Fg_2$-comodule such that the compatibility \eqref{aux-18} is satisfied. Then obviously \eqref{aux-20} is true, which is the $\Fa$-comodule compatibility for the coaction \eqref{comodule on doublecrossed sum}.
\end{proof}

\nin We proceed by  investigating the relations between AYD modules over the Lie algebras $\Fg_1$ and $\Fg_2$, and AYD modules over the double crossed sum Lie algebra $\Fa = \Fg_1 \bowtie \Fg_2$.

\begin{proposition}\label{auxx-2}
Let $(\Fg_1,\Fg_2)$ be a matched pair of Lie algebras, $\Fa = \Fg_1 \bowtie \Fg_2$,  and $M \in \, ^{\Fa}\rm{conil}\Mc_{\Fa}$. Then, $M$ is an AYD module over $\Fa$ if and  only if $M$ is an AYD module over $\Fg_1$ and $\Fg_2$, and the following conditions are satisfied
\begin{align}\label{prop-ax2-1}
&(m \cdot X)\ns{-1} \ot (m \cdot X)\ns{0} = m\ns{-1} \lhd X \ot m\ns{0} + m\ns{-1} \ot m\ns{0} \cdot X, \\[.1cm]\label{prop-ax2-2}
& m\ns{-1} \rhd X \ot m\ns{0} = 0, \\[.1cm]\label{prop-ax2-3}
& (m \cdot Y)\nsb{-1} \ot (m \cdot Y)\nsb{0} = - Y \rhd m\nsb{-1} \ot m\nsb{0} + m\nsb{-1} \ot m\nsb{0} \cdot Y, \\[.1cm]\label{prop-ax2-4}
& Y \lhd m\nsb{-1} \ot m\nsb{0} = 0,
\end{align}
for any $X \in \Fg_1$, $Y \in \Fg_2$ and any $m \in M$.
\end{proposition}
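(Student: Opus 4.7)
The plan is to reduce the $\Fa$-AYD compatibility on a general element $Z \in \Fa$ to its components in $\Fg_1$ and $\Fg_2$ separately, and then to decompose the brackets that appear using the mixed actions of the matched pair. Throughout I use Proposition \ref{proposition-comodule-doublecrossed-sum} to write the $\Fa$-coaction on $M$ as
\[
\Db_\Fa(m) = m\nsb{-1} \ot m\nsb{0} + m\ns{-1} \ot m\ns{0}, \quad m\nsb{-1}\in\Fg_1,\ m\ns{-1}\in\Fg_2,
\]
and the $\Fa$-action via \eqref{module on doublecrossed sum}. Fixing a basis compatible with the decomposition $\Fa = \Fg_1 \oplus \Fg_2$ identifies $\Fa \ot M = (\Fg_1 \ot M) \oplus (\Fg_2 \ot M)$, and every equation in $\Fa \ot M$ can be read off in each summand separately.

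I first test the $\Fa$-AYD condition \eqref{AYD-condition} on an element $X \in \Fg_1$. The bracket $[m\nsb{-1},X]$ lies entirely in $\Fg_1$, while $[m\ns{-1},X]$ decomposes by \eqref{lie-actions} as $m\ns{-1} \rhd X + m\ns{-1} \lhd X$, with $m\ns{-1} \rhd X \in \Fg_1$ and $m\ns{-1} \lhd X \in \Fg_2$. Expanding the left-hand side of \eqref{AYD-condition} along the same splitting and equating the $\Fg_1 \ot M$ and $\Fg_2 \ot M$ components gives
\begin{align*}
(m\cdot X)\nsb{-1} \ot (m\cdot X)\nsb{0} &= m\nsb{-1} \ot m\nsb{0}\cdot X + [m\nsb{-1},X] \ot m\nsb{0} + m\ns{-1}\rhd X \ot m\ns{0}, \\
(m\cdot X)\ns{-1} \ot (m\cdot X)\ns{0} &= m\ns{-1} \ot m\ns{0}\cdot X + m\ns{-1}\lhd X \ot m\ns{0}.
\end{align*}
The first of these, combined with the $\Fg_1$-AYD compatibility of $M$, yields exactly \eqref{prop-ax2-2}; the second is \eqref{prop-ax2-1}. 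Conversely, assuming the $\Fg_1$-AYD compatibility and \eqref{prop-ax2-1}, \eqref{prop-ax2-2}, the two component equalities recombine to the $\Fa$-AYD condition on $X$.

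A mirror computation handles $Y \in \Fg_2$. Here $[m\ns{-1},Y]$ sits inside $\Fg_2$, whereas $[m\nsb{-1},Y] = -[Y,m\nsb{-1}] = -Y \rhd m\nsb{-1} - Y \lhd m\nsb{-1}$, with $-Y\rhd m\nsb{-1} \in \Fg_1$ and $-Y\lhd m\nsb{-1} \in \Fg_2$. Splitting \eqref{AYD-condition} for $Z=Y$ into $\Fg_1$- and $\Fg_2$-components produces
\begin{align*}
(m\cdot Y)\nsb{-1} \ot (m\cdot Y)\nsb{0} &= -Y \rhd m\nsb{-1} \ot m\nsb{0} + m\nsb{-1} \ot m\nsb{0}\cdot Y, \\
(m\cdot Y)\ns{-1} \ot (m\cdot Y)\ns{0} &= m\ns{-1} \ot m\ns{0}\cdot Y + [m\ns{-1},Y] \ot m\ns{0} - Y \lhd m\nsb{-1} \ot m\nsb{0},
\end{align*}
which together with the $\Fg_2$-AYD condition give \eqref{prop-ax2-3} and \eqref{prop-ax2-4}. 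Linearity in $Z = X+Y$ then promotes these separate statements to the full $\Fa$-AYD condition, closing the equivalence.

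The argument itself is elementary once the decomposition is set up; the only place that requires any care is the consistent bookkeeping of the four pieces into which the bracket of a $\Fg_i$-valued coaction term with an element of $\Fg_j$ splits, and the asymmetry in sign between the $X \in \Fg_1$ and $Y \in \Fg_2$ cases arising from \eqref{lie-actions}. This is the step most likely to harbor a sign error, but no conceptual obstacle intervenes beyond the clean application of Proposition \ref{proposition-comodule-doublecrossed-sum} and \eqref{module on doublecrossed sum}.
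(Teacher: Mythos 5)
Your component bookkeeping is correct and your converse direction is fine, but the forward direction has a genuine gap. When you split the $\Fa$-AYD identity for $X\in\Fg_1$ into its $\Fg_1\ot M$ and $\Fg_2\ot M$ pieces, the $\Fg_1$-component reads
\begin{equation*}
(m\cdot X)\nsb{-1}\ot(m\cdot X)\nsb{0}
= \Bigl(m\nsb{-1}\ot m\nsb{0}\cdot X + [m\nsb{-1},X]\ot m\nsb{0}\Bigr) + m\ns{-1}\rhd X\ot m\ns{0},
\end{equation*}
i.e.\ it only tells you that the \emph{sum} of the $\Fg_1$-AYD defect and the term $m\ns{-1}\rhd X\ot m\ns{0}$ vanishes. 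Both of these live in the same summand $\Fg_1\ot M$, so the decomposition of $\Fa\ot M$ cannot separate them. You write that this component, ``combined with the $\Fg_1$-AYD compatibility of $M$, yields \eqref{prop-ax2-2}'' --- but the $\Fg_1$-AYD compatibility is itself one of the conclusions you must prove in the forward direction, and you never establish it. The same issue recurs in the $\Fg_2$-component for $Y\in\Fg_2$ and \eqref{prop-ax2-4}. That this is not a mere bookkeeping omission is shown by Example \ref{example-sl2}: $S({sl_2}^\ast)$ with the Koszul coaction is AYD over $sl_2=\Fg_1\bowtie\Fg_2$ but is \emph{not} AYD over $\Fg_1$, so the implication ``$\Fa$-AYD $\Rightarrow$ $\Fg_i$-AYD'' genuinely fails in general.

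What saves the proposition is the hypothesis $M\in\,^{\Fa}\rm{conil}\Mc_{\Fa}$, which your argument never uses. The paper's proof invokes it at the outset: local conilpotency lets one lift $M$ to an AYD module over $U(\Fa)=U(\Fg_1)\bowtie U(\Fg_2)$ by \cite[Proposition 5.10]{RangSutl-II}; the projections $\pi_1,\pi_2$ are coalgebra maps, and since $U(\Fg_1)$ and $U(\Fg_2)$ are Hopf \emph{sub}algebras of $U(\Fa)$, the AYD conditions over each restrict for free. Only after the $\Fg_1$- and $\Fg_2$-AYD compatibilities are secured this way does the paper subtract them from the expanded $\Fa$-AYD identity (its equations \eqref{aux-21}=\eqref{aux-22}) to isolate \eqref{prop-ax2-1}--\eqref{prop-ax2-4}, by setting $Y:=0$ and $X:=0$ respectively --- which is essentially your computation. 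So your splitting is the right second half of the argument; you are missing the first half, and it is precisely the step where the conilpotency hypothesis does its work.
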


\begin{proof}
For $M \in \, ^{\Fa}\rm{conil}\Mc_{\Fa}$, assume that $M$ is an AYD module over the double crossed sum Lie algebra $\Fa$ via the coaction

\begin{equation}
m \mapsto m\pr{-1} \ot m\pr{0} = m\nsb{-1} \ot m\nsb{0} + m\ns{-1} \ot m\ns{0}.
\end{equation}

\nin As the $\Fa$-coaction is locally conilpotent, by \cite[Proposition 5.10]{RangSutl-II} we have $M \in \, ^{U(\Fa)}\mathcal{AYD}_{U(\Fa)}$. Then since the projections
\begin{equation}
\pi_1:U(\Fa) = U(\Fg_1) \bowtie U(\Fg_2) \to U(\Fg_1), \quad \pi_2:U(\Fa) = U(\Fg_1) \bowtie U(\Fg_2) \to U(\Fg_2)
\end{equation}
\nin are coalgebra maps, we conclude that $M$ is a comodule over $U(\Fg_1)$ and $U(\Fg_2)$. Finally, since $U(\Fg_1)$ and $U(\Fg_2)$ are Hopf subalgebras of $U(\Fa)$, AYD conditions on $U(\Fg_1)$ and $U(\Fg_2)$ are immediate, and thus $M$ is an AYD module over $\Fg_1$ and $\Fg_2$.

\medskip
\nin We now prove the compatibility conditions \eqref{prop-ax2-1}, \ldots, \eqref{prop-ax2-4}. To this end, we will make use of the AYD condition for an arbitrary $X \oplus Y \in \Fa$ and $m \in M$. On one hand side we have
\begin{align}\label{aux-21}
\begin{split}
& [m\pr{-1}, X \oplus Y] \ot m\pr{0} + m\pr{-1} \ot m\pr{0} \cdot (X \oplus Y) = \\
& [m\nsb{-1} \oplus 0, X \oplus Y] \ot m\nsb{0} + [0 \oplus m\ns{-1}, X \oplus Y] \ot m\ns{0} \\
& + m\nsb{-1} \ot m\nsb{0} \cdot (X \oplus Y) + m\ns{-1} \ot m\ns{0} \cdot (X \oplus Y) \\
& = ([m\nsb{-1},X] - Y \rhd m\nsb{-1} \oplus -Y \lhd m\nsb{-1}) \ot m\nsb{0} \\
& + (m\ns{-1} \rhd X \oplus [m\ns{-1},Y] + m\ns{-1} \lhd X) \ot m\ns{0} \\
& + (m\nsb{-1} \oplus 0) \ot m\nsb{0} \cdot (X \oplus Y) + (0 \oplus m\ns{-1}) \ot m\ns{0} \cdot (X \oplus Y) \\
& = ((m \cdot X)\nsb{-1} \oplus 0) \ot (m \cdot X)\nsb{0} + (0 \oplus (m \cdot Y)\ns{-1}) \ot (m \cdot Y)\ns{0} \\
& + (- Y \rhd m\nsb{-1} \oplus -Y \lhd m\nsb{-1}) \ot m\nsb{0} + (m\ns{-1} \rhd X \oplus m\ns{-1} \lhd X) \ot m\ns{0} \\
& + (m\nsb{-1} \oplus 0) \ot m\nsb{0} \cdot Y + (0 \oplus m\ns{-1}) \ot m\ns{0} \cdot X.
\end{split}
\end{align}
\nin On the other hand,
\begin{align}\label{aux-22}
\begin{split}
& (m \cdot (X \oplus Y))\pr{-1} \ot (m \cdot (X \oplus Y))\pr{0} = ((m \cdot X)\nsb{-1} \oplus 0) \ot (m \cdot X)\nsb{0} + \\
& ((m \cdot Y)\nsb{-1} \oplus 0) \ot (m \cdot Y)\nsb{0} + (0 \oplus (m \cdot X)\ns{-1}) \ot (m \cdot X)\ns{0} \\
& + (0 \oplus (m \cdot Y)\ns{-1}) \ot (m \cdot Y)\ns{0}.
\end{split}
\end{align}
\nin Since  $M$ is an AYD module over $\Fg_1$ and $\Fg_2$, AYD compatibility \eqref{aux-21} = \eqref{aux-22} translates into
\begin{align}\label{aux-23}
\begin{split}
& ((m \cdot Y)\nsb{-1} \oplus 0) \ot (m \cdot Y)\nsb{0} + (0 \oplus (m \cdot X)\ns{-1}) \ot (m \cdot X)\ns{0} = \\
& (- Y \rhd m\nsb{-1} \oplus -Y \lhd m\nsb{-1}) \ot m\nsb{0} + (m\ns{-1} \rhd X \oplus m\ns{-1} \lhd X) \ot m\ns{0} \\
&  + (m\nsb{-1} \oplus 0) \ot m\nsb{0} \cdot Y + (0 \oplus m\ns{-1}) \ot m\ns{0} \cdot X.
\end{split}
\end{align}

\nin Finally, we set $Y := 0$ to get \eqref{prop-ax2-1} and \eqref{prop-ax2-2}. The equations \eqref{prop-ax2-3} and \eqref{prop-ax2-4} are similarly implied by setting $X:=0$.

\medskip
\nin The converse argument is clear.
\end{proof}

\nin In general, if $M$ is an AYD module over the double crossed sum Lie algebra $\Fa = \Fg_1 \oplus \Fg_2$, then $M$ is not necessarily an AYD module over the Lie algebras $\Fg_1$ and $\Fg_2$.

\begin{example}\label{example-sl2}\rm{
Consider the Lie algebra $sl_2 = \Big\langle X,Y,Z \Big\rangle$,
\begin{equation}
[Y,X] = X, \quad [Z,X] = Y, \quad [Z,Y] = Z.
\end{equation}
\nin Then, $sl_2 = \Fg_1 \bowtie \Fg_2$ for $\Fg_1 = \Big\langle X,Y \Big\rangle$ and $\Fg_2 = \Big\langle Z \Big\rangle$.

\medskip
\nin In view of Example \ref{ex-1}, the symmetric algebra $M = S({sl_2}^\ast)$ is a right-left AYD module over $sl_2$. The module structure  is defined by the coadjoint action, that coincides with \eqref{action-1} since $sl_2$ is unimodular, and comodule structure is given by  the Koszul coaction \eqref{coaction-1}.

\medskip
\nin We now show that it is not an AYD module over $\Fg_1$.  Let $\Big\{\t^X,\t^Y,\t^Z\Big\}$ be a dual basis for $sl_2$. The linear map
\begin{equation}
\Db_{\Fg_1}: M \to \Fg_1 \ot M, \quad m \mapsto X \ot m\t^X + Y \ot m\t^Y,
\end{equation}
which is the projection onto the Lie algebra $\Fg_1$, endows $M$ with a left $\Fg_1$-comodule structure. However,  the AYD compatibility on $\Fg_1$ is not satisfied. Indeed, on  one side we have
\begin{equation}
\Db_{\Fg_1}(m \lhd X) = X \ot (m \lhd X)\t^X + Y \ot (m \lhd X)\t^Y,
\end{equation}
and the other  one we get
\begin{align}
\begin{split}
& [X,X] \ot m\t^X + [Y,X] \ot m\t^Y + X \ot (m\t^X) \lhd X + Y \ot (m\t^Y) \lhd X = \\
& X \ot (m \lhd X)\t^X + Y \ot (m \lhd X)\t^Y - Y \ot m\t^Z.
\end{split}
\end{align}
}\end{example}

\begin{remark}{\rm
Assume that the mutual actions of $\Fg_1$ and $\Fg_2$ are trivial. In this case, if $M$ is an AYD module over $\Fg_1 \bowtie \Fg_2$, then it is an AYD module over $\Fg_1$ and $\Fg_2$.

\medskip
\nin To see it, let us apply $p_1\ot \Id_M$  on the both hand sides of the AYD condition \eqref{AYD-condition}  for  $X \oplus 0 \in \Fa$, where  $p_1:\Fa \to \Fg_1$ is the obvious projection. That is
\begin{align}\label{auxy1}
\begin{split}
& p_1([m\pr{-1}, X \oplus 0]) \ot m\pr{0} + p_1(m\pr{-1}) \ot m\pr{0} \cdot (X \oplus 0) \\
& = p_1((m \cdot (X \oplus 0))\pr{-1}) \ot (m \cdot (X \oplus 0))\pr{0}.
\end{split}
\end{align}
Since in this case the projection $p_1:\Fa \to \Fg_1$ is a map of Lie algebras, the equation \eqref{auxy1} reads
\begin{align}
\begin{split}
& [p_1(m\pr{-1}), X] \ot m\pr{0} + p_1(m\pr{-1}) \ot m\pr{0} \cdot X = p_1((m \cdot X)\pr{-1}) \ot (m \cdot X)\pr{0},
\end{split}
\end{align}
which is the AYD compatibility for the $\Fg_1$-coaction.
Similarly, one proves  that $M$ is an  AYD module over the Lie algebra $\Fg_2$.
}\end{remark}

\nin Let $\Fa = \Fg_1 \bowtie \Fg_2$ be a double crossed sum  Lie algebra and $M$ be an SAYD module over $\Fa$.  By the next example we show that $M$ is not necessarily   stable over  $\Fg_1$ and $\Fg_2$.

\begin{example}{\rm
Consider the Lie algebra $\Fa = gl_2 = \Big\langle Y^1_1, Y^1_2, Y^2_1, Y^2_2 \Big\rangle$ with a dual basis $\Big\{\t^1_1, \t^2_1, \t^1_2, \t^2_2\Big\}$.

\medskip
\nin We have a decomposition $gl_2 = \Fg_1 \bowtie \Fg_2$, where $\Fg_1 = \Big\langle Y^1_1, Y^1_2 \Big\rangle$ and $\Fg_2 = \Big\langle Y^2_1, Y^2_2 \Big\rangle$. Let  $M := S({gl_2}^\ast)$ be the symmetric algebra as an SAYD module over $gl_2$ with the action \eqref{action-1} and the Koszul coaction \eqref{coaction-1} as in  Example \ref{ex-1}.  Then the $\Fg_1$-coaction on $M$ becomes
\begin{equation}
m \mapsto m\ns{-1} \ot m\ns{0} = Y^1_1 \ot m\t^1_1 + Y^1_2 \ot m\t^2_1.
\end{equation}
Accordingly, since $\d(Y^1_1) = 0 = \d(Y^1_2)$ we have
\begin{equation}
{\t^2_1}\ns{0} \lhd {\t^2_1}\ns{-1} = - \Lc_{Y^1_1}\t^1_1 - \Lc_{Y^1_2}\t^2_1 = - \t^2_1\t^1_1 \neq 0.
\end{equation}
}\end{example}

\nin We know that if a comodule   over a  Lie algebra $\Fg$  is locally conilpotent then it can be  lifted to a comodule  over  $U(\Fg)$. In the rest of this section, we are interested  in translating  Proposition \ref{auxx-2} in terms of AYD modules over universal enveloping algebras.

\begin{proposition}\label{auxx-1}
Let $\Fa = \Fg_1 \bowtie \Fg_2$ be a double crossed sum Lie algebra and $M$ be a left comodule over $\Fa$. Then $\Fa$-coaction  is locally conilpotent if and only if the corresponding $\Fg_1$-coaction and $\Fg_2$-coaction are locally conilpotent.
\end{proposition}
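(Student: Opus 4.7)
The $(\Rightarrow)$ direction is immediate: by \eqref{auxy2}, $\Db_{\Fg_i} = (p_i \ot \Id_M) \circ \Db_\Fa$ for the projections $p_i: \Fa \twoheadrightarrow \Fg_i$, and an induction on $n$ yields $\Db_{\Fg_i}^n = (p_i^{\ot n} \ot \Id_M) \circ \Db_\Fa^n$, so $\Db_\Fa^n(m) = 0$ forces $\Db_{\Fg_i}^n(m) = 0$ for $i = 1, 2$.

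For the $(\Leftarrow)$ direction, I plan to use the decomposition $\Db_\Fa = (\iota_1 \ot \Id_M)\Db_{\Fg_1} + (\iota_2 \ot \Id_M)\Db_{\Fg_2}$ furnished by Proposition \ref{proposition-comodule-doublecrossed-sum}; iterating distributes the $n$-fold $\Fa$-coaction as
\[
\Db_\Fa^n(m) = \sum_{w \in \{1,2\}^n} T_w(m),
\]
where the summand $T_w(m) \in \Fg_{w_1} \ot \cdots \ot \Fg_{w_n} \ot M$ is the mixed iteration obtained by applying $\Db_{\Fg_{w_1}}$ to $m$, then $\Db_{\Fg_{w_2}}$ to the resulting $M$-component, and so on. Since these $2^n$ summands lie in pairwise disjoint direct summands of $\Fa^{\ot n} \ot M$, one has $\Db_\Fa^n(m) = 0$ if and only if $T_w(m) = 0$ for every $w$. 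Rewritten as $(\Id_{\Fg_1} \ot \Db_{\Fg_2})\Db_{\Fg_1}(m) = (\tau_{12} \ot \Id_M)(\Id_{\Fg_2} \ot \Db_{\Fg_1})\Db_{\Fg_2}(m)$, the compatibility \eqref{aux-18} holds for every $m \in M$ and may therefore be applied to the $M$-component at any intermediate stage of the iteration. This lets one interchange any adjacent $(1, 2)$ in $w$ with $(2, 1)$ at the cost of transposing the corresponding two tensor factors of $\Fa^{\ot n}$, and after sorting all ones before all twos one obtains
\[
T_w(m) = (\sigma_w \ot \Id_M)\bigl((\Id^{\ot k} \ot \Db_{\Fg_2}^{n-k}) \circ \Db_{\Fg_1}^k\bigr)(m),
\]
where $k$ is the number of ones in $w$ and $\sigma_w$ is the corresponding permutation of $\Fa^{\ot n}$.

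To conclude, fix $m \in M$ and pick $N_1$ with $\Db_{\Fg_1}^{N_1}(m) = 0$ by local conilpotency of $\Db_{\Fg_1}$. For each $0 \leq k < N_1$, write $\Db_{\Fg_1}^k(m) = \sum_j X_{k,j} \ot m_{k,j}$ as a finite sum in $\Fg_1^{\ot k} \ot M$ and pick $N_{k,j}$ with $\Db_{\Fg_2}^{N_{k,j}}(m_{k,j}) = 0$ by local conilpotency of $\Db_{\Fg_2}$. Setting $N := N_1 + \max_{k < N_1,\, j} N_{k,j}$, every word $w \in \{1,2\}^N$ with $k_w$ ones satisfies either $k_w \geq N_1$---in which case $\Db_{\Fg_1}^{k_w}(m) = 0$ and $T_w(m) = 0$---or $k_w < N_1$ and $N - k_w \geq \max_j N_{k_w, j}$, so $\Db_{\Fg_2}^{N - k_w}$ annihilates each $m_{k_w, j}$ and again $T_w(m) = 0$. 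Hence $\Db_\Fa^N(m) = 0$. The main obstacle is the careful tracking of the swap identity \eqref{aux-18} through all intermediate positions of the iterated coaction, but this follows from its naturality in the $M$-slot.
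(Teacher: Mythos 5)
Your proof is correct and follows essentially the same route as the paper's: both rest on the decomposition $\Db_{\Fa}=\Db_{\Fg_1}+\Db_{\Fg_2}$, expand the iterated coaction, use the compatibility \eqref{aux-18} to reduce every mixed term to a sorted one of the form $(\Id^{\ot k}\ot \Db_{\Fg_2}^{n-k})\circ\Db_{\Fg_1}^{k}$, and finish by choosing $n$ large enough that each factor's local conilpotency kills every summand. Your version merely makes explicit the $2^{n}$-fold direct-sum bookkeeping and the permutations that the paper's induction leaves implicit.
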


\begin{proof}
By \eqref{auxy2} we know that $\Db_{\Fa} = \Db_{\Fg_1} + \Db_{\Fg_2}$. Therefore,
\begin{equation}
\Db_{\Fa}^2(m) = m\nsb{-2} \ot m\nsb{-1} \ot m\nsb{0} + m\ns{-2} \ot m\ns{-1} \ot m\ns{0} + m\nsb{-1} \ot m\nsb{0}\ns{-1} \ot m\nsb{0}\ns{0}.
\end{equation}
By induction we assume that
\begin{align}
\begin{split}
& \Db_{\Fa}^k(m) = m\nsb{-k} \ot \ldots \ot m\nsb{-1} \ot m\nsb{0} + m\ns{-k} \ot \ldots \ot m\ns{-1} \ot m\ns{0} \\
& + \sum_{p + q = k} m\nsb{-p} \ot \ldots \ot m\nsb{-1} \ot m\nsb{0}\ns{-q} \ot \ldots \ot m\nsb{0}\ns{-1} \ot m\nsb{0}\ns{0},
\end{split}
\end{align}
and we apply the coaction one more times to get
\begin{align}
\begin{split}
& \Db_{\Fa}^{k+1}(m) = m\nsb{-k-1} \ot \ldots \ot m\nsb{-1} \ot m\nsb{0} + m\ns{-k-1} \ot \ldots \ot m\ns{-1} \ot m\ns{0} \\
& + \sum_{p + q = k} m\nsb{-p} \ot \ldots \ot m\nsb{-1} \ot m\nsb{0}\ns{-q} \ot \ldots \ot m\nsb{0}\ns{-1} \ot m\nsb{0}\ns{0}\nsb{-1} \ot m\nsb{0}\ns{0}\nsb{0} \\
& + \sum_{p + q = k} m\nsb{-p} \ot \ldots \ot m\nsb{-1} \ot m\nsb{0}\ns{-q-1} \ot \ldots \ot m\nsb{0}\ns{-1} \ot m\nsb{0}\ns{0} \\
& = m\nsb{-k-1} \ot \ldots \ot m\nsb{-1} \ot m\nsb{0} + m\ns{-k-1} \ot \ldots \ot m\ns{-1} \ot m\ns{0} \\
& + \sum_{p + q = k} m\nsb{-p-1} \ot \ldots \ot m\nsb{-1} \ot m\nsb{0}\ns{-q} \ot \ldots \ot m\nsb{0}\ns{-1} \ot m\nsb{0}\ns{0} \ot m\nsb{0}\ns{0} \\
& + \sum_{p + q = k} m\nsb{-p} \ot \ldots \ot m\nsb{-1} \ot m\nsb{0}\ns{-q-1} \ot \ldots \ot m\nsb{0}\ns{-1} \ot m\nsb{0}\ns{0}.
\end{split}
\end{align}
On the second equality we used \eqref{aux-18}. This result immediately implies the claim.
\end{proof}

\nin Let $M$ be a locally conilpotent comodule over $\Fg_1$ and $\Fg_2$. We denote by
\begin{equation}
M \to U(\Fg_1) \ot M, \quad  m \mapsto m\snsb{-1} \ot m\snsb{0}
\end{equation}
the lift  of the $\Fg_1$-coaction and similarly by
\begin{equation}
M \to U(\Fg_2) \ot M, \quad  m \mapsto m\sns{-1} \ot m\sns{0}
\end{equation}
the lift of the $\Fg_2$-coaction.

\begin{corollary}\label{aux-58}
Let $\Fa = \Fg_1 \bowtie \Fg_2$ be a double crossed sum Lie algebra and $M \in \, ^{\Fa}\rm{conil}\Mc_{\Fa}$. Then the $\Fa$-coaction
lifts  to the $U(\Fa)$-coaction
\begin{equation}
m \mapsto m\snsb{-1} \ot m\snsb{0}\sns{-1} \ot m\snsb{0}\sns{0} \in U(\Fg_1) \bowtie U(\Fg_2) \ot M.
\end{equation}
\end{corollary}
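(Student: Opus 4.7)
The plan is to realize the claimed formula as the unique $U(\Fa)$-lift of $\Db_\Fa$. By Proposition \ref{auxx-1}, local conilpotency of $\Db_\Fa$ forces $\Db_{\Fg_1}$ and $\Db_{\Fg_2}$ (obtained via the projections \eqref{auxy2}) to be locally conilpotent, so by \cite[Proposition 5.7]{RangSutl-II} they lift to $U(\Fg_1)$- and $U(\Fg_2)$-coactions $m \mapsto m\snsb{-1}\ot m\snsb{0}$ and $m \mapsto m\sns{-1}\ot m\sns{0}$; similarly $\Db_\Fa$ itself lifts to a unique $U(\Fa)$-coaction $\Db_U$.

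Next I would invoke the identification $U(\Fa) \cong U(\Fg_1) \bowtie U(\Fg_2)$, whose underlying coalgebra is the tensor coalgebra $U(\Fg_1) \ot U(\Fg_2)$. Under this identification the counit-projections $\pi_1:u \bowtie v \mapsto u \ve(v)$ and $\pi_2:u \bowtie v \mapsto \ve(u)v$ are coalgebra maps onto $U(\Fg_1)$ and $U(\Fg_2)$, so composing $\Db_U$ with $\pi_i \ot \Id_M$ yields $U(\Fg_i)$-coactions $\Phi_1$ and $\Phi_2$ on $M$. Further composition with the canonical projection $U(\Fg_i) \to \Fg_i$ recovers $\Db_{\Fg_i}$, because $\Db_U$ already projects to $\Db_\Fa$; by the uniqueness of the lift, one must have $\Phi_1(m) = m\snsb{-1}\ot m\snsb{0}$ and $\Phi_2(m) = m\sns{-1}\ot m\sns{0}$.

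Finally I would reconstruct $\Db_U$ from these two projections. Writing $\Db_U(m)$ in $U(\Fg_1) \ot U(\Fg_2) \ot M$, the coassociativity of $\Db_U$ combined with the tensor coproduct $\Delta(u \bowtie v) = u\ps{1} \bowtie v\ps{1} \ot u\ps{2} \bowtie v\ps{2}$ gives an identity in $U(\Fg_1) \ot U(\Fg_2) \ot U(\Fg_1) \ot U(\Fg_2) \ot M$; contracting it with $\Id \ot \ve \ot \ve \ot \Id \ot \Id$ (that is, applying the counit in the second and third slots) collapses the left side to $\Db_U(m)$ and the right side to $(\Id \ot \Phi_2)\circ\Phi_1(m) = m\snsb{-1} \ot m\snsb{0}\sns{-1} \ot m\snsb{0}\sns{0}$, yielding the stated formula.

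The only delicate point is ensuring that $\pi_1$ and $\pi_2$ are genuine coalgebra maps, which is immediate from the bicrossed-product coalgebra being a tensor product; the comodule compatibility \eqref{aux-18} of Proposition \ref{proposition-comodule-doublecrossed-sum} is then automatically encoded in the coassociativity of $\Db_U$ and need not be lifted separately to the $U$-level.
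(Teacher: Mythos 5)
Your third step is correct and is the cleanest part of the argument: since the coalgebra underlying $U(\Fg_1)\bowtie U(\Fg_2)$ is the tensor product coalgebra, the maps $\pi_1=\Id\ot\ve$ and $\pi_2=\ve\ot\Id$ are coalgebra maps, and contracting the coassociativity identity for the lifted coaction $\Db_U$ with $\Id\ot\ve\ot\ve\ot\Id\ot\Id$ does yield $\Db_U=(\Id\ot\Phi_2)\circ\Phi_1$ for the two coalgebra projections $\Phi_i:=(\pi_i\ot\Id_M)\circ\Db_U$. The gap is in your second step, where you identify $\Phi_1$ and $\Phi_2$ with the lifts of the coactions $\Db_{\Fg_1}=(p_1\ot\Id)\circ\Db_\Fa$ and $\Db_{\Fg_2}=(p_2\ot\Id)\circ\Db_\Fa$ of \eqref{auxy2}, which is what the notation $m\snsb{-1}\ot m\snsb{0}$ and $m\sns{-1}\ot m\sns{0}$ in the statement refers to. Your justification rests on the claim that the square formed by $\pi_1$, the canonical projections $U(\Fa)\to\Fa$ and $U(\Fg_1)\to\Fg_1$, and $p_1$ commutes, so that $\Phi_1$ projects onto $\Db_{\Fg_1}$ and uniqueness of the lift applies. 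That square does not commute: for $X\in\Fg_1$, $Y\in\Fg_2$, the element $X\bowtie Y$ corresponds to $XY\in U(\Fa)$, and the canonical (primitive-part) projection gives
\begin{equation*}
\pi_{\Fa}(XY)=\tfrac12\,[X,Y],\qquad p_1\bigl(\tfrac12[X,Y]\bigr)=-\tfrac12\,(Y\rt X),
\end{equation*}
which is generically nonzero, whereas $\pi_{\Fg_1}\bigl(\pi_1(X\bowtie Y)\bigr)=\pi_{\Fg_1}(X)\,\ve(Y)=0$. Consequently $\pi_{\Fg_1}\circ\Phi_1$ and $\Db_{\Fg_1}$ may differ by terms $\tfrac12\,(v\rt u)\ot m'$ coming from the mixed degree-two part of $\Db_U(m)$ (already in $sl_2=\Fg_1\bowtie\Fg_2$ one has $Z\rt X=Y\neq 0$), and the appeal to uniqueness of the lift does not go through.

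This identification is precisely the content of the corollary, so it cannot be treated as a formality. To close the gap you must either show that these correction terms vanish on the image of $\Db_U$ — which would require an actual argument from the comodule axioms, not supplied here — or bypass the uniqueness argument entirely and assemble $\Db_U$ from the explicit iterated coaction $\Db_{\Fa}^k$ computed in the proof of Proposition \ref{auxx-1}, keeping careful track of all mixed orderings of $\Db_{\Fg_1}$ and $\Db_{\Fg_2}$ and of the symmetrization coefficients in the lift, so as to see which $U(\Fg_1)$- and $U(\Fg_2)$-coactions genuinely occupy the two tensor legs. As written, your argument establishes the correct and useful identity $\Db_U=(\Id\ot\Phi_2)\circ\Phi_1$ for the coalgebra projections $\Phi_i$ of $\Db_U$, but not that these $\Phi_i$ coincide with the lifts named in the statement.
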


\begin{proposition}\label{aux-24}
Let $\Fa = \Fg_1 \bowtie \Fg_2$ be a double crossed sum Lie algebra and $M \in \, ^{\Fa}\rm{conil}\Mc_{\Fa}$. Then $M$ is a  AYD module  over $\Fa$ if and only if $M$ is a AYD module over $\Fg_1$ and $\Fg_2$, and the following conditions are satisfied for any $m \in M$, any $u \in U(\Fg_1)$ and $v \in U(\Fg_2)$.
\begin{align}\label{auxy3}
&(m \cdot u)\sns{-1} \ot (m \cdot u)\sns{0} = m\sns{-1} \lhd u\ps{1} \ot m\sns{0} \cdot u\ps{2}, \\\label{auxy4}
&m\sns{-1} \rhd u \ot m\sns{0} = u \ot m, \\\label{auxy5}
&(m \cdot v)\snsb{-1} \ot (m \cdot v)\snsb{0} = S(v\ps{2}) \rhd m\snsb{-1} \ot m\snsb{0} \cdot v\ps{1}, \\\label{auxy6}
&v \lhd m\snsb{-1} \ot m\snsb{0} = v \ot m.
\end{align}
\end{proposition}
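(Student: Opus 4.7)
The plan is to reduce the claim to Proposition \ref{auxx-2} via the established Lie-to-enveloping-algebra dictionary, and then to propagate the four compatibility conditions from the Lie algebra layer to the enveloping algebra layer by inducting on the PBW filtration of $U(\Fg_1)$ and $U(\Fg_2)$.

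First I would invoke Corollary \ref{aux-58} together with the correspondence between locally conilpotent $\Fg$-comodules and $U(\Fg)$-comodules from \cite[Proposition 5.8]{RangSutl-II} to upgrade the coactions: a locally conilpotent $\Fa$-coaction on $M$ is equivalent data to its lift $m \mapsto m\snsb{-1} \ot m\snsb{0}\sns{-1} \ot m\snsb{0}\sns{0} \in U(\Fg_1) \bowtie U(\Fg_2) \ot M$. Since $U(\Fg_1), U(\Fg_2) \hookrightarrow U(\Fa)$ are Hopf subalgebras, being AYD over $\Fa$ (equivalently over $U(\Fa)$ by \cite[Proposition 5.10]{RangSutl-II}) implies the AYD property over each of $\Fg_1$ and $\Fg_2$. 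Thus the outer equivalence with $\Fg_i$-AYD is automatic once the three-way correspondence is in place, and what remains is to identify the extra compatibilities.

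Next I would match \eqref{auxy3}--\eqref{auxy6}, restricted to primitive elements $u = X \in \Fg_1$ and $v = Y \in \Fg_2$, with the Lie algebra conditions \eqref{prop-ax2-1}--\eqref{prop-ax2-4} of Proposition \ref{auxx-2}. Using $\Delta X = X \ot 1 + 1 \ot X$, $\ve(X) = 0$, $S(X) = -X$, and the fact that the projections of the lifted coactions onto the primitive parts recover the original $\Fg_i$-coactions, each identity collapses to the corresponding one at the Lie algebra level. Conversely, to extend from primitives to all of $U(\Fg_i)$ I would induct on the PBW degree: writing $u = u^1 u^2$ with $u^1 \in \Fg_1$ primitive and $u^2 \in U(\Fg_1)$ of strictly smaller degree, I apply the induction hypothesis, the $U(\Fg_1)$-module axiom $(m \cdot u^1) \cdot u^2 = m \cdot u$, and rearrange using the mutual pair axiom \eqref{mutual-1} together with $\Delta(u^1 u^2) = \Delta(u^1)\Delta(u^2)$; this simultaneously propagates \eqref{auxy3} and \eqref{auxy4}. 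Symmetrically, \eqref{auxy5} and \eqref{auxy6} are propagated by inducting on $v \in U(\Fg_2)$ via \eqref{mutual-2}, with the antipode factor $S(v\ps{2})$ handled by the antipode formula for $U(\Fg_1) \bowtie U(\Fg_2)$.

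The main obstacle will be the bookkeeping in the multiplicativity step, in particular ensuring that the split $(u^1 u^2)\ps{1} \ot (u^1 u^2)\ps{2}$ interacts correctly with the two mutual actions simultaneously so that \eqref{auxy3} and \eqref{auxy4} are inherited in parallel rather than mixing. The compatibility \eqref{aux-18} between the two Lie coactions, already exploited in the proof of Proposition \ref{proposition-comodule-doublecrossed-sum}, is the key ingredient that decouples the two propagations and should make the induction go through cleanly. Once the inductive extension from $\Fg_i$ to $U(\Fg_i)$ is established, the proposition follows immediately by combining it with Proposition \ref{auxx-2}.
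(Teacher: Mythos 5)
Your overall strategy differs from the paper's. The paper does not route through Proposition \ref{auxx-2} at all: it lifts $M$ to an AYD module over $U(\Fa)=U(\Fg_1)\bowtie U(\Fg_2)$ and writes the Hopf-algebraic AYD condition \eqref{AYD-Hopf} directly for the elements $u\bowtie 1$ and $1\bowtie v$ with $u,v$ arbitrary; the four conditions \eqref{auxy3}--\eqref{auxy6} then fall out by applying suitable counits and cancelling against the AYD identity over the Hopf subalgebras $U(\Fg_1)$ and $U(\Fg_2)$, so no induction on PBW degree is needed in that direction. The converse is handled by verifying the AYD condition on $u\bowtie 1$ and $1\bowtie v$ separately and invoking multiplicativity of the AYD condition, which is the one step you share with the paper. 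Your multiplicativity induction for \eqref{auxy3}--\eqref{auxy6} is in fact workable: for example \eqref{auxy3} propagates from $u^1$ and $u^2$ to $u^1u^2$ because $\lhd$ is a right action and the coproduct is multiplicative, and \eqref{auxy4} propagates using \eqref{mutual-1} together with coassociativity of the lifted coaction and \eqref{auxy3} for the primitive factor.

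The genuine gap is in your base case. You assert that \eqref{auxy3}--\eqref{auxy6}, restricted to primitive $u=X$ and $v=Y$, ``collapse'' to \eqref{prop-ax2-1}--\eqref{prop-ax2-4}. Only one direction of that is immediate: applying the projection $U(\Fg_2)\to\Fg_2$ to \eqref{auxy3} and \eqref{auxy4} recovers \eqref{prop-ax2-1} and \eqref{prop-ax2-2}. The direction you actually need --- from the Lie-level identities, which involve only the $\Fg_2$-valued coaction $m\mapsto m\ns{-1}\ot m\ns{0}$, to the $U$-level identities for primitive $X$, which involve the full lifted coaction $m\mapsto m\sns{-1}\ot m\sns{0}$ with its quadratic and higher components --- is not a collapse. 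For instance, to deduce $m\sns{-1}\rhd X\ot m\sns{0}=X\ot m$ from $m\ns{-1}\rhd X\ot m\ns{0}=0$ one must show that every iterated term of the form $m\ns{-k}\rhd(\cdots(m\ns{-1}\rhd X)\cdots)\ot m\ns{0}$ vanishes; this can be done by applying \eqref{prop-ax2-2} to the elements $m\ns{0}$ and coacting again, but it is a separate induction that has to be written out, and the analogous lifting of \eqref{prop-ax2-1} to \eqref{auxy3} for primitive $u$ is more involved still. Until that lifting step is supplied, the reduction to Proposition \ref{auxx-2} does not yet prove the proposition.
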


\begin{proof}
Let $M$ be AYD module over $\Fa$. Since the coaction is conilpotent, it lifts to an  AYD module over $U(\Fa)$ by \cite[Proposition 5.7]{RangSutl-II}. We write the AYD condition of Hopf algebras \eqref{AYD-Hopf} for $u \bowtie 1 \in U(\Fa)$,
\begin{align}\label{auxy7}
\begin{split}
& (m \cdot u)\snsb{-1} \ot (m \cdot u)\snsb{0}\sns{-1} \ot (m \cdot u)\snsb{0}\sns{0} = \\
& (S(u\ps{3}) \ot 1)(m\snsb{-1} \ot m\snsb{0}\sns{-1})(u\ps{1} \ot 1) \ot m\snsb{0}\sns{0} \cdot (u\ps{2} \ot 1) = \\
& S(u\ps{4})m\snsb{-1}(m\snsb{0}\sns{-2} \rhd u\ps{1}) \ot m\snsb{0}\sns{-1} \lhd u\ps{2} \ot m\snsb{0}\sns{0} \cdot u\ps{3}.
\end{split}
\end{align}
Applying $\ve \ot \Id \ot \Id$ on the both hand sides of \eqref{auxy7},  we get \eqref{auxy3}.
Similarly we get
\begin{equation}
(m \cdot u)\snsb{-1} \ot (m \cdot u)\snsb{0} = S(u\ps{3})m\snsb{-1}(m\snsb{0}\sns{-1} \rhd u\ps{1}) \ot m\snsb{0}\sns{0} \cdot u\ps{2},
\end{equation}
which yields the following equation after using AYD condition on the left hand side
 \begin{equation}
S(u\ps{3})m\snsb{-1}u\ps{1} \ot m\snsb{0}u\ps{2} = S(u\ps{3})m\snsb{-1}(m\snsb{0}\sns{-1} \rhd u\ps{1}) \ot m\snsb{0}\sns{0} \cdot u\ps{2}.
\end{equation}
This  immediately implies \eqref{auxy4}. Switching to the Lie algebra $\Fg_2$ and writing the AYD condition with a $1 \bowtie v \in U(\Fa)$, we obtain \eqref{auxy5} and  \eqref{auxy6}.

\medskip
\nin Conversely, for $M \in \, ^{\Fa}\rm{conil}\Mc_{\Fa}$ which is also an AYD module over  $\Fg_1$ and $\Fg_2$, assume that \eqref{auxy3},\ldots,\eqref{auxy6} are satisfied. Then  $M$ is an AYD module over $U(\Fg_1)$ and $U(\Fg_2)$. We  show that \eqref{auxy3} and \eqref{auxy4} together imply the AYD condition for the elements of the form $u \bowtie 1 \in U(\Fg_1) \bowtie U(\Fg_2)$. Indeed,
\begin{align}
\begin{split}
& (m \cdot u)\snsb{-1} \ot (m \cdot u)\snsb{0}\sns{-1} \ot (m \cdot u)\snsb{0}\sns{0} = \\
& S(u\ps{3})m\snsb{-1}u\ps{1} \ot (m\snsb{0} \cdot u\ps{2})\sns{-1} \ot (m\snsb{0} \cdot u\ps{2})\sns{0} = \\
& S(u\ps{4})m\snsb{-1}u\ps{1} \ot m\snsb{0}\sns{-1} \lhd u\ps{2} \ot m\snsb{0}\sns{0} \cdot u\ps{3} = \\
& S(u\ps{4})m\snsb{-1}(m\snsb{0}\sns{-2} \rhd u\ps{1}) \ot m\snsb{0}\sns{-1} \lhd u\ps{2} \ot m\snsb{0}\sns{0} \cdot u\ps{3},
\end{split}
\end{align}
where the first equality follows from the AYD condition on $U(\Fg_1)$, the second equality follows from the \eqref{auxy3}, and the last equality is obtained by using \eqref{auxy4}. Similarly, using \eqref{auxy5} and \eqref{auxy6} we prove the AYD condition for  the elements of the form $1 \bowtie v \in U(\Fg_1) \bowtie U(\Fg_2)$. The proof is then complete, since the AYD condition is multiplicative.
\end{proof}

\nin The following generalization of Proposition \ref{aux-24} is now straightforward.

\begin{corollary}\label{auxx-24}
Let $(\Uc,\Vc)$ be a mutual pair of Hopf algebras and $M$ a linear space. Then $M$ is an AYD module over $\Uc\bi \Vc$ if and only if $M$ is an AYD module over $\Uc$ and $\Vc$, and  the following conditions are satisfied for any $m \in M$, any $u \in \Uc$ and $v \in \Vc$.
\begin{align}\label{auxy13}
&(m \cdot u)\sns{-1} \ot (m \cdot u)\sns{0} = m\sns{-1} \lhd u\ps{1} \ot m\sns{0} \cdot u\ps{2}, \\\label{auxy14}
&m\sns{-1} \rhd u \ot m\sns{0} = u \ot m, \\\label{auxy15}
& (m \cdot v)\snsb{-1} \ot (m \cdot v)\snsb{0} = S(v\ps{2}) \rhd m\snsb{-1} \ot m\snsb{0} \cdot v\ps{1}, \\\label{auxy16}
&v \lhd m\snsb{-1} \ot m\snsb{0} = v \ot m,
\end{align}

\end{corollary}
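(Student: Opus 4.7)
The plan is to mimic the Hopf-algebraic portion of the proof of Proposition \ref{aux-24}, observing that it uses nothing specific to universal enveloping algebras beyond the bicrossed product structure of $\Uc \bi \Vc$. In particular, the lifting step from $\Fg_i$-coactions to $U(\Fg_i)$-coactions is no longer required, since we already work at the Hopf algebra level throughout. So the entire argument reduces to the manipulations that were performed in the second half of the proof of Proposition \ref{aux-24}.

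For the forward direction, I would suppose $M$ is an AYD module over $\Uc \bi \Vc$ and first recover the individual AYD structures over $\Uc$ and $\Vc$. Since $\Uc \bi \Vc$ has the tensor product coalgebra structure, the projections $\Uc \bi \Vc \to \Uc$ and $\Uc \bi \Vc \to \Vc$ are coalgebra maps, so composing with the $\Uc \bi \Vc$-coaction endows $M$ with $\Uc$- and $\Vc$-comodule structures. The AYD conditions over $\Uc$ and $\Vc$ individually follow from the AYD condition over $\Uc \bi \Vc$ evaluated at elements of the form $u \bowtie 1$ and $1 \bowtie v$, since these embed as Hopf subalgebras. To extract \eqref{auxy13} and \eqref{auxy14}, I would write the AYD condition for $u \bowtie 1$, expand the right-hand side using the bicrossed product multiplication rule, and then apply $\ve \ot \Id \ot \Id$ (respectively $\Id \ot \ve \ot \Id$) to the resulting identity in $\Uc \ot \Vc \ot M$, invoking the already-established AYD condition over $\Uc$ to cancel the surviving terms. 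An entirely parallel computation with $1 \bowtie v$ produces \eqref{auxy15} and \eqref{auxy16}.

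For the converse, the key observation is that the AYD condition is multiplicative in the Hopf algebra element: if it holds for $h_1$ and for $h_2$, then it holds for $h_1 h_2$. Since $\Uc \bi \Vc$ is generated as an algebra by $\Uc \bowtie 1$ and $1 \bowtie \Vc$, it suffices to verify AYD on these two subsets. For $u \bowtie 1$, the verification reproduces the chain of equalities in the proof of Proposition \ref{aux-24}: combining the AYD condition over $\Uc$ with the compatibilities \eqref{auxy13} and \eqref{auxy14} yields the full AYD condition \eqref{AYD-Hopf}, and symmetrically for $1 \bowtie v$ via \eqref{auxy15} and \eqref{auxy16}. I do not anticipate any genuine obstacle; the only point requiring care is the correct placement of antipodes and the mutual actions, but the bookkeeping is identical to the enveloping algebra case already handled in Proposition \ref{aux-24}.
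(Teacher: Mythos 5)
Your proposal is correct and follows exactly the route the paper intends: the paper itself gives no separate proof of this corollary, merely noting that it is a "straightforward generalization" of Proposition \ref{aux-24}, and your argument is precisely the Hopf-algebraic half of that proof (projections as coalgebra maps, AYD on the Hopf subalgebras $\Uc\bowtie 1$ and $1\bowtie\Vc$, counit applications to extract \eqref{auxy13}--\eqref{auxy16}, and multiplicativity of the AYD condition for the converse) with the now-unnecessary conilpotent lifting step removed.
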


\section{Lie-Hopf algebras and their SAYD modules}
\label{Sec-Lie-Hopf}
In this section we first recall  the associated matched pair of Hopf algebras to a matched pair of Lie algebras from \cite{RangSutl}. We then identify the AYD modules  over the universal enveloping algebra of a double crossed sum Lie algebra with the YD modules over the corresponding bicrossed product Hopf algebra. Finally we prove that the only finite  dimensional SAYD module over the Connes-Moscovici Hopf algebras is the one-dimensional one found in \cite{ConnMosc98}.

\subsection{Lie-Hopf algebras}

Let us first review the bicrossed product construction from \cite{Maji}. Let  $\Uc$ and $\Fc$ be two Hopf algebras.  A linear map $$ \Db:\Uc\ra\Uc\ot \Fc , \qquad \Db u \, = \, u^{\pr{0}} \ot u^{\pr{1}} \, , $$ defines a right
coaction and equips
 $\Uc$ with a  right $\Fc-$comodule coalgebra structure,  if the
following conditions are satisfied for any $u\in \Uc$:
\begin{align}
&u^{\pr{0}}\ps{1}\ot u^{\pr{0}}\ps{2}\ot u^{\pr{1}}= {u\ps{1}}^{\pr{0}}\ot {u\ps{2}}^{\pr{0}}\ot {u\ps{1}}^{\pr{1}}{u\ps{2}}^{\pr{1}}, \quad \ve(u^{\pr{0}})u^{\pr{1}}=\ve(u)1.
\end{align}
We  then  form a  cocrossed product coalgebra $\Fc\cl\Uc$. It has $\Fc\ot \Uc$ as underlying vector space and the coalgebra structure is given by
\begin{align}
&\Delta(f\cl u)= f\ps{1}\cl {u\ps{1}}^{\pr{0}}\ot  f\ps{2}{u\ps{1}}^{\pr{1}}\cl u\ps{2}, \quad \ve(f\cl u)=\ve(f)\ve(u).
\end{align}
In a dual fashion,   $\Fc$ is called a {left $\Uc-$module algebra}, if $\Uc$ acts from the left on $\Fc$ via a left action $$
\rt : \Fc\ot \Uc \ra \Fc
$$ which satisfies the following conditions for any $u\in \Uc$, and $f,g\in \Fc$ :
\begin{align}
&u\rt 1=\ve(u)1, \quad u\rt(fg)=(u\ps{1}\rt f)(u\ps{2}\rt g).
\end{align}
This time  we can endow the underlying vector space $\Fc\ot \Uc$ with
 an algebra structure, to be denoted by $\Fc\al \Uc$, with  $1\al 1$
 as its unit and the product
\begin{equation}
(f\al u)(g\al v)=f \;u\ps{1}\rt g\al u\ps{2}v.
\end{equation}

\nin A pair of Hopf algebras $(\Fc,\Uc)$ is called a matched pair of Hopf algebras if they are equipped, as above,  with an action and a coaction which satisfy the following compatibility conditions
\begin{align}\label{mp1}
& \Delta(u\rt f)={u\ps{1}}^{\pr{0}} \rt f\ps{1}\ot {u\ps{1}}^{\pr{1}}(u\ps{2}\rt f\ps{2}), \quad \ve(u\rt f)=\ve(u)\ve(f) \\ \label{mp4}
& \Db(uv)={u\ps{1}}^{\pr{0}} v^{\pr{0}}\ot {u\ps{1}}^{\pr{1}}(u\ps{2}\rt v^{\pr{1}}), \quad \Db(1)=1 \ot 1 \\
& {u\ps{2}}^{\pr{0}}\ot (u\ps{1}\rt f){u\ps{2}}^{\pr{1}}={u\ps{1}}^{\pr{0}}\ot {u\ps{1}}^{\pr{1}}(u\ps{2}\rt f).
\end{align}
for any $u\in\Uc$, and any $f\in \Fc$. We then form  a new Hopf algebra $\Fc\acl \Uc$, called the  bicrossed product of the matched pair  $(\Fc , \Uc)$. It has $\Fc\cl \Uc$ as the underlying coalgebra and $\Fc\al \Uc$ as the underlying algebra. The antipode is given by
\begin{equation}\label{anti}
S(f\acl u)=(1\acl S(u^{\pr{0}}))(S(fu^{\pr{1}})\acl 1) , \qquad f \in \Fc , \, u \in \Uc.
\end{equation}

\nin Next, we recall  Lie-Hopf algebras from \cite{RangSutl}. A Lie-Hopf algebra produces a bicrossed product Hopf algebra such that  one of the Hopf algebras involved is commutative and the other one is the universal enveloping algebra of a Lie algebra.

\medskip
\nin Let $\Fc$ be a commutative Hopf algebra on which a Lie algebra  $\Fg$ acts  by derivations. Then the vector space $\Fg \ot \Fc$ endowed with the bracket
\begin{equation}\label{bracket}
[X\ot f, Y\ot g]= [X,Y]\ot fg+ Y\ot \ve(f)X\rt g- X\ot \ve(g) Y\rt f
\end{equation}
becomes a Lie algebra. Next, we assume that $\Fc$ coacts on $\Fg$ via $\Db_\Fg:\Fg\ra \Fg\ot \Fc$. We say that the coaction $\Db_\Fg:\Fg\ra \Fg\ot \Fc$ satisfies the structure identity of $\Fg$  if $\Db_\Fg:\Fg\ra \Fg\ot \Fc$ is a Lie algebra map. Finally one uses the  action of $\Fg$ on $\Fc$ and the coaction of  $\Fc$  on $\Fg$ to define the following useful action of $\Fg$ on $\Fc\ot \Fc$:
\begin{equation}
X\bullet (f^1 \ot f^2)= \sum X^{\pr{0}}\rt f^1\ot X^{\pr{1}} f^2 + f^1\ot X\rt f^2.
\end{equation}
We are now ready to define the notion of Lie-Hopf algebra.

\begin{definition}\label{def-Lie-Hopf}
\cite{RangSutl}. Let a Lie algebra $\Fg$ act on a commutative Hopf algebra $\Fc$ by derivations. We say that $\Fc$ is a $\Fg$-Hopf algebra if
 \begin{enumerate}
   \item $\Fc$ coacts on $\Fg$ and its coaction satisfies the structure identity of $\Fg$.
   \item $\D$ and $\ve$ are $\Fg$-linear, that is   $\D(X\rt f)=X\bullet\D(f)$, \quad $\ve(X\rt f)=0$, \quad $f\in \Fc$ and $X\in \Fg$.
    \end{enumerate}
\end{definition}

\nin If $\Fc$ is a  $\Fg$-Hopf algebra, then  $U(\Fg)$ acts on $\Fc$ naturally and makes it a $U(\Fg)$-module algebra. On the other hand, we extend the coaction $\Db_\Fg$ of $\Fc$ on $\Fg$ to a coaction $\Db_{U}$ of $\Fc$  on $U(\Fg)$ inductively via the rule \eqref{mp4}.

\medskip
\nin As for the corresponding bicrossed product Hopf algebra, we have the following result.

\begin{theorem}\label{Theorem-Lie-Hopf-matched-pair}
\cite{RangSutl}. Let $\Fc$ be a commutative Hopf algebra and $\Fg$ be a Lie algebra. Then the pair $(\Fc,U(\Fg))$ is a matched pair of Hopf algebras if and only if $\Fc$ is a $\Fg$-Hopf algebra.
\end{theorem}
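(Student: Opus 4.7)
The plan is to establish the two implications separately; the forward direction (from Lie-Hopf to matched pair) is where the real work lies, since it requires building the $U(\Fg)$-module algebra and $\Fc$-comodule coalgebra structures from their Lie-algebraic shadows and then checking the three compatibility axioms \eqref{mp1}, \eqref{mp4}, and the last one. The converse direction is essentially a matter of restricting matched-pair data along the inclusion $\Fg \hookrightarrow U(\Fg)$ and unpacking what remains.

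For the direction ($\Leftarrow$), suppose $\Fc$ is a $\Fg$-Hopf algebra. The action $\Fg \ot \Fc \to \Fc$ by derivations extends uniquely to a $U(\Fg)$-action making $\Fc$ a $U(\Fg)$-module algebra, since the derivation property is exactly the primitive case of the module algebra axiom $u \rhd (fg) = (u\ps{1} \rhd f)(u\ps{2} \rhd g)$, and this axiom is preserved under products in $U(\Fg)$. For the coaction, I would extend $\Db_{\Fg}$ to $\Db_{U}: U(\Fg) \to U(\Fg) \ot \Fc$ inductively by declaring $\Db_U(1) = 1 \ot 1$ and enforcing \eqref{mp4}; the structure identity (that $\Db_\Fg$ is a Lie-algebra map into $\Fg \ot \Fc$ with the bracket \eqref{bracket}) is precisely what is needed to show $\Db_U$ descends to the enveloping algebra, i.e.\ respects $[X,Y] = XY - YX$. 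Once $\Db_U$ is well-defined, I would check that it is a coalgebra map (comodule coalgebra axiom) by induction on the length of monomials in $U(\Fg)$, the Lie-generator case being the coassociativity-style condition hidden in the structure identity. Finally, the three matched-pair compatibilities are multiplicative in $u \in U(\Fg)$ and hence reduce to $u = X \in \Fg$, where \eqref{mp1} becomes $\D(X \rhd f) = X \bullet \D(f)$ and $\ve(X \rhd f) = 0$, exactly the $\Fg$-linearity of $\D$ and $\ve$ in Definition \ref{def-Lie-Hopf}; the compatibility \eqref{mp4} holds by definition of $\Db_U$, and the remaining one (involving $(u\ps{1} \rhd f)u\ps{2}{}^{\pr{1}}$) follows on primitives from commutativity of $\Fc$ and the derivation property.

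For the direction ($\Rightarrow$), assume $(\Fc, U(\Fg))$ is a matched pair. Restrict the $U(\Fg)$-action to $\Fg$; primitivity of $X \in \Fg$ together with the module algebra axiom gives $X \rhd (fg) = (X \rhd f)g + f(X \rhd g)$, so $\Fg$ acts by derivations. For the coaction, I would show that $\Db_U$ restricts to a map $\Fg \to \Fg \ot \Fc$: applying the comodule-coalgebra compatibility to a primitive $X$ and then projecting via $\ve$ on one of the $U(\Fg)$-tensor factors forces $\Db_U(X) \in \Fg \ot \Fc$. The structure identity for this restricted $\Db_\Fg$ then follows by computing $\Db_U([X,Y]) = \Db_U(XY) - \Db_U(YX)$ using \eqref{mp4} and extracting the Lie-algebra part; commutativity of $\Fc$ is used crucially to reduce the resulting expression to the bracket \eqref{bracket}. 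The $\Fg$-linearity of $\D$ and $\ve$ is the Lie-generator specialization of \eqref{mp1}.

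The principal obstacle is the well-definedness and coassociativity of the extended coaction $\Db_U$ in the ($\Leftarrow$) direction: one must show that inductively propagating \eqref{mp4} from $\Fg$ to all of $U(\Fg)$ respects the Lie relations and yields a genuine coalgebra-compatible comodule structure. Both reductions come down to a careful bookkeeping argument where the structure identity of $\Db_\Fg$ and the derivation property of the $\Fg$-action conspire, using commutativity of $\Fc$, to close the induction. Once this technical core is in place, the matched pair axioms follow by reduction to Lie-algebra generators, where they are exactly the defining conditions of a $\Fg$-Hopf algebra.
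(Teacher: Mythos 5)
The paper does not actually prove this theorem: it is recalled verbatim from \cite{RangSutl} with only a citation, so there is no in-text argument to compare against. Your outline nonetheless reproduces the strategy of the cited source faithfully: extend the derivation action to a $U(\Fg)$-module algebra structure, extend $\Db_\Fg$ to $\Db_U$ inductively via \eqref{mp4} using the structure identity to respect the relations $XY-YX=[X,Y]$, and reduce all three matched-pair compatibilities to primitive elements, where \eqref{mp1} becomes exactly $\D(X\rt f)=X\bullet\D(f)$ and the third condition collapses by commutativity of $\Fc$; the converse is the corresponding restriction argument. One step is stated with the wrong mechanism, though the conclusion is correct: in the ($\Rightarrow$) direction, applying $\ve$ to a $U(\Fg)$-leg of the comodule-coalgebra identity for a primitive $X$ only returns a tautology (or the counit axiom $\ve(X^{\pr{0}})X^{\pr{1}}=\ve(X)1$). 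What the identity actually gives, after writing $\Db_U(X)=\sum_i u_i\ot f_i$ with the $f_i$ linearly independent, is $\D(u_i)=u_i\ot 1+1\ot u_i$ for each $i$; you then need the standard fact that the primitives of $U(\Fg)$ are exactly $\Fg$ (characteristic zero) to conclude $\Db_U(X)\in\Fg\ot\Fc$. With that substitution the argument closes, and the rest of your reductions (structure identity from $\Db_U([X,Y])=\Db_U(XY)-\Db_U(YX)$ via \eqref{mp4}, $\Fg$-linearity of $\D$ and $\ve$ from \eqref{mp1}) are exactly right.
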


\nin A class of examples of Lie-Hopf algebras arises from  matched pairs of Lie algebras. To be able to express such an example, let us recall first the definition of $R(\Fg)$, the  Hopf algebra of representative functions on a Lie algebra $\Fg$.
\begin{equation}\notag
 R(\Fg)=\Big\{f\in U(\Fg)^\ast \mid \exists \; I\subseteq \ker f \text{ such that }  \; \dim(\ker f)/I < \infty  \Big\}.
\end{equation}
 The finite codimensionality condition  in the definition of $R(\Fg)$ guarantees that  for any $f\in R(\Fg)$  there exist  a  finite number of  functions $f_i',
 f_i''\in R(\Fg)$   such that  for any $u^1,u^2\in U(\Fg)$,
\begin{equation}
f(u^1u^2)=\sum_{i}f_i'(u^1)f_i''(u^2).
\end{equation}
The Hopf algebraic structure of $R(\Fg)$ is summarized by:
\begin{align}
&\mu: R(\Fg)\ot R(\Fg)\ra R(\Fg), && \mu(f\ot g)(u)=f(u\ps{1})g(u\ps{2}),\\ &\eta:\Cb\ra R(\Fg),&& \eta(1)=\ve,\\ &\D:R(\Fg)\ra R(\Fg)\ot R(\Fg),&&\\\notag & \D(f)=\sum_if_i'\ot
f_i'',&& \text{if}\;\; f(u^1u^2)= \sum f_i'(u^1)f_i''(u^2),\\
& S: R(\Fg)\ra R(\Fg), &&S(f)(u)=f(S(u)).
\end{align}

\nin The following proposition produces a family of examples.

\begin{proposition}\label{Proposition-matched-Lie-Hopf-Lie}
\cite{RangSutl}. For any matched pair of Lie algebras $(\Fg_1,\Fg_2)$, the Hopf algebra $R(\Fg_2)$ is a $\Fg_1$-Hopf algebra.
\end{proposition}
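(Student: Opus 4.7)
My plan is to verify the three conditions of Definition \ref{def-Lie-Hopf} by dualizing the matched pair data; all the algebraic content will ultimately reduce to the matched pair axioms \eqref{mp-L-1}--\eqref{mp-L-4} via their Hopf algebraic incarnations \eqref{mutual-1}--\eqref{mutual-3} on the enveloping algebras $U(\Fg_1)$ and $U(\Fg_2)$.

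First, I would introduce the $\Fg_1$-action on $R(\Fg_2)$. Since the matched pair makes $U(\Fg_2)$ into a right $U(\Fg_1)$-module coalgebra via $\lt$, the natural candidate is
$$
(X \rt f)(u) := f(u \lt X), \qquad X \in \Fg_1,\; f \in R(\Fg_2),\; u \in U(\Fg_2),
$$
which is a left $\Fg_1$-action because $\lt$ is a right action. For $X$ primitive the module coalgebra axiom yields $\Delta(u \lt X) = u\ps{1} \lt X \ot u\ps{2} + u\ps{1} \ot u\ps{2} \lt X$, and the Leibniz rule for $X \rt$ follows at once. The local finiteness condition in the definition of $R(\Fg_2)$ ensures that $X \rt f$ remains representative.

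Next, I would construct the coaction $\Db_{\Fg_1}: \Fg_1 \to \Fg_1 \ot R(\Fg_2)$. For $X$ primitive the module coalgebra formula combined with $u \rt 1 = \ve(u) 1$ shows that $u \rt X$ is primitive in $U(\Fg_1)$, hence lies in $\Fg_1$; the linear map $U(\Fg_2) \to \Fg_1$, $u \mapsto u \rt X$ then dualizes to
$$
\Db_{\Fg_1}(X) = X^{(0)} \ot X^{(1)} \in \Fg_1 \ot R(\Fg_2), \qquad u \rt X = X^{(1)}(u)\, X^{(0)}.
$$
That the coefficients $X^{(1)}$ actually land in $R(\Fg_2)$ is automatic in the finite dimensional setting of our applications, and requires only a local finiteness hypothesis in general. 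To verify the structure identity I would evaluate both sides of $\Db_{\Fg_1}([X,Y]) = [\Db_{\Fg_1}(X), \Db_{\Fg_1}(Y)]$ against an arbitrary $u \in U(\Fg_2)$: using the bracket \eqref{bracket}, the identity $(X^{(1)} Y^{(1)})(u) = X^{(1)}(u\ps{1}) Y^{(1)}(u\ps{2})$, and the counit relation $\ve(X^{(1)}) X^{(0)} = X$, the right hand side collapses to $[u\ps{1} \rt X, u\ps{2} \rt Y] + (u \lt X) \rt Y - (u \lt Y) \rt X$. On the other side, a direct expansion via \eqref{mutual-1} gives $u \rt (XY) = (u\ps{1} \rt X)(u\ps{2} \rt Y) + (u \lt X) \rt Y$, with an analogous formula for $u \rt (YX)$; subtracting and using cocommutativity of $U(\Fg_2)$ produces exactly the same expression, so the structure identity holds. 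This step is the Hopf algebraic lift of \eqref{mp-L-3}.

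Finally, for the $\Fg_1$-linearity of the Hopf structure on $R(\Fg_2)$: the counit case is immediate, $\ve(X \rt f) = f(1 \lt X) = \ve(X) f(1) = 0$. For $\Delta$, I would compute $(X \rt f)(u^1 u^2) = f((u^1 u^2) \lt X)$ and apply the mutual axiom \eqref{mutual-2} with $\Delta X = X \ot 1 + 1 \ot X$, rewriting $(u^1 u^2) \lt X = (u^1 \lt (u^2\ps{1} \rt X)) u^2\ps{2} + u^1 (u^2 \lt X)$. After substituting $u^2\ps{1} \rt X = X^{(0)} X^{(1)}(u^2\ps{1})$ in the first summand and separating the $u^1$ and $u^2$ dependencies, the result reads $(X^{(0)} \rt f\ps{1})(u^1)(X^{(1)} f\ps{2})(u^2) + f\ps{1}(u^1)(X \rt f\ps{2})(u^2)$, which is precisely $X \bullet \Delta f$ evaluated at $u^1 \ot u^2$. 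The main obstacle will be this last bookkeeping, where I must carefully match the output of \eqref{mutual-2} with the definition of $X \bullet (f^1 \ot f^2)$; all other steps are routine dualizations of the matched pair data.
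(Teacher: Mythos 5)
The paper does not prove this proposition; it is quoted from \cite{RangSutl}, and your proposal reconstructs exactly the expected argument there: dualize the mutual-pair structure on $(U(\Fg_1),U(\Fg_2))$, so that $\lt$ gives the derivation action $(X\rt f)(u)=f(u\lt X)$, $\rt$ gives the coaction, the structure identity reduces via \eqref{mutual-1} and cocommutativity of $U(\Fg_2)$ to \eqref{mp-L-3}, and $\Fg_1$-linearity of $\D$ reduces via \eqref{mutual-2} to the definition of $\bullet$. Your computations check out against the paper's conventions; the only point requiring care is the well-definedness of $\Db_{\Fg_1}:\Fg_1\to\Fg_1\ot R(\Fg_2)$ (finite-dimensionality of the $U(\Fg_2)$-orbit of $X$ and representativity of the matrix coefficients), which you correctly flag as automatic when $\Fg_1$ is finite dimensional.
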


\subsection{SAYD modules over Lie-Hopf algebras}

Let us start with a very brief introduction to SAYD modules over Hopf algebras. Let $\Hc$ be a Hopf algebra. By definition, a character $\t: \Hc\ra \Cb$ is an algebra map.
A group-like  $\s\in \Hc$ is the dual object of the character, \ie $\D(\s)=\s\ot \s$. The pair $(\t,\s)$ is called a modular pair in involution \cite{ConnMosc00} if
 \begin{equation}
 \t(\s)=1, \quad \text{and}\quad  S_\t^2=Ad_\s,
 \end{equation}
where  $Ad_\s(h)= \s h\s^{-1}$ and  $S_\d$ is defined by
\begin{equation}
S_\t(h)=\t(h\ps{1})S(h\ps{2}).
\end{equation}
We recall from \cite{HajaKhalRangSomm04-I} the definition of a  right-left  stable-anti-Yetter-Drinfeld module over a Hopf algebra $\Hc$. Let $M$ be a right module and left comodule over a Hopf algebra $\Hc$. We say that it is stable-anti-Yetter-Drinfeld (SAYD) module over $\Hc$ if
\begin{equation}
\Db(m\cdot h)= S(h\ps{3})m\ns{-1}h\ps{1}\ot m\ns{0}\cdot h\ps{2},\qquad  m\ns{0} \cdot m\ns{-1}=v,
\end{equation}
for any $m\in M$ and $h\in \Hc$.
It is shown in \cite{HajaKhalRangSomm04-I} that any modular pair in involution defines a one dimensional SAYD module and all one dimensional SAYD modules come this way.

\medskip
\nin If $M$ is a module over a bicrossed product Hopf algebra $\mathcal{F} \acl \mathcal{U}$, then by the fact that $\mathcal{F}$ and $\mathcal{U}$ are subalgebras of $\mathcal{F} \acl \mathcal{U}$ we can immediately conclude that $M$ is a module on $\mathcal{F}$ and $\mathcal{U}$. More explicitly, we have the following elementary lemma, see \cite[Lemma 3.4]{RangSutl-II}.

\begin{lemma}\label{module on bicrossed product}
Let $(\mathcal{F}, \mathcal{U})$ be a matched pair of Hopf algebras and $M$ a linear space. Then $M$ is a right module over the bicrossed product Hopf algebra $\mathcal{F} \acl \mathcal{U}$ if and only if $M$ is a right module over $\mathcal{F}$ and  a right module over $\mathcal{U}$, such that
\begin{equation}\label{aux-25}
(m \cdot u) \cdot f = (m \cdot (u\ps{1} \rhd f)) \cdot u\ps{2}.
\end{equation}
\end{lemma}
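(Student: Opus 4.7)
The plan is to use the standard bicrossed product multiplication formula $(f \acl u)(g \acl v) = f(u\ps{1} \rhd g) \acl u\ps{2} v$ to translate an $\mathcal{F} \acl \mathcal{U}$-module structure into the pair of compatible $\mathcal{F}$- and $\mathcal{U}$-actions, and vice versa. Since the forward and backward directions are both essentially bookkeeping with the multiplication in $\mathcal{F} \acl \mathcal{U}$, there is no serious obstacle; the only point worth care is checking that the canonical inclusions $\mathcal{F} \hookrightarrow \mathcal{F} \acl \mathcal{U}$ and $\mathcal{U} \hookrightarrow \mathcal{F} \acl \mathcal{U}$ are algebra homomorphisms, so that restricting the action produces bona fide actions of each factor.

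For the forward implication, suppose $M$ is a right $\mathcal{F}\acl\mathcal{U}$-module. First I would observe that $f \mapsto f \acl 1$ and $u \mapsto 1 \acl u$ are unital algebra maps: the formulas $(f\acl 1)(g \acl 1) = fg \acl 1$ and $(1\acl u)(1\acl v) = 1 \acl uv$ follow directly from $1 \rhd g = \ve(1)g = g$ and $u \rhd 1 = \ve(u)1$. Restricting the action along these inclusions gives compatible $\mathcal{F}$- and $\mathcal{U}$-module structures on $M$. The compatibility \eqref{aux-25} is then obtained by computing $(1\acl u)(f \acl 1)$ inside $\mathcal{F}\acl\mathcal{U}$: using $(f\acl u)(g\acl v) = f(u\ps{1}\rhd g) \acl u\ps{2}v$, one gets $(1\acl u)(f\acl 1) = (u\ps{1}\rhd f) \acl u\ps{2} = ((u\ps{1}\rhd f)\acl 1)(1 \acl u\ps{2})$. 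Applying both sides to $m$ yields exactly $(m\cdot u)\cdot f = (m \cdot (u\ps{1}\rhd f))\cdot u\ps{2}$.

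For the converse, given compatible right actions of $\mathcal{F}$ and $\mathcal{U}$ satisfying \eqref{aux-25}, I would define
\begin{equation}
m \cdot (f \acl u) := (m \cdot f) \cdot u,
\end{equation}
and check associativity. A direct computation gives
\begin{equation}
(m \cdot (f \acl u)) \cdot (g \acl v) = (((m \cdot f) \cdot u) \cdot g) \cdot v,
\end{equation}
and applying \eqref{aux-25} to the inner pair $((m\cdot f)\cdot u)\cdot g$ transforms this into $(m \cdot f(u\ps{1}\rhd g)) \cdot u\ps{2}v$, which is precisely $m \cdot ((f\acl u)(g\acl v))$ by definition and by the bicrossed product multiplication rule. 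Unitality is immediate from $m \cdot (1 \acl 1) = m$, so we obtain an $\mathcal{F}\acl\mathcal{U}$-module structure, and it is routine to verify that restriction along the inclusions recovers the given $\mathcal{F}$- and $\mathcal{U}$-actions, closing the equivalence.
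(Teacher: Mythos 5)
Your proof is correct and follows exactly the route the paper has in mind: the paper states this lemma without proof (citing it as elementary from an earlier reference), noting only that $\Fc$ and $\Uc$ are subalgebras of $\Fc\acl\Uc$ so the restricted actions exist, and your computation of $(1\acl u)(f\acl 1)=((u\ps{1}\rhd f)\acl 1)(1\acl u\ps{2})$ together with the associativity check for $m\cdot(f\acl u):=(m\cdot f)\cdot u$ is precisely the standard argument being invoked. No gaps.
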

Let $(\Fg_1,\Fg_2)$ be a matched pair of Lie algebras and $M$ be a  module over the double crossed sum $\Fg_1 \bowtie \Fg_2$ such that $\Fg_1 \bowtie \Fg_2$-coaction is locally conilpotent.
Being a right $\Fg_1$-module, $M$ has a right $U(\Fg_1)$-module structure. Similarly, since it is a locally conilpotent left $\Fg_2$-comodule, $M$ is a right $R(\Fg_2)$-module. Then we define
\begin{align}\label{aux-17}
\begin{split}
& M \ot R(\Fg_2) \acl U(\Fg_1) \to M \\
& m \ot (f \acl u) \mapsto (m \cdot f) \cdot u = f(m\sns{-1})m\sns{0} \cdot u.
\end{split}
\end{align}

\begin{corollary}
Let $(\Fg_1,\Fg_2)$ be a matched pair of Lie algebras and $M$ be an AYD module over the double crossed sum $\Fg_1 \bowtie \Fg_2$ such that $\Fg_1 \bowtie \Fg_2$-coaction is locally conilpotent. Then $M$ has a right $R(\Fg_2) \acl U(\Fg_1)$-module structure via \eqref{aux-17}.
\end{corollary}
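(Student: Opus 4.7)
The plan is to verify the two hypotheses of Lemma \ref{module on bicrossed product}, applied to the matched pair $(R(\Fg_2), U(\Fg_1))$ coming from Proposition \ref{Proposition-matched-Lie-Hopf-Lie}. First I would observe that, by Proposition \ref{auxx-1}, the local conilpotency of the $\Fa$-coaction on $M$ implies that the induced $\Fg_2$-coaction is locally conilpotent, and hence by \cite[Proposition 5.7]{RangSutl-II} lifts to a $U(\Fg_2)$-coaction $m\mapsto m\sns{-1}\ot m\sns{0}$. Dualizing this coaction by evaluation pairing as in \eqref{aux-17} gives a right $R(\Fg_2)$-action
\begin{equation}
m\cdot f \;=\; f(m\sns{-1})\,m\sns{0},
\end{equation}
whose associativity $(m\cdot f)\cdot g = m\cdot(fg)$ is an immediate consequence of coassociativity together with the definition $(fg)(v)=f(v\ps{1})g(v\ps{2})$ of the multiplication in $R(\Fg_2)$. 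Moreover, since $M$ is a module over $\Fg_1$, it is naturally a right $U(\Fg_1)$-module by universality.

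The main step is the compatibility condition \eqref{aux-25} of Lemma \ref{module on bicrossed product}:
\begin{equation}
(m\cdot u)\cdot f \;=\; (m\cdot(u\ps{1}\rhd f))\cdot u\ps{2},
\qquad u\in U(\Fg_1),\;f\in R(\Fg_2).
\end{equation}
Here I would use that the left action of $U(\Fg_1)$ on $R(\Fg_2)$ is defined by transposing the mutual right action $\lhd:U(\Fg_2)\ot U(\Fg_1)\to U(\Fg_2)$ of the mutual pair in the sense of Corollary \ref{auxx-24}, that is $(u\rhd f)(v)=f(v\lhd u)$. Expanding the right-hand side with this formula yields
\begin{equation}
(m\cdot(u\ps{1}\rhd f))\cdot u\ps{2}
\;=\; f\big(m\sns{-1}\lhd u\ps{1}\big)\,m\sns{0}\cdot u\ps{2}.
\end{equation}
On the other hand, the lifted AYD-type identity \eqref{auxy3} of Proposition \ref{aux-24} gives
\begin{equation}
(m\cdot u)\sns{-1}\ot(m\cdot u)\sns{0}
\;=\; m\sns{-1}\lhd u\ps{1}\ot m\sns{0}\cdot u\ps{2},
\end{equation}
so that $(m\cdot u)\cdot f = f(m\sns{-1}\lhd u\ps{1})\,m\sns{0}\cdot u\ps{2}$, which matches the right-hand side.

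Finally, one should check that the resulting $R(\Fg_2)\acl U(\Fg_1)$-action coincides with the prescribed formula $(m\cdot f)\cdot u$ of \eqref{aux-17} on the generators $f\acl 1$ and $1\acl u$; this is immediate because the embeddings of $R(\Fg_2)$ and $U(\Fg_1)$ into the bicrossed product are algebra maps and the formula factors through them. The only point that requires care is the bookkeeping of conventions for the action $\rhd$ of $U(\Fg_1)$ on $R(\Fg_2)$ and of the coaction $\sns{-1}\ot\sns{0}$, since the correct identification of these with the pairings built from the mutual actions of $(U(\Fg_1),U(\Fg_2))$ is exactly what allows the AYD identity \eqref{auxy3} to translate literally into \eqref{aux-25}. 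Aside from that, no further computation is required beyond what is already encoded in Proposition \ref{aux-24} and Lemma \ref{module on bicrossed product}.
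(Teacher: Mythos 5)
Your proposal is correct and follows essentially the same route as the paper: both verify the compatibility condition \eqref{aux-25} of Lemma \ref{module on bicrossed product} by computing $(m\cdot u)\cdot f = f((m\cdot u)\sns{-1})(m\cdot u)\sns{0}$, invoking the identity \eqref{auxy3} of Proposition \ref{aux-24}, and transposing via $(u\rhd f)(v)=f(v\lhd u)$. The extra details you supply (local conilpotency of the $\Fg_2$-coaction via Proposition \ref{auxx-1} and associativity of the $R(\Fg_2)$-action) are left implicit in the paper but are consistent with its setup.
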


\begin{proof}
For $f \in R(\Fg_2)$, $u \in U(\Fg_1)$ and $m \in M$, we have
\begin{align}
\begin{split}
& (m \cdot u) \cdot f = f((m \cdot u)\sns{-1})(m \cdot u)\sns{0} = f(m\sns{-1} \lhd u\ps{1})m\sns{0} \cdot u\ps{2} \\
& = (u\ps{1} \rhd f)(m\sns{-1})m\sns{0} \cdot u\ps{2} = (m \cdot (u\ps{1} \rhd f)) \cdot u\ps{2}.
\end{split}
\end{align}
Here in the second equality we used Proposition \ref{aux-24}. So by Lemma \ref{module on bicrossed product}  the proof is complete.
\end{proof}

\nin Let us assume that $M$ is a left comodule over the bicrossed product $\mathcal{F} \acl \mathcal{U}$. Since the projections $\pi_1 := \Id_{\mathcal{F}} \ot \ve_{\mathcal{U}}:\mathcal{F} \acl \mathcal{U} \to \mathcal{F}$ and $\pi_2 := \ve_{\mathcal{F}} \ot \Id_{\mathcal{U}}:\mathcal{F} \acl \mathcal{U} \to \mathcal{U}$ are coalegbra maps, $M$ becomes  a left $\mathcal{F}$-comodule as well as a left $\mathcal{U}$-comodule via $\pi_1$ and $\pi_2$. Denoting these comodule structures by
\begin{equation}
m \mapsto m^{\sns{-1}} \ot m^{\sns{0}} \in \mathcal{F} \ot M \quad \text{and} \quad m \mapsto m\snsb{-1} \ot m\snsb{0} \in \mathcal{U} \ot M,
\end{equation}
we mean the $\mathcal{F} \acl \mathcal{U}$-comodule structure is
\begin{equation}\label{auxy12}
m \mapsto m^{\sns{-1}} \ot m^{\sns{0}}\snsb{-1} \ot m^{\sns{0}}\snsb{0} \in \mathcal{F} \acl \mathcal{U} \ot M.
\end{equation}

\begin{lemma}\label{comodule on bicrossed product}
Let $(\mathcal{F}, \mathcal{U})$ be a matched pair of Hopf algebras and $M$ a linear space. Then $M$ is a left comodule over the bicrossed product Hopf algebra $\mathcal{F} \acl \mathcal{U}$ if and only if it is a left comodule over $\mathcal{F}$ and  a left comodule over $\mathcal{U}$, such that  for any $m \in M$
\begin{equation}\label{aux-27}
(m^{\sns{0}}\snsb{-1})^{\pr{0}} \ot m^{\sns{-1}} \cdot (m^{\sns{0}}\snsb{-1})^{\pr{1}} \ot m^{\sns{0}}\snsb{0} = m\snsb{-1} \ot (m\snsb{0})^{\sns{-1}} \ot (m\snsb{0})^{\sns{0}},
\end{equation}
where $u \mapsto u^{\pr{0}} \ot u^{\pr{1}} \in \mathcal{U} \ot \mathcal{F}$ is the right $\mathcal{F}$-coaction on $\mathcal{U}$.
\end{lemma}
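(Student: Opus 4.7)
The strategy is to verify coassociativity of the candidate coaction
\begin{equation*}
\Db_{\acl}(m) = m^{\sns{-1}} \acl m^{\sns{0}}\snsb{-1} \ot m^{\sns{0}}\snsb{0}
\end{equation*}
in terms of the $\Fc$- and $\Uc$-coassociativities plus the extra identity \eqref{aux-27}, and to read off \eqref{aux-27} from the coassociativity of a given $\Fc \acl \Uc$-coaction.

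For the forward direction, I first note that the projections $\pi_1 = \Id \ot \ve$ and $\pi_2 = \ve \ot \Id$ from $\Fc \acl \Uc$ onto its tensor factors are coalgebra maps, where the comodule-coalgebra identities $\ve(u^{\pr{0}})u^{\pr{1}} = \ve(u)1$ and $u^{\pr{0}}\ve(u^{\pr{1}}) = u$ guarantee compatibility with the cocrossed coproduct. Composing the assumed $\Fc \acl \Uc$-coaction with $\pi_1$ and $\pi_2$ then endows $M$ with left $\Fc$- and $\Uc$-comodule structures. To extract the compatibility \eqref{aux-27}, I expand the coassociativity of $\Db_{\acl}$ using the cocrossed coproduct $\Delta(f \acl u) = f\ps{1} \acl (u\ps{1})^{\pr{0}} \ot f\ps{2}(u\ps{1})^{\pr{1}} \acl u\ps{2}$ together with $\Uc$-coassociativity on $m^{\sns{0}}$, obtaining an identity in $\Fc \ot \Uc \ot \Fc \ot \Uc \ot M$. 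Applying $\ve_{\Fc} \ot \Id \ot \Id \ot \ve_{\Uc} \ot \Id$ and invoking the counit axioms of the two coproducts and the two coactions collapses the outer slots: the LHS becomes $(m^{\sns{0}}\snsb{-1})^{\pr{0}} \ot m^{\sns{-1}}(m^{\sns{0}}\snsb{-1})^{\pr{1}} \ot m^{\sns{0}}\snsb{0}$ and the RHS becomes $m\snsb{-1} \ot (m\snsb{0})^{\sns{-1}} \ot (m\snsb{0})^{\sns{0}}$, which is exactly \eqref{aux-27}.

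For the converse, given $\Fc$- and $\Uc$-coactions on $M$ satisfying \eqref{aux-27}, define $\Db_{\acl}$ by \eqref{auxy12}. Counitality is immediate from $\ve_{\Fc \acl \Uc} = \ve_{\Fc} \ve_{\Uc}$ combined with the counit axioms of the individual coactions. For coassociativity, I expand $(\Delta \ot \Id)\Db_{\acl}(m)$ via the cocrossed coproduct, then use $\Uc$-coassociativity on $m^{\sns{0}}$ to replace $(m^{\sns{0}}\snsb{-1})\ps{1} \ot (m^{\sns{0}}\snsb{-1})\ps{2} \ot m^{\sns{0}}\snsb{0}$ by $m^{\sns{0}}\snsb{-1} \ot (m^{\sns{0}}\snsb{0})\snsb{-1} \ot (m^{\sns{0}}\snsb{0})\snsb{0}$. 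The resulting five-tensor expression is matched to $(\Id \ot \Db_{\acl})\Db_{\acl}(m)$ by applying \eqref{aux-27} to the middle three slots, and then using $\Fc$-coassociativity to unsplit $(m^{\sns{-1}})\ps{1} \ot (m^{\sns{-1}})\ps{2}$ into $m^{\sns{-1}} \ot (m^{\sns{0}}\snsb{0})^{\sns{-1}}$. In effect, one reverses the computation of the forward direction.

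The main obstacle is the careful bookkeeping of Sweedler indices across five tensor slots and the correct sequencing of the three identities ($\Fc$-coassociativity, $\Uc$-coassociativity, and \eqref{aux-27}) in the converse direction. The compatibility \eqref{aux-27} is designed precisely so that the twisting introduced by the cocrossed coproduct of $\Fc \acl \Uc$ unknots, but exhibiting this explicitly is a dense, though routine, calculation.
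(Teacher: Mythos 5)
Your proposal is correct and follows essentially the same route as the paper: the forward direction extracts \eqref{aux-27} by applying $\ve_{\Fc}\ot\Id_{\Uc}\ot\Id_{\Fc}\ot\ve_{\Uc}\ot\Id_M$ to the coassociativity identity of the $\Fc\acl\Uc$-coaction, and the converse reverses that computation using the $\Fc$- and $\Uc$-coassociativities together with \eqref{aux-27}. Your additional remarks on the projections being coalgebra maps and on counitality are correct but are already implicit in the paper's setup preceding the lemma.
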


\begin{proof}
Let assume that $M$ is a comodule over the bicrossed product Hopf algebra $\mathcal{F} \acl \mathcal{U}$. Then by the coassociativity of the coaction, we have
\begin{align}\label{aux-28}
\begin{split}
& m^{\sns{-2}} \acl (m^{\sns{0}}\snsb{-2})^{\pr{0}} \ot m^{\sns{-1}} \cdot (m^{\sns{0}}\snsb{-2})^{\pr{1}} \acl m^{\sns{0}}\snsb{-1} \ot m^{\sns{0}}\snsb{0} \\
& = m^{\sns{-1}} \acl m^{\sns{0}}\snsb{-1} \ot (m^{\sns{0}}\snsb{0})^{\sns{-1}} \acl (m^{\sns{0}}\snsb{0})^{\sns{0}}\snsb{-1} \ot (m^{\sns{0}}\snsb{0})^{\sns{0}}\snsb{0}.
\end{split}
\end{align}
By applying  $\ve_{\mathcal{F}} \ot \Id_{\mathcal{U}} \ot \Id_{\mathcal{F}} \ot \ve_{\mathcal{U}} \ot \Id_{M}$ on both hand sides of \eqref{aux-28}, we get
\begin{equation}
(m^{\sns{0}}\snsb{-1})^{\pr{0}} \ot m^{\sns{-1}} \cdot (m^{\sns{0}}\snsb{-1})^{\pr{1}} \ot m^{\sns{0}}\snsb{0} = m\snsb{-1} \ot (m\snsb{0})^{\sns{-1}} \ot (m\snsb{0})^{\sns{0}}.
\end{equation}
Conversely, assume that \eqref{aux-27} holds for any $m \in M$. This results
\begin{align}
\begin{split}
& m^{\sns{-2}} \ot (m^{\sns{0}}\snsb{-1})^{\pr{0}} \ot m^{\sns{-1}} \cdot (m^{\sns{0}}\snsb{-1})^{\pr{1}} \ot m^{\sns{0}}\snsb{0} \\
& = m^{\sns{-1}} \ot m^{\sns{0}}\snsb{-1} \ot (m^{\sns{0}}\snsb{0})^{\sns{-1}} \ot (m^{\sns{0}}\snsb{0})^{\sns{0}},
\end{split}
\end{align}
which implies  \eqref{aux-28} \ie the coassociativity of the $\mathcal{F} \acl \mathcal{U}$-coaction.
\end{proof}

\begin{corollary}
Let $(\Fg_1,\Fg_2)$ be a matched pair of Lie algebras and $M$ be an AYD module over the double crossed sum $\Fg_1 \bowtie \Fg_2$ with locally finite action and locally conilpotent coaction. Then $M$ has a left $R(\Fg_2) \acl U(\Fg_1)$-comodule structure.
\end{corollary}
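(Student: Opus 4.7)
The plan is to invoke Lemma \ref{comodule on bicrossed product}, so I must produce a left $U(\Fg_1)$-coaction and a left $R(\Fg_2)$-coaction on $M$ and then verify the cross-compatibility \eqref{aux-27}. By Proposition \ref{auxx-1} the local conilpotency of the $\Fa$-coaction descends to both the $\Fg_1$- and the $\Fg_2$-coaction, and by Corollary \ref{aux-58} the $\Fg_1$-coaction lifts to a left $U(\Fg_1)$-coaction $m \mapsto m\snsb{-1}\ot m\snsb{0}$. This takes care of the $U(\Fg_1)$-side at no extra cost.

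For the $R(\Fg_2)$-side I would use the locally finite right $U(\Fg_2)$-action on $M$ (which is a component of the hypothesized locally finite $\Fg_1 \bowtie \Fg_2$-action, lifted to $U(\Fg_2)$). For each $m \in M$ the finite-dimensional subspace $m\cdot U(\Fg_2)$ admits a basis $\{m_i\}$, and the coefficient functionals $\phi_i \in U(\Fg_2)^\ast$ defined by $m\cdot v = \sum_i \phi_i(v)\,m_i$ have finite-codimension kernels, so they lie in $R(\Fg_2)$. Setting $m^{\sns{-1}}\ot m^{\sns{0}} := \sum_i \phi_i \ot m_i$ produces a left $R(\Fg_2)$-comodule structure characterized by the identity $m^{\sns{-1}}(v)\,m^{\sns{0}} = m\cdot v$ for every $v \in U(\Fg_2)$; coassociativity of this coaction is dual to associativity of the $U(\Fg_2)$-action, exactly as in the classical Peter--Weyl-type correspondence.

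The remaining work is to verify the compatibility \eqref{aux-27}. My plan is to test the equation by pairing its middle $R(\Fg_2)$-tensorand with an arbitrary $v \in U(\Fg_2)$, using the defining relation $v \rhd u = u^{\pr{0}}\,\langle u^{\pr{1}}, v\rangle$ of the right $R(\Fg_2)$-coaction on $U(\Fg_1)$ and the characterizing property of the $R(\Fg_2)$-coaction on $M$. After this pairing the left hand side of \eqref{aux-27} collapses to
\begin{equation*}
v\ps{2}\rhd (m\cdot v\ps{1})\snsb{-1}\ot (m\cdot v\ps{1})\snsb{0},
\end{equation*}
while the right hand side collapses to $m\snsb{-1}\ot m\snsb{0}\cdot v$. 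The two sides are then identified by substituting \eqref{auxy5} of Proposition \ref{aux-24} into the former, followed by collapsing $v\ps{3}S(v\ps{2}) = \ve(v\ps{2})1$ via the antipode axiom. Since this holds for every $v$, and since one may recover an element of $U(\Fg_1)\ot R(\Fg_2)\ot M$ from its evaluations against $U(\Fg_2)$ in the middle slot, the compatibility \eqref{aux-27} follows.

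The step I expect to be the main obstacle is precisely this last verification: one has to orchestrate three separate ingredients --- the $R(\Fg_2)$-coaction dual to the $U(\Fg_2)$-action on $M$, the lifted $U(\Fg_1)$-coaction on $M$, and the $R(\Fg_2)$-coaction on $U(\Fg_1)$ dual to the $U(\Fg_2)$-action on $U(\Fg_1)$ --- and recognize that the resulting three-factor identity is exactly the dualization of the single AYD compatibility \eqref{auxy5}. Once that recognition is in place, Lemma \ref{comodule on bicrossed product} assembles the two coactions into the desired left $R(\Fg_2)\acl U(\Fg_1)$-coaction on $M$.
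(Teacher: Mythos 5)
Your proposal is correct and follows essentially the same route as the paper: the $U(\Fg_1)$-coaction comes from local conilpotency, the $R(\Fg_2)$-coaction is the Hochschild dual of the locally finite $U(\Fg_2)$-action, and the compatibility \eqref{aux-27} of Lemma \ref{comodule on bicrossed product} is obtained by pairing against $v\in U(\Fg_2)$ and reducing to \eqref{auxy5} of Proposition \ref{aux-24} --- exactly the equation the paper rewrites as $v\ps{2}\rhd(m\cdot v\ps{1})\snsb{-1}\ot(m\cdot v\ps{1})\snsb{0}=m\snsb{-1}\ot m\snsb{0}\cdot v$ before invoking the non-degenerate pairing. The only cosmetic remark is that your final cancellation $v\ps{3}S(v\ps{2})$ uses the cocommutativity of $U(\Fg_2)$ in addition to the antipode axiom, which is harmless here.
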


\begin{proof}
Since $M$ is a locally conilpotent left $\Fg_1$-comodule, it has a left $U(\Fg_1)$-comodule structure. On the other hand, being a locally finite right $\Fg_2$-module, $M$ is a left $R(\Fg_2)$-comodule \cite{Hoch74}.
By Proposition \ref{aux-24} we have
\begin{equation}
(m \cdot v)\snsb{-1} \ot (m \cdot v)\snsb{0} = S(v\ps{2}) \rhd m\snsb{-1} \ot m\snsb{0} \cdot v\ps{1},
\end{equation}
or in other words
\begin{equation}
v\ps{2} \rhd (m \cdot v\ps{1})\snsb{-1} \ot (m \cdot v\ps{1})\snsb{0} = m\snsb{-1} \ot m\snsb{0} \cdot v.
\end{equation}
Using the $R(\Fg_2)$-coaction on $M$ and $R(\Fg_2)$-coaction on $U(\Fg_1)$, we can translate this equality into
\begin{equation}
(m^{\sns{-1}} \cdot (m^{\sns{0}}\snsb{-1})^{\pr{1}})(v) (m^{\sns{0}}\snsb{-1})^{\pr{0}} \ot m^{\sns{0}}\snsb{0} = m\snsb{-1} \ot (m\snsb{0})^{\sns{0}}((m\snsb{0})^{\sns{-1}})(v).
\end{equation}
Finally, by the non-degenerate pairing between $U(\Fg_2)$ and $R(\Fg_2)$ we get
\begin{equation}
(m^{\sns{0}}\snsb{-1})^{\pr{0}} \ot m^{\sns{-1}} \cdot (m^{\sns{0}}\snsb{-1})^{\pr{1}} \ot m^{\sns{0}}\snsb{0} = m\snsb{-1} \ot (m\snsb{0})^{\sns{-1}} \ot (m\snsb{0})^{\sns{0}},
\end{equation}
\ie the $R(\Fg_2) \acl U(\Fg_1)$-coaction compatibility.
\end{proof}

\nin Our next challenge is to identify  the Yetter-Drinfeld modules over Lie-Hopf algebras.

\bigskip
\nin Let us recall that a right module left comodule $M$ over a Hopf algebra $\mathcal{H}$ is called a YD module if
\begin{equation}\label{YD compatibility}
h\ps{2}(m \cdot h\ps{1})\ns{-1} \ot (m \cdot h\ps{1})\ns{0} = m\ns{-1}h\ps{1} \ot m\ns{0} \cdot h\ps{2}
\end{equation}
for any $h \in \mathcal{H}$ and any $m \in M$.

\begin{proposition}\label{aux-41}
Let $(\mathcal{F}, \mathcal{U})$ be a matched pair of Hopf algebras and $M$ be a right module and left comodule over $\mathcal{F} \acl \mathcal{U}$ such that via the corresponding module and comodule structures it becomes a YD-module over $\Uc$. Then $M$ is a YD-module over $\Fc\acl\Uc$ if and only if $M$ is a YD-module over  $\Fc$ via the corresponding module and comodule structures, and the following conditions are satisfied
\begin{align}\label{auxy8}
&(m \cdot f)\snsb{-1} \ot (m \cdot f)\snsb{0} = m\snsb{-1} \ot m\snsb{0} \cdot f, \\\label{auxy9}
&m^{\sns{-1}}f\ps{1} \ot m^{\sns{0}}f\ps{2} = m^{\sns{-1}}(m^{\sns{0}}\snsb{-1} \rhd f\ps{1}) \ot m^{\sns{0}}\snsb{0} \cdot f\ps{2}, \\\label{auxy10}
&m^{\sns{-1}} \ot m^{\sns{0}} \cdot u = (u\ps{1})^{\pr{1}}(u\ps{2} \rhd (m \cdot (u\ps{1})^{\pr{0}})^{\sns{-1}}) \ot (m \cdot (u\ps{1})^{\pr{0}})^{\sns{0}}, \\\label{auxy11}
&m\snsb{-1}u^{\pr{0}} \ot m\snsb{0} \cdot u^{\pr{1}} = m\snsb{-1}u \ot m\snsb{0}.
\end{align}
\end{proposition}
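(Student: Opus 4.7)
The plan is to test the YD compatibility \eqref{YD compatibility} on the two types of algebra generators $f\acl 1$ and $1\acl u$ of $\Fc\acl\Uc$, since the YD condition is multiplicative in $h$ (compatibility for $h^1$ and $h^2$ with the same $m$ propagates to $h^1h^2$), so it suffices to verify it on these generators. The decomposition \eqref{auxy12} of the $\Fc\acl\Uc$-coaction from Lemma \ref{comodule on bicrossed product}, and the module decomposition from Lemma \ref{module on bicrossed product}, together let me translate between the YD identity over $\Fc\acl\Uc$ and identities living in $\Fc\ot M$ or $\Uc\ot M$ by applying the counits $\ve_\Fc$ and $\ve_\Uc$ to appropriate tensor slots.

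For the forward direction, since $\Fc\acl 1$ is a genuine Hopf subalgebra of $\Fc\acl\Uc$, the comultiplication $\Delta(f\acl 1)=f\ps{1}\acl 1\ot f\ps{2}\acl 1$ keeps both factors in $\Fc\acl 1$. Substituting $h=f\acl 1$ in \eqref{YD compatibility}, unpacking with \eqref{auxy12}, and then projecting the $\Uc$-components via $\ve_\Uc$ will produce the YD condition over $\Fc$ together with \eqref{auxy9}, while projecting the $\Fc$-components via $\ve_\Fc$ isolates \eqref{auxy8}. For $h=1\acl u$, the comultiplication $\Delta(1\acl u)=1\acl (u\ps{1})^{\pr{0}}\ot (u\ps{1})^{\pr{1}}\acl u\ps{2}$ mixes both factors, and substituting into \eqref{YD compatibility} gives an identity that, after applying $\ve_\Fc$ to the $\Fc$-slots of both sides and using the hypothesized YD condition on $\Uc$, yields \eqref{auxy10}; a complementary projection with $\ve_\Uc$ delivers \eqref{auxy11}.

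For the converse, I would verify the YD compatibility on $h=f\acl 1$ by combining the YD condition on $\Fc$ with \eqref{auxy8} and \eqref{auxy9}, which together match the two sides after the decomposition \eqref{auxy12}. For $h=1\acl u$, the same is done using the YD condition on $\Uc$ together with \eqref{auxy10} and \eqref{auxy11}, invoking the matched pair identities \eqref{mp1}--\eqref{mp4} to reorganize the mixed $\Fc$-coactions on $M$ and on $\Uc$. Multiplicativity then extends the YD compatibility to arbitrary elements $f\acl u=(f\acl 1)(1\acl u)\in\Fc\acl\Uc$.

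The main obstacle will be the combinatorial bookkeeping: after substitution one juggles several Sweedler-type indices simultaneously — the comultiplication of $\Fc\acl\Uc$, the $\Fc$-coaction and $\Uc$-coaction on $M$, and the $\Fc$-coaction on $\Uc$ appearing in \eqref{auxy10} and \eqref{auxy11} — and these must be refactored into the form demanded by \eqref{auxy12} using the matched pair compatibilities, especially \eqref{mp4}, and the bicrossed product multiplication $(f\acl u)(g\acl v)=f(u\ps{1}\rhd g)\acl u\ps{2}v$. A secondary subtlety is that \eqref{auxy10} and \eqref{auxy11} couple the $\Fc$-coaction on $M$ with the $\Fc$-coaction on $\Uc$ itself, so these two identities must be handled in tandem when verifying the YD condition on $h=1\acl u$; this mirrors how \eqref{auxy3} and \eqref{auxy4} were paired in the proof of Proposition \ref{aux-24}.
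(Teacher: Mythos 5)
Your proposal matches the paper's own proof essentially step for step: both arguments test the YD condition \eqref{YD compatibility} on the generators $f\acl 1$ and $1\acl u$, extract \eqref{auxy8}--\eqref{auxy9} (resp.\ \eqref{auxy10}--\eqref{auxy11}) by applying $\ve_{\Fc}$ or $\ve_{\Uc}$ to the appropriate tensor slot after using the YD condition over $\Fc$ (resp.\ $\Uc$), and reverse the computation for the converse before invoking multiplicativity. No substantive differences to report.
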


\begin{proof}
First we assume that $M$ is a YD module over $\mathcal{F} \acl \mathcal{U}$. Since $\Fc$ is a Hopf subalgebra of $\Fc \acl \Uc$, $M$ is a YD module over $\Fc$.

\medskip
\nin Next, we prove the compatibilities \eqref{auxy8},\ldots,\eqref{auxy11}. Writing \eqref{YD compatibility} for an arbitrary $f \acl 1 \in \mathcal{F} \acl \mathcal{U}$, we get
\begin{align}
\begin{split}
& (f\ps{2} \acl 1) \cdot ((m \cdot f\ps{1})^{\sns{-1}} \acl (m \cdot f\ps{1})^{\sns{0}}\snsb{-1}) \ot (m \cdot f\ps{1})^{\sns{0}}\snsb{0} \\
& = (m^{\sns{-1}} \acl m^{\sns{0}}\snsb{-1}) \cdot (f\ps{1} \acl 1) \ot m^{\sns{0}}\snsb{0} \cdot f\ps{2}.
\end{split}
\end{align}
Using the YD condition on $\mathcal{F}$ on the left hand side, we get
\begin{align}\label{aux-33}
\begin{split}
& f\ps{2}(m \cdot f\ps{1})^{\sns{-1}} \acl (m \cdot f\ps{1})^{\sns{0}}\snsb{-1} \ot (m \cdot f\ps{1})^{\sns{0}}\snsb{0} \\
& = m^{\sns{-1}}f\ps{1} \acl (m^{\sns{0}}f\ps{2})\snsb{-1} \ot (m^{\sns{0}}f\ps{2})\snsb{0} \\
& = m^{\sns{-1}}(m^{\sns{0}}\snsb{-2} \rhd f\ps{1}) \acl m^{\sns{0}}\snsb{-1} \ot m^{\sns{0}}\snsb{0} \cdot f\ps{2}.
\end{split}
\end{align}
Now we apply $\ve_{\mathcal{F}} \ot \Id_{\mathcal{U}} \ot \Id_M$ on the both hand sides of \eqref{aux-33} to get \eqref{auxy8}.
Similarly we apply $\Id_{\mathcal{F}} \ot \ve_{\mathcal{U}} \ot \Id_M$ to get \eqref{auxy9}.

\medskip
\nin By the same argument,  the YD compatibility of $\Fc\acl \Uc$  for an element of the form $1 \acl u \in \mathcal{F} \acl \mathcal{U}$,  followed by the YD compatibility of $\Uc$ yields \eqref{auxy10} and \eqref{auxy11}.

\medskip
\nin Conversely, assume that $M \in \, ^{\mathcal{F}}\mathcal{YD}_{\mathcal{F}}$ and \eqref{auxy8},\ldots,\eqref{auxy11} are satisfied. We will prove that the YD condition over $\mathcal{F} \acl \mathcal{U}$ holds for  the elements of the forms $f \acl 1 \in \mathcal{F} \acl \mathcal{U}$ and  $1 \acl u \in \mathcal{F} \acl \mathcal{U}$. By \eqref{auxy9}, we have
\begin{align}
\begin{split}
& m^{\sns{-1}}f\ps{1} \acl (m^{\sns{0}}f\ps{2})\snsb{-1} \ot (m^{\sns{0}}f\ps{2})\snsb{0} \\
& = m^{\sns{-1}}(m^{\sns{0}}\snsb{-1} \rhd f\ps{1}) \acl (m^{\sns{0}}\snsb{0} \cdot f\ps{2})\snsb{-1} \ot (m^{\sns{0}}\snsb{0} \cdot f\ps{2})\snsb{0},
\end{split}
\end{align}
which, by  using  \eqref{auxy8}, implies the YD compatibility for the elements of the form $f \acl 1 \in \mathcal{F} \acl \mathcal{U}$.

\medskip
\nin Next, by \eqref{auxy10} we have
\begin{align}
\begin{split}
& (u\ps{1})^{\pr{1}}(u\ps{2} \rhd (m \cdot (u\ps{1})^{\pr{0}})^{\sns{-1}}) \acl u\ps{3}(m \cdot (u\ps{1})^{\pr{0}})^{\sns{0}}\snsb{-1} \ot (m \cdot (u\ps{1})^{\pr{0}})^{\sns{0}}\snsb{0} \\
& = m^{\sns{-1}} \acl u\ps{2}(m^{\sns{0}} \cdot u\ps{1})\snsb{-1} \ot (m^{\sns{0}} \cdot u\ps{1})\snsb{0}, \end{split}
\end{align}
which amounts to the  YD compatibility  for the elements of the form $1 \acl u \in \mathcal{F} \acl \mathcal{U}$ by using YD compatibility over $\mathcal{U}$ and \eqref{auxy11}.

\medskip
\nin Since YD condition is multiplicative, it is then satisfied for any $f \acl u \in \mathcal{F} \acl \mathcal{U}$, and hence we have proved that $M$ is YD module over $\mathcal{F} \acl \mathcal{U}$.
\end{proof}

\begin{proposition}\label{aux-46}
Let $(\Fg_1,\Fg_2)$ be a matched pair of finite dimensional Lie algebras, $M$ an AYD module over the double crossed sum $\Fg_1 \bowtie \Fg_2$ with locally finite action and locally conilpotent coaction. Then, by the action \eqref{aux-17} and the coaction \eqref{auxy12}, $M$ becomes a right-left YD module over $R(\Fg_2) \acl U(\Fg_1)$.
\end{proposition}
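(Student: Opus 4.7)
\nin The plan is to apply Proposition \ref{aux-41} with $\Uc := U(\Fg_1)$ and $\Fc := R(\Fg_2)$. Before invoking it, I would first confirm that the expression \eqref{auxy12} indeed defines a left $R(\Fg_2)\acl U(\Fg_1)$-comodule structure on $M$ and that \eqref{aux-17} defines a right $R(\Fg_2)\acl U(\Fg_1)$-module structure; these are exactly the contents of the two corollaries immediately preceding this proposition, obtained via Lemma \ref{module on bicrossed product} and Lemma \ref{comodule on bicrossed product}.

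\medskip
\nin With the module and comodule structures in place, Proposition \ref{aux-41} reduces the task to three verifications: (i) $M$ is a YD module over $U(\Fg_1)$, (ii) $M$ is a YD module over $R(\Fg_2)$, and (iii) the four compatibilities \eqref{auxy8}--\eqref{auxy11} hold. For (i), I would appeal to Proposition \ref{aux-24}: since $M$ is AYD over $\Fa$ with locally conilpotent coaction, the lifting yields that $M$ is AYD over $U(\Fg_1)$. As $U(\Fg_1)$ is cocommutative with $S^2 = \Id$, a direct check on primitive elements $X \in \Fg_1$ shows that both the AYD and YD identities reduce to $(m \cdot X)\ns{-1} \ot (m \cdot X)\ns{0} = [m\ns{-1}, X] \ot m\ns{0} + m\ns{-1} \ot m\ns{0} \cdot X$, and since both conditions are multiplicative this suffices.

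\medskip
\nin For (ii), I would pass to the $U(\Fg_2)$ side. Proposition \ref{aux-24} again yields that $M$ is AYD over $U(\Fg_2)$. The locally finite right $U(\Fg_2)$-action dualizes, via the non-degenerate pairing between $U(\Fg_2)$ and $R(\Fg_2)$, to a left $R(\Fg_2)$-comodule structure $m \mapsto m^{\sns{-1}} \ot m^{\sns{0}}$; the locally conilpotent left $U(\Fg_2)$-coaction $m \mapsto m\sns{-1} \ot m\sns{0}$ dualizes to a right $R(\Fg_2)$-action coinciding with \eqref{aux-17}. Under this swap of module and comodule roles, the commutativity of $R(\Fg_2)$ converts the AYD identity over $U(\Fg_2)$ into the YD identity over $R(\Fg_2)$; the identity is checked by pairing each $R(\Fg_2)$-valued slot with an arbitrary $v \in U(\Fg_2)$. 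Finally, (iii) is obtained by pushing the compatibility conditions \eqref{auxy3}--\eqref{auxy6} of Proposition \ref{aux-24} through the same pairing: \eqref{auxy5} translates into \eqref{auxy8} and \eqref{auxy6} into \eqref{auxy11} directly, while \eqref{auxy3} and \eqref{auxy4}, which sit on the $U(\Fg_2)$-coaction side, yield \eqref{auxy9} and \eqref{auxy10} after the duality converts the $U(\Fg_2)$-coaction into the $R(\Fg_2)$-action.

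\medskip
\nin The main technical obstacle is the duality step in (ii) together with the last part of (iii): translating the Lie-algebra-level identities between $\Fg_1$ and $\Fg_2$ (equivalently, between $U(\Fg_1)$ and $U(\Fg_2)$) into the mixed $R(\Fg_2)\acl U(\Fg_1)$ form requires careful bookkeeping with the mutual actions $\rhd$ and $\lhd$, and each identity must be verified by evaluating against an arbitrary $v \in U(\Fg_2)$ to reduce it to an already established $U(\Fg_2)$-level statement.
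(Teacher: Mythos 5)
Your overall architecture coincides with the paper's: both proofs reduce the statement to Proposition \ref{aux-41}, with the Lie-algebra-level input supplied by Proposition \ref{aux-24} (and Corollary \ref{aux-58}) and transported to $R(\Fg_2)$ through the non-degenerate pairing with $U(\Fg_2)$. Your parts (i) and (ii) are correct and in fact make explicit something the paper's proof glosses over, namely that the YD hypotheses of Proposition \ref{aux-41} over $U(\Fg_1)$ and $R(\Fg_2)$ follow from the corresponding AYD conditions: on primitive elements both identities reduce to the same formula, both are multiplicative, and the role-swapping duality carries the YD condition over the cocommutative $U(\Fg_2)$ to the YD condition over $R(\Fg_2)$.

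The gap is in your step (iii): the dictionary you propose between \eqref{auxy3}--\eqref{auxy6} and \eqref{auxy8}--\eqref{auxy11} is mismatched, and the translations would not close if carried out as stated. Under the pairing the $R(\Fg_2)$-action on $M$ is dual to the $U(\Fg_2)$-\emph{coaction} while the $R(\Fg_2)$-coaction is dual to the $U(\Fg_2)$-\emph{action}. Keeping track of this, \eqref{auxy3} dualizes to the bicrossed-product module condition \eqref{aux-25} and \eqref{auxy5} to the comodule condition \eqref{aux-27} --- that is, they are consumed by the two corollaries you invoke in your first paragraph to get the module and comodule structures, and are not available to produce \eqref{auxy9} and \eqref{auxy8}. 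In the paper's proof, \eqref{auxy8} is obtained by applying $\Id \ot f \ot \Id$ to the mixed coassociativity $m\sns{0}\snsb{-1} \ot m\sns{-1} \ot m\sns{0}\snsb{0} = m\snsb{-1} \ot m\snsb{0}\sns{-1} \ot m\snsb{0}\sns{0}$ of \eqref{aux-38}; \eqref{auxy9} comes from \eqref{auxy6} combined with \eqref{aux-38}; \eqref{auxy10} comes from the $U(\Fg_1)\bowtie U(\Fg_2)$-module associativity $(m \cdot v)\cdot u = (m \cdot (v\ps{1} \rhd u\ps{1}))\cdot(v\ps{2} \lhd u\ps{2})$, not from \eqref{auxy4}; and \eqref{auxy11} comes from \eqref{auxy4} together with \eqref{aux-38}, not from \eqref{auxy6}. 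In particular \eqref{auxy8} and \eqref{auxy10} do not follow from \eqref{auxy3}--\eqref{auxy6} at all: they are instances of the (co)associativity of the $U(\Fa)$-(co)module structure. A quick sanity check exposing the mismatch: \eqref{auxy8} involves only the $U(\Fg_1)$-coaction and the $R(\Fg_2)$-action (equivalently the $U(\Fg_2)$-coaction), with no occurrence of the $U(\Fg_2)$-action, whereas \eqref{auxy5} is fundamentally a statement about $m \cdot v$ for $v \in U(\Fg_2)$, so the former cannot be a translation of the latter.
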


\begin{proof}
We prove the proposition  by verifying the conditions of Proposition \ref{aux-41}.
Since $M$ is an AYD module over $\Fg_1 \bowtie \Fg_2$ with a locally conilpotent coaction, it is an AYD module over $U(\Fg_1) \bowtie U(\Fg_2)$. In particular, it is a left comodule over $U(\Fg_1) \bowtie U(\Fg_2)$ with the  following  coaction   as proved in Corollary \ref{aux-58}
\begin{equation}
m \mapsto m\snsb{-1} \bowtie m\snsb{0}\sns{-1} \ot m\snsb{0}\sns{0} \in U(\Fg_1) \bowtie U(\Fg_2).
\end{equation}
By the coassociativity of this coaction, we have
\begin{equation}\label{aux-38}
m\sns{0}\snsb{-1} \ot m\sns{-1} \ot m\sns{0}\snsb{0} = m\snsb{-1} \ot m\snsb{0}\sns{-1} \ot m\snsb{0}\sns{0}.
\end{equation}
Thus, the application of $\Id_{U(\Fg_1)} \ot f \ot \Id_M$  on both hand sides  results \eqref{auxy8}.

\medskip
\nin Using \eqref{auxy6} and \eqref{aux-38} we get
\begin{align}
\begin{split}
& v\ps{2}(m \cdot v\ps{1})\sns{-1} \ot (m \cdot v\ps{1})\sns{0} = (v\ps{2} \lhd (m \cdot v\ps{1})\sns{0}\snsb{-1})(m \cdot v\ps{1})\sns{-1} \ot (m \cdot v\ps{1})\sns{0}\snsb{0} \\
& = (v\ps{2} \lhd (m \cdot v\ps{1})\snsb{-1})(m \cdot v\ps{1})\snsb{0}\sns{-1} \ot (m \cdot v\ps{1})\snsb{0}\sns{0}.
\end{split}
\end{align}
Then applying $f \ot \Id_M$ to both sides and using the non-degenerate pairing between $R(\Fg_2)$ and $U(\Fg_2)$, we conclude \eqref{auxy9}.

\medskip
\nin To verify  \eqref{auxy10}, we use the $U(\Fg_1) \bowtie U(\Fg_2)$-module compatibility on $M$, \ie for any $u \in U(\Fg_1)$, $v \in U(\Fg_2)$ and $m \in M$,
\begin{equation}
(m \cdot v) \cdot u = (m \cdot (v\ps{1} \rhd u\ps{1})) \cdot (v\ps{2} \lhd u\ps{2}).
\end{equation}
Using the non-degenerate pairing between $R(\Fg_2)$ and $U(\Fg_1)$, we rewrite this equality as
\begin{align}
\begin{split}
& m^{\sns{-1}}(v)m^{\sns{0}} \cdot u = (m \cdot (v\ps{1} \rhd u\ps{1}))^{\sns{-1}}(v\ps{2} \lhd u\ps{2})(m \cdot (v\ps{1} \rhd u\ps{1}))^{\sns{0}} \\
& = u\ps{2} \rhd (m \cdot (v\ps{1} \rhd u\ps{1}))^{\sns{-1}}(v\ps{2})(m \cdot (v\ps{1} \rhd u\ps{1}))^{\sns{-1}} \\
& = (u\ps{1})^{\pr{1}}(v\ps{1})(u\ps{2} \rhd (m \cdot (u\ps{1})^{\pr{0}})^{\sns{-1}})(v\ps{2})(m \cdot (u\ps{1})^{\pr{0}})^{\sns{0}} \\
& = [(u\ps{1})^{\pr{1}}(u\ps{2} \rhd (m \cdot (u\ps{1})^{\pr{0}})^{\sns{-1}})](v)(m \cdot (u\ps{1})^{\pr{0}})^{\sns{0}},
\end{split}
\end{align}
which means \eqref{auxy10}.

\medskip
\nin Using the $U(\Fg_1) \bowtie U(\Fg_2)$-coaction compatibility \eqref{aux-38}, together with \eqref{auxy4},  we have
\begin{align}
\begin{split}
& m\snsb{-1}u^{\pr{0}} \ot m\snsb{0} \cdot u^{\pr{1}} = m\snsb{-1}u^{\pr{0}}u^{\pr{1}}(m\snsb{0}\sns{-1}) \ot m\snsb{0}\sns{0} \\
& = m\snsb{-1}(m\snsb{0}\sns{-1} \rhd u) \ot m\snsb{0}\sns{0} = m\sns{0}\snsb{-1}(m\sns{-1} \rhd u) \ot m\sns{0}\snsb{0} \\
& = m\snsb{-1}u \ot m\sns{0},
\end{split}
\end{align}
which is \eqref{auxy11}.
\end{proof}

\nin We are now ready to express the main result of this section.

\begin{theorem}
Let $(\mathcal{F}, \mathcal{U})$ be a matched pair of Hopf algebras such that $\Fc$ is commutative and $\Uc$ is cocommutative, and $\Big\langle\;, \Big\rangle:\mathcal{F} \times \mathcal{V} \to \mathbb{C}$ a non-degenerate Hopf pairing. Then $M$ is an AYD-module over $\Uc\bi\Vc$ if and only if $M$ is a YD-module over $\Fc\acl \Uc$ such that by the corresponding module and comodule structures it is a YD-module over $\Uc$.
\end{theorem}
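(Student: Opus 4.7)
The plan is to reduce the theorem to matching two explicit characterizations, using the non-degenerate pairing as a dictionary. Corollary~\ref{auxx-24} asserts that $M$ is AYD over $\mathcal{U}\bowtie\mathcal{V}$ if and only if it is AYD over both $\mathcal{U}$ and $\mathcal{V}$ subject to the compatibilities \eqref{auxy13}--\eqref{auxy16}. Proposition~\ref{aux-41} asserts that $M$ is YD over $\mathcal{F}\acl\mathcal{U}$ (together with the YD-over-$\mathcal{U}$ hypothesis) if and only if it is YD over $\mathcal{F}$ and $\mathcal{U}$ subject to the compatibilities \eqref{auxy8}--\eqref{auxy11}. The task is then to match these two lists.

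The first step will be to set up the module--comodule dictionary induced by the non-degenerate Hopf pairing: a right $\mathcal{V}$-action on $M$ is equivalent to a (locally finite) left $\mathcal{F}$-coaction via $m\cdot v=\langle m^{\sns{-1}},v\rangle\, m^{\sns{0}}$, and a left $\mathcal{V}$-coaction is equivalent to a right $\mathcal{F}$-action via $m\cdot f=\langle f, m\snsb{-1}\rangle\, m\snsb{0}$. The common $\mathcal{U}$-module and $\mathcal{U}$-comodule structures remain unchanged. Using commutativity of $\mathcal{F}$ (equivalently, cocommutativity of $\mathcal{V}$ transmitted through the pairing), the AYD condition over $\mathcal{V}$ will pair to the YD condition over $\mathcal{F}$: the antipode-shift appearing in the AYD formula is absorbed by the reorderings allowed by commutativity.

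The core step will be verifying that the four compatibilities \eqref{auxy13}--\eqref{auxy16} pair through the dictionary to \eqref{auxy8}--\eqref{auxy11}. For this, I will use that the mutual pair $(\mathcal{U},\mathcal{V})$ corresponds via the pairing to the matched pair $(\mathcal{F},\mathcal{U})$: specifically, the $\mathcal{U}$-coaction on $\mathcal{V}$ pairs with the $\mathcal{U}$-action on $\mathcal{F}$, and the $\mathcal{V}$-action on $\mathcal{U}$ pairs with the $\mathcal{F}$-coaction on $\mathcal{U}$. Under this dictionary \eqref{auxy15} translates to \eqref{auxy8}, \eqref{auxy13} to \eqref{auxy11}, \eqref{auxy16} to \eqref{auxy9}, and \eqref{auxy14} to \eqref{auxy10}. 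This is the Hopf-algebraic abstraction of the argument used in Proposition~\ref{aux-46}, where the pairing between $R(\Fg_2)$ and $U(\Fg_2)$ was exploited.

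The main obstacle will be the bookkeeping in this last step: the Sweedler indices, antipode placements, and applications of the pairing must be tracked carefully across each compatibility, leaning repeatedly on commutativity of $\mathcal{F}$ and cocommutativity of $\mathcal{U}$. The assumption that $M$ is additionally YD over $\mathcal{U}$ is essential, since AYD over $\mathcal{U}$ inherited from AYD over $\mathcal{U}\bowtie\mathcal{V}$ does not automatically coincide with YD over $\mathcal{U}$; imposing the YD hypothesis synchronizes the common $\mathcal{U}$-structure on both sides of the equivalence, and only then do the two packages of data correspond bijectively.
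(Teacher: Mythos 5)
Your overall strategy --- reduce to Corollary \ref{auxx-24} on the double crossed product side and Proposition \ref{aux-41} on the bicrossed product side, then translate one list of compatibilities into the other through the non-degenerate pairing --- is exactly the route the paper takes. The gap is in the dictionary itself: three of the four correspondences you propose match conditions that involve \emph{different} underlying data on $M$, so the claimed translations cannot go through. Concretely, \eqref{auxy15} constrains the $\mathcal{V}$-action (equivalently, via the pairing, the $\mathcal{F}$-coaction) against the $\mathcal{U}$-coaction, whereas \eqref{auxy8} constrains the $\mathcal{F}$-action (equivalently, the $\mathcal{V}$-coaction) against the $\mathcal{U}$-coaction; no amount of Sweedler bookkeeping turns one into the other. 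The same structural mismatch afflicts your pairs \eqref{auxy13}$\leftrightarrow$\eqref{auxy11} and \eqref{auxy14}$\leftrightarrow$\eqref{auxy10}. Only \eqref{auxy16}$\leftrightarrow$\eqref{auxy9} is assigned correctly, and even there the derivation also invokes \eqref{auxy8} and the cocommutativity of $\mathcal{V}$ rather than being a bare re-reading through the pairing.

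The correct accounting, which is what the paper's proof establishes, is as follows: \eqref{auxy13} and \eqref{auxy15} pair, respectively, with the right-module compatibility \eqref{aux-25} and the left-comodule compatibility \eqref{aux-27} over $\mathcal{F}\acl\mathcal{U}$ --- that is, they are precisely what make $M$ a module and a comodule over the bicrossed product in the first place; dually, \eqref{auxy10} and \eqref{auxy8} pair with the module and comodule compatibilities over $\mathcal{U}\bowtie\mathcal{V}$; and only the two remaining pairs \eqref{auxy14}$\leftrightarrow$\eqref{auxy11} and \eqref{auxy16}$\leftrightarrow$\eqref{auxy9} match AYD conditions directly against YD conditions. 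Your plan as written therefore omits the step of producing the $\mathcal{U}\bowtie\mathcal{V}$ (respectively $\mathcal{F}\acl\mathcal{U}$) module and comodule structures before the AYD (respectively YD) conditions can even be formulated, and it is exactly the mis-assigned conditions that supply those structures. With the dictionary corrected, the argument closes as in the paper; your closing remark about the role of the auxiliary YD-over-$\mathcal{U}$ hypothesis is sound.
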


\begin{proof}
Let $M \in \, ^{\mathcal{F} \acl \mathcal{U}}\mathcal{YD}_{\mathcal{F} \acl \mathcal{U}} \, \cap \, ^{\mathcal{U}}\mathcal{YD}_{\mathcal{U}}$.
We first prove that $M \in \Mc_{\mathcal{U} \bowtie \mathcal{V}}$. By Proposition \ref{aux-41}, we have
\eqref{auxy10}.  Evaluating both sidesof this equality on an arbitrary $v \in \mathcal{V}$ we get
\begin{align}
(m \cdot v) \cdot u = (m \cdot (v\ps{1} \rhd u\ps{1})) \cdot (v\ps{2} \lhd u\ps{2}).
\end{align}
This proves that $M$ is a right module on the double crossed product $\mathcal{U} \bowtie \mathcal{V}$.

\medskip
\nin Next, we show that $M \in \, ^{\mathcal{U} \bowtie \mathcal{V}}\Mc$. This time using \eqref{auxy8}
 and the duality between right $\mathcal{F}$-action and left $\mathcal{V}$-coaction we get
\begin{align}
f(m\sns{-1})m\sns{0}\snsb{-1} \ot m\sns{0}\snsb{0} = f(m\snsb{0}\sns{-1})m\snsb{-1} \ot m\snsb{0}\sns{0}.
\end{align}
Since the pairing is non-degenerate, we conclude that $M$ is a left comodule over $\Uc\bi\Vc$.

\medskip
\nin Finally, we prove that AYD condition over $\mathcal{U} \bowtie \mathcal{V}$ is satisfied by using Corollary \ref{auxx-24}, that is  we show that \eqref{auxy13},\ldots,\eqref{auxy16} are satisfied.

\medskip
\nin Firstly, by considering the Hopf  duality between the $\mathcal{F}$ and  $\mathcal{V}$, the right $\mathcal{F} \acl \mathcal{U}$-module compatibility reads
\begin{equation}
f((m \cdot u)\sns{-1})(m \cdot u)\sns{0} = f(m\sns{-1} \lhd u\ps{1})m\sns{0} \cdot u\ps{2}.
\end{equation}
Hence \eqref{auxy13} holds.

\medskip
\nin Secondly, by  \eqref{auxy11} and the Hopf duality between  $\Fc$ and  $\Vc$, we get
\begin{equation}
m\snsb{-1}u^{\pr{0}}u^{\pr{1}}(m\snsb{0}\sns{-1}) \ot m\snsb{0}\sns{0} = m\snsb{-1}(m\snsb{0}\sns{-1} \rhd u) \ot m\snsb{0}\sns{0} = m\snsb{-1}u \ot m\snsb{0},
\end{equation}
which immediately imply \eqref{auxy14}.

\medskip
\nin Thirdly, evaluating the left $\mathcal{F} \acl \mathcal{U}$-coaction compatibility
\begin{equation}
(m^{\sns{0}}\snsb{-1})^{\pr{0}} \ot m^{\sns{-1}} \cdot (m^{\sns{0}}\snsb{-1})^{\pr{1}} \ot m^{\sns{0}}\snsb{-1} = m\snsb{-1} \ot (m\snsb{0})^{\sns{-1}} \ot (m\snsb{0})^{\sns{0}}
\end{equation}
on an arbitrary $v \in \mathcal{V}$, we get
\begin{equation}
v\ps{2} \rhd (m \cdot v\ps{1})\snsb{-1} \ot (m \cdot v\ps{1})\snsb{0} = m\snsb{-1} \ot m\snsb{0} \cdot v,
\end{equation}
which  immediately  implies \eqref{auxy15}.

\medskip
\nin Finally,  evaluating the left hand side of the evaluation \eqref{auxy9} on an arbitrary $v \in \mathcal{V}$, we get
\begin{align}
\begin{split}
& LHS = f\ps{1}(v\ps{2})(m \cdot v\ps{1}) \cdot f\ps{2} = f\ps{1}(v\ps{2})f\ps{2}((m \cdot v\ps{1})\sns{-1}) (m \cdot v\ps{1})\sns{0} \\
& = f(v\ps{2}(m \cdot v\ps{1})\sns{-1}) (m \cdot v\ps{1})\sns{0} = f(m\sns{-1} \cdot v\ps{1}) m\sns{0} \cdot v\ps{2},
\end{split}
\end{align}
and the the right hand side turns into
\begin{align}
\begin{split}
& RHS = (m \cdot v\ps{1})\snsb{-1} \rhd f\ps{1}(v\ps{2})f\ps{2}((m \cdot v\ps{1})\snsb{0}\sns{-1})(m \cdot v\ps{1})\snsb{0}\sns{0} \\
& = f\ps{1}(v\ps{2} \lhd (m \cdot v\ps{1})\snsb{-1})f\ps{2}((m \cdot v\ps{1})\snsb{0}\sns{-1})(m \cdot v\ps{1})\snsb{0}\sns{0} \\
& = f\ps{1}(v\ps{4} \lhd (m\sns{0} \cdot v\ps{2})\snsb{-1})f\ps{2}(S(v\ps{3})m\sns{-1}v\ps{1})(m\sns{0} \cdot v\ps{2})\snsb{0} \\
& = f([v\ps{4} \lhd (m\sns{0} \cdot v\ps{2})\snsb{-1}]S(v\ps{3})m\sns{-1}v\ps{1})(m\sns{0} \cdot v\ps{2})\snsb{0},
\end{split}
\end{align}
where on the third equality we use \eqref{auxy8}. So we get
\begin{align}
m\sns{-1} \cdot v\ps{1} \ot m\sns{0} \cdot v\ps{2} = [v\ps{4} \lhd (m\sns{0} \cdot v\ps{2})\snsb{-1}]S(v\ps{3})m\sns{-1}v\ps{1} \ot (m\sns{0} \cdot v\ps{2})\snsb{0}.
\end{align}
Using the cocommutativity of $\Vc$, we conclude \eqref{auxy16}.

\bigskip

\nin Conversely, take $M \in \, ^{\mathcal{U} \bowtie \mathcal{V}}\mathcal{AYD}_{\mathcal{U} \bowtie \mathcal{V}}$. Then  $M$ is a  left comodule over $\Fc\acl \Uc$ by \eqref{auxy15} and a right module over $\Fc\acl \Uc$ by  \eqref{auxy13}.  So by  Proposition \ref{aux-41} it suffices to verify \eqref{auxy8},\ldots, \eqref{auxy11}.

\medskip
\nin Indeed, \eqref{auxy8} follows from the coaction compatibility over $\Uc \bowtie \Vc$. The condition \eqref{auxy9} is the consequence of  \eqref{auxy16}. The equation \eqref{auxy10} is obtained from   the module compatibility over $\Uc \bowtie \Vc$. Finally \eqref{auxy11} follows from \eqref{auxy14}.
\end{proof}

\begin{proposition}\label{aux-59}
Let $(\Fg_1,\Fg_2)$ be a matched pair of Lie algebras and $M$ be an AYD module over the double crossed sum $\Fg_1 \bowtie \Fg_2$ with locally finite action and locally conilpotent coaction. Assume also that $M$ is stable over $R(\Fg_2)$ and $U(\Fg_1)$. Then  $M$ is stable over $R(\Fg_2) \acl U(\Fg_1)$.
\end{proposition}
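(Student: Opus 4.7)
The plan is to verify $m\ns{0}\cdot m\ns{-1}=m$ for $M$ as an $\Fc\acl\Uc$-module, where $\Fc=R(\Fg_2)$ and $\Uc=U(\Fg_1)$. Unpacking the coaction \eqref{auxy12} and the action \eqref{aux-17}, the goal reads
$$(m^{\sns{0}}\snsb{0}\cdot m^{\sns{-1}})\cdot m^{\sns{0}}\snsb{-1}=m.$$
I would obtain this by combining the two given stabilities with the coaction compatibility \eqref{aux-27} of Lemma \ref{comodule on bicrossed product} and the AYD condition \eqref{auxy4} for $M$ over $\Fg_1\bowtie\Fg_2$.

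First, apply the linear map $\Uc\ot\Fc\ot M\to M$, $u\ot f\ot m'\mapsto(m'\cdot f)\cdot u$, to both sides of \eqref{aux-27}. On the left-hand side, the inner composition $(m\snsb{0})^{\sns{0}}\cdot(m\snsb{0})^{\sns{-1}}$ collapses to $m\snsb{0}$ by $\Fc$-stability applied to $m\snsb{0}$, after which $m\snsb{0}\cdot m\snsb{-1}=m$ by $\Uc$-stability. Using commutativity of $\Fc$ on the right-hand side to split $m'\cdot(fg)=(m'\cdot f)\cdot g$ yields
$$\bigl((m^{\sns{0}}\snsb{0}\cdot m^{\sns{-1}})\cdot(m^{\sns{0}}\snsb{-1})^{\pr{1}}\bigr)\cdot(m^{\sns{0}}\snsb{-1})^{\pr{0}}=m.$$

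Next, prove the auxiliary identity $(N\cdot u^{\pr{1}})\cdot u^{\pr{0}}=N\cdot u$ for every $N\in M$ and $u\in\Uc$. Writing the $\Fc$-action through the $\Vc$-coaction as $N\cdot f=f(N\sns{-1})N\sns{0}$, with $\Vc=U(\Fg_2)$, the left-hand side becomes $u^{\pr{1}}(N\sns{-1})N\sns{0}\cdot u^{\pr{0}}$. Summing over the Sweedler indices of $u$ and using the Hopf duality $v\rhd u=v(u^{\pr{1}})u^{\pr{0}}$ that comes from the matched pair/Lie-Hopf correspondence of Section \ref{Sec-Lie-Hopf}, this collapses to $N\sns{0}\cdot(N\sns{-1}\rhd u)$. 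Applying \eqref{auxy4} with $m$ replaced by $N$ gives the identity $N\sns{-1}\rhd u\ot N\sns{0}=u\ot N$ in $\Uc\ot M$; composing with the right $\Uc$-action then rewrites the above as $N\cdot u$.

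Finally, plug the auxiliary identity into the first step with $N=m^{\sns{0}}\snsb{0}\cdot m^{\sns{-1}}$ and $u=m^{\sns{0}}\snsb{-1}$ at each Sweedler summand: this removes the primed indices and delivers the desired $(m^{\sns{0}}\snsb{0}\cdot m^{\sns{-1}})\cdot m^{\sns{0}}\snsb{-1}=m$. The main delicacy is bookkeeping the interplay of the two coactions and two actions through the nested Sweedler indices, and verifying that the duality $v\rhd u=v(u^{\pr{1}})u^{\pr{0}}$ used in the auxiliary step is precisely the one furnished by the correspondence between $\Uc\bowtie\Vc$ and $\Fc\acl\Uc$ developed in the preceding section.
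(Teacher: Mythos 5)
Your proposal is correct, and it reaches the same endpoint as the paper---reduce the $R(\Fg_2)\acl U(\Fg_1)$-stability of $m$ to $R(\Fg_2)$-stability followed by $U(\Fg_1)$-stability---but by a noticeably longer route. The paper's own proof is a three-line computation: after writing the stability expression as $\bigl((m^{\sns{0}})\snsb{0}\cdot m^{\sns{-1}}\bigr)\cdot(m^{\sns{0}})\snsb{-1}$, it invokes only the coassociativity identity \eqref{aux-38} of the $U(\Fg_1\bowtie\Fg_2)$-coaction (equivalently \eqref{auxy8}, $(m\cdot f)\snsb{-1}\ot(m\cdot f)\snsb{0}=m\snsb{-1}\ot m\snsb{0}\cdot f$) to commute the $R(\Fg_2)$-action past the $U(\Fg_1)$-coaction, and then applies the two stabilities in succession. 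You instead start from the bicrossed comodule compatibility \eqref{aux-27}, which drags in the extra leg $(m^{\sns{0}}\snsb{-1})^{\pr{1}}$ of the $\Fc$-coaction on $\Uc$, and you then need the AYD identity \eqref{auxy4}, via your auxiliary identity $(N\cdot u^{\pr{1}})\cdot u^{\pr{0}}=N\cdot u$, to absorb it. All your steps check out: the auxiliary identity is bilinear in $(N,u)$, so applying it to the entangled Sweedler sum is legitimate; the right-module axiom $m'\cdot(fg)=(m'\cdot f)\cdot g$ justifies the splitting; and the duality $v\rhd u=v(u^{\pr{1}})u^{\pr{0}}$ you use is indeed the one underlying the $\Uc\bowtie\Vc$ versus $\Fc\acl\Uc$ correspondence. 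The cost of your route is that it invokes the AYD condition \eqref{auxy4} at a point where the paper only needs comodule coassociativity, so the paper's version is the more economical one, but yours is a valid independent verification of the same statement.
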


\begin{proof}
For an $m \in M$, using the $U(\Fg_1 \bowtie \Fg_2)$-comodule compatibility \eqref{aux-38}, we get
\begin{align}
\begin{split}
& (m^{\sns{0}})\snsb{0} \cdot (m^{\sns{-1}} \acl (m^{\sns{0}})\snsb{-1}) = ((m^{\sns{0}})\snsb{0} \cdot m^{\sns{-1}}) \cdot (m^{\sns{0}})\snsb{-1} \\
& = (m^{\sns{0}} \cdot m^{\sns{-1}})\snsb{0} \cdot (m^{\sns{0}} \cdot m^{\sns{-1}})\snsb{-1} = m\snsb{0} \cdot m\snsb{-1} = m.
\end{split}
\end{align}
\end{proof}

\subsection{AYD modules over the Connes-Moscovici Hopf algebras}

In this subsection we investigate  the finite dimensional SAYD modules over the Connes-Moscovici Hopf algebras $\Hc_n$. Let us first recall from \cite{MoscRang09} the bicrossed product decomposition of the Connes-Moscovici Hopf algebras.

\medskip
\nin Let $\Diff(\Rb^n)$ denote the group of diffeomorphisms on $\Rb^n$. Via the splitting $\Diff(\Rb^n) = G \cdot N$, where $G$ is the group of affine transformation on $\Rb^n$ and
\begin{equation}
N = \Big\{\psi \in \Diff(\Rb^n) \; \Big| \; \psi(0) = 0, \; \psi'(0) = \Id\Big\},
\end{equation}
we have $\Hc_n = \Fc(N) \acl U(\Fg)$. Elements of the Hopf algebra $\Fc:=\Fc(N)$ are called regular functions. They are the coefficients of the Taylor expansions at $0 \in \Rb^n$ of the elements of the group $N$. Here, $\Fg$ is the Lie algebra of the group $G$ and $\Uc:=U(\Fg)$ is the universal enveloping algebra of $\Fg$.

\medskip
\nin On the other hand,  by \cite{Fuks}  the Lie algebra $\Fa$ of formal vector fields on $\Rb^n$ admits the filtration
\begin{align}
\Fa = \Fl_{-1} \supseteq \Fl_{0} \supseteq \Fl_{1} \supseteq \ldots
\end{align}
with the bracket
\begin{align}
[\Fl_p,\Fl_q] \subseteq \Fl_{p+q}.
\end{align}
Here, the subalgebra $\Fl_k \subseteq \Fa$, $k \geq -1$, consists of the vector fields $\sum f_i\p/\p x^i$ such that $f_1, \ldots, f_n$ belongs to the $(k+1)$st power of the maximal ideal of the ring of formal power series. Then it is immediate to conclude
\begin{equation}
gl_n = \Fl_0/\Fl_1, \quad \Fl_{-1}/\Fl_0 \cong \mathbb{R}^n, \quad\text{and} \quad \Fg_1 = \Fl_{-1}/\Fl_0 \oplus \Fl_0/\Fl_1 \cong {gl_n}^{\mbox{aff}}.
\end{equation}

\nin As a result, setting $\Fn := \Fl_1$, the Lie algebra  $\Fa$ admits the decomposition $\Fa = \Fg \oplus \Fn$, and hence we have a matched pair of Lie algebras $(\Fg,\Fn)$. The Hopf algebra  $\Fc(N)$ is isomorphic with $R(\Fn)$ via the following non-degenerate pairing
\begin{equation}
\Big\langle  \a^i_{ j_1,\ldots,j_p}, Z^{k_1, \ldots, k_q}_l\Big\rangle= \d^p_q\d^i_l\d_{j_1,\ldots,j_p}^{k_1, \ldots, k_q}.
\end{equation}
Here
\begin{equation}
\a^i_{ j_1,\ldots,j_p}(\psi)= {\left.\frac{\p^p}{\p x^{j_p}\ldots \p x^{j_1}}\right|}_{x=0}\psi^i(x),
\end{equation}
and
\begin{equation}
Z^{k_1, \ldots, k_q}_l= x^{k_1}\ldots x^{k_q}\frac{\p}{\p x^l}.
\end{equation}
We refer the reader to \cite{ConnMosc98} for more details on this duality.

\medskip
\nin Let $\d$ be the trace of the adjoint representation of $\Fg$ on itself. Then it is known that $\Cb_\d$ is a SAYD module over the Hopf algebra $\Hc_n$ \cite{ConnMosc98}.

\begin{lemma}\label{lemma-(co)action-trivial}
For any YD module over $\Hc_n$, the action of $\Uc$ and the coaction of $\Fc$ are trivial.
\end{lemma}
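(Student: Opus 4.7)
The plan is to combine the Yetter--Drinfeld compatibility conditions of Proposition \ref{aux-41} with the specific matched pair structure $(\Fg,\Fn)=(gl_n^{\rm aff},\Fl_1)$, exploiting the grading induced on $\Fc=R(\Fn)$ by the Euler-type element $Z=\sum_i x^i\p_i\in gl_n\sbs\Fg$.

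First I would record two key facts about the coaction $\Db:\Uc\ra\Uc\ot\Fc$. A direct computation in $\Fa=\Fg\oplus\Fn$ gives $[Z,\eta]=(q-1)\eta\in\Fn$ for every $\eta\in\Fl_q\cap\Fn$, hence $\eta\rhd Z=0$ and therefore $\Db(Z)=Z\ot 1$; by the same filtration argument $[\eta,Y]\in\Fn$ for every $Y\in gl_n$ and $\eta\in\Fn$, so $\Db(Y)=Y\ot 1$ for all $Y\in gl_n$. Dually, the positive $\ad(Z)$-weights on $\Fn$ induce an $\Nb$-grading $\Fc=\bigoplus_{k\ge 0}\Fc_k$ with $\Fc_0=\Cb\cdot 1$, and the left action $Z\rhd\cdot$ operates as multiplication by $k$ on $\Fc_k$; in particular $Z\rhd\cdot$ is invertible on $\Fc_+$.

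Second, I would feed $u=Z$ into the YD identity \eqref{auxy10}. Since $\Db(Z)=Z\ot 1$ and $Z$ is primitive, the identity collapses to
$$m\sns{-1}\ot m\sns{0}\cdot Z\;=\;(m\cdot Z)\sns{-1}\ot(m\cdot Z)\sns{0}+(Z\rhd m\sns{-1})\ot m\sns{0}.$$
Decomposing the $\Fc$-coaction of $M$ according to the $Z$-weight grading and performing a downward induction on the maximal weight $k$ occurring in $m\sns{-1}$, the extra term $(Z\rhd m\sns{-1})\ot m\sns{0}$ multiplies the top-weight component by $k\ge 1$, while the remaining terms only involve strictly smaller weights. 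This forces every positive-weight component of $m\sns{-1}\ot m\sns{0}$ to vanish, so the $\Fc$-coaction on $M$ is trivial.

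Third, with the $\Fc$-coaction trivial, \eqref{auxy10} for a primitive $u\in\Fg$ reduces to $1\ot m\cdot u=u^{\pr{1}}\ot m\cdot u^{\pr{0}}$. Taking $u=X_i=\p_i$, whose coaction unpacks from $[Z^{k_1k_2}_l,X_i]=-\d^{k_1}_i Y^{k_2}_l-\d^{k_2}_i Y^{k_1}_l$ and the duality pairing between $R(\Fn)$ and $U(\Fn)$ as
$$\Db(X_i)=X_i\ot 1-2\sum_{k,l}Y^k_l\ot\a^l_{i k},$$
the linear independence of the $\a^l_{i k}$'s in $\Fc_+$ forces $m\cdot Y^k_l=0$ for every $Y^k_l\in gl_n$. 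Iterating through the coaction-algebra formula \eqref{mp4} on quadratic and higher products of generators of $\Uc$, a PBW-type argument then yields $m\cdot u=\ve(u)m$ for every $u\in\Uc$, so the $\Uc$-action is also trivial.

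The main obstacle will be the weight-by-weight argument in Step 2: since the term $(m\cdot Z)\sns{-1}$ is itself unknown, the induction has to be set up carefully, most likely by projecting onto the top weight occurring across all $m\in M$ simultaneously. A further subtlety is the infinite-dimensionality of $R(\Fn)$, which requires care when invoking linear independence of the generating functionals $\a^l_{i k}$ across arbitrary orders.
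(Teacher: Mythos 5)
Your approach is genuinely different from the paper's, and it has a real gap at its central step. The paper's proof is a two-line reduction: the YD compatibility (the same manipulation as in Proposition \ref{aux-41}, with the $\Fc$-coaction dualized to a $U(\Fn)$-action) assembles the $\Uc$-action and the $\Fc$-coaction on a finite dimensional $M$ into a single module structure over the double crossed sum $\Fa=\Fg\bowtie\Fn$, the Lie algebra of formal vector fields; by Fuks, $\Fa$ admits no nontrivial finite dimensional representation, so both structures are trivial at once. (Note that finite dimensionality is essential and is used explicitly in the paper's proof, even though it is suppressed in the statement.) You instead attack the two structures separately by an explicit weight computation, and the difficulty you flag at the end of Step 2 is not a technicality but precisely where the argument fails.

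Concretely: let $k\ge 1$ be the largest $Z$-weight occurring in the image of the $\Fc$-coaction (it exists because $M$ is finite dimensional), and let $T:M\to\Fc_k\ot M$ be the top-weight component of the coaction. Your identity gives $(\Id\ot R_Z)\circ T-T\circ R_Z=k\,T$, where $R_Z$ denotes the action of $Z$ on $M$; the unknown term $(m\cdot Z)^{\sns{-1}}\ot(m\cdot Z)^{\sns{0}}$ genuinely contributes to weight $k$ and cannot be discarded. This relation only says that $T$ carries the generalized $\lambda$-eigenspace of $R_Z$ into $\Fc_k$ tensored with the generalized $(\lambda+k)$-eigenspace; it forces $T=0$ only if no two eigenvalues of $Z$ on $M$ differ by $k$, and at this stage you have no control over the spectrum of $Z$ on $M$. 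Step 3 then rests on Step 2, and it needs an independent repair as well: annihilating the $gl_n$-action does not by itself annihilate the translations $X_i$; one needs the commutation relation $[Y^i_i,X_i]=-X_i$ together with the fact that \emph{every} element of $M$ is killed by $gl_n$, rather than an iteration of \eqref{mp4}. The cleanest way to salvage your computation is in fact the paper's route: evaluate \eqref{auxy10} against $U(\Fn)$ to reconstitute the full $\Fa$-module structure first, and only then invoke the structure theory (Fuks) to kill it.
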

\begin{proof}
Let $M$ be a finite dimensional YD module over $\mathcal{H}_n = \Fc \acl \Uc$.
 One uses the same argument as in Proposition \ref{aux-41}  to show that $M$ is a module over $\Fa$. However we know that $\Fa$ has no nontrivial finite dimensional representation  by  \cite{Fuks}. We conclude that the $\Uc$ action and the $\Fc$-coaction on $M$ are trivial.
\end{proof}

\nin Let us introduce the isotropy subalgebra $\Fg_0\subset \Fg$ by
\begin{align}
\Fg_0 = \Big\{X \in \Fg_1\,\Big|\, Y \rhd X = 0, \forall Y \in \Fg_2\Big\} \subseteq \Fg_1.
\end{align}
 By the construction of $\Fa$ it is obvious that $\Fg_0$ is generated by $Z^i_j$. So $\Fg_0\cong gl_n$.
  By the definition of the  coaction $\Db_\Uc:\Uc\ra \Uc\ot \Fc$ we see that $U(\Fg_0)=\Uc^{co\Fc}$.

\begin{lemma}\label{lemma-coaction-lands}
For any finite dimensional YD module $M$ over  $\Hc_n$ the coaction $$\Db:M\ra \Hc_n\ot M$$ lands merely in $U(\Fg_0)\ot M.$
\end{lemma}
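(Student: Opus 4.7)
The plan is to combine Lemma \ref{lemma-(co)action-trivial} with the comodule compatibility of Lemma \ref{comodule on bicrossed product}, and then use the identification $U(\Fg_0) = \Uc^{co\Fc}$ recorded just above the statement.

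First, since $M$ is a finite dimensional YD module over $\Hc_n = \Fc \acl \Uc$, Lemma \ref{lemma-(co)action-trivial} tells us that the $\Uc$-action and the $\Fc$-coaction on $M$ are both trivial, i.e.\ $m \cdot u = \ve(u)m$ and $m^{\sns{-1}} \ot m^{\sns{0}} = 1 \ot m$ for all $u \in \Uc$ and $m \in M$. Writing the $\Fc \acl \Uc$-coaction in the factorized form \eqref{auxy12}, namely $m \mapsto m^{\sns{-1}} \ot m^{\sns{0}}\snsb{-1} \ot m^{\sns{0}}\snsb{0}$, triviality of the $\Fc$-coaction collapses this to $m \mapsto 1 \acl m\snsb{-1} \ot m\snsb{0}$, so in particular the coaction already takes values in $(1 \acl \Uc) \ot M \cong \Uc \ot M$.

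Next, I would plug the triviality of the $\Fc$-coaction into the comodule compatibility \eqref{aux-27} of Lemma \ref{comodule on bicrossed product}. The right-hand side of \eqref{aux-27} becomes $m\snsb{-1} \ot 1 \ot m\snsb{0}$, while the left-hand side collapses to $(m\snsb{-1})^{\pr{0}} \ot (m\snsb{-1})^{\pr{1}} \ot m\snsb{0}$, using that $m^{\sns{-1}} \ot m^{\sns{0}} = 1 \ot m$. Equating the two yields the identity
\begin{equation}
(m\snsb{-1})^{\pr{0}} \ot (m\snsb{-1})^{\pr{1}} \ot m\snsb{0} \; = \; m\snsb{-1} \ot 1 \ot m\snsb{0}
\end{equation}
in $\Uc \ot \Fc \ot M$.

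Finally, I would conclude by observing that this identity says precisely that $m\snsb{-1}$ lies in the subspace of $\Uc$ on which the right $\Fc$-coaction $u \mapsto u^{\pr{0}} \ot u^{\pr{1}}$ is trivial, i.e.\ $m\snsb{-1} \in \Uc^{co\Fc}$. Since $\Uc^{co\Fc} = U(\Fg_0)$, as recorded immediately before the statement of the lemma, we obtain $m\snsb{-1} \in U(\Fg_0)$, so the coaction $\Db : M \to \Hc_n \ot M$ in fact lands in $U(\Fg_0) \ot M$, as claimed. The only non-routine point is the passage from the matched pair compatibility of Lemma \ref{comodule on bicrossed product} to the $\Fc$-coinvariance of $m\snsb{-1}$, but once the $\Fc$-coaction on $M$ is known to be trivial this is an immediate substitution and does not require any new input.
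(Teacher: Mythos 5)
Your proof is correct and follows essentially the same route as the paper: triviality of the $\Fc$-coaction (Lemma \ref{lemma-(co)action-trivial}) collapses the coaction into $\Uc\ot M$, and then the coassociativity of the $\Fc\acl\Uc$-coaction forces $m\snsb{-1}\in\Uc^{co\Fc}=U(\Fg_0)$. The only cosmetic difference is that you invoke the packaged compatibility \eqref{aux-27} of Lemma \ref{comodule on bicrossed product} (which is itself extracted from that coassociativity), whereas the paper writes the coassociativity identity out directly and reads off the same conclusion.
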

\begin{proof}
By Lemma \ref{lemma-(co)action-trivial} we know that $\Uc$-action and $\Fc$-coaction
 on $M$ are trivial. Hence, the left coaction $M \to \Fc\acl \Uc \ot M$ becomes $m \mapsto 1 \acl m\snsb{-1} \ot m\snsb{0}$. The coassociativity of the coaction
\begin{equation}
1 \acl m\snsb{-2} \ot 1 \acl m\snsb{-1} \ot m\snsb{0} = 1 \acl (m\snsb{-2})^{\pr{0}} \ot (m\snsb{-2})^{\pr{1}} \acl m\snsb{-1} \ot m\snsb{0}
\end{equation}
implies that
\begin{equation}
m \mapsto m\snsb{-1} \ot m\snsb{0} \in \Uc^{co\Fc} \ot M =U(\Fg_0) \ot M.
\end{equation}
\end{proof}

\begin{lemma}\label{lemma-coaction-trivial}
Let $M$ be a finite dimensional YD module over the Hopf algebra $\Hc_n$ then the coaction of $\Hc_n$ on $M$ is trivial.
\end{lemma}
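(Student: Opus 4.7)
The plan is to apply the Yetter--Drinfeld compatibility to $h = f \acl 1 \in \Hc_n$ for arbitrary $f \in \Fc$, and exploit Lemma~\ref{lemma-(co)action-trivial} (triviality of the $\Uc$-action and $\Fc$-coaction on $M$) together with Lemma~\ref{lemma-coaction-lands} (the coaction lands in $U(\Fg_0) \otimes M$). Writing the coaction as $m \mapsto 1 \acl m\snsb{-1} \otimes m\snsb{0}$ with $m\snsb{-1} \in U(\Fg_0)$, the identity $\Delta(f \acl 1) = (f\ps{1} \acl 1) \otimes (f\ps{2} \acl 1)$ together with the smash-product rule $(1 \acl u)(g \acl 1) = (u\ps{1} \rhd g) \acl u\ps{2}$ reduce the YD compatibility to
\[
(f \acl m\snsb{-1}) \otimes m\snsb{0} = (m\snsb{-1}\ps{1} \rhd f) \acl m\snsb{-1}\ps{2} \otimes m\snsb{0}.
\]
Expanding the coaction in a basis $\{m^b\}$ of $M$ as $m^a \mapsto \sum_b u^a{}_b \otimes m^b$ and using the linear independence of the $m^b$, this produces, for each $u := u^a{}_b \in U(\Fg_0)$,
\[
f \otimes u = (u\ps{1} \rhd f) \otimes u\ps{2} \quad \text{in } \Fc \otimes U(\Fg_0), \qquad \forall f \in \Fc,
\]
which says precisely that $1 \acl u$ commutes with all of $\Fc \acl 1$ inside $\Hc_n$.

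From here I would conclude that each such $u$ lies in $\mathbb{C} \cdot 1$. Applying $\mathrm{id} \otimes \phi$ for an arbitrary $\phi \in U(\Fg_0)^*$ to the above identity shows that the element $v_\phi := (\mathrm{id} \otimes \phi)(\Delta u - 1 \otimes u) \in U(\Fg_0)$ acts as zero on $\Fc$. Granted that the representation $U(\Fg_0) = U(gl_n) \to \End(\Fc)$ induced by the matched-pair action is faithful, this forces $v_\phi = 0$ for every $\phi$, whence $\Delta u = 1 \otimes u$; applying $\mathrm{id} \otimes \ve$ then gives $u = \ve(u) \cdot 1 \in \mathbb{C}$. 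Combined with the counit axiom $\ve(m\snsb{-1}) m\snsb{0} = m$ for the coaction, this establishes that the $\Hc_n$-coaction on $M$ is trivial.

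The main obstacle is to verify the faithfulness of the $U(gl_n)$-action on $\Fc = R(\Fn)$. This should follow from the explicit matched-pair structure $\Fa = \Fg \bowtie \Fn$: the $gl_n$-module $\Fn$, with its generators $Z^{k_1,\ldots,k_q}_l$ for $q \ge 2$, is an infinite direct sum of polynomial $gl_n$-representations whose irreducible constituents collectively separate the elements of $U(gl_n)$, and the action on $\Fc = R(\Fn)$ extends the dual $gl_n$-action on $\Fn^*$ by derivations. Equivalently, any $u \in U(\Fg_0)$ commuting with every multiplication operator $L_f$ on $\Fc$ must itself be a multiplication operator, hence of PBW degree zero, hence scalar.
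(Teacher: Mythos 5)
Your strategy (test the Yetter--Drinfeld condition on elements $f \acl 1$ and deduce that each matrix coefficient of the coaction commutes with $\Fc$, hence is scalar) is genuinely different from the paper's, but the key reduction is not justified and in fact hides a circularity. Writing out the YD compatibility for $h = f\acl 1$ with the coaction $m\mapsto 1\acl m\snsb{-1}\ot m\snsb{0}$ actually gives
\begin{equation*}
f\ps{2}\acl (m\cdot f\ps{1})\snsb{-1}\ot (m\cdot f\ps{1})\snsb{0}
=(m\snsb{-1}\ps{1}\rhd f\ps{1})\acl m\snsb{-1}\ps{2}\ot m\snsb{0}\cdot f\ps{2};
\end{equation*}
your displayed identity follows from this only after replacing $m\cdot f\ps{1}$ by $\ve(f\ps{1})m$ and $m\snsb{0}\cdot f\ps{2}$ by $\ve(f\ps{2})m\snsb{0}$, i.e.\ after assuming that the $\Fc$-action on $M$ is trivial. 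But Lemma \ref{lemma-(co)action-trivial} only gives triviality of the $\Uc$-action and of the $\Fc$-\emph{coaction}; triviality of the $\Fc$-action is exactly Lemma \ref{lemma-action-trivial}, which is proved \emph{after} the present lemma and whose proof invokes it. As written, your argument therefore assumes a statement that logically depends on the one being proved.

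The paper avoids this by working entirely inside the Hopf subalgebra $U(\Fg_0)\cong U(gl_n)$, where the action \emph{is} already known to be trivial: the YD condition for $Z\in\Fg_0$ then collapses to the ad-invariance $[m\snsb{-1},Z]\ot m\snsb{0}=0$, and an explicit computation with the $gl_n$ structure constants forces the coaction coefficients to vanish for $n\ge 2$ (with a separate argument for $n=1$, where $\Fg_0$ is abelian, using the relation $[Z,X]=X$). Even granting your reduction, a second gap remains: your conclusion rests on the faithfulness of the $U(gl_n)$-action on $\Fc=R(\Fn)$, which you only sketch; this is a nontrivial assertion about which irreducible $gl_n$-constituents occur in $R(\Fn)$ and whether their annihilators intersect to zero in $U(gl_n)$, and it would need an actual proof. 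Both problems disappear if you run your computation on elements $1\acl u$ with $u\in U(\Fg_0)$ rather than on $f\acl 1$.
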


\begin{proof}
By  Lemma \ref{lemma-coaction-lands} we know that the coaction of $\Hc_n$ on $M$ lands in $U(\Fg_0)\ot M$. Since $U(\Fg_0)$ is a Hopf subalgebra of $\Hc_n$, it is obvious that $M$ is an AYD module over $U(\Fg_0)$. Since $\Fg_0$ is finite dimensional,  $M$ becomes an AYD module over $\Fg_0$.

\medskip
\nin Let us express  the $\Fg_0$-coaction for an arbitrary basis element $m^i \in M$ as
\begin{align}
m^i \mapsto m^i\nsb{-1} \ot m^i\nsb{0} = \alpha^{ip}_{kq}Z^q_p \ot m^k \in \Fg_0 \ot M.
\end{align}
Then AYD condition over $\Fg_0$ becomes
\begin{align}
\alpha^{ip}_{kq}[Z^q_p,Z] \ot m^k = 0.
\end{align}
Choosing an arbitrary $Z = Z^{p_0}_{q_0} \in gl_n = \Fg_0$, we get
\begin{align}
\alpha^{ip_0}_{kq}Z^q_{q_0} \ot m^k - \alpha^{ip}_{kq_0}Z^{p_0}_p \ot m^k = 0,
\end{align}
or equivalently
\begin{align}
\alpha^{ip_0}_{kq_0}(Z^{q_0}_{q_0} - Z^{p_0}_{p_0}) + \sum_{q \neq q_0}\alpha^{ip_0}_{kq}Z^q_{q_0} - \sum_{p \neq p_0}\alpha^{ip}_{kq_0}Z^{p_0}_p = 0.
\end{align}
Thus for $n \geq 2$ we have proved that the $\Fg_0$-coaction is trivial. Hence its lift  as a  $U(\Fg_0)$-coaction that  we have started with  is trivial. This proves that the $\Uc$ coaction and hence the $\Hc_n$ coaction on $M$ is trivial.

\medskip
\nin On the other hand,  for $n = 1$, the YD condition for $X \in \Hc_1$ reads, in view of the triviality of the action of ${gl_1}^{\rm aff}$,
\begin{equation}
Xm\pr{-1} \ot m\pr{0} + Z(m \cdot \d_1)\pr{-1} \ot (m \cdot \d_1)\pr{0} = m\pr{-1}X \ot m\pr{0}.
\end{equation}
By Lemma \ref{lemma-coaction-lands} we know that the coaction lands in $U({gl_1}^{\rm aff})$. Together with the relation $[Z,X] = X$ this forces the $\Hc_1$-coaction (and also the action) to be trivial.

\end{proof}

\begin{lemma}\label{lemma-action-trivial}
Let $M$ be a finite dimensional YD module over the Hopf algebra $\Hc_n$. Then the action of $\Hc_n$ on $M$ is trivial.
\end{lemma}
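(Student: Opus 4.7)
\nin \textit{Proof plan.} Combining Lemma~\ref{lemma-coaction-trivial} with Lemma~\ref{lemma-(co)action-trivial}, the $\Hc_n$-coaction on $M$ is already trivial and the $\Uc$-action on $M$ is trivial; hence it suffices to show that the $\Fc$-action on $M$ is trivial as well. Once the coaction is trivial, the YD compatibility reduces to
$$h\ps{2}\ot m\cdot h\ps{1} \;=\; h\ps{1}\ot m\cdot h\ps{2}, \qquad h\in\Hc_n,\;m\in M,$$
and the strategy is to test this identity on the elements of $\Uc$ whose coproduct in the bicrossed product $\Hc_n=\Fc\acl\Uc$ carries a genuine cross term in $\Fc\ot\Uc$.

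\nin The natural candidates are the translation generators $X_k\in\Fg_1\subset\Uc$, whose comultiplication in $\Hc_n$ has, by the bicrossed product formula of Subsection~3.2 applied to the $\Fc$-coaction on $\Fg_1$, the shape
$$\Delta X_k \;=\; X_k\ot 1 + 1\ot X_k + \a^i_{jk}\ot Y^j_i,$$
with $\a^i_{jk}\in\Fc$ the basic ($p=2$) generators and $Y^j_i\in\Uc$ the corresponding elements of $gl_n\subset\Fg_1$. Plugging $h=X_k$ into the reduced YD identity and cancelling the symmetric primitive pieces yields
$$Y^j_i\ot m\cdot\a^i_{jk} \;=\; \a^i_{jk}\ot m\cdot Y^j_i,$$
whose right-hand side vanishes by Lemma~\ref{lemma-(co)action-trivial}. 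Since the $Y^j_i$ are linearly independent in $\Uc$, one reads off $m\cdot\a^i_{jk}=0$ for every basic generator of $\Fc$.

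\nin To propagate this vanishing to all of $\Fc$, the plan is to invoke the iterated-commutator presentation $\a^i_{j_1,\dots,j_{p+1}}=[X_{j_{p+1}},\a^i_{j_1,\dots,j_p}]$ inside $\Hc_n$. Acting on $m$ and expanding,
$$m\cdot\a^i_{j_1,\dots,j_{p+1}} \;=\; (m\cdot X_{j_{p+1}})\cdot\a^i_{j_1,\dots,j_p} \,-\, (m\cdot\a^i_{j_1,\dots,j_p})\cdot X_{j_{p+1}},$$
and both summands vanish by the triviality of the $\Uc$-action together with the inductive hypothesis; an induction on the length of the multi-index then closes the argument, forcing the $\Fc$-action, and hence the full $\Hc_n$-action, to be trivial. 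The main technical obstacle is essentially notational: pinning down the precise cross term $\a^i_{jk}\ot Y^j_i$ appearing in $\Delta X_k$ and checking that iterated commutators with the $X_l$'s genuinely exhaust the generators of $\Fc$. Both are standard features of the Connes-Moscovici construction but require some index bookkeeping in the general $n$ case.
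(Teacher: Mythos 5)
Your proposal is correct and follows essentially the same route as the paper: test the (co)action-reduced YD identity on the affine generators $X_k$, whose coproduct cross term $\delta^i_{jk}\ot Y^j_i$ forces $m\cdot\delta^i_{jk}=0$, and then propagate to the higher $\delta^i_{jkl_1\ldots l_s}$ using $[X_l,\delta^i_{jk}]=X_l\rhd\delta^i_{jk}$ together with the triviality of the $\Uc$-action. The only cosmetic difference is that you work directly with the YD identity while the paper writes out $\Delta^2$ for the antipode-twisted form; the resulting computation and induction are the same.
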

\begin{proof}
By Lemma \ref{lemma-(co)action-trivial} we know that the action of $\Hc_n$ on $M$ is concentrated on the action of $\Fc$ on $M$. So it suffices to prove that this action is trivial.

\medskip
\nin For an arbitrary $m \in M$ and $1 \acl X_k \in \mathcal{H}_n$, we write the YD compatibility. First we calculate
\begin{align}
\begin{split}
& \Delta^2(1 \acl X_k) = \\
& (1 \acl 1) \ot (1 \acl 1) \ot (1 \acl X_k) + (1 \acl 1) \ot (1 \acl X_k) \ot (1 \acl 1) \\
& + (1 \acl X_k) \ot (1 \acl 1) \ot (1 \acl 1) + (\delta^p_{qk} \acl 1) \ot (1 \acl Y_p^q) \ot (1 \acl 1) \\
& + (1 \acl 1) \ot (\delta^p_{qk} \acl 1) \ot (1 \acl Y_p^q) + (\delta^p_{qk} \acl 1) \ot (1 \acl 1) \ot (1 \acl Y_p^q).
\end{split}
\end{align}
Since, by  Lemma \ref{lemma-coaction-trivial}, the coaction of $\Hc_n$ on $M$ is trivial, the AYD condition  can be written as
\begin{align}
\begin{split}
& (1 \acl 1) \ot m \cdot X_k = S(1 \acl X_k) \ot m + 1 \acl 1 \ot m \cdot X_k + 1 \acl X_k \ot m + \\
& \delta^p_{qk} \acl 1 \ot m \cdot Y_p^q - 1 \acl Y_p^q \ot m \cdot \delta^p_{qk} - \delta^p_{qk} \acl Y_p^q \ot m = \\
& \delta^p_{qk} \acl 1 \ot m \cdot Y_p^q + Y_i^j \rhd \delta^i_{jk} \acl 1 \ot m - 1 \acl Y_p^q \ot m \cdot \delta^p_{qk}.
\end{split}
\end{align}
Therefore,
\begin{align}
m \cdot \delta^p_{qk} = 0.
\end{align}
Finally, by the module compatibility on a bicrossed product $\Fc \acl \Uc$, we get
\begin{align}
(m \cdot X_l) \cdot \delta^p_{qk} = m \cdot (X_l \rhd \delta^p_{qk}) + (m \cdot \delta^p_{qk}) \cdot X_l,
\end{align}
which in turn, by using one more time the triviality of the $U(\Fg_1)$-action on $M$, implies
\begin{align}
m \cdot \delta^p_{qkl} = 0.
\end{align}
Similarly we have
\begin{align}
m \cdot \delta^p_{qkl_1 \ldots l_s} = 0, \quad \forall s
\end{align}
This proves that the   $\Fc$-action and a posteriori
the    $\Hc_n$ action on $M$ is trivial.
\end{proof}

\nin Now we prove the main result of this section.
\begin{theorem}
The only finite dimensional AYD module over the Connes-Moscovici Hopf algebra $\mathcal{H}_n$  is $\Cb_\d$.
\end{theorem}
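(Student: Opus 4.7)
The plan is to reduce the statement from AYD modules to YD modules already handled by Lemmas \ref{lemma-coaction-trivial} and \ref{lemma-action-trivial}, by twisting with the canonical modular pair in involution $(\delta, 1)$ of $\Hc_n$. Recall that this pair defines the one-dimensional SAYD module $\Cb_\delta$ of \cite{ConnMosc98}, and that for any Hopf algebra equipped with a modular pair $(\delta, \sigma)$ satisfying $\delta(\sigma) = 1$ and $S_\delta^2 = \Ad_\sigma$, the operation of tensoring with the associated one-dimensional SAYD implements an equivalence between the categories of AYD and YD modules.

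First, for a finite-dimensional AYD module $M$ over $\Hc_n$, I would introduce the twisted module $\widetilde{M} := M \ot \Cb_{\delta^{-1}}$, with action and coaction obtained by combining those of $M$ with the character $\delta^{-1}$ on the first tensor slot of the coaction. A direct Sweedler-level computation, using $\delta(1) = 1$ and $S_\delta^2 = \Ad_1 = \Id$, shows that the AYD compatibility of $M$ (which features $S(h\ps{3})$ on the first tensor factor) is transformed into the YD compatibility for $\widetilde{M}$, and that $M \mapsto \widetilde{M}$ is an equivalence of categories that identifies $\Cb_\delta$ with the trivial YD module $\Cb$.

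Next, I would apply Lemmas \ref{lemma-coaction-trivial} and \ref{lemma-action-trivial} to the finite-dimensional YD module $\widetilde{M}$, concluding that both the $\Hc_n$-action and the $\Hc_n$-coaction on $\widetilde{M}$ are trivial. Untwisting through the equivalence, the $\Hc_n$-coaction on $M$ is then forced into $1 \ot M$, while the $\Hc_n$-action on $M$ factors through the character $\delta : \Hc_n \to \Cb$. Consequently $M \cong \Cb_\delta^{\oplus \dim M}$ as an AYD module over $\Hc_n$, proving that $\Cb_\delta$ is, up to finite direct sums, the only finite-dimensional AYD module over $\Hc_n$.

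The main obstacle is the technical justification of the twisting equivalence: one must carefully track how the antipode $S$ and the character $\delta$ interact through coassociativity, module compatibility, and the bicrossed decomposition $\Hc_n = \Fc \acl \Uc$, which is neither commutative nor cocommutative. The crucial identities all rest on the defining properties of $(\delta, 1)$ as a modular pair in involution, namely $\delta(\sigma) = 1$ and $S_\delta^2 = \Ad_\sigma = \Id$. Once this Hopf-algebraic bookkeeping is done, the theorem follows at once from the preceding lemmas with no additional input.
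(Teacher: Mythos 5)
Your proposal is correct and follows essentially the same route as the paper: the paper also reduces the AYD statement to the YD case by invoking the equivalence of categories $M \mapsto M \ot\, ^\s\Cb_\t$ associated to the modular pair in involution $(\d,1)$ (citing Staic rather than verifying the twist by hand), and then applies Lemmas \ref{lemma-coaction-trivial} and \ref{lemma-action-trivial} to conclude that the only finite dimensional YD module is trivial, hence the only AYD module is $\Cb_\d$.
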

\begin{proof}
By Lemma \ref{lemma-action-trivial} and Lemma \ref{lemma-coaction-trivial} we know that the only finite dimensional YD module on $\Hc_n$ is the trivial one. On the other hand, by the result of M. Staic in \cite{Stai} we know that the category of AYD modules and the category of YD modules over a Hopf algebra $H$ are equivalent provided $H$ has a modular pair in involution $(\t,\s)$. In fact the equivalence functor between these  two categories are simply given by
\begin{equation}
^H\mathcal{YD}_H\ni M \longmapsto ~ {^\s}M_\t:=M\ot\;^\s\Cb_\t\in\;\; ^H\mathcal{AYD}_H.
\end{equation}
Since by the result of Connes-Moscovici  in \cite{ConnMosc98} the Hopf algebra $\Hc_n$ admits a modular pair in involution $(\d,1)$,  we conclude that the only finite dimensional AYD module on $\Hc_n$ is $\Cb_\d$.
\end{proof}

\section{Hopf-cyclic cohomology with coefficients}
Thanks to the results in the second section, we know all SAYD modules over a Lie-Hopf algebra $(R(\Fg_2),\Fg_1)$ in terms of AYD modules over the ambient Lie algebra $\Fg_1\bi\Fg_2$.   The next natural question is the Hopf cyclic cohomology of the bicrossed product Hopf algebra with  coefficients in such a module $^\s M_\d$, where $(\d,\s)$ is the natural modular pair in involution associated to $(\Fg_1,\Fg_2)$ and $M$ is a SAYD module over $\Fg_1\bi\Fg_2$ . To answer this question we need to prove a van Est type theorem between Hopf cyclic complex of the Hopf algebra $R(\Fg_2)\acl U(\Fg_1)$  with coefficients $^\s M_\d$ and the relative perturbed Koszul complex of $\Fg_1\bi\Fg_2$ with coefficient $M$ introduced in \cite{RangSutl-II}. Actually we observe that our strategy in \cite{RangSutl} can be improved to include all cases, not only the induced coefficients  introduced in \cite{RangSutl}. The main obstacle  here is the $R(\Fg_2)$ action and  $U(\Fg_1)$ coaction which prevent us from having two trivial  (co)bounadry maps. The first one is the Hochschild coboundary map of $U(\Fg_1)$ and the second one is the Connes boundary map of $R(\Fg_2)$. We observe that the filtration on $^\s M_\d$ originally defined by Jara-Stefan in \cite{JaraStef} is extremely helpful. In the first page of the spectral sequence associated to such filtration  these two (co)boundary vanish and the situation descends to the case of \cite{RangSutl}.

\subsection{Relative Lie algebra cohomology and cyclic cohomology of Hopf algebras}

For a Lie subalgebra $\Fh\subseteq \Fg$ and a right $\Fg$-module $M$ we define the relative cochains by
\begin{equation}
C^q(\Fg,\Fh,M)=\Big\{\a \in C^q(\Fg,M)=\wg^q \Fg^\ast \ot M \Big|\;  \iota(X) \a = \Lc_X(\a) = 0, \quad X\in \Fh\Big\},
\end{equation}
where
\begin{align}
&\iota(X)(\a)(X_1,\ldots,X_{q})=\a(X,X_1,\ldots,X_q), \\
&\Lc_X(\a)(X_1,\ldots,X_{q})=\\\notag
&\sum(-1)^i\a([X,X_i], X_1,\ldots, \widehat{X}_i,\ldots,X_q)+\theta(X_1,\ldots,X_q)X.
\end{align}
We can identify $C^q(\Fg,\Fh,M)$ with $\Hom_{\Fh}(\wg^q(\Fg/\Fh),M)$ which is $(\wedge^q(\Fg/\Fh)^\ast \ot M)^\Fh$, where the action of $\Fh$ on $\Fg/\Fh$ is induced  by the  adjoint action of $\Fh$ on $\Fg$.

\medskip
\nin It is checked in \cite{ChevEile} that the Chevalley-Eilenberg coboundary $d_{\rm CE}: C^q(\Fg,M) \ra C^{q+1}(\Fg,M)$
 \begin{align}
 \begin{split}
& d_{\rm CE}(\a)(X_0, \ldots,X_q)=\sum_{i<j} (-1)^{i+j}\a([X_i,X_j], X_0\ldots \widehat{X}_i, \ldots, \widehat{X}_j, \ldots, X_q)+\\
& ~~~~~~~~~~~~~~~~~~~~~~~\sum_{i}(-1)^{i+1}\a(X_0,\ldots,\widehat{X}_i,\ldots X_q)X_i.
\end{split}
 \end{align}
is well defined on $C^{\bullet}(\Fg,\Fh,M)$. We denote the homology of the complex $(C^\bullet(\Fg,\Fh,M),d_{\rm CE})$ by $H^\bullet(\Fg,\Fh,M)$ and refer  to it as the relative Lie algebra cohomology of $\Fh\subseteq \Fg$ with coefficients in $M$.

\medskip
\nin Next, we recall the perturbed Koszul complex $W(\Fg, M)$ from \cite{RangSutl-II}. Let $M$ be a right $\Fg$-module and $S(\Fg^\ast)$-module satisfying
\begin{equation}
(m \cdot X_j) \cdot \theta^t = m \cdot (X_j \rhd \theta^t) + (m \cdot \theta^t) \cdot X_j.
\end{equation}
Then $M$ is a module over the semi direct sum Lie algebra $\widetilde{\Fg} = \Fg^\ast \al \Fg$.

\medskip
\nin Let $M$ be a module over the Lie algebra $\widetilde\Fg$. Then
\begin{align}
&\text{$M$ is called unimodular stable if }\qquad \sum_k (m \cdot X_k) \cdot \theta^k = 0 , \\\notag
&\text{$M$ is called  stable if }\qquad \sum_k (m \cdot \theta^k) \cdot X_k = 0.
\end{align}
By \cite[Proposition 4.3]{RangSutl-II}, if $M$ is unimodular stable, then $M_{\b}:= M\ot\Cb_{\b}$ is stable over $\Fg$. Here $\b$ is the trace of the adjoint representation of the Lie algebra $\Fg$ on itself.

\medskip
\nin For a unimodular stable right $\widetilde\Fg$-module $M$, the graded space $W^n(\Fg,M) := \wedge^n \Fg^* \ot M$, $n \geq 0$, becomes a mixed complex with Chevalley-Eilenberg coboundary $$d_{\rm CE}: W^n(\Fg,M) \to W^{n+1}(\Fg,M)$$  and
 the Kozsul differential
\begin{align}
\begin{split}
& d_{\rm K}:W^n(\Fg,M) \to W^{n-1}(\Fg,M) \\
& \alpha \ot m \mapsto \sum_i \iota_{X_i}(\alpha) \ot m \lhd \theta^i.
\end{split}
\end{align}
By \cite[Proposition 5.13]{RangSutl-II}, a unimodular stable right $\widetilde\Fg$-module is a right-left unimodular stable AYD module, unimodular SAYD in short, over the Lie algebra $\Fg$.

\medskip
\nin Finally we introduce  the relative perturbed Koszul complex,
\begin{equation}
W(\Fg, \Fh, M) = \Big\{f \in W(\Fg, M)\, \Big|\, \iota(Y)f = 0, \iota(Y)(d_{\rm CE}f) = 0 , \forall Y \in \Fh\Big\}.
\end{equation}
We have the following result.

\begin{lemma}\label{lemma-relative-Koszul}
$d_{\rm K}(W(\Fg, \Fh, M)) \subseteq W(\Fg, \Fh, M)$.
\end{lemma}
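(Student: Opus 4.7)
My plan is to reduce the two defining conditions of $W(\Fg,\Fh,M)$ to a single pair using Cartan's magic formula $\Lc_Y=\iota(Y)\,d_{\rm CE}+d_{\rm CE}\,\iota(Y)$, and then establish two commutation identities involving $d_{\rm K}$. Since $\iota(Y)f=0$ and $\iota(Y)d_{\rm CE}f=0$ together are equivalent, via Cartan, to $\iota(Y)f=0$ and $\Lc_Y f=0$, the space $W(\Fg,\Fh,M)$ is precisely the subspace of $\Fh$-basic elements of $W(\Fg,M)$.

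First I would check that $\iota(Y)$ and $d_{\rm K}$ anticommute. Applying $\iota(Y)$ to $d_{\rm K}(\alpha\ot m)=\sum_i\iota_{X_i}(\alpha)\ot m\lhd \theta^i$ and using the anticommutativity of interior products on $\wg^\bullet \Fg^\ast$ gives
$$\iota(Y)\,d_{\rm K}(\alpha\ot m)=-\sum_i\iota_{X_i}\iota_Y(\alpha)\ot m\lhd\theta^i=-d_{\rm K}\,\iota(Y)(\alpha\ot m).$$
Hence $\iota(Y)d_{\rm K}f=-d_{\rm K}\iota(Y)f=0$ for any $f \in W(\Fg,\Fh,M)$ and any $Y \in \Fh$.

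Next, and this is the key step, I would prove $[\Lc_Y,d_{\rm K}]=0$. Expanding both sides on $\alpha\ot m$ using $[\Lc_Y,\iota_{X_i}]=\iota_{[Y,X_i]}$ on the form factor, and the $\widetilde{\Fg}$-module relation $(m\cdot Y)\cdot\theta^i-(m\cdot\theta^i)\cdot Y=m\cdot(Y\rhd\theta^i)$ on the coefficient factor, produces
\begin{align*}
[\Lc_Y,d_{\rm K}](\alpha\ot m)= \sum_i \iota_{[Y,X_i]}(\alpha)\ot m\lhd\theta^i +\sum_i \iota_{X_i}(\alpha)\ot m\lhd(Y\rhd\theta^i).
\end{align*}
Writing $[Y,X_i]=\sum_j c^j_i X_j$, and noting the dual coadjoint formula $Y\rhd\theta^i=-\sum_j c^i_j\theta^j$, the two sums cancel after interchanging the summation indices. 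Conceptually, this is the statement that the Koszul element $\sum_i X_i\ot\theta^i\in\Fg\ot\Fg^\ast$ is invariant under the diagonal adjoint-coadjoint action of $\Fg$; it is here that the $\widetilde{\Fg}$-module structure is essentially used, and this is the main obstacle in the proof.

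Combining the two commutation identities: for $f\in W(\Fg,\Fh,M)$ one has $\Lc_Y d_{\rm K} f=d_{\rm K}\Lc_Y f=0$, and one more application of Cartan's formula yields $\iota(Y)d_{\rm CE}d_{\rm K}f=\Lc_Y d_{\rm K}f-d_{\rm CE}\iota(Y)d_{\rm K}f=0$. Together with the already established $\iota(Y)d_{\rm K}f=0$, this shows $d_{\rm K}f\in W(\Fg,\Fh,M)$, completing the proof.
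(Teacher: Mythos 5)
Your proof is correct, and its first half (the anticommutation of $\iota(Y)$ with $d_{\rm K}$, giving $\iota(Y)d_{\rm K}f=0$) is exactly the paper's first step. For the second condition $\iota(Y)d_{\rm CE}d_{\rm K}f=0$, however, you take a genuinely different route. The paper simply invokes the mixed-complex identity $d_{\rm CE}\circ d_{\rm K}+d_{\rm K}\circ d_{\rm CE}=0$, which is already available since $(W(\Fg,M),d_{\rm CE},d_{\rm K})$ is a mixed complex for a unimodular stable $\widetilde\Fg$-module: then $\iota(Y)d_{\rm CE}d_{\rm K}f=-\iota(Y)d_{\rm K}d_{\rm CE}f=d_{\rm K}\iota(Y)d_{\rm CE}f=0$, using the first step applied to $d_{\rm CE}f$. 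You instead reformulate $W(\Fg,\Fh,M)$ as the $\Fh$-basic subcomplex via Cartan's formula and prove the equivariance identity $[\Lc_Y,d_{\rm K}]=0$ from scratch, using $[\Lc_Y,\iota_{X_i}]=\iota_{[Y,X_i]}$ together with the $\widetilde\Fg$-module relation $(m\cdot Y)\cdot\theta^i-(m\cdot\theta^i)\cdot Y=m\cdot(Y\rhd\theta^i)$ and the invariance of $\sum_i X_i\ot\theta^i$. This costs you an extra computation that the paper avoids, but it buys a sharper statement -- $d_{\rm K}$ is a morphism of $\Fh$-modules -- and it makes explicit exactly where the $\widetilde\Fg$-module axiom (equivalently the AYD compatibility) enters, which the paper's argument hides inside the already-established anticommutation of $d_{\rm CE}$ and $d_{\rm K}$. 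Up to sign conventions in your intermediate displayed commutator (which depend on how $\Lc_Y$ is normalized on the coefficient factor), the cancellation you describe is the right one, and the conclusion stands.
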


\begin{proof}
For any $\a \ot m \in W^{n+1}(\Fg, \Fh, M)$ and any $Y \in \Fh$,
\begin{align}
\begin{split}
& \iota(Y)(d_{\rm K}(\a \ot m)) = \iota(Y)((-1)^n\iota(m\nsb{-1})\a \ot m\nsb{0}) = (-1)^n\iota(Y)\iota(m\nsb{-1})\a \ot m\nsb{0} \\
& = (-1)^{n-1}\iota(m\nsb{-1})\iota(Y)\a \ot m\nsb{0} = d_{\rm K}(\iota(Y)\a \ot m) = 0
\end{split}
\end{align}
Similarly, using $d_{\rm CE} \circ d_{\rm K} + d_{\rm K} \circ d_{\rm CE} = 0$,
\begin{align}
\iota(Y)(d_{\rm CE}(d_{\rm K}(\a \ot m))) = - \iota(Y)(d_{\rm K} \circ d_{\rm CE}(\a \ot m)) = - d_{\rm K}(\iota(Y)d_{\rm CE}(\a \ot m)) = 0.
\end{align}
\end{proof}

\begin{definition}
Let $\Fg$ be a Lie algebra, $\Fh \subseteq \Fg$ be a Lie subalgebra and $M$ be a unimodular SAYD module over $\Fg$. We call the homology of the mixed subcomplex $(W(\Fg, \Fh, M), d_{\rm CE} + d_{\rm K})$ the relative periodic cyclic cohomology of the Lie algebra $\Fg$ relative to the Lie subalgebra $\Fh$ with coefficients in unimodular stable right $\widetilde\Fg$-module $M$. We use the notation $\widetilde{HP}^{\bullet}(\Fg,\Fh,M)$.
\end{definition}

\nin In case of the trivial Lie algebra coaction, this cohomology becomes the relative Lie algebra cohomology.

\medskip
\nin We conclude this subsection by a brief account of cyclic cohomology of Hopf algebras. Let $M$ be a right-left SAYD module over a Hopf algebra $\Hc$. Let
 \begin{equation}
 C^q(\Hc,M):= M\ot \Hc^{\ot q}, \quad q\ge 0.
 \end{equation}

\nin We recall the following operators on $C^{\bullet}(\Hc,M)$
\begin{align*}
&\text{face operators} \quad\p_i: C^q(\Hc,M)\ra C^{q+1}(\Hc,M), && 0\le i\le q+1\\
&\text{degeneracy operators } \quad\s_j: C^q(\Hc,M)\ra C^{q-1}(\Hc,M),&& \quad 0\le j\le q-1\\
&\text{cyclic operators} \quad\tau: C^q(\Hc,M)\ra C^{q}(\Hc,M),&&
\end{align*}
by
\begin{align}
\begin{split}
&\p_0(m\ot h^1\odots h^q)=m\ot 1\ot h^1\odots h^q,\\
&\p_i(m\ot h^1\odots h^q)=m\ot h^1\odots h^i\ps{1}\ot h^i\ps{2}\odots h^q,\\
&\p_{q+1}(m\ot h^1\odots h^q)=m\pr{0}\ot h^1\odots h^q\ot m\pr{-1},\\
&\s_j (m\ot h^1\odots h^q)= (m\ot h^1\odots \ve(h^{j+1})\odots h^q),\\
&\tau(m\ot h^1\odots h^q)=m\pr{0}h^1\ps{1}\ot S(h^1\ps{2})\cdot(h^2\odots h^q\ot m\pr{-1}),
\end{split}
\end{align}
where $\Hc$ acts on $\Hc^{\ot q}$ diagonally.

\medskip
\nin The graded module $C(\Hc,M)$  endowed with the above operators is then a cocyclic module \cite{HajaKhalRangSomm04-II}, which means that $\p_i,$ $\s_j$ and $\tau$ satisfy the following identities
\begin{eqnarray}
\begin{split}
& \p_{j}  \p_{i} = \p_{i} \p_{j-1},   \hspace{35 pt}  \text{ if} \quad\quad i <j,\\
& \sigma_{j}  \sigma_{i} = \sigma_{i} \sigma_{j+1},    \hspace{30 pt}  \text{    if} \quad\quad i \leq j,\\
&\sigma_{j} \p_{i} =   \label{rel1}
 \begin{cases}
\p_{i} \sigma_{j-1},   \quad
 &\text{if} \hspace{18 pt}\quad\text{$i<j$}\\
\text{Id}   \quad\quad\quad
 &\text{if}\hspace{17 pt} \quad   \text{$i=j$ or $i=j+1$}\\
\p_{i-1} \sigma_{j}  \quad
 &\text{    if} \hspace{16 pt}\quad \text{$i>j+1$},\\
\end{cases}\\
&\tau\p_{i}=\p_{i-1} \tau, \hspace{43 pt} 1\le i\le  q\\
&\tau \p_{0} = \p_{q+1}, \hspace{43 pt} \tau \sigma_{i} = \sigma _{i-1} \tau,  \hspace{33 pt} 1\le i\le q\\ \label{rel2}
&\tau \sigma_{0} = \sigma_{n} \tau^2, \hspace{43 pt} \tau^{q+1} = \Id.
\end{split}
\end{eqnarray}

\nin One uses the face operators to define the Hochschild coboundary
\begin{align}
\begin{split}
&b: C^{q}(\Hc,M)\ra C^{q+1}(\Hc,M), \qquad \text{by}\qquad b:=\sum_{i=0}^{q+1}(-1)^i\p_i.
\end{split}
\end{align}
It is known that $b^2=0$. As a result, one obtains the Hochschild complex of the coalgebra $\Hc$ with coefficients in the bicomodule $M$. Here, the right $\Hc$-comodule structure on $M$ defined trivially. The cohomology of $(C^\bullet(\Hc,M),b)$ is denoted by $H^\bullet_{\rm coalg}(H,M)$.

\medskip
\nin One uses the rest of the operators to define the Connes boundary operator,
\begin{align}
\begin{split}
&B: C^{q}(\Hc,M)\ra C^{q-1}(\Hc,M), \qquad \text{by}\qquad B:=\left(\sum_{i=0}^{q}(-1)^{qi}\tau^{i}\right) \s_{q-1}\tau.
\end{split}
\end{align}

\nin It is shown in \cite{Conn83} that for any cocyclic module we have $b^2=B^2=(b+B)^2=0$. As a result, one defines the cyclic cohomology of $\Hc$ with coefficients in the  SAYD module $M$, which is  denoted by $HC^{\bullet}(\Hc,M)$, as the total  cohomology of the bicomplex
\begin{align}
C^{p,q}(\Hc,M)= \left\{ \begin{matrix} M\ot \Hc^{\ot q-p},&\quad\text{if}\quad 0\le p\le q, \\
&\\
0, & \text{otherwise.}
\end{matrix}\right.
\end{align}

\nin One also defines the periodic cyclic cohomology of $\Hc$ with coefficients in $M$, which is denoted by $HP^\ast(\Hc,M)$, as the total cohomology of direct sum total  of the bicomplex

\begin{align}
C^{p,q}(\Hc,M)= \left\{ \begin{matrix} M\ot \Hc^{\ot q-p},&\quad\text{if}\quad  p\le q, \\
&\\
0, & \text{otherwise.}
\end{matrix}\right.
\end{align}

\nin It can be seen that the periodic cyclic complex and hence the cohomology is $\Zb_2$ graded.

\subsection{Hopf-cyclic cohomology of Lie-Hopf algebras}
Our first task in this subsection is to calculate the periodic cyclic cohomology of $R(\Fg)$ with coefficients in a general  SAYD module. This will generalize our result in  \cite[ Theorem 4.7]{RangSutl}, where the coefficients were the induced ones, \ie those SAYD modules induced by a module over $\Fg$.

\medskip
\nin In the second subsubsection we prove the main result of this paper. Roughly  speaking,  we identify the  Hopf cyclic cohomology of a bicrossed product Hopf algebra, associated with a Lie algebra decomposition, with the Lie algebra cohomology of the ambient Lie algebra. The novelty here is the fact that we are able to prove such a  van Est type theorem  with the most general coefficients.

\subsubsection{Hopf algebra of representative  functions}
Let $M$ be a locally finite $\Fg$-module and locally conilpotent $\Fg$-comodule.  We first define a left $R(\Fg)$-coaction on $M$. It is known, by \cite{Hoch74}(see also \cite{RangSutl}), that the linear map
\begin{equation}
 M \to U(\Fg)^* \ot M , \quad  m \mapsto m^{\sns{-1}} \ot m^{\sns{0}}
\end{equation}
defined by  the rule  $m^{\sns{-1}}(u)m^{\sns{0}} = m \cdot u$, defines a left $R(\Fg)$-comodule structure
\begin{align}
\begin{split}
& \Db: M \to R(\Fg) \ot M \\
& m \mapsto m^{\sns{-1}} \ot m^{\sns{0}}.
\end{split}
\end{align}
Then using the left $U(\Fg)$-comodule on $M$, we define the  right $R(\Fg)$-module structure
\begin{align}
\begin{split}
& M \ot R(\Fg) \to M \\
& m \ot f \mapsto m \cdot f := f(m\snsb{-1})m\snsb{0}.
\end{split}
\end{align}
\begin{proposition}\label{proposition-AYD-R(g)}
Let $M$ be locally finite as a $\Fg$-module and locally conilpotent as a  $\Fg$-comodule. If $M$ is an AYD over $\Fg$, then it is an AYD over $R(\Fg)$.
\end{proposition}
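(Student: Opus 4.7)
The approach is to reduce the $R(\Fg)$-AYD condition to the already-established $U(\Fg)$-AYD condition on $M$. First I invoke \cite[Proposition 5.10]{RangSutl-II} to lift the locally conilpotent $\Fg$-AYD structure on $M$ to a right-left $U(\Fg)$-AYD structure, giving
\begin{equation*}
(m \cdot u)\snsb{-1} \ot (m \cdot u)\snsb{0} \;=\; S(u\ps{3})\, m\snsb{-1}\, u\ps{1} \ot m\snsb{0} \cdot u\ps{2}.
\end{equation*}
The $R(\Fg)$-module and $R(\Fg)$-comodule structures on $M$ introduced just before the proposition are the Hopf-dual transcripts of the $U(\Fg)$-coaction and the $U(\Fg)$-action respectively, via the canonical pairing $R(\Fg) \times U(\Fg) \to \Cb$ (cf.\ \cite{Hoch74}).

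To verify the right-left AYD identity
\begin{equation*}
(m \cdot f)^{\sns{-1}} \ot (m \cdot f)^{\sns{0}} \;=\; S(f\ps{3})\, m^{\sns{-1}}\, f\ps{1} \ot m^{\sns{0}} \cdot f\ps{2}
\end{equation*}
in $R(\Fg) \ot M$, I pair the first tensor slot with an arbitrary $u \in U(\Fg)$. The left-hand side evaluates to $(m \cdot f) \cdot u = f(m\snsb{-1})\, m\snsb{0} \cdot u$. For the right-hand side, using that the product in $R(\Fg)$ is dual to the coproduct in $U(\Fg)$ (so $\langle g\,h, u\rangle = g(u\ps{1}) h(u\ps{2})$), I get $f\bigl(u\ps{3}\,(m \cdot u\ps{2})\snsb{-1}\,S(u\ps{1})\bigr)\,(m \cdot u\ps{2})\snsb{0}$. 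It therefore suffices to establish the stronger identity
\begin{equation*}
m\snsb{-1} \ot m\snsb{0} \cdot u \;=\; u\ps{3}\,(m \cdot u\ps{2})\snsb{-1}\,S(u\ps{1}) \ot (m \cdot u\ps{2})\snsb{0}
\end{equation*}
in $U(\Fg) \ot M$, from which the $R(\Fg)$-AYD identity follows by pairing the first slot with $f$.

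To prove this final identity, I substitute the $U(\Fg)$-AYD formula into the right-hand side and apply coassociativity to relabel indices via $\Delta^4(u) = u^{(1)} \ot \cdots \ot u^{(5)}$, producing $u^{(5)} S(u^{(4)})\, m\snsb{-1}\, u^{(2)} S(u^{(1)}) \ot m\snsb{0} \cdot u^{(3)}$. The decisive step invokes cocommutativity of $U(\Fg)$: since $S^2 = \Id$ in a cocommutative Hopf algebra, we have $v\ps{2} S(v\ps{1}) = v\ps{1} S(v\ps{2}) = \ve(v)$ for every $v$. Viewing $u^{(4)} \ot u^{(5)}$ as $\Delta$ of the last factor of $\Delta^3(u)$ by coassociativity, $u^{(5)} S(u^{(4)})$ collapses to a counit, which is then absorbed back into $u^{(3)}$, reducing the expression to one with $\Delta^2(u) = u^{(1)} \ot u^{(2)} \ot u^{(3)}$. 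Applying the same cocommutativity trick to the remaining pair $u^{(1)}, u^{(2)}$ then yields $m\snsb{-1} \ot m\snsb{0} \cdot u$, matching the left-hand side. The main difficulty is purely notational—keeping Sweedler indices straight across the iterated coassociativity and correctly identifying which adjacent pairs arise as $\Delta$ of a common element so that cocommutativity can be brought to bear.
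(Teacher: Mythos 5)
Your proposal is correct and follows essentially the same route as the paper's proof: lift the locally conilpotent $\Fg$-AYD structure to a $U(\Fg)$-AYD structure, pair the first tensor leg of the $R(\Fg)$-AYD identity with an arbitrary $u\in U(\Fg)$, rewrite both sides through the duality to reduce to the identity $u\ps{3}(m\cdot u\ps{2})\snsb{-1}S(u\ps{1})\ot(m\cdot u\ps{2})\snsb{0}=m\snsb{-1}\ot m\snsb{0}\cdot u$, and collapse the resulting Sweedler legs using $v\ps{2}S(v\ps{1})=\ve(v)1$, which holds by cocommutativity of $U(\Fg)$ (the flip plus the antipode axiom, rather than $S^2=\Id$ as you state, but the conclusion is the same). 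The only cosmetic difference is that you isolate the intermediate identity in $U(\Fg)\ot M$ before pairing with $f$, whereas the paper carries out the whole computation as a single chain of equalities of scalars.
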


\begin{proof}
Since $M$ is AYD module over the Lie algebra $\Fg$ with a locally conilpotent $\Fg$-coaction, it is an AYD over $U(\Fg)$.

\medskip
\nin Let $m \in M$, $f \in R(\Fg)$ and $u \in U(\Fg)$. On one hand side of the AYD condition we  have
\begin{align}
\Db_{R(\Fg)}(m \cdot f)(u) = (m \cdot f) \cdot u = f(m\snsb{-1})m\snsb{0} \cdot u,
\end{align}
and on the other hand,
\begin{align}
\begin{split}
& (S(f\ps{3})m^{\sns{-1}}f\ps{1})(u)m^{\sns{0}} \cdot f\ps{2} = \\
& S(f\ps{3})(u\ps{1})m^{\sns{-1}}(u\ps{2})f\ps{1}(u\ps{3})f\ps{2}((m^{\sns{0}})\snsb{-1})(m^{\sns{0}})\snsb{0} = \\
& f\ps{2}(S(u\ps{1}))m^{\sns{-1}}(u\ps{2})f\ps{1}(u\ps{3}(m^{\sns{0}})\snsb{-1})(m^{\sns{0}})\snsb{0} = \\
& m^{\sns{-1}}(u\ps{2})f(u\ps{3}(m^{\sns{0}})\snsb{-1}S(u\ps{1}))(m^{\sns{0}})\snsb{0} = \\
& f(u\ps{3}(m^{\sns{-1}}(u\ps{2})m^{\sns{0}})\snsb{-1}S(u\ps{1}))(m^{\sns{-1}}(u\ps{2})m^{\sns{0}})\snsb{0} = \\
& f(u\ps{3}(m \cdot u\ps{2})\snsb{-1}S(u\ps{1}))(m \cdot u\ps{2})\snsb{0} = \\
& f(u\ps{3}S(u\ps{2}\ps{3})m\snsb{-1}u\ps{2}\ps{1}S(u\ps{1}))m\snsb{0} \cdot u\ps{2}\ps{2} = \\
& f(u\ps{5}S(u\ps{4})m\snsb{-1}u\ps{2}S(u\ps{1}))m\snsb{0} \cdot u\ps{3} = \\
& f(m\snsb{-1})m\snsb{0} \cdot u,
\end{split}
\end{align}
where we used the AYD condition on $U(\Fg)$ on the sixth equality. This proves that $M$ is an AYD module over $R(\Fg)$.
\end{proof}

\begin{theorem}\label{g-R(g)spectral sequence}
Let $\Fg$ be a Lie algebra and $\Fg = \Fh \ltimes \Fl$ be a Levi decomposition. Let $M$ be a unimodular SAYD module over $\Fg$ as a locally finite $\Fg$-module and locally conilpotent $\Fg$-comodule. Assume also that $M$ is stable over $R(\Fg)$.  Then the  periodic cyclic cohomology of $\Fg$ relative to the subalgebra $\Fh \subseteq \Fg$ with coefficients in $M$ is the same as the periodic cyclic cohomology of $R(\Fg)$ with coefficients in $M$. In short,
\begin{align}
\widetilde{HP}(\Fg, \Fh, M) \cong HP(R(\Fg), M)
\end{align}
\end{theorem}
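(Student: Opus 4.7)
The plan is to build an explicit morphism of mixed complexes from the Hopf cyclic bicomplex $(C^\bullet(R(\Fg), M), b, B)$ to the relative perturbed Koszul mixed complex $(W^\bullet(\Fg,\Fh,M), d_{\rm CE}, d_{\rm K})$, and then prove it is a quasi-isomorphism by a spectral sequence on the Hochschild degree.

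First, using the Hopf duality pairing between $R(\Fg)$ and $U(\Fg)$ together with the canonical inclusion $\Fg \hookrightarrow U(\Fg)$, I would define an antisymmetrization map
\begin{equation*}
\alpha: M \ot R(\Fg)^{\ot n} \lra \wedge^n \Fg^\ast \ot M, \qquad \alpha(m \ot f^1 \odots f^n)(X_1, \ldots, X_n) = \sum_{\s \in S_n} \sign(\s) f^1(X_{\s(1)}) \cdots f^n(X_{\s(n)}) \, m.
\end{equation*}
Using the $\Fh$-invariance built into the SAYD compatibility of $M$, together with Lemma \ref{lemma-relative-Koszul}, I would verify that the image of $\alpha$ lands in the relative subcomplex $W^\bullet(\Fg,\Fh,M)$. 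A direct computation expanding the cofaces $\p_i$ by the coproduct of $R(\Fg)$ then yields $\alpha \circ b = d_{\rm CE} \circ \alpha$, while the fact that the cyclic operator $\tau$ involves the SAYD coaction dually to the $\Fg$-action---a compatibility whose content is essentially Proposition \ref{proposition-AYD-R(g)}---gives $\alpha \circ B = d_{\rm K} \circ \alpha$. Stability over $R(\Fg)$ is precisely what makes $\tau$ have order $q+1$, and hence $B$ well-defined, so it is used at this point.

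The main step is to show that $\alpha$ is a quasi-isomorphism on totalizations. For this I would filter the $(b,B)$-bicomplex by the Hochschild degree, yielding a spectral sequence whose $E_1$ page is the Hochschild coalgebra cohomology $H^\bullet_{\rm coalg}(R(\Fg),M)$. Since $M$ is locally finite as a $\Fg$-module and locally conilpotent as a $\Fg$-comodule, one may reduce to finite dimensional pieces and invoke the duality between $R(\Fg)$ and $U(\Fg)$: up to the twist induced by the $R(\Fg)$-coaction on $M$, the Hochschild complex here is dual to the Koszul resolution of $\Cb$ by free $U(\Fg)$-modules, so that $E_1^n \cong W^n(\Fg,M)$ and the induced $d_1$ is $d_{\rm K}$. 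Passing to $\Fh$-invariants throughout---which commutes with the formation of this spectral sequence by reductivity of $\Fh$ (Levi factor)---produces $E_2^{\bullet,\bullet} = \widetilde{HP}^\bullet(\Fg,\Fh,M)$ in the $\Zb_2$-graded periodic version, and convergence then gives the claimed isomorphism.

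The main obstacle is precisely the first-page identification in the presence of a nontrivial SAYD coaction, since the last coface $\p_{q+1}$ carries this coaction and therefore twists the usual computation of $H^\bullet_{\rm coalg}(R(\Fg),\Cb) \cong \wedge^\bullet \Fg^\ast$. One must carefully track how the coaction enters the Hochschild cofaces and verify that the unimodular stability hypothesis, combined with the transport of AYD structure in Proposition \ref{proposition-AYD-R(g)}, are exactly what is needed to produce the identification with $W^\bullet(\Fg, M)$ and to match $d_1$ with $d_{\rm K}$. This is the step where the present argument genuinely generalizes the induced coefficients case of \cite{RangSutl}.
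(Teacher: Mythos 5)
Your plan hinges on two assertions that are exactly the hard content of the theorem, and neither is established. First, you propose an antisymmetrization map $\alpha$ and assert $\alpha\circ B = d_{\rm K}\circ\alpha$, i.e.\ a morphism of mixed complexes. But when the $\Fg$-coaction on $M$ is nontrivial, the $R(\Fg)$-action on $M$ (which is dual to that coaction) is nontrivial, and the Connes boundary $B$ of $R(\Fg)$ genuinely uses this action through the cyclic operator $\tau$; the antisymmetrization map does not intertwine $B$ with $d_{\rm K}$ at the chain level. This is precisely the obstruction the paper flags in the introduction (``the main obstacle here is the $R(\Fg_2)$ action and $U(\Fg_1)$ coaction which prevent us from having two trivial (co)boundary maps''), and it is why the isomorphism is obtained only at the $E_1$ level of a spectral sequence rather than from a map of complexes. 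Second, your fallback is to filter by Hochschild (column) degree and identify $E_1$ with $W^\bullet(\Fg,M)$ carrying $d_1=d_{\rm K}$; you yourself call this identification ``the main obstacle'' and offer only that ``one must carefully track'' the coaction. That deferral is the whole theorem: identifying the $d_1$ induced by $B$ with $d_{\rm K}$ in the presence of a nontrivial $R(\Fg)$-action is not supplied by Proposition \ref{proposition-AYD-R(g)} or by unimodular stability, and the convergence of the column filtration for the periodic (doubly infinite) bicomplex is itself not addressed. A further unaddressed point is where the relativization by $\Fh$ comes from: $W(\Fg,\Fh,M)$ is the basic (horizontal and invariant) subcomplex, not simply the $\Fh$-invariants, and ``passing to $\Fh$-invariants throughout'' has no counterpart on the $C^\bullet(R(\Fg),M)$ side of your map.

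The paper's proof takes a different and essentially unavoidable route: it filters the coefficient module $M$ itself by the Jara--Stefan filtration $F_0M = M^{coU(\Fg)}$, $F_{p+1}M/F_pM=(M/F_pM)^{coU(\Fg)}$, checks that all four (co)boundaries $d_{\rm CE}, d_{\rm K}, b, B$ preserve the induced filtrations on both complexes (this requires showing that the $R(\Fg)$-action and coaction preserve $F_pM$, which uses local conilpotency and the duality with $U(\Fg)$), and then observes that each graded piece $F_jM/F_{j-1}M$ has trivial $\Fg$-coaction, hence trivial $R(\Fg)$-action. On the associated graded, $d_{\rm K}$ vanishes and the cyclic structure degenerates, so both $E_1$ pages reduce to the induced-coefficients situation already settled in \cite{RangSutl}, whose Theorems 4.3 and 4.7 supply the quasi-isomorphism of $E_1$ terms (and also account for the appearance of $\Fh$). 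If you want to salvage your approach, you would need to either prove the chain-level identity $\alpha\circ B=d_{\rm K}\circ\alpha$ (which fails in general) or carry out the $E_1$ identification for the column filtration with the full nontrivial coaction, including convergence; the coefficient filtration is the device that makes both problems disappear.
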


\begin{proof}
Since $M$ is a unimodular stable AYD module over $\Fg$, by Lemma \ref{lemma-relative-Koszul} the relative perturbed Koszul complex $(W(\Fg, \Fh, M), d_{\rm CE} + d_{\rm K})$ is well defined. On the other hand, since $M$ is locally finite as a $\Fg$-module and locally conilpotent as a $\Fg$-comodule, it is an AYD module over $R(\Fg)$ by Proposition \ref{proposition-AYD-R(g)}. Together with the assumption that $M$ is stable over $R(\Fg)$, the Hopf-cyclic complex $(C(R(\Fg),M), b + B)$ is well defined.

\medskip
\nin Since $M$ is a unimodular SAYD over  $\Fg$, $M_{\b}:= M\ot\Cb_{\b}$ is SAYD over $\Fg$ by \cite[Proposition 4.3]{RangSutl-II}. Here $\b$ is the trace of the adjoint representation of the Lie algebra $\Fg$ on itself. Therefore, by \cite[Lemma 6.2]{JaraStef} we have the filtration $M = \cup_{p \in \Zb}F_pM$ defined as $F_0M = M^{coU(\Fg)}$ and inductively
\begin{equation}
F_{p+1}M/F_pM = (M/F_pM)^{coU(\Fg)}.
\end{equation}
This filtration naturally induces an analogous filtration on the complexes as
\begin{equation}
F_jW(\Fg, \Fh, M) = W(\Fg, \Fh, F_jM), \quad \text{and} \quad F_jC(R(\Fg),M) = C(R(\Fg),F_jM).
\end{equation}

\nin We now show that the (co)boundary maps $d_{\rm CE},d_{\rm K},b,B$ respect this filtration. To do so for $d_{\rm K}$ and $d_{\rm CE}$, it  suffices to show that the $\Fg$-action and $\Fg$-coaction on $M$ respect the filtration; which is done by  the same argument as in  the proof of Theorem 6.2 in \cite{RangSutl-II}. Similarly, to show that the Hochschild coboundary $b$ and the Connes boundary map $B$ respect the filtration we need to show that $R(\Fg)$-action and $R(\Fg)$-coaction respects the filtration.

\medskip
\nin Indeed, for an element $m \in F_pM$, writing the $U(\Fg)$-coaction as
\begin{equation}
m\snsb{-1} \ot m\snsb{0} = 1 \ot m + m\ns{-1} \ot m\ns{0}, \qquad m\ns{-1} \ot m\ns{0} \in U(\Fg) \ot F_{p-1}M,
\end{equation}
we get for any $f \in R(\Fg)$
\begin{equation}
m \cdot f = \ve(f)m + f(m\ns{-1})m\ns{0} \in F_pM.
\end{equation}
This proves that $R(\Fg)$-action respects the filtration. To prove that $R(\Fg)$-coaction respects the filtration, we first write the coaction on $m \in F_pM$ as
\begin{equation}
m \mapsto \sum_i f^i \ot m_i \in R(\Fg) \ot M.
\end{equation}
By \cite{Hoch-book} there are elements $u_j \in U(\Fg)$ such that $f^i(u_j) = \d^i_j$. Hence, for any $m_{i_0}$ we have
\begin{equation}
m_{i_0} = \sum_i f^i(u_{i_0}) m_i = m\cdot u_{i_0} \in F_pM.
\end{equation}
We have proved that $R(\Fg)$-coaction respects the filtration.

\medskip
\nin Next, we write the $E_1$ terms of the associated spectral sequences. We have
\begin{align}
\begin{split}
& E_1^{j,i}(\Fg,\Fh,M) = H^{i+j}(F_jW(\Fg, \Fh, M)/F_{j-1}W(\Fg, \Fh, M)) \\
& = H^{i+j}(W(\Fg, \Fh, F_jM/F_{j-1}M)) = \bigoplus_{i+j = n \, mod \, 2}H^n(\Fg, \Fh, F_jM/F_{j-1}M),
\end{split}
\end{align}
where on the last equality we used the fact that $F_{j}M/F_{j-1}M$ has trivial $\Fg$-coaction.

\medskip
\nin Similarly we have
\begin{align}
\begin{split}
& E_1^{j,i}(R(\Fg),M) = H^{i+j}(F_jC(R(\Fg),M)/F_{j-1}C(R(\Fg),M)) \\
& = H^{i+j}(C(R(\Fg),F_jM/F_{j-1}M)) = \bigoplus_{i+j = n\, mod\,2}H^n_{\rm coalg}(R(\Fg),F_jM/F_{j-1}M)),
\end{split}
\end{align}
where on the last equality we could use \cite[Theorem 4.3]{RangSutl} due to the fact that $F_jM/F_{j-1}M$ has trivial $\Fg$-coaction, hence trivial $R(\Fg)$-action.

\medskip
\nin Finally, under the hypothesis of the theorem, a quasi-isomorphism between $E_1$ terms is given by \cite[Theorem 4.7]{RangSutl}.
\end{proof}

\begin{remark}{\rm
If $M$ has a trivial $\Fg$-comodule structure, then $d_{\rm K} = 0$ and hence the above theorem descents to \cite[Theorem 4.7]{RangSutl}.
}\end{remark}

\subsubsection{Bicrossed product Hopf algebras}
Let $M$ be an AYD module over a double crossed sum Lie algebra $\Fg_1 \bowtie \Fg_2$ with a locally finite $\Fg_1 \bi \Fg_2$-action and a locally conilpotent $\Fg_1 \bi \Fg_2$-coaction. Then by Proposition \ref{aux-46} $M$ is a YD module over the bicrossed product Hopf algebra $R(\Fg_2) \acl U(\Fg_1)$.

\medskip
\nin Let also $(\delta,\sigma)$ be an MPI for the Hopf algebra $R(\Fg_2) \acl U(\Fg_1)$, see \cite[Theorem 3.2]{RangSutl}. Then $\, ^{\sigma}M_{\delta} := M \ot \, ^{\sigma}\mathbb{C}_{\delta}$ is an AYD module over the bicrossed product Hopf algebra $R(\Fg_2) \acl U(\Fg_1)$.

\medskip
\nin Finally, let us assume that $M$ is stable over $R(\Fg_2)$ and $U(\Fg_1)$. Then $\, ^{\sigma}M_{\delta}$ is stable if and only if
\begin{align}
\begin{split}
& m \ot 1_{\mathbb{C}} = (m^{\sns{0}}\snsb{0} \ot 1_{\mathbb{C}}) \cdot (m^{\sns{-1}} \acl m^{\sns{0}}\snsb{-1})(\sigma \acl 1) \\
& = (m^{\sns{0}}\snsb{0} \ot 1_{\mathbb{C}}) \cdot (m^{\sns{-1}} \acl 1)(1 \acl m^{\sns{0}}\snsb{-1})(\sigma \acl 1) \\
& = (m^{\sns{0}}\snsb{0} \cdot m^{\sns{-1}} \ot 1_{\mathbb{C}}) \cdot (1 \acl m^{\sns{0}}\snsb{-1})(\sigma \acl 1) \\
& = ((m^{\sns{0}}\snsb{0} \cdot m^{\sns{-1}}) \cdot m^{\sns{0}}\snsb{-1}\delta(m^{\sns{0}}\snsb{-2}) \ot 1_{\mathbb{C}}) \cdot (\sigma \acl 1) \\
& = m\snsb{0}\delta(m\snsb{-1})\sigma \ot 1_{\mathbb{C}}.
\end{split}
\end{align}
Here, on the fourth equality we have used Proposition \ref{aux-59}. In other words, $M$ satisfying the hypothesis of the Proposition \ref{aux-59}, $\, ^{\sigma}M_{\delta}$ is stable if and only if
\begin{align}\label{aux-47}
m\snsb{0}\delta(m\snsb{-1})\sigma = m
\end{align}

\begin{theorem}\label{aux-63}
Let $(\Fg_1, \Fg_2)$ be a matched pair of Lie algebras and $\Fg_2 = \Fh \ltimes \Fl$ be a Levi decomposition such that $\Fh$ is $\Fg_1$ invariant and it acts on $\Fg_1$ by derivations. Let $M$ be a unimodular SAYD module over $\Fg_1 \bowtie \Fg_2$ with a locally finite $\Fg_1 \bi \Fg_2$-action and locally conilpotent $\Fg_1 \bowtie \Fg_2$-coaction. Finally assume that $\, ^{\sigma}M_{\delta}$ is stable. Then we have
\begin{equation}
HP(R(\Fg_2) \acl U(\Fg_1),\, ^{\sigma}M_{\delta}) \cong \widetilde{HP}(\Fg_1 \bowtie \Fg_2, \Fh, M)
\end{equation}
\end{theorem}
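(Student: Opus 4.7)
The plan is to mimic the strategy of Theorem \ref{g-R(g)spectral sequence}, replacing $R(\Fg)$ with the bicrossed product $R(\Fg_2)\acl U(\Fg_1)$ and $\Fg$ with $\Fg_1\bowtie \Fg_2$, and reducing everything on the $E_1$ page to the induced-coefficients case already treated in \cite{RangSutl}. I would first check that all objects in the statement are well defined: by Proposition \ref{aux-46}, $M$ is a YD module over $R(\Fg_2)\acl U(\Fg_1)$; combined with \eqref{aux-47} and Proposition \ref{aux-59}, the twisted module $\,^\sigma M_\delta$ is an SAYD module over $R(\Fg_2)\acl U(\Fg_1)$, so the Hopf cyclic complex $(C(R(\Fg_2)\acl U(\Fg_1),\,^\sigma M_\delta),b+B)$ is defined. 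On the Lie side, the hypothesis that $\Fh$ is $\Fg_1$-invariant and acts on $\Fg_1$ by derivations guarantees that $\Fh$ is a Lie subalgebra of $\Fg_1\bowtie \Fg_2$, so that the relative perturbed Koszul complex $(W(\Fg_1\bowtie \Fg_2,\Fh,M),d_{\rm CE}+d_{\rm K})$ from Lemma \ref{lemma-relative-Koszul} is defined.

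Next I would install the Jara--Stefan coradical filtration $F_0M=M^{coU(\Fg_1\bowtie \Fg_2)}$ and $F_{p+1}M/F_pM=(M/F_pM)^{coU(\Fg_1\bowtie \Fg_2)}$ from \cite[Lemma 6.2]{JaraStef}, which is exhaustive by local conilpotency. This induces filtrations
\begin{equation}
F_jW(\Fg_1\bowtie \Fg_2,\Fh,M):=W(\Fg_1\bowtie \Fg_2,\Fh,F_jM),\qquad F_jC(R(\Fg_2)\acl U(\Fg_1),\,^\sigma M_\delta):=C(R(\Fg_2)\acl U(\Fg_1),\,^\sigma (F_jM)_\delta).
\end{equation}
The key technical verification is that the four (co)boundary maps $d_{\rm CE},d_{\rm K},b,B$ all preserve this filtration. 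For $d_{\rm CE}$ and $d_{\rm K}$ this is the same argument as in the proof of Theorem \ref{g-R(g)spectral sequence}: the $\Fg_1\bowtie \Fg_2$-action and coaction on $M$ respect $F_\bullet M$ by the construction of the filtration and the AYD compatibility. For $b$ and $B$ one must show that the $R(\Fg_2)$-action and $U(\Fg_1)$-coaction on $M$ respect $F_\bullet M$. By Proposition \ref{auxx-1} and Corollary \ref{aux-58}, the $U(\Fg_1\bowtie \Fg_2)$-coaction splits as a product of the $U(\Fg_1)$-coaction and the $U(\Fg_2)$-coaction, each of which therefore respects $F_\bullet M$; the $R(\Fg_2)$-action is obtained by pairing against the $\Fg_2$-part, so it inherits the same property, and likewise for the $U(\Fg_1)$-coaction.

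With the filtrations in place, the crucial point is that on the associated graded, each quotient $F_jM/F_{j-1}M$ carries the trivial $U(\Fg_1\bowtie \Fg_2)$-coaction, equivalently the trivial $U(\Fg_1)$-coaction together with the trivial $R(\Fg_2)$-action. Thus on the $E_1$-page of the Hopf side one finds
\begin{equation}
E_1^{j,i}(R(\Fg_2)\acl U(\Fg_1),\,^\sigma M_\delta)=\bigoplus_{i+j\equiv n\bmod 2}HP^n(R(\Fg_2)\acl U(\Fg_1),\,^\sigma(F_jM/F_{j-1}M)_\delta),
\end{equation}
and these are precisely the induced-coefficient modules of \cite{RangSutl}. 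On the Lie side the corresponding $E_1$-page simplifies, because $d_{\rm K}$ acts trivially on $F_jM/F_{j-1}M$, to the relative Chevalley--Eilenberg cohomology $H^n(\Fg_1\bowtie \Fg_2,\Fh,F_jM/F_{j-1}M)$. The van Est isomorphism of \cite[Theorem 4.7]{RangSutl} (whose hypotheses are exactly met by the Levi decomposition $\Fg_2=\Fh\ltimes \Fl$ together with the extra hypothesis that $\Fh$ acts on $\Fg_1$ by derivations) supplies a term-by-term quasi-isomorphism
\begin{equation}
E_1^{j,i}(R(\Fg_2)\acl U(\Fg_1),\,^\sigma M_\delta)\cong E_1^{j,i}(\Fg_1\bowtie \Fg_2,\Fh,M),
\end{equation}
and a standard comparison of convergent filtered spectral sequences produces the desired isomorphism of periodic cyclic cohomologies.

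The main obstacle I anticipate is the filtration-preservation step, in particular showing that the Hochschild coboundary $b$ and the Connes operator $B$ respect $F_\bullet$. The face and cyclic operators entangle the $R(\Fg_2)$-action on $M$ with the diagonal $R(\Fg_2)\acl U(\Fg_1)$-action on the tensor factors, and the formulas are further twisted by the characters $(\delta,\sigma)$; checking compatibility forces one to unfold the bicrossed product coproduct and use the module-over-module-coalgebra/comodule-over-comodule-algebra compatibilities from Section 3 (Propositions \ref{aux-41} and \ref{aux-46}). A secondary subtlety is to verify that the pairing between $R(\Fg_2)$ and $U(\Fg_2)$ converts the Lie-algebraic condition that the $\Fg_2$-coaction on $M$ respects $F_\bullet$ into the Hopf-algebraic condition that the $R(\Fg_2)$-action does, cleanly enough that the two spectral sequences sit in a single morphism.
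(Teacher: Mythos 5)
Your proposal follows essentially the same route as the paper: install the Jara--Stefan filtration, check that $d_{\rm CE}$, $d_{\rm K}$, $b$, $B$ preserve it, observe that each graded piece $F_jM/F_{j-1}M$ has trivial coaction and hence gives induced coefficients, and invoke the van Est quasi-isomorphism of \cite{RangSutl} on the $E_1$ page. The only slip is the citation: for the bicrossed product $R(\Fg_2)\acl U(\Fg_1)$ the relevant induced-coefficient van Est result is \cite[Corollary 5.10]{RangSutl} (reached in the paper via the cylindrical module $\FZ(\Hc,R(\Fg_2);\cdot)$ of \cite{MoscRang09}, which becomes bicocyclic precisely on the associated graded), not \cite[Theorem 4.7]{RangSutl}, which treats $R(\Fg)$ alone.
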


\begin{proof}
Let $C(U(\Fg_1) \cl R(\Fg_2),\, ^{\sigma}M_{\delta})$ be the complex computing the Hopf-cyclic cohomology of the bicrossed product Hopf algebra $\mathcal{H} =  R(\Fg_2) \acl U(\Fg_1)$ with coefficients in the SAYD module $^{\sigma}M_{\delta}$.

\medskip
\nin By \cite{MoscRang09}, Theorem 3.16, this complex is quasi-isomorphic with the total complex of the mixed complex $\FZ(\mathcal{H},R(\Fg_2);\, ^{\sigma}M_{\delta})$ whose cylindrical structure is given by the following operators. The horizontal operators are
\begin{align}\label{horizontal-operators}
\begin{split}
& \hd_{0}(m \ot \td{f} \ot \td{u}) = m \ot 1 \ot f^1 \odots f^p \ot \td{u} \\
& \hd_{i}(m \ot \td{f} \ot \td{u}) = m \ot f^1 \odots \D(f^i)\odots f^p \ot \td{u} \\
& \hd_{p+1}(m \ot \td{f} \ot \td{u}) = m\pr{0} \ot f^1 \odots f^p \ot \td{u}^{\pr{-1}}m\pr{-1} \rhd 1_{R(\Fg_2)} \ot \td{u}^{\pr{0}} \\
& \overset{\ra}{\sigma}_{j}(m \ot \td{f} \ot \td{u}) = m \ot f^1 \odots \ve(f^{j+1}) \odots f^p \ot \td{u} \\
&\hta(m \ot \td{f} \ot \td{u}) = m\pr{0} f^1\ps{1} \ot S(f^1\ps{2}) \cdot (f^2 \odots f^p \ot \td{u}^{\pr{-1}}m\pr{-1} \rhd 1_{R(\Fg_2)} \ot \td{u}^{\pr{0}}),
\end{split}
\end{align}
and the vertical operators are
\begin{align}\label{vertical-operators}
\begin{split}
&\vd_{0}(m \ot \td{f} \ot \td{u}) = m \ot \td{f} \ot \dot 1 \ot u^1\odots  u^q \\
&\vd_{i}(m \ot \td{f} \ot \td{u}) = m \ot \td{f} \ot u^0 \odots \D(u^i) \odots u^q \\
&\vd_{q+1}(m \ot \td{f} \ot \td{u}) = m\pr{0} \ot \td{f} \ot u^1 \odots u^q \ot \wbar{m\pr{-1}} \\
&\vs_j(m \ot \td{f} \ot \td{u}) = m \ot \td{f} \ot u^1 \odots \ve(u^{j+1}) \odots u^q \\
&\vta(m \ot \td{f} \ot \td{u}) = m\pr{0} u^1\ps{4}S^{-1}(u^1\ps{3} \rhd 1_{R(\Fg_2)}) \ot \\
& S(S^{-1}(u^1\ps{2}) \rhd 1_{R(\Fg_2)}) \cdot \left( S^{-1}(u^1\ps{1}) \rhd \td{f} \ot S(u^1\ps{5}) \cdot (u^2 \odots u^q \ot \wbar{m\pr{-1}})
\right).
\end{split}
\end{align}
for any $m \in \, ^{\sigma}M_{\delta}$.

\medskip
\nin Here,
\begin{equation}
U(\Fg_1)^{\ot \, q} \to \mathcal{H} \ot U(\Fg_1)^{\ot \, q}, \quad \td{u} \mapsto \td{u}^{\pr{-1}} \ot \td{u}^{\pr{0}}
\end{equation}
arises from the left $\mathcal{H}$-coaction on $U(\Fg_1)$ that coincides with the original $R(\Fg_2)$-coaction, \cite[Proposition 3.20]{MoscRang09}. On the other hand, $U(\Fg_1) \cong \mathcal{H} \ot_{R(\Fg_2)} \mathbb{C} \cong \mathcal{H}/\mathcal{H}R(\Fg_2)^+$ as coalgebras via the map $(f \acl u) \ot_{R(\Fg_2)} 1_{\mathbb{C}} \mapsto \ve(f)u$ and $\wbar{f \acl u} = \ve(f)u$ denotes the corresponding class.

\medskip
\nin Since $M$ is a unimodular SAYD module over $\Fg_1 \bowtie \Fg_2$, it admits the filtration $(F_pM)_{p \in \mathbb{Z}}$ defined similarly as before. We recall by Proposition \ref{proposition-comodule-doublecrossed-sum} that $\Fg_1 \bowtie \Fg_2$-coaction is the summation of $\Fg_1$-coaction and $\Fg_2$-coaction. Therefore, since $\Fg_1 \bi \Fg_2$-coaction respects the filtration, we conclude that $\Fg_1$-coaction and $\Fg_2$-coaction respect the filtration. Similarly, since $\Fg_1 \bi \Fg_2$-action respects the filtration, we conclude that $\Fg_1$-action and $\Fg_2$-action respects the filtration. Finally, by a similar argument as in the proof of Theorem \ref{g-R(g)spectral sequence} we can say that $R(\Fg_2)$-action and $R(\Fg_2)$-coaction respect the filtration.

\medskip
\nin As a result, the (co)boundary maps $d_{\rm CE}$ and $d_{\rm K}$ of the complex $W(\Fg_1 \bi \Fg_2, \Fh, M)$, and $b$ and $B$ from $C(U(\Fg_1) \cl R(\Fg_2), \, ^{\sigma}M_{\delta})$ respect the filtration.

\medskip
\nin Next, we proceed to the $E_1$ terms of the associated spectral sequences. We have
\begin{align}\label{aux-48}
\begin{split}
& E_1^{j,i}(R(\Fg_2) \acl U(\Fg_1), \, ^{\sigma}M_{\delta}) = \\
& H^{i+j}(F_jC(U(\Fg_1) \cl R(\Fg_2),\, ^{\sigma}M_{\delta})/F_{j-1}C(U(\Fg_1) \cl R(\Fg_2),\, ^{\sigma}M_{\delta})),
\end{split}
\end{align}
where
\begin{align}
\begin{split}
& F_jC(U(\Fg_1) \cl R(\Fg_2),\, ^{\sigma}M_{\delta})/F_{j-1}C(U(\Fg_1) \cl R(\Fg_2),\, ^{\sigma}M_{\delta}) = \\
& C(U(\Fg_1) \cl R(\Fg_2), F_j \, ^{\sigma}M_{\delta}/F_{j-1} \, ^{\sigma}M_{\delta}).
\end{split}
\end{align}
Since $$F_j \, ^{\sigma}M_{\delta}/F_{j-1} \, ^{\sigma}M_{\delta} = \, ^{\sigma}(F_jM/F_{j-1}M)_{\delta} =: \, ^{\sigma}\wbar{M}_{\delta}$$ has a trivial $\Fg_1 \bowtie \Fg_2$-comodule structure, its $U(\Fg_1)$-comodule structure and $R(\Fg_2)$-module structure are also trivial. Therefore, it is an $R(\Fg_2)$-SAYD in the sense of \cite{MoscRang09}. In this case, by \cite[Proposition 3.16]{MoscRang09}, $\FZ(\mathcal{H},R(\Fg_2);\, ^{\sigma}M_{\delta})$ is a bicocyclic module and the cohomology in \eqref{aux-48} is computed from the total of the following bicocyclic complex
\begin{align}\label{bicocyclic-bicomplex}
\begin{xy}  \xymatrix{  \vdots\ar@<.6 ex>[d]^{\uparrow B} & \vdots\ar@<.6 ex>[d]^{\uparrow B}
 &\vdots \ar@<.6 ex>[d]^{\uparrow B} & &\\
 ^\s{\overline{M}}_\d \ot U(\Fg_1)^{\ot 2} \ar@<.6 ex>[r]^{\hb}\ar@<.6
ex>[u]^{  \uparrow b  } \ar@<.6 ex>[d]^{\uparrow B}&
  ^\s{\overline{M}}_\d\ot U(\Fg_1)^{\ot 2}\ot  R(\Fg_2)   \ar@<.6 ex>[r]^{\hb}\ar@<.6 ex>[l]^{\hB}\ar@<.6 ex>[u]^{  \uparrow b  }
   \ar@<.6 ex>[d]^{\uparrow B} & ^\s{\overline{M}}_\d\ot U(\Fg_1)^{\ot 2}\ot R(\Fg_2)^{\ot 2}
   \ar@<.6 ex>[r] \ar@<.6 ex>[l]^{~~\hB} \ar@<.6 ex>[u]^{  \uparrow b  }
   \ar@<.6 ex>[d]^{\uparrow B}& \ar@<.6 ex>[l] \hdots&\\
^\s{\overline{M}}_\d \ot U(\Fg_1) \ar@<.6 ex>[r]^{\hb}\ar@<.6 ex>[u]^{  \uparrow b  }
 \ar@<.6 ex>[d]^{\uparrow B}&  ^\s{\overline{M}}_\d \ot U(\Fg_1) \ot R(\Fg_2) \ar@<.6 ex>[r]^{\hb}
 \ar@<.6 ex>[l]^{\hB}\ar@<.6 ex>[u]^{  \uparrow b  } \ar@<.6 ex>[d]^{\uparrow B}
 &^\s{\overline{M}}_\d\ot U(\Fg_1) \ot  R(\Fg_2)^{\ot 2}  \ar@<.6 ex>[r] \ar@<.6 ex>[l]^{\hB}\ar@<.6 ex>[u]^{  \uparrow b  }
  \ar@<.6 ex>[d]^{\uparrow B}&\ar@<.6 ex>[l] \hdots&\\
^\s{\overline{M}}_\d  \ar@<.6 ex>[r]^{\hb}\ar@<.6 ex>[u]^{  \uparrow b  }&
^\s{\overline{M}}_\d\ot R(\Fg_2) \ar@<.6 ex>[r]^{\hb}\ar[l]^{\hB}\ar@<.6
ex>[u]^{  \uparrow b  }&^\s{\overline{M}}_\d\ot R(\Fg_2)^{\ot 2}  \ar@<.6
ex>[r]^{~~\hb}\ar@<.6 ex>[l]^{\hB}\ar@<1 ex >[u]^{  \uparrow b  }
&\ar@<.6 ex>[l]^{~~\hB} \hdots& .}
\end{xy}
\end{align}

\nin Moreover, by \cite[Proposition 5.1]{RangSutl}, the total of the bicomplex \eqref{bicocyclic-bicomplex} is quasi-isomorphic to the total of the bicomplex
\begin{align}
\begin{xy} \xymatrix{  \vdots & \vdots
 &\vdots &&\\
 \;^\s{\overline{M}}_\d\ot \wg^2{\Fg_1}^\ast  \ar[u]^{d_{\rm CE}}\ar[r]^{b^\ast_{R(\Fg_2)}\;\;\;\;\;\;\;\;\;\;}&  \;^\s{\overline{M}}_\d\ot \wg^2{\Fg_1}^\ast\ot R(\Fg_2) \ar[u]^{d_{\rm CE}} \ar[r]^{b^\ast_R(\Fg_2)}& \;^\s{\overline{M}}_\d\ot \wg^2{\Fg_1}^\ast\ot R(\Fg_2)^{\ot 2} \ar[u]^{d_{\rm CE}} \ar[r]^{\;\;\;\;\;\;\;\;\;\;\;\;\;\;\;\;\;\;\;\;\;\;\;b^\ast_{R(\Fg_2)}} & \hdots&  \\
 \;^\s{\overline{M}}_\d\ot {\Fg_1}^\ast  \ar[u]^{d_{\rm CE}}\ar[r]^{b^\ast_{R(\Fg_2)}\;\;\;\;\;\;\;\;\;\;\;\;\;}& \;^\s{\overline{M}}_\d\ot {\Fg_1}^\ast\ot R(\Fg_2) \ar[u]^{d_{\rm CE}} \ar[r]^{b^\ast_{R(\Fg_2)}}& \;^\s{\overline{M}}_\d\ot  {\Fg_1}^\ast\ot R(\Fg_2)^{\ot 2} \ar[u]^{d_{\rm CE}} \ar[r]^{\;\;\;\;\;\;\;\;\;\;\;\;\;\;\;\;\;\;\;b^\ast_R(\Fg_2) }& \hdots&  \\
   \;^\s{\overline{M}}_\d\ar[u]^{d_{\rm CE}}\ar[r]^{b^\ast_{R(\Fg_2)}~~~~~~~}& \;^\s{\overline{M}}_\d\ot R(\Fg_2) \ar[u]^{d_{\rm CE}}\ar[r]^{b^\ast_R(\Fg_2)}& \;^\s{\overline{M}}_\d\ot R(\Fg_2)^{\ot 2} \ar[u]^{d_{\rm CE}} \ar[r]^{\;\;\;\;\;\;\;\;\;\;\;\;\;\;\;b^\ast_{R(\Fg_2)}} & \hdots&, }
\end{xy}
\end{align}
where $b^\ast_{R(\Fg_2)}$ is the coalgebra Hochschild coboundary with coefficients in the $R(\Fg_2)$-comodule $\;^\s{\overline{M}}_\d \ot \wg^q{\Fg_1}^\ast$.

\medskip
\nin Similarly,
\begin{align}\label{aux-49}
\begin{split}
& E_1^{j,i}(\Fg_1 \bowtie \Fg_2,\Fh,M) = H^{i+j}(F_jW(\Fg_1 \bowtie \Fg_2,\Fh,M)/F_{j-1}W(\Fg_1 \bowtie \Fg_2,\Fh,M)) \\
& = H^{i+j}(W(\Fg_1 \bowtie \Fg_2,\Fh,F_jM/F_{j-1}M)) = \bigoplus_{i+j = n \, mod \, 2}H^n(\Fg_1 \bi \Fg_2, \Fh, F_jM/F_{j-1}M)
\end{split}
\end{align}
where the last equality follows from the fact that $F_jM/F_{j-1}M$ has a trivial $\Fg_1 \bowtie \Fg_2$-comodule structure.

\medskip
\nin Finally, the quasi isomorphism between the $E_1$-terms \eqref{aux-48} and \eqref{aux-49} is given by the Corollary 5.10 of \cite{RangSutl}.
\end{proof}

\begin{remark}
{\rm
In case of a trivial $\Fg_1 \bowtie \Fg_2$-coaction, this theorem becomes \cite[Corollary 5.10]{RangSutl}. In this case, $U(\Fg_1)$-coaction and $R(\Fg_2)$-action are trivial, therefore the condition \eqref{aux-47} is obvious.}
\end{remark}

\section{Illustration}

In this section, first  we exercise our method in Sections \ref{Sec-Lie-Hopf} to provide a highly nontrivial 4-dimensional  SAYD module over $\Hc_{1\rm S}\cop\cong R(\Cb)\acl U({gl_1}^{\rm aff})$, the Schwarzian Hopf
algebra, which is introduced in \cite{ConnMosc98}.    The merit of this example is the nontrivially  of the $R(\Cb)$-action and the
$U({gl_1}^{\rm aff})$-coaction which were assumed to be trivial for induced modules in \cite{RangSutl}.   We then  illustrate Theorem \ref{aux-63} by computing two hand sides of the theorem. At the end we explicitly compute
the representative cocycles for these  cohomology classes.
From now on we denote $R(\Cb)$ by $\Fc$, $U({gl_1}^{\rm aff})$ by $\Uc$ and $\Hc_{1\rm S}\cop$ by $\Hc$.

\subsection{A 4-dimensional SAYD module on the Schwarzian Hopf algebra}
Let us first recall the Lie algebra $sl_2$ as a double crossed sum Lie algebra. We have $sl_2 = \Fg_1 \bowtie \Fg_2$, $\Fg_1 = \Cb\Big\langle X,Y \Big\rangle$, $\Fg_2 = \Cb\Big\langle Z \Big\rangle$, and the Lie bracket is
\begin{equation}
[Y,X] = X, \quad [Z,X] = Y, \quad [Z,Y] = Z.
\end{equation}

\nin Let us take $M = S({sl_2}^\ast)\nsb{2}$. By Example \ref{ex-2}, $M$ is an SAYD over $sl_2$ via the coadjoint action and the Koszul coaction.

\medskip
\nin Writing ${\Fg_2}^\ast = \Cb\Big\langle \d_1 \Big\rangle$, we have $\Fc = R(\Fg_2) = \mathbb{C}[\delta_1]$. Also, $\Uc = U(\Fg_1)$ and it is immediate to realize that $\Fc \acl \Uc \cong {\Hc_{1 \rm S}}\cop$ \cite{MoscRang07}.

\medskip
\nin Next, we construct the $\Fc \acl \Uc$-(co)action explicitly and we verify that $\, ^{\sigma}M_{\delta}$ is an SAYD over $\Fc \acl \Uc$. Here, $(\sigma,\delta)$ is the canonical modular pair in involution associated to the bicrossed product $\Fc \acl \Uc$ \cite{RangSutl}.
By definition  $\delta = Tr \circ ad_{\Fg_1}$. Let us compute $\sigma \in \Fc$ from the right $\Fc$-coaction on $\Uc$.

\medskip
\nin Considering the formula $[v,X] = v \rhd X \oplus v \lhd X$, the action $\Fg_2 \lhd \Fg_1$ is given by
\begin{equation}
Z \lhd X = 0, \quad Z \lhd Y = Z.
\end{equation}
Similarly, the action $\Fg_2 \rhd \Fg_1$ is
\begin{equation}
Z \rhd X = Y, \quad Z \rhd Y = 0.
\end{equation}

\nin Dualizing the left action $\Fg_2 \rhd \Fg_1$, we have the $\Fc$-coaction on $\Uc$ as follows
\begin{align}
\begin{split}
& \Uc \to \Uc \ot \Fc, \quad u \mapsto u^{\pr{0}} \ot u^{\pr{1}} \\
& X \mapsto X \ot 1 + Y \ot \delta_1 \\
& Y \mapsto Y \ot 1.
\end{split}
\end{align}
Hence, by \cite{RangSutl} Section 3.1
\begin{equation}
\sigma = det \left(
               \begin{array}{cc}
                 1 & \delta_1 \\
                 0 & 1 \\
               \end{array}
             \right) = 1.
\end{equation}

\nin On the other hand, by the Lie algebra structure of $\Fg_1 \cong {gl_1}^{\rm aff}$, we have
\begin{equation}
\delta(X) = 0, \qquad \delta(Y) = 1.
\end{equation}

\nin Next,  we express the $\Fc \acl \Uc$-coaction on $M = S({sl_2}^\ast)\nsb{2}$ explicitly. A vector space basis of $M$ is given by $\Big\{1_M, R^X, R^Y, R^Z\Big\}$ and the $\Fg_1$-coaction (Kozsul) is
\begin{equation}
M \to \Fg_1 \ot M, \quad 1_M \mapsto X \ot R^X + Y \ot R^Y, \quad R^i \mapsto 0, \quad i = X,Y,Z.
\end{equation}

\nin Note that the application of this coaction twice is zero, thus it is locally conilpotent. Then the corresponding $\Uc$ coaction is
\begin{align}
\begin{split}
& M \to \Uc \ot M, \quad m \mapsto m\snsb{-1} \ot m\snsb{0} \\
& 1_M \mapsto 1 \ot 1_M + X \ot R^X + Y \ot R^Y \\
& R^i \mapsto 1 \ot R^i, \quad i = X,Y,Z.
\end{split}
\end{align}

\nin To determine the left $\Fc$-coaction, we need to dualize the right $\Fg_2$-action. We have
\begin{equation}
1_M \lhd Z = 0, \quad R^X \lhd Z = 0, \quad R^Y \lhd Z = R^X, \quad R^Z \lhd Z = R^Y,
\end{equation}
implying
\begin{align}
\begin{split}
& M \to \Fc \ot M, \quad m \mapsto m^{\sns{-1}} \ot m^{\sns{0}} \\
& 1_M \mapsto 1 \ot 1_M \\
& R^X \mapsto 1 \ot R^X \\
& R^Y \mapsto 1 \ot R^Y + \delta_1 \ot R^X \\
& R^Z \mapsto 1 \ot R^Z + \delta_1 \ot R^Y + \frac{1}{2}\delta_1^2 \ot R^X.
\end{split}
\end{align}
As a result, $\Fc \acl \Uc$-coaction appears as follows
\begin{align}
\begin{split}
& M \to \Fc \acl \Uc \ot M, \quad m \mapsto m^{\sns{-1}} \acl m^{\sns{0}}\snsb{-1} \ot m^{\sns{0}}\snsb{0} \\
& 1_M \mapsto 1 \ot 1_M + X \ot R^X + Y \ot R^Y \\
& R^X \mapsto 1 \ot R^X \\
& R^Y \mapsto 1 \ot R^Y + \delta_1 \ot R^X \\
& R^Z \mapsto 1 \ot R^Z + \delta_1 \ot R^Y + \frac{1}{2}\delta_1^2 \ot R^X.
\end{split}
\end{align}

\nin Let us next determine the right $\Fc \acl \Uc$-action. It is enough to determine the $\Uc$-action and $\Fc$-action separately. The action of $\Uc$ is directly given by
\begin{align}
\begin{split}
& 1_M \lhd X = 0, \quad 1_M \lhd Y = 0 \\
& R^X \lhd X = -R^Y, \quad R^X \lhd Y = R^X \\
& R^Y \lhd X = -R^Z, \quad R^Y \lhd Y = 0 \\
& R^Z \lhd X = 0, \quad R^Z \lhd Y = -R^Z.
\end{split}
\end{align}

\nin To be able to see the $\Fc$-action, we determine the $\Fg_2$-coaction. This follows from the Kozsul coaction on $M$, \ie
\begin{align}
\begin{split}
& M \to U(\Fg_2) \ot M, \quad m \mapsto m\sns{-1} \ot m\sns{0} \\
& 1_M \mapsto 1 \ot 1_M + Z \ot R^Z \\
& R^i \mapsto 1 \ot R^i, \quad i = X,Y,Z.
\end{split}
\end{align}
Hence, $\Fc$-action is given by
\begin{equation}
1_M \lhd \delta_1 = R^Z, \quad R^i \lhd \delta_1 = 0, \quad i = X,Y,Z.
\end{equation}

\nin We will now check carefully  that $M$ is a YD module over the bicrossed product Hopf algebra $\Fc \acl \Uc$. We leave to the reader to check that $M$ satisfies the conditions introduced in Lemma \ref{module on bicrossed product} and Lemma \ref{comodule on bicrossed product}; that is  $M$ is a module and comodule on $\Fc \acl \Uc$ respectively. We proceed to the verification of the YD condition on the bicrossed product Hopf algebra $\Fc \acl \Uc$.

\medskip
\nin By the multiplicative property of the YD condition, it is enough to check that the condition holds  for the elements $X, Y, \delta_1 \in \Fc \acl \Uc$.

\medskip
\nin For simplicity of the notation, we write the $\Fc \acl \Uc$-coaction as $m \mapsto m\pr{-1} \ot m\pr{0}$.

\medskip
\nin We begin with $1_M \in M$ and $X \in \Fc \acl \Uc$. On one hand we have
\begin{align}
\begin{split}
& X\ps{2} \cdot (1_M \lhd X\ps{1})\pr{-1} \ot (1_M \lhd X\ps{1})\pr{0} = \\
& (1_M \lhd X)\pr{-1} \ot (1_M \lhd X)\pr{0} + X 1_M\pr{-1} \ot 1_M\pr{0} + \delta_1(1_M \lhd Y)\pr{-1} \ot (1_M \lhd Y)\pr{0} = \\
& X \ot 1_M + X^2 \ot R^X + XY \ot R^Y,
\end{split}
\end{align}
and on the other hand,
\begin{align}
\begin{split}
& 1_M\pr{-1} X\ps{1} \ot 1_M\pr{0} \lhd X\ps{2} = \\
& 1_M\pr{-1} X \ot 1_M\pr{0} + 1_M\pr{-1} \ot 1_M\pr{0} \lhd X + 1_M\pr{-1} Y \ot 1_M\pr{0} \lhd \delta_1 = \\
& X \ot 1_M + X^2 \ot R^X + YX \ot R^Y + X \ot R^X \lhd X + Y \ot R^Y \lhd X + Y \ot 1_M \lhd \delta_1.
\end{split}
\end{align}
In view of $[Y,X]=X$, $R^X \lhd X = -R^X$, $R^Y \lhd X = -R^Z$ and $1_M \lhd \delta_1 = R^Z$, we have the YD compatibility is satisfied for $1_M \in M$ and $X \in \Fc \acl \Uc$.

\medskip
\nin We proceed to check the condition for  $1_M \in M$ and $Y \in \Fc \acl \Uc$. We have
\begin{align}
\begin{split}
& Y\ps{2} \cdot (1_M \lhd Y\ps{1})\pr{-1} \ot (1_M \lhd Y\ps{1})\pr{0} = \\
& (1_M \lhd Y)\pr{-1} \ot (1_M \lhd Y)\pr{0} + Y 1_M\pr{-1} \ot 1_M\pr{0} = \\
& Y \ot 1_M + YX \ot R^X + Y^2 \ot R^Y,
\end{split}
\end{align}
and
\begin{align}
\begin{split}
& 1_M\pr{-1} Y\ps{1} \ot 1_M\pr{0} \lhd Y\ps{2} = \\
& 1_M\pr{-1} Y \ot 1_M\pr{0} + 1_M\pr{-1} \ot 1_M\pr{0} \lhd Y = \\
& Y \ot 1_M + XY \ot R^X + Y^2 \ot R^Y + X \ot R^X \lhd Y.
\end{split}
\end{align}
We use $[Y,X]=X$ and $R^X \lhd Y = R^X$, and hence the YD condition is satisfied for $1_M \in M$ and $Y \in \Fc \acl \Uc$.

\medskip
\nin For $1_M \in M$ and $\delta_1 \in \Fc \acl \Uc$ we have
\begin{align}
\begin{split}
& \delta_1\ps{2} (1_M \lhd \delta_1\ps{1})\pr{-1} \ot (1_M \lhd \delta_1\ps{1})\pr{0} = \\
& (1_M \lhd \delta_1)\pr{-1} \ot (1_M \lhd \delta_1)\pr{0} + \delta_1 1_M\pr{-1} \ot 1_M\pr{0} = \\
& 1 \ot R^Z + \delta_1 \ot R^Y + \frac{1}{2}\delta_1^2  \ot R^X + \delta_1 \ot 1_M + \delta_1X \ot R^X + \delta_1Y \ot R^Y.
\end{split}
\end{align}
On the other hand,
\begin{align}
\begin{split}
& 1_M\pr{-1} \delta_1\ps{1} \ot 1_M\pr{0} \lhd \delta_1\ps{2} = \\
& 1_M\pr{-1} \delta_1 \ot 1_M\pr{0} + 1_M\pr{-1} \ot 1_M\pr{0} \lhd \delta_1 = \\
& \delta_1 \ot 1_M + X\delta_1 \ot R^X + Y\delta_1 \ot R^Y + 1 \ot 1_M \lhd \delta_1.
\end{split}
\end{align}
Thus, the YD condition for $1_M \in M$ and $\delta_1 \in \Fc \acl \Uc$ follows from $[X,\delta_1] = \frac{1}{2}\delta_1^2$, $[Y,\delta_1] = \delta_1$ and $1_M \lhd \delta_1 = R^Z$.

\medskip
\nin Next, we consider $R^X \in M$ and $X \in \Fc \acl \Uc$. In this case we have,
\begin{align}
\begin{split}
& X\ps{2} (R^X \lhd X\ps{1})\pr{-1} \ot (R^X \lhd X\ps{1})\pr{0} = \\
& (R^X \lhd X)\pr{-1} \ot (R^X \lhd X)\pr{0} + X R^X\pr{-1} \ot R^X\pr{0} + \delta_1 (R^X \lhd Y)\pr{-1} \ot (R^X \lhd Y)\pr{0} = \\
& -1 \ot R^Y - \delta_1 \ot R^X + X \ot R^X + \delta_1 \ot R^X = \\
& -1 \ot R^Y + X \ot R^X.
\end{split}
\end{align}
On the other hand,
\begin{align}
\begin{split}
& R^X\pr{-1} X\ps{1} \ot R^X\pr{0} \lhd X\ps{2} = \\
& R^X\pr{-1} X \ot R^X\pr{0} + R^X\pr{-1} \ot R^X\pr{0} \lhd X + R^X\pr{-1} Y \ot R^X\pr{0} \lhd \delta_1 = \\
& X \ot R^X + 1 \ot R^X \lhd X,
\end{split}
\end{align}
and we have the equality in view of the fact that $R^X \lhd X = -R^Y$.

\medskip
\nin For $R^X \in M$ and $Y \in \Fc \acl \Uc$, on one hand side we have
\begin{align}
\begin{split}
& Y\ps{2} (R^X \lhd Y\ps{1})\pr{-1} \ot (R^X \lhd Y\ps{1})\pr{0} = \\
& (R^X \lhd Y)\pr{-1} \ot (R^X \lhd Y)\pr{0} + Y R^X\pr{-1} \ot R^X\pr{0} = \\
& 1 \ot R^X + Y \ot R^X,
\end{split}
\end{align}
and on the other hand,
\begin{align}
\begin{split}
& R^X\pr{-1}  Y\ps{1} \ot R^X\pr{0} \lhd Y\ps{2} = \\
& R^X\pr{-1}  Y \ot R^X\pr{0} + R^X \ot R^X\pr{0} \lhd Y = \\
& Y \ot R^X + 1 \ot R^X \lhd Y.
\end{split}
\end{align}
The equality is the consequence of $R^X \lhd Y = R^X$.

\medskip
\nin For $R^X \in M$ and $\delta_1 \in \Fc \acl \Uc$ we have
\begin{align}
\begin{split}
& \delta_1\ps{2} (R^X \lhd \delta_1\ps{1})\pr{-1} \ot (R^X \lhd \delta_1\ps{1})\pr{0} = \\
& (R^X \lhd \delta_1)\pr{-1} \ot (R^X \lhd \delta_1)\pr{0} + \delta_1 R^X\pr{-1} \ot R^X\pr{0} = \\
& \delta_1 \ot R^X,
\end{split}
\end{align}
and
\begin{align}
\begin{split}
& R^X\pr{-1} \delta_1\ps{1} \ot R^X\pr{0} \lhd \delta_1\ps{2} = \\
& R^X\pr{-1} \delta_1 \ot R^X\pr{0} + R^X\pr{-1} \ot R^X\pr{0} \lhd \delta_1 = \\
& \delta_1 \ot R^X + 1 \ot R^X \lhd \delta_1.
\end{split}
\end{align}
The result follows from $R^X \lhd \delta_1 = 0$.

\medskip
\nin We proceed to verify the condition for $R^Y \in M$. For $R^Y \in M$ and $X \in \Fc \acl \Uc$, we have
\begin{align}
\begin{split}
& X\ps{2} (R^Y \lhd X\ps{1})\pr{-1} \ot (R^Y \lhd X\ps{1})\pr{0} = \\
& (R^Y \lhd X)\pr{-1} \ot (R^Y \lhd X)\pr{0} + X R^Y\pr{-1} \ot R^Y\pr{0} + \delta_1 (R^Y \lhd Y)\pr{-1} \ot (R^Y \lhd Y)\pr{0} = \\
& -1 \ot R^Z - \delta_1 \ot R^Y - \frac{1}{2}\delta_1^2 \ot R^X + X \ot R^Y + X\delta_1 \ot R^X,
\end{split}
\end{align}
as well as
\begin{align}
\begin{split}
& R^Y\pr{-1} X\ps{1} \ot R^Y\pr{0} \lhd X\ps{2} = \\
& R^Y\pr{-1} X \ot R^Y\pr{0} + R^Y\pr{-1} \ot R^Y\pr{0} \lhd X + R^Y\pr{-1} Y \ot R^Y\pr{0} \lhd \delta_1 = \\
& X \ot R^Y + \delta_1X \ot R^X + 1 \ot R^Y \lhd X + \delta_1 \ot R^X \lhd X.
\end{split}
\end{align}
To see the equality, we use $[X,\delta_1] = \frac{1}{2}\delta_1^2$, $R^Y \lhd X = -R^Z$ and $R^X \lhd X = -R^Y$.

\medskip
\nin Similarly for $R^Y \in M$ and $Y \in \Fc \acl \Uc$, we have on one hand
\begin{align}
\begin{split}
& Y\ps{2} (R^Y \lhd Y\ps{1})\pr{-1} \ot (R^Y \lhd Y\ps{1})\pr{0} = \\
& (R^Y \lhd Y)\pr{-1} \ot (R^Y \lhd Y)\pr{0} + Y R^Y\pr{-1} \ot R^Y\pr{0} = \\
& Y \ot R^Y + Y\delta_1 \ot R^X,
\end{split}
\end{align}
and on the other hand,
\begin{align}
\begin{split}
& R^Y\pr{-1} Y\ps{1} \ot R^Y\pr{0} \lhd Y\ps{2} = \\
& R^Y\pr{-1} Y \ot R^Y\pr{0} + R^Y\pr{-1} \ot R^Y\pr{0} \lhd Y = \\
& Y \ot R^Y + \delta_1Y \ot R^X + \delta_1 \ot R^X \lhd Y.
\end{split}
\end{align}
Hence the equality by $[Y,\delta_1] = \delta_1$ and $R^X \lhd Y = R^X$.

\medskip
\nin Finally, for $R^Y \in M$ and $\delta_1 \in \Fc \acl \Uc$ we have
\begin{align}
\begin{split}
& \delta_1\ps{2} (R^Y \lhd \delta_1\ps{1})\pr{-1} \ot (R^Y \lhd \delta_1\ps{1})\pr{0} = \\
& (R^Y \lhd \delta_1)\pr{-1} \ot (R^Y \lhd \delta_1)\pr{0} + \delta_1 R^Y\pr{-1} \ot R^Y\pr{0} = \\
& \delta_1 \ot R^Y + \delta_1^2 \ot R^X,
\end{split}
\end{align}
and
\begin{align}
\begin{split}
& R^Y\pr{-1} \delta_1\ps{1} \ot R^Y\pr{0} \lhd \delta_1\ps{2} = \\
& R^Y\pr{-1} \delta_1 \ot R^Y\pr{0} + R^Y\pr{-1} \ot R^Y\pr{0} \lhd \delta_1 = \\
& \delta_1 \ot R^Y + \delta_1^2 \ot R^X.
\end{split}
\end{align}

\nin Now we check the condition for $R^Z \in M$. For $R^Z \in M$ and $X \in \Fc \acl \Uc$,
\begin{align}
\begin{split}
& X\ps{2} (R^Z \lhd X\ps{1})\pr{-1} \ot (R^Z \lhd X\ps{1})\pr{0} = \\
& (R^Z \lhd X)\pr{-1} \ot (R^Z \lhd X)\pr{0} + X R^Z\pr{-1} \ot R^Z\pr{0} + \delta_1 (R^Z \lhd Y)\pr{-1} \ot (R^Z \lhd Y)\pr{0} = \\
& X \ot R^Z + X\delta_1 \ot R^Y + \frac{1}{2}X\delta_1^2 \ot R^X - \delta_1 \ot R^Z - \delta_1^2 \ot R^Y - \frac{1}{2}\delta_1^3 \ot R^X.
\end{split}
\end{align}
On the other hand,
\begin{align}
\begin{split}
& R^Z\pr{-1} X\ps{1} \ot R^Z\pr{0} \lhd X\ps{2} = \\
& R^Z\pr{-1} X \ot R^Z\pr{0} + R^Z\pr{-1} \ot R^Z\pr{0} \lhd X + R^Z\pr{-1} Y \ot R^Z\pr{0} \lhd \delta_1 = \\
& X \ot R^Z + \delta_1X \ot R^Y + \frac{1}{2}\delta_1^2X \ot R^X + \delta_1 \ot R^Y \lhd X + \frac{1}{2}\delta_1^2 \ot R^X \lhd X.
\end{split}
\end{align}
Equality follows from $[X,\delta_1] = \frac{1}{2}\delta_1^2$, $R^Y \lhd X = -R^Z$ and $R^X \lhd X = -R^Y$.

\medskip
\nin Next, we consider $R^Z \in M$ and $Y \in \Fc \acl \Uc$. In this case we have
\begin{align}
\begin{split}
& Y\ps{2} (R^Z \lhd Y\ps{1})\pr{-1} \ot (R^Z \lhd Y\ps{1})\pr{0} = \\
& (R^Z \lhd Y)\pr{-1} \ot (R^Z \lhd Y)\pr{0} + Y R^Z\pr{-1} \ot R^Z\pr{0} = \\
& -1 \ot R^Z - \delta_1 \ot R^Y - \frac{1}{2}\delta_1^2 \ot R^X + Y \ot R^Z + Y\delta_1 \ot R^Y + \frac{1}{2}Y\delta_1^2 \ot R^X,
\end{split}
\end{align}
and on the other hand,
\begin{align}
\begin{split}
& R^Z\pr{-1} Y\ps{1} \ot R^Z\pr{0} \lhd Y\ps{2} = \\
& R^Z\pr{-1} Y \ot R^Z\pr{0} + R^Z\pr{-1} \ot R^Z\pr{0} \lhd Y = \\
& Y \ot R^Z + \delta_1Y \ot R^Y + \frac{1}{2}\delta_1^2Y \ot R^X + 1 \ot R^Z \lhd Y + \frac{1}{2}\delta_1^2 \ot R^X \lhd Y.
\end{split}
\end{align}
Equality follows from $[Y,\delta_1] = \delta_1$, $R^Z \lhd Y = -R^Z$ and $R^X \lhd Y = R^X$.

\medskip
\nin Finally we check the YD compatibility for $R^Z \in M$ and $\delta_1 \in \Fc \acl \Uc$. We have
\begin{align}
\begin{split}
& \delta_1\ps{2} (R^Z \lhd \delta_1\ps{1})\pr{-1} \ot (R^Z \lhd \delta_1\ps{1})\pr{0} = \\
& (R^Z \lhd \delta_1)\pr{-1} \ot (R^Z \lhd \delta_1)\pr{0} + \delta_1 R^Z\pr{-1} \ot R^Z\pr{0} = \\
& \delta_1 \ot R^Z + \delta_1^2 \ot R^Y + \frac{1}{2}\delta_1^3 \ot R^X,
\end{split}
\end{align}
and
\begin{align}
\begin{split}
& R^Z\pr{-1} \delta_1\ps{1} \ot R^Z\pr{0} \lhd \delta_1\ps{2} = \\
& R^Z\pr{-1} \delta_1 \ot R^Z\pr{0} + R^Z\pr{-1} \ot R^Z\pr{0} \lhd \delta_1 = \\
& \delta_1 \ot R^Z + \delta_1^2 \ot R^Y + \frac{1}{2}\delta_1^3 \ot R^X.
\end{split}
\end{align}

\nin We have proved  that $M$ is a YD module over the bicrossed product Hopf algebra $\Fc \acl \Uc = {\Hc_{1 \rm S}}^{\rm cop}$.

\medskip
\nin Let us now check the stability condition. Since in this case $\sigma = 1$, $\,^{\sigma}M_{\delta}$ has the same coaction as $M$. Thus, $(m \ot 1_{\mathbb{C}})\pr{-1} \ot (m \ot 1_{\mathbb{C}})\pr{0} \in \Fc \acl \Uc \ot \,^{\sigma}M_{\delta}$ denoting the coaction, we have
\begin{align*}\notag
& (1_M \ot 1_{\mathbb{C}})\pr{0} \cdot (1_M \ot 1_{\mathbb{C}})\pr{-1} = (1_M \ot 1_{\mathbb{C}}) \cdot 1 + (R^X \ot 1_{\mathbb{C}}) \cdot X + (R^Y \ot 1_{\mathbb{C}}) \cdot Y \\\notag
& = 1_M \ot 1_{\mathbb{C}} + R^X \cdot X\ps{2}\delta(X\ps{1}) \ot 1_{\mathbb{C}} + R^Y \cdot Y\ps{2}\delta(Y\ps{1}) \ot 1_{\mathbb{C}} \\\notag
& = 1_M \ot 1_{\mathbb{C}} + R^X \cdot X \ot 1_{\mathbb{C}} + R^X \delta(X) \ot 1_{\mathbb{C}} + R^Y \cdot Y \ot 1_{\mathbb{C}} + R^Y \delta(Y) \ot 1_{\mathbb{C}} \\
& = 1_M \ot 1_{\mathbb{C}}.\\[.5cm]
& (R^X \ot 1_{\mathbb{C}})\pr{0} \cdot (R^X \ot 1_{\mathbb{C}})\pr{-1} = (R^X \ot 1_{\mathbb{C}}) \cdot 1
= R^X \ot 1_{\mathbb{C}}.\\[.5cm]
& (R^Y \ot 1_{\mathbb{C}})\pr{0} \cdot (R^Y \ot 1_{\mathbb{C}})\pr{-1} = (R^Y \ot 1_{\mathbb{C}}) \cdot 1 + (R^X \ot 1_{\mathbb{C}}) \cdot \delta_1
 = R^Y \ot 1_{\mathbb{C}}.\\[.5cm]
& (R^Z \ot 1_{\mathbb{C}})\pr{0} \cdot (R^Z \ot 1_{\mathbb{C}})\pr{-1} =\\
&\hspace{5cm} (R^Z \ot 1_{\mathbb{C}}) \cdot 1 + (R^Y \ot 1_{\mathbb{C}}) \cdot \delta_1 + (R^X \ot 1_{\mathbb{C}}) \cdot \frac{1}{2}\delta_1^2  = R^Z \ot 1_{\mathbb{C}}.
\end{align*}
Hence the stability is satisfied.

\medskip
\nin We record our discussion in the following proposition.

\begin{proposition}
 The four dimensional module-comodule  $$M_\d:=M \ot \mathbb{C}_{\delta}= \Cb\Big\langle 1_M, R^X,R^Y,R^Z\Big\rangle \ot \Cb_\d$$ is an SAYD module over the the Schwarzian Hopf algebra $\Hc_{\rm 1S}\cop$, via the action and coaction
$$
\left.
  \begin{array}{c|ccc}

    \lhd & X & Y & \d_1 \\[.2cm]
    \hline
    &&&\\[-.2cm]
    \one & 0 & 0 & \bfR^Z \\[.1cm]

    \bfR^X & -\bfR^Y & 2\bfR^X & 0 \\[.1cm]
    \bfR^Y & -\bfR^Z & \bfR^Y & 0 \\[.1cm]
    \bfR^Z & 0 & 0 & 0 \\
  \end{array}
\right.\qquad
  \begin{array}{rl}
  &\Db: M_\d \longrightarrow \mathcal{H}_{\rm 1S}\cop \ot M_\d \\[.2cm]
\hline
  &\\[-.2cm]
& \one \mapsto 1 \ot \one + X \ot \bfR^X + Y \ot \bfR^Y \\
& \bfR^X \mapsto 1 \ot \bfR^X \\
& \bfR^Y \mapsto 1 \ot \bfR^Y + \delta_1 \ot \bfR^X \\
& \bfR^Z \mapsto 1 \ot \bfR^Z + \delta_1 \ot \bfR^Y + \frac{1}{2}\delta_1^2 \ot \bfR^X.
  \end{array}
$$

\nin Here, $\one := 1_M \ot \Cb_\d, \; \bfR^X := R^X \ot \Cb_\d, \; \bfR^Y := R^Y \ot \Cb_\d, \; \bfR^Z := R^Z \ot \Cb_\d$.
\end{proposition}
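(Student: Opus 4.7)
The plan is to avoid checking the AYD axioms for $M_\d$ over $\Hc_{1\rm S}\cop$ by hand and instead derive them from the SAYD structure of $M=S(sl_2^\ast)\nsb{2}$ over the Lie algebra $sl_2$ via the correspondence machinery of Section \ref{Sec-Lie-Hopf}. By Example \ref{ex-2}, $M$ is a right-left SAYD module over $sl_2$ with coadjoint action and Koszul coaction; since $M$ is finite dimensional, the action is automatically locally finite, and the truncation ensures that the Koszul coaction is locally conilpotent (its second iterate already vanishes).

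Next, I would invoke the decomposition $sl_2=\Fg_1\bi\Fg_2$ of Example \ref{example-sl2}, with $\Fg_1=\Cb\langle X,Y\rangle\cong gl_1^{\rm aff}$ and $\Fg_2=\Cb\langle Z\rangle$. Proposition \ref{aux-46} then yields a right-left YD module structure on $M$ over the bicrossed product $R(\Fg_2)\acl U(\Fg_1)$, which is isomorphic to $\Hc_{1\rm S}\cop$ by \cite{MoscRang07}. The action and coaction displayed in the statement are obtained by unwinding this construction: the $X,Y$ columns of the action and the $X,Y$ part of the coaction come directly from the coadjoint $\Fg_1$-action and the Koszul $\Fg_1$-coaction; the $R(\Fg_2)$-action $\one\lhd\d_1=\bfR^Z$ is read off from the pairing $\langle\d_1,Z\rangle=1$ applied to the $\Fg_2$-action; and the $\d_1,\d_1^2$ terms on the coaction side are the $R(\Fg_2)$-coaction produced by dualizing the Koszul $\Fg_2$-coaction $\one\mapsto 1\ot\one+Z\ot\bfR^Z$, and so on.

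To upgrade from YD to AYD, I would twist by the canonical modular pair in involution $(\d,\s)$ associated to $\Hc_{1\rm S}\cop$ in \cite{RangSutl}. Dualizing $Z\rhd X=Y$, $Z\rhd Y=0$ gives the $R(\Fg_2)$-coaction on $U(\Fg_1)$ as $X\mapsto X\ot 1+Y\ot\d_1$, $Y\mapsto Y\ot 1$, so $\s=\det\bigl(\begin{smallmatrix}1&\d_1\\0&1\end{smallmatrix}\bigr)=1$; the trace of $\ad$ on $\Fg_1$ gives $\d(X)=0$ and $\d(Y)=1$. Since $\s=1$ the twisted module $^\s M_\d = M\ot {}^\s\Cb_\d$ inherits precisely the same coaction as $M$, and the action is shifted by $\d$, which accounts for the mismatch between the naive coadjoint formulas and the tabulated entries (e.g.\ $\bfR^X\lhd Y=2\bfR^X$ is the coadjoint contribution $\bfR^X$ plus the trace shift $\d(Y)\bfR^X=\bfR^X$).

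For stability, rather than verifying the Hopf-algebraic identity directly, I would apply Proposition \ref{aux-59} together with the criterion \eqref{aux-47}: it suffices to check $m\snsb{0}\d(m\snsb{-1})\s=m$ on each basis element. With $\s=1$ and $\d(X)=0$, $\d(Y)=1$, this reduces to four short verifications in which only the $Y$-component of the $U(\Fg_1)$-coaction survives, and the resulting term is absorbed by the $\d$-shift in the action. The main obstacle in the plan is the bookkeeping: Proposition \ref{aux-46} produces the $\Hc_{1\rm S}\cop$-structure abstractly, but reading off the tabulated formulas requires a careful simultaneous tracking of the four compatibility conditions of Proposition \ref{aux-41}, since \emph{neither} the $R(\Fg_2)$-action nor the $U(\Fg_1)$-coaction is trivial on $M_\d$; this non-triviality is precisely what prevents one from quoting the induced-module results of \cite{RangSutl} and demands the full strength of the double-crossed-sum correspondence developed in Section \ref{Sec-Lie-Hopf}.
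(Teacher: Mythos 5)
Your route to the Yetter--Drinfeld structure is legitimate but genuinely different from the paper's: the paper never invokes Proposition \ref{aux-46} in this example. Instead it uses the Section \ref{Sec-Lie-Hopf} machinery only to \emph{construct} the $\Fc\acl\Uc$-action and coaction (by dualizing the $\Fg_2$-action and the Koszul $\Fg_2$-coaction exactly as you describe), and then verifies the YD condition of Proposition \ref{aux-41} by brute force, for every pair consisting of a basis element $\one,\bfR^X,\bfR^Y,\bfR^Z$ and a generator $X,Y,\delta_1$, before checking stability directly. Your plan to obtain the YD structure abstractly from Proposition \ref{aux-46} (whose hypotheses do hold: $M=S({sl_2}^\ast)\nsb{2}$ is AYD over $sl_2=\Fg_1\bowtie\Fg_2$, finite dimensional hence locally finite, with locally conilpotent truncated Koszul coaction) and then pass to AYD by twisting with the modular pair $(\delta,\sigma=1)$ is a cleaner derivation of the anti-Yetter--Drinfeld property, though recovering the tabulated formulas still requires unwinding the same dualizations.

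The genuine gap is in your stability step. Proposition \ref{aux-59} and the criterion \eqref{aux-47} are both established \emph{under the hypothesis that $M$ is stable over $R(\Fg_2)$ and over $U(\Fg_1)$ separately}, and the second hypothesis fails for this module: the $U(\Fg_1)$-coaction on $1_M$ is $1\ot 1_M+X\ot R^X+Y\ot R^Y$, so $1_M\snsb{0}\cdot 1_M\snsb{-1}=1_M+R^X\cdot X+R^Y\cdot Y=1_M-R^Y\neq 1_M$. Consistently, the criterion \eqref{aux-47} itself fails on the vacuum vector, since $\delta(1)1_M+\delta(X)R^X+\delta(Y)R^Y=1_M+R^Y\neq 1_M$, even though $^\sigma M_\delta$ \emph{is} stable. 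So you cannot reduce stability to \eqref{aux-47}; you must check stability of $^\sigma M_\delta$ over the full bicrossed product directly, as the paper does: $\one\pr{0}\cdot\one\pr{-1}=\one\cdot 1+\bfR^X\cdot X+\bfR^Y\cdot Y=\one-\bfR^Y+\bfR^Y=\one$, where it is the $\delta$-shift in the \emph{action} (namely $\bfR^Y\cdot Y=R^Y\cdot Y+\delta(Y)R^Y=\bfR^Y$) that produces the cancellation, and the remaining three basis elements are immediate. This is a short computation, but it is not the computation your plan describes, and the lemma you cite is not available here.
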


\subsection{Computation of  $\widetilde{HP}(sl_2,S(sl_2^\ast)\nsb{2})$  }
This subsection is devoted to the computation of $\widetilde{HP}(sl_2,M)$ by demonstrating  explicit representatives of the cohomology classes.
We know that the perturbed Koszul complex $(W(sl_2,M), d_{\rm CE} + d_{\rm K})$ computes this  cohomology.

\medskip
\noindent Being an SAYD over $U(sl_2)$, $M$ admits the filtration $(F_pM)_{p \in \mathbb{Z}}$ from \cite{JaraStef}. Explicitly,
\begin{equation}
F_0M = \{R^X,R^Y,R^Z\}, \quad F_pM = M, \quad p \geq 1.
\end{equation}
The induced filtration on the complex is
\begin{equation}
F_j(W(sl_2,M)) := W(sl_2,F_jM),
\end{equation}
and the $E_1$ term of the associated spectral sequence is
\begin{equation}
E_1^{j,i}(sl_2,M) = H^{i+j}(W(sl_2,F_jM)/W(sl_2,F_{j-1}M)) \cong H^{i+j}(W(sl_2,F_jM/F_{j-1}M)).
\end{equation}

\nin Since $F_jM/F_{j-1}M$ has trivial $sl_2$-coaction, the boundary $d_{\rm K}$ vanish on the quotient complex $W(sl_2,F_jM/F_{j-1}M)$ and hence
\begin{equation}
E_1^{j,i}(sl_2,M) = \bigoplus_{i+j \cong \bullet \, mod \, 2}H^{\bullet}(sl_2,F_jM/F_{j-1}M).
\end{equation}

\nin In particular,
\begin{equation}
E_1^{0,i}(sl_2,M) = H^i(W(sl_2,F_0M)) \cong \bigoplus_{i \cong \bullet \, mod \, 2}H^{\bullet}(sl_2,F_0M) = 0.
\end{equation}
The last equality follows from the Whitehead's theorem (noticing that $F_0M$ is an irreducible $sl_2$-module of dimension greater than 1). For $j = 1$ we have $M/F_0M \cong \Cb$ and hence
\begin{equation}
E_1^{1,i}(sl_2,M) = \bigoplus_{i+1 \cong \bullet \, mod \, 2}H^{\bullet}(sl_2),
\end{equation}
which gives two cohomology classes as a result of Whitehead's 1st and 2nd lemmas. Finally, by $F_pM = M$ for $p \ge 1$, we have $E_1^{j,i}(sl_2,M) = 0$ for $j \geq 2$.

\medskip
\nin Let us now write the complex as
\begin{equation}
W(sl_2,M) = W^{\rm even}(sl_2,M) \oplus W^{\rm odd}(sl_2,M),
\end{equation}
where
\begin{equation}
W^{\rm even}(sl_2,M) = M \oplus (\wedge^2 {sl_2}^* \ot M), \quad W^{\rm odd}(sl_2,M) = ({sl_2}^* \ot M) \oplus (\wedge^3 {sl_2}^* \ot M).
\end{equation}

\nin Next, we demonstrate the explicit cohomology cocycles of $\widetilde{HP}(sl_2,M)$. First, let us take $1_M \in W^{\rm even}(sl_2,M)$. It is immediate to observe $d_{\rm CE}(1_M) = 0$ as well as $d_{\rm K}(1_M) = 0$. On the other hand, in the level of spectral sequence it descends to the nontrivial class $0$ of the cohomology of $sl_2$. Hence, it is a representative of the  even cohomology class.

\medskip
\nin Secondly, we consider $$(2\theta^X \ot R^Z - \theta^Y \ot R^Y \; , \; \theta^X \wg \theta^Y \wg \theta^Z \ot 1_M) \in W^{\rm odd}(sl_2,M)$$ Here $\Big\{\theta^X,\theta^Y,\theta^Z\Big\}$ is the dual basis corresponding to the basis $\Big\{X,Y,Z\Big\}$ of $sl_2$. Let us show that it is a $d_{\rm CE} + d_{\rm K}$-cocycle. It is immediate that
\begin{equation}
d_{\rm CE}(\theta^X \wg \theta^Y \wg \theta^Z \ot 1_M) = 0
\end{equation}
As for the Koszul differential,
\begin{align}
\begin{split}
& d_{\rm K}(\theta^X \wg \theta^Y \wg \theta^Z \ot 1_M) = \\
& \iota_X(\theta^X \wg \theta^Y \wg \theta^Z) \ot R^X + \iota_Y(\theta^X \wg \theta^Y \wg \theta^Z) \ot R^Y + \iota_Z(\theta^X \wg \theta^Y \wg \theta^Z) \ot R^Z = \\
& \theta^Y \wg \theta^Z \ot R^X - \theta^X \wg \theta^Z \ot R^Y + \theta^X \wg \theta^Y \ot R^Z.
\end{split}
\end{align}

\nin On the other hand, we have
\begin{align}
\begin{split}
& d_{\rm CE}(\theta^X \ot R^Z) = \theta^X \wg \theta^Y \ot R^Z - \theta^Y \wg \theta^X \ot R^Z \cdot Y - \theta^Z \wg \theta^X \ot R^Z \cdot Z \\
& = \theta^X \wg \theta^Z \ot R^Y,
\end{split}
\end{align}
and
\begin{align}
\begin{split}
& d_{\rm CE}(\theta^Y \ot R^Y) = \theta^X \wg \theta^Z \ot R^Y - \theta^X \wg \theta^Y \ot R^Y \cdot X - \theta^Z \wg \theta^Y \ot R^Z \cdot Z \\
& = \theta^X \wg \theta^Y \ot R^Z + \theta^X \wg \theta^Z \ot R^Y + \theta^Y \wg \theta^Z \ot R^X.
\end{split}
\end{align}
Therefore,
\begin{equation}
d_{\rm CE}(2\theta^X \ot R^Z - \theta^Y \ot R^Y) = - \theta^X \wg \theta^Y \ot R^Z + \theta^X \wg \theta^Z \ot R^Y - \theta^Y \wg \theta^Z \ot R^X.
\end{equation}
We also have
\begin{equation}
d_{\rm K}(2\theta^X \ot R^Z - \theta^Y \ot R^Y) = 2R^ZR^X - R^YR^Y = 0 - 0 = 0.
\end{equation}
Therefore, we can write
\begin{equation}
(d_{\rm CE} + d_{\rm K})((2\theta^X \ot R^Z - \theta^Y \ot R^Y\;,\;\theta^X \wg \theta^Y \wg \theta^Z \ot 1_M)) = 0.
\end{equation}
Finally we note that $(2\theta^X \ot R^Z - \theta^Y \ot R^Y \; , \; \theta^X \wg \theta^Y \wg \theta^Z \ot 1_M)$ descends, in the $E_1$-level of the spectral sequence,   to the cohomology class represented by the  3-cocycle $$\theta^X \wg \theta^Y \wg \theta^Z.$$ Hence, it represents the odd cohomology class.
Let us summarize our discussion so far

\begin{proposition}
The periodic cyclic cohomology of the Lie algebra $sl_2$ with coefficients in SAYD module  $M:=S({sl_2}^\ast)_{[2]}$  is represented by
\begin{align}
&\widetilde{HP}^0(sl_2,M)=\Cb\Big\langle  1_M\Big\rangle, \\
&\widetilde{HP}^1(sl_2,M)=\Cb\Big\langle (2\theta^X \ot R^Z - \theta^Y \ot R^Y\;,\;\theta^X\wedge \theta^Y\wedge \theta^Z \ot 1_M) \Big\rangle.
\end{align}
\end{proposition}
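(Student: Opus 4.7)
My plan is to exploit the filtration $(F_pM)_{p\in\Zb}$ of $M=S(sl_2^\ast)_{[2]}$ arising from Jara--Stefan, where $F_0M=\Cb\langle R^X,R^Y,R^Z\rangle$ and $F_pM=M$ for $p\ge 1$, and run the associated spectral sequence on the perturbed Koszul complex $(W(sl_2,M),d_{\rm CE}+d_{\rm K})$. Since the $sl_2$-coaction on the quotient $F_jM/F_{j-1}M$ is trivial for every $j$, the Koszul differential vanishes on each associated graded piece, so
\[
E_1^{j,i}(sl_2,M)\;\cong\;\bigoplus_{i+j\equiv\bullet\,(2)} H^{\bullet}(sl_2,\,F_jM/F_{j-1}M).
\]

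Next I would determine each column. For $j=0$, $F_0M$ is the three-dimensional coadjoint (equivalently adjoint) representation of the semisimple Lie algebra $sl_2$; it is irreducible and nontrivial, so Whitehead's lemmas (together with Poincar\'e duality in top degree, which gives $H^3(sl_2,F_0M)\cong H^0(sl_2,F_0M)^\ast=0$) force $H^\bullet(sl_2,F_0M)=0$ in all degrees, hence $E_1^{0,\ast}=0$. For $j=1$, $F_1M/F_0M\cong \Cb$ is the trivial module, so $E_1^{1,\ast}$ is read off from $H^\bullet(sl_2,\Cb)$, giving one even and one odd class (represented respectively by $1_{F_1M/F_0M}$ and by the top form $\theta^X\wedge\theta^Y\wedge\theta^Z$). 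For $j\ge 2$ the filtration quotients vanish. This already identifies $\widetilde{HP}^0$ and $\widetilde{HP}^1$ as one-dimensional.

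It then remains to lift each $E_1$-class to an honest $(d_{\rm CE}+d_{\rm K})$-cocycle on the full complex $W(sl_2,M)$ and write it down explicitly. The even class is easy: $1_M$ is annihilated by $d_{\rm CE}$ (since $1_M$ is $sl_2$-invariant, being the unit of $S(sl_2^\ast)_{[2]}$) and by $d_{\rm K}$ (contracting the zero form gives zero), and it projects onto the generator of $H^0(sl_2,F_1M/F_0M)$, so it represents $\widetilde{HP}^0$.

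The hard part, and the only real obstacle, is the odd class, since the nonzero Koszul coaction forces a correction in lower filtration degree. I would start from the lift $\theta^X\wedge\theta^Y\wedge\theta^Z\ot 1_M$ of the top $sl_2$-cocycle and compute $d_{\rm K}(\theta^X\wedge\theta^Y\wedge\theta^Z\ot 1_M)=\theta^Y\wedge\theta^Z\ot R^X-\theta^X\wedge\theta^Z\ot R^Y+\theta^X\wedge\theta^Y\ot R^Z$ from the Koszul coaction on $1_M$, then look for an element $\alpha\in {sl_2}^\ast\ot M$ with $d_{\rm CE}(\alpha)$ equal to minus this expression and $d_{\rm K}(\alpha)=0$. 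A short ansatz with coefficients in $F_0M$, using the coadjoint-action table of $sl_2$ on $\{R^X,R^Y,R^Z\}$, leads to $\alpha=2\theta^X\ot R^Z-\theta^Y\ot R^Y$; I would then verify directly that $d_{\rm CE}(\alpha)=-\theta^X\wedge\theta^Y\ot R^Z+\theta^X\wedge\theta^Z\ot R^Y-\theta^Y\wedge\theta^Z\ot R^X$ and that $d_{\rm K}(\alpha)=2R^ZR^X-(R^Y)^2=0$ in $S(sl_2^\ast)_{[2]}$ (both products land above degree two and hence vanish by truncation). Combining these verifications gives $(d_{\rm CE}+d_{\rm K})(2\theta^X\ot R^Z-\theta^Y\ot R^Y,\;\theta^X\wedge\theta^Y\wedge\theta^Z\ot 1_M)=0$, and since its image in $E_1^{1,\ast}$ is the generator $\theta^X\wedge\theta^Y\wedge\theta^Z$ of $H^3(sl_2,\Cb)$, it represents the odd class, completing the proposition.
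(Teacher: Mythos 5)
Your proposal is correct and follows essentially the same route as the paper: the Jara--Stefan filtration with $F_0M=\Cb\langle R^X,R^Y,R^Z\rangle$, the collapse of $d_{\rm K}$ on the associated graded pieces, Whitehead's vanishing for the irreducible nontrivial column $j=0$ and the trivial-coefficient cohomology of $sl_2$ for $j=1$, followed by the explicit lift $(2\theta^X\ot R^Z-\theta^Y\ot R^Y,\ \theta^X\wg\theta^Y\wg\theta^Z\ot 1_M)$ verified exactly as in the text. The only cosmetic difference is your appeal to Poincar\'e duality for $H^3(sl_2,F_0M)$, where the paper simply invokes Whitehead's theorem for a nontrivial irreducible module; both are valid.
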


\subsection{Computation of $HP(\Hc_{1\rm S}, M_\d)$}
We now consider the complex $C(\Uc \cl \Fc, M_{\delta})$, which  computes  the periodic Hopf cyclic cohomology
\begin{equation}
HP(\Hc_{1\rm S}\cop,M_{\delta}) = HP(\Fc \acl \Uc,M_{\delta}).
\end{equation}

\nin We can immediately conclude that $M_{\delta}$ is also an SAYD module over $U(sl_2)$ with the same action and coaction due to the unimodularity of $sl_2$. The corresponding filtration is then given by
\begin{equation}
F_0M_{\delta} = F_0M \ot \mathbb{C}_{\delta} = \Cb\Big\langle R^X,R^Y,R^Z\Big\rangle \ot \mathbb{C}_{\delta}, \quad F_pM_{\delta} = M_{\delta}, p \geq 1.
\end{equation}

\nin We will first derive a Cartan type homotopy formula for Hopf cyclic cohomology, as in \cite{MoscRang07}. One notes that in \cite{MoscRang07} the SAYD module was one dimensional. We have to adapt the homotopy formula to fit our situation. To this end, let
\begin{equation}
D_Y:\mathcal{H} \to \mathcal{H}, \quad D_Y(h) := hY.
\end{equation}
Obviously, $D_Y$ is an $\mathcal{H}$-linear coderivation. Hence the operators
\begin{align}
\begin{split}
& \mathcal{L}_{D_Y}:C^n(\Uc \cl \Fc,M_{\delta}) \to C^n(\Uc \cl \Fc,M_{\delta}) \\
& \Lc_{D_Y}(m\ot_\Hc c^0\odots c^n)=\sum_{i=0}^{n}m\ot c^0_\Hc \odots D_Y(c_i) \odots c^n,
\end{split}
\end{align}
\begin{align}
\begin{split}
& e_{D_Y}:C^n(\Uc \cl \Fc,M_{\delta}) \to C^{n+1}(\Uc \cl \Fc,M_{\delta}) \\
& e_{D_Y}(m\ot_\Hc c^0\odots c^n)=(-1)^{n}m\pr{0}\ot_\Hc c^0\ps{2}\ot c^1\odots c^n\ot m\pr{-1}D_Y(c^0\ps{1}),
\end{split}
\end{align}
and
\begin{align}
\begin{split}
& E_{D_Y}:C^n(\Uc \cl \Fc,M_{\delta}) \to C^{n-1}(\Uc \cl \Fc,M_{\delta}) \\
&E_{D_Y}^{j,i}(m\ot_\Hc c^0\odots c^{n})=\\
&=(-1)^{n(i+1)}\epsilon(c^0)m\pr{0}\ot_\Hc c^{n-i+2}\odots c^{n+1}\ot
m\pr{-1}c_1\odots m\pr{-(j-i)}c^{j-i}\ot
\\
&\ot m\pr{-(j-i+1)}D_Y(c^{j-i+1})\ot m\pr{-(j-i+2)}c^{j-i+2}\odots
m\pr{-(n-i+1)}c^{n-i+1},
\end{split}
\end{align}
satisfy, by \cite[Proposition 3.7]{MoscRang07},
\begin{equation}
[E_{D_Y} + e_{D_Y}, b + B] = \mathcal{L}_{D_Y}.
\end{equation}

\nin We next obtain an analogous of \cite[Lemma 3.8]{MoscRang07}.

\begin{lemma}\label{aux-68}
We have
\begin{equation}
\mathcal{L}_{D_Y} = I - \widetilde{ad}Y,
\end{equation}
where
\begin{equation}
\widetilde{ad}Y(m_{\delta} \ot \widetilde{f} \ot \widetilde{u}) = m_{\delta} \ot \widetilde{ad}Y(\widetilde{f} \ot \widetilde{u}) - (m \cdot Y)_{\delta} \ot \widetilde{f} \ot \widetilde{u},
\end{equation}
and $m_{\delta} := m \ot 1_{\mathbb{C}}$.
\end{lemma}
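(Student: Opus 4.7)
The plan is to extend \cite[Lemma 3.8]{MoscRang07} from the one-dimensional coefficient $\Cb_\delta$ to the higher-dimensional $M_\delta = M \ot \Cb_\delta$. The key algebraic input is that $Y \in {gl_1}^{\rm aff}$ is primitive in $\Hc = \Fc \acl \Uc$: the right $\Fc$-coaction on $Y$ sends $Y \mapsto Y \ot 1$, hence $\Delta(1 \acl Y) = 1 \acl Y \ot 1 + 1 \ot 1 \acl Y$, and the commutation relation $hY - Yh = [h,Y]$ holds in $\Hc$ for every $h$.

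I would then carry out the telescoping argument of \cite{MoscRang07} slot by slot on the cochain $m_\delta \ot \widetilde{f} \ot \widetilde{u}$. For each tensor factor (a $f^i \in \Fc$ or a $u^j \in \Uc$), $\Lc_{D_Y}$ inserts a right multiplication by $Y$; applying $c\,Y = Yc + [c,Y]$ decomposes this contribution into a bracket term and a $Yc$ term. The bracket contributions assemble into the operator $\widetilde{ad}Y$ acting on $\widetilde{f} \ot \widetilde{u}$ exactly as in the trivial-coefficient case of \cite{MoscRang07}, while the displaced $Y$'s telescope leftward through the remaining slots via the same mechanism until a single $Y$ survives at the coefficient end.

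The novelty of the $M_\delta$-coefficient case is how that surviving leftmost $Y$ interacts with $m_\delta$. By the diagonal right $\Hc$-action on $M_\delta = M \ot \Cb_\delta$ and the primitivity of $Y$,
\begin{equation*}
m_\delta \cdot Y \;=\; \delta(Y_{(2)})\,(m \cdot Y_{(1)}) \ot 1_{\Cb} \;=\; (m \cdot Y)_\delta + \delta(Y)\,m_\delta \;=\; (m \cdot Y)_\delta + m_\delta,
\end{equation*}
using $\delta(Y)=1$. The $m_\delta$ summand produces the identity $I$ in the statement, while the $(m \cdot Y)_\delta$ summand is precisely the correction that distinguishes the new operator $\widetilde{ad}Y$ on cochains with coefficients from its trivial-coefficient ancestor, exactly matching the definition given in the statement. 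Collecting all pieces yields the claimed equality $\Lc_{D_Y} = I - \widetilde{ad}Y$.

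The main obstacle will be the sign and character bookkeeping in the telescoping, and in particular verifying that as $Y$ crosses the boundary between the interior tensor slots and the coefficient slot $m_\delta$, it contributes precisely $\delta(Y)\,m_\delta + (m \cdot Y)_\delta$ and not a twisted variant. Once this compatibility is confirmed, the identity follows from the combinatorial manipulations underlying \cite[Lemma 3.8]{MoscRang07}, with the term $-(m \cdot Y)_\delta \ot \widetilde{f} \ot \widetilde{u}$ absorbed into the new definition of $\widetilde{ad}Y$.
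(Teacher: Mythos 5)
Your proposal is correct and follows essentially the same route as the paper's proof: both hinge on the primitivity of $Y$ in $\Hc$, on $D_Y$ being an $\Hc$-linear coderivation so that the conjugated $\mathcal{L}_{D_Y}$ decomposes into adjoint-action contributions on the $\Fc$- and $\Uc$-slots plus a single surviving right action of $Y$ on the coefficient, and on the identity $m_{\delta}\cdot Y=(m\cdot Y)_{\delta}+m_{\delta}$ coming from $\delta(Y)=1$. The paper merely carries out the bookkeeping you defer, by transporting the cochain explicitly through the isomorphisms $\Psi$, $\Phi_1$, $\Phi_2$ of \cite{MoscRang09} and back.
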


\begin{proof}
Let us first recall the isomorphism
\begin{equation}
\Theta := \Phi_2 \circ \Phi_1 \circ \Psi:C^{\bullet}_{\mathcal{H}}(\Uc \cl \Fc,M_{\delta}) \to \FZ^{\bullet,\bullet}
\end{equation}
of cocyclic modules. By \cite{MoscRang09}, we know that
\begin{align}\label{aux-76}
\begin{split}
& \Psi(m_{\delta} \ot_{\mathcal{H}} u^0 \cl f^0 \ot \ldots \ot u^n \cl f^n) = \\
& m_{\delta} \ot_{\mathcal{H}} {u^0}^{\pr{-n-1}}f^0 \ot \ldots \ot {u^0}^{\pr{-1}} \ldots \ot {u^n}^{\pr{-1}}f^n \ot {u^0}^{\pr{0}} \ot \ldots \ot {u^n}^{\pr{0}} \\
& \Psi^{-1}(m_{\delta} \ot_{\mathcal{H}} f^0 \ot \ldots \ot f^n \ot u^0 \ot \ldots \ot u^n) = \\
& m_{\delta} \ot_{\mathcal{H}} {u^0}^{\pr{0}} \cl S^{-1}({u^0}^{\pr{-1}})f^0 \ot \ldots \ot {u^n}^{\pr{0}} \cl S^{-1}({u^0}^{\pr{-n-1}}{u^1}^{\pr{-n}} \ldots {u^n}^{\pr{-1}})f^n,
\end{split}
\end{align}

\begin{align}\label{aux-77}
\begin{split}
& \Phi_1(m_{\delta} \ot_{\mathcal{H}} f^0 \ot \ldots \ot f^n \ot u^0 \ot \ldots \ot u^n) = \\
& m_{\delta} \cdot u^0\ps{2} \ot_{\Fc} S^{-1}(u^0\ps{1}) \rhd (f^0 \ot \ldots \ot f^n) \ot S(u^0\ps{3}) \cdot (u^1 \ot \ldots \ot u^n) \\
& \Phi_1^{-1}(m_{\delta} \ot_{\Fc} f^0 \ot \ldots \ot f^n \ot u^1 \ot \ldots \ot u^n) = \\
& m_{\delta} \ot_{\mathcal{H}} f^0 \ot \ldots \ot f^n \ot 1_{U(\Fg_1)} \ot u^1 \ot \ldots \ot u^n,
\end{split}
\end{align}
and
\begin{align}\label{aux-78}
\begin{split}
& \Phi_2(m_{\delta} \ot_{\Fc} f^0 \ot \ldots \ot f^n \ot u^1 \ot \ldots \ot u^n) = \\
& m_{\delta} \cdot f^0\ps{1} \ot S(f^0\ps{2}) \cdot (f^1 \ot \ldots \ot f^n \ot u^1 \ot \ldots \ot u^n) \\
& \Phi_2^{-1}(m_{\delta} \ot f^1 \ot \ldots \ot f^n \ot u^1 \ot \ldots \ot u^n) = \\
& m_{\delta} \ot_{\Fc} 1_{\Fc} \ot f^1 \ot \ldots \ot f^n \ot u^1 \ot \ldots \ot u^n.
\end{split}
\end{align}

\nin Here, the left $\mathcal{H}$-coaction on $\Uc$ is the one corresponding to the right $\Fc$-coaction. Namely,
\begin{equation}\label{aux-65}
u^{\pr{-1}} \ot u^{\pr{0}} = S(u^{\pr{1}}) \ot u^{\pr{0}}.
\end{equation}

\nin We also recall that
\begin{align}
\begin{split}
& \Phi:\mathcal{H} = \Fc \acl \Uc \to \Uc \cl \Fc \\
& \Phi(f \acl u) = u^{\pr{0}} \cl fu^{\pr{1}}\\
& \Phi^{-1}(u \cl f) = fS^{-1}(u^{\pr{1}}) \acl u^{\pr{0}}.
\end{split}
\end{align}

\nin Therefore, we have
\begin{align}
\begin{split}
& \Theta \circ \mathcal{L}_{D_Y} \circ \Theta^{-1} (m_{\delta} \ot f^1 \ot \ldots \ot f^n \ot u^1 \ot \ldots \ot u^n) = \\
& \Theta \circ \mathcal{L}_{D_Y} \circ \Psi^{-1} \circ \Phi_1^{-1} \circ \Phi_2^{-1} (m_{\delta} \ot f^1 \ot \ldots \ot f^n \ot u^1 \ot \ldots \ot u^n) = \\
& \Theta \circ \mathcal{L}_{D_Y} \circ \Psi^{-1} \circ \Phi_1^{-1} (m_{\delta} \ot_{\Fc} 1_{\Fc} \ot f^1 \ot \ldots \ot f^n \ot u^1 \ot \ldots \ot u^n) = \\
& \Theta \circ \mathcal{L}_{D_Y} \circ \Psi^{-1} (m_{\delta} \ot_{\mathcal{H}} 1_{\Fc} \ot f^1 \ot \ldots \ot f^n \ot 1_{\Uc} \ot u^1 \ot \ldots \ot u^n) = \\
& \Theta \circ \mathcal{L}_{D_Y} (m_{\delta} \ot_{\mathcal{H}} 1_{\Uc} \cl 1_{\Fc} \ot {u^1}^{\pr{0}} \cl S^{-1}({u^1}^{\pr{-1}}) \rhd f^1 \ot \ldots \\
& \ldots \ot {u^n}^{\pr{0}} \cl S^{-1}({u^1}^{\pr{-n}} \ldots {u^n}^{\pr{-1}}) \rhd f^n) = \\
& \Theta \circ \mathcal{L}_{D_Y} (m_{\delta} \ot_{\mathcal{H}} 1_{\Uc} \cl 1_{\Fc} \ot {u^1}^{\pr{0}} \cl {u^1}^{\pr{1}}f^1 \ot \ldots \ot {u^n}^{\pr{0}} \cl {u^n}^{\pr{1}} \ldots {u^1}^{\pr{n}}f^n),
\end{split}
\end{align}
where on the last equality we have used \eqref{aux-65}. In order to apply $\mathcal{L}_{D_Y}$, we make the observation
\begin{align}
\begin{split}
& \Phi D_Y \Phi^{-1} (u \cl f) = \Phi (fS^{-1}(u^{\pr{1}}) \acl u^{\pr{0}}Y) = \\
& (u^{\pr{0}}Y)^{\pr{0}} \cl fS^{-1}(u^{\pr{1}})(u^{\pr{0}}Y)^{\pr{1}} = \\
& {u^{\pr{0}}\ps{1}}^{\pr{0}}Y^{\pr{0}} \cl fS^{-1}(u^{\pr{1}}){u^{\pr{0}}\ps{1}}^{\pr{1}}(u^{\pr{0}}\ps{2} \rhd Y^{\pr{1}}) = \\
& {u\ps{1}}^{\pr{0}}Y^{\pr{0}} \cl fS^{-1}({u\ps{1}}^{\pr{2}}{u\ps{2}}^{\pr{1}}){u\ps{1}}^{\pr{1}}({u\ps{2}}^{\pr{0}} \rhd Y^{\pr{1}}) = \\
& uY \cl f,
\end{split}
\end{align}
using the action-coaction compatibilities of a bicrossed product. Hence,
\begin{align}
\begin{split}
& \Theta \circ \mathcal{L}_{D_Y} (m_{\delta} \ot_{\mathcal{H}} 1_{\Uc} \cl 1_{\Fc} \ot {u^1}^{\pr{0}} \cl {u^1}^{\pr{1}}f^1 \ot \ldots \ot {u^n}^{\pr{0}} \cl {u^n}^{\pr{1}} \ldots {u^1}^{\pr{n}}f^n) = \\
& \Theta (m_{\delta} \ot_{\mathcal{H}} Y \cl 1_{\Fc} \ot {u^1}^{\pr{0}} \cl {u^1}^{\pr{1}}f^1 \ot \ldots \ot {u^n}^{\pr{0}} \cl {u^n}^{\pr{1}} \ldots {u^1}^{\pr{n}}f^n) + \\
& \sum_{i = 1}^n \Theta (m_{\delta} \ot_{\mathcal{H}} 1_{\Uc} \cl 1_{\Fc} \ot {u^1}^{\pr{0}} \cl {u^1}^{\pr{1}}f^1 \ot \ldots \\
& \ldots \ot {u^i}^{\pr{0}}Y \cl {u^i}^{\pr{1}} \ldots {u^1}^{\pr{i}}f^i \ot \ldots \ot {u^n}^{\pr{0}} \cl {u^n}^{\pr{1}} \ldots {u^1}^{\pr{n}}f^n).
\end{split}
\end{align}

\nin We notice,
\begin{align}
\begin{split}
& \Psi (m_{\delta} \ot_{\mathcal{H}} Y \cl 1_{\Fc} \ot {u^1}^{\pr{0}} \cl {u^1}^{\pr{1}}f^1 \ot \ldots \ot {u^n}^{\pr{0}} \cl {u^n}^{\pr{1}} \ldots {u^1}^{\pr{n}}f^n) = \\
& m_{\delta} \ot_{\mathcal{H}} Y^{\pr{-n-1}} \cdot 1_{\Fc} \ot Y^{\pr{-n}}({u^1}^{\pr{0}})^{\pr{-n}}{u^1}^{\pr{1}}f^1 \ot \ldots \\
& \ldots \ot Y^{\pr{-1}}({u^1}^{\pr{0}})^{\pr{-1}} \ldots ({u^n}^{\pr{0}})^{\pr{-1}}{u^n}^{\pr{1}} \ldots {u^1}^{\pr{n}}f^n \ot Y^{\pr{0}} \ot ({u^1}^{\pr{0}})^{\pr{0}} \ot \ldots \ot ({u^n}^{\pr{0}})^{\pr{0}} \\
& = m_{\delta} \ot_{\mathcal{H}} 1_{\Fc} \ot f^1 \ot \ldots \ot f^n \ot Y \ot u^1 \ot \ldots \ot u^n,
\end{split}
\end{align}
where on the last equality we have used \eqref{aux-65}. Similarly,

\begin{align}
\begin{split}
& \Psi (m_{\delta} \ot_{\mathcal{H}} 1_{\Uc} \cl 1_{\Fc} \ot {u^1}^{\pr{0}} \cl {u^1}^{\pr{1}}f^1 \ot \ldots \\
& \ldots \ot {u^i}^{\pr{0}}Y \cl {u^i}^{\pr{1}} \ldots {u^1}^{\pr{i}}f^i \ot \ldots \ot {u^n}^{\pr{0}} \cl {u^n}^{\pr{1}} \ldots {u^1}^{\pr{n}}f^n) = \\
& m_{\delta} \ot_{\mathcal{H}} 1_{\Fc} \ot f^1 \ot \ldots \ot f^n \ot 1_{\Uc} \ot u^1 \ot \ldots u^iY \ot \ldots \ot u^n.
\end{split}
\end{align}

\nin Therefore,
\begin{align}
\begin{split}
& \Theta (m_{\delta} \ot_{\mathcal{H}} Y \cl 1_{\Fc} \ot {u^1}^{\pr{0}} \cl {u^1}^{\pr{1}}f^1 \ot \ldots \ot {u^n}^{\pr{0}} \cl {u^n}^{\pr{1}} \ldots {u^1}^{\pr{n}}f^n) + \\
& \sum_{i = 1}^n \Theta (m_{\delta} \ot_{\mathcal{H}} 1_{\Uc} \cl 1_{\Fc} \ot {u^1}^{\pr{0}} \cl {u^1}^{\pr{1}}f^1 \ot \ldots \\
& \ldots \ot {u^i}^{\pr{0}}Y \cl {u^i}^{\pr{1}} \ldots {u^1}^{\pr{i}}f^i \ot \ldots \ot {u^n}^{\pr{0}} \cl {u^n}^{\pr{1}} \ldots {u^1}^{\pr{n}}f^n) = \\
& \Phi_2 \circ \Phi_1 (m_{\delta} \ot_{\mathcal{H}} 1_{\Fc} \ot \widetilde{f} \ot Y \ot \widetilde{u} + m_{\delta} \ot_{\mathcal{H}} 1_{\Fc} \ot \widetilde{f} \ot 1_{\Uc} \ot \widetilde{u} \cdot Y) = \\
& \Phi_2 (m_{\delta} \cdot Y\ps{2} \ot_{\Fc} S^{-1}(Y\ps{1}) \rhd (1_{\Fc} \ot \widetilde{f}) \ot S(Y\ps{3}) \cdot \widetilde{u} + m_{\delta} \ot_{\Fc} 1_{\Fc} \ot \widetilde{f} \ot \widetilde{u} \cdot Y).
\end{split}
\end{align}

\nin Considering he fact that $Y \in \mathcal{H}$ is primitive, and hence $adY(f) = [Y,f] = Y \rhd f$, we conclude
\begin{align}
\begin{split}
& \Phi_2 (m_{\delta} \cdot Y\ps{2} \ot_{\Fc} S^{-1}(Y\ps{1}) \rhd (1_{\Fc} \ot \widetilde{f}) \ot S(Y\ps{3}) \cdot \widetilde{u} + m_{\delta} \ot_{\Fc} 1_{\Fc} \ot \widetilde{f} \ot \widetilde{u} \cdot Y) = \\
& \Phi_2 (- m_{\delta} \ot_{\Fc} 1_{\Fc} \ot adY(\widetilde{f}) \ot \widetilde{u} - m_{\delta} \ot_{\Fc} 1_{\Fc} \ot \widetilde{f} \ot Y \cdot \widetilde{u} + \\
& m_{\delta} \cdot Y \ot_{\Fc} 1_{\Fc} \ot \widetilde{f} \ot \widetilde{u} + m_{\delta} \ot_{\Fc} 1_{\Fc} \ot \widetilde{f} \ot \widetilde{u} \cdot Y) = \\
& m_{\delta} \cdot Y \ot_{\Fc} \widetilde{f} \ot \widetilde{u} - m_{\delta} \ot_{\Fc} adY(\widetilde{f}) \ot \widetilde{u} - m_{\delta} \ot_{\Fc} \widetilde{f} \ot adY(\widetilde{u}).
\end{split}
\end{align}

\nin Finally, we recall $m_{\delta} \cdot Y = (m \cdot Y\ps{1})_{\delta}\delta(Y\ps{2}) = (m \cdot Y)_{\delta} + m_{\delta}$ to finish the proof.
\end{proof}

\begin{lemma}\label{aux-69}
The operator $\widetilde{ad}Y$ commutes with the horizontal operators \eqref{horizontal-operators} and the vertical operators \eqref{vertical-operators}.
\end{lemma}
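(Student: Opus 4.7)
The plan is to verify the commutation directly on each structural operator, exploiting the fact that $Y \in \Fg_1$ is primitive so that $\mathrm{ad}Y$ is simultaneously a derivation of the algebra structure and a coderivation of the coalgebra structure of $\Hc$. Extended by the Leibniz rule, $\mathrm{ad}Y$ acts on tensor powers $\Hc^{\ot n}$ in the usual diagonal fashion and satisfies $\D \circ \mathrm{ad}Y = (\mathrm{ad}Y \ot \mathrm{id} + \mathrm{id} \ot \mathrm{ad}Y) \circ \D$. Moreover $S \circ \mathrm{ad}Y = -\mathrm{ad}Y \circ S$ and $\ve \circ \mathrm{ad}Y = 0$, both of which will be used repeatedly.

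First I would dispatch the ``purely coalgebraic'' operators: the inner horizontal faces $\hd_i$ for $0\le i\le p$, the horizontal degeneracies $\hs_j$, and their vertical counterparts $\vd_i$, $\vs_j$ for $0\le i\le q$. These involve only the comultiplications/counits on the $\Fc$- or $\Uc$-tensor factors, together with the insertion/omission of a unit. The coderivation property of $\mathrm{ad}Y$ and the identity $\ve \circ \mathrm{ad}Y = 0$ immediately yield commutation with $\widetilde{\mathrm{ad}}Y$ in these cases, as the correction term $-(m\cdot Y)_\d \ot \widetilde{f}\ot\widetilde{u}$ is transported inertly through the tensor slots.

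The more delicate part concerns the operators that interact with the SAYD structure of $\,^\s M_\d$, namely the last face $\hd_{p+1}$ and cyclic $\hta$ horizontally, and $\vd_{q+1}$ and $\vta$ vertically. These bring the coaction $m\mapsto m\pr{-1}\ot m\pr{0}$ (and, through $\hta,\vta$, also the right $\Hc$-action) into play. The key observation is that the correction term in the definition of $\widetilde{\mathrm{ad}}Y$ is designed precisely to absorb the obstruction produced when $\mathrm{ad}Y$ is commuted past the coaction: applying $\mathrm{ad}Y$ to $m\pr{-1}$ gives $Ym\pr{-1}-m\pr{-1}Y$, and the AYD condition \eqref{AYD-Hopf} rewrites the second summand in terms of $(m\cdot Y)\pr{-1}\ot(m\cdot Y)\pr{0}$ up to a piece that becomes the $-(m\cdot Y)_\d$ correction after we account for the character $\d$ acting on the remaining $Y$-factor. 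Using primitivity of $Y$ (so that $\D^2(Y)$ has only three non-trivial summands and $S(Y)=-Y$) keeps the bookkeeping manageable.

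The principal obstacle, as expected, is the cyclic operator $\hta$ (and then $\vta$ by a parallel argument). Here I would expand both $\widetilde{\mathrm{ad}}Y\circ \hta$ and $\hta \circ \widetilde{\mathrm{ad}}Y$ using the Leibniz rule for $\mathrm{ad}Y$ on the product $m\pr{0}f^1\ps{1}\ot S(f^1\ps{2})\cdot(\cdots)$, together with $S\circ\mathrm{ad}Y=-\mathrm{ad}Y\circ S$. The resulting sum splits into three natural groups: terms where $\mathrm{ad}Y$ lands on the $\widetilde{f}\ot\widetilde{u}$ slots (matching the $\widetilde{\mathrm{ad}}Y$ on $\widetilde{f}\ot\widetilde{u}$), terms where it lands on $m\pr{-1}$ (handled by the AYD identity as above), and terms where it lands on $\td{u}^{\pr{0}}$ and $\td{u}^{\pr{-1}}$ (which cancel by the compatibility $\D\circ\mathrm{ad}Y=(\mathrm{ad}Y\ot\mathrm{id}+\mathrm{id}\ot\mathrm{ad}Y)\circ\D$ applied to the induced $\Hc$-coaction on $U(\Fg_1)$ of \cite[Prop.~3.20]{MoscRang09}). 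Collecting everything, the residual is exactly $-(m\cdot Y)_\d\ot \hta(\td{f}\ot\td{u})$, establishing the commutation. The vertical operators $\vd_{q+1}$ and $\vta$ are treated in exactly the same fashion, with only the additional bookkeeping coming from the formula \eqref{aux-65} relating the left $\Hc$-coaction on $\Uc$ to the right $\Fc$-coaction.
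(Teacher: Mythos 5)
Your overall strategy is the same as the paper's: a direct operator-by-operator verification using that $adY$ is simultaneously a derivation and a coderivation of $\Hc$, the Leibniz-type identity $adY(h \rhd f) = adY(h) \rhd f + h \rhd adY(f)$, the compatibility of $adY$ with the induced coaction on $\Uc$, and the AYD condition in the form $(m \cdot Y)\pr{-1} \ot (m \cdot Y)\pr{0} = m\pr{-1} \ot m\pr{0} \cdot Y - adY(m\pr{-1}) \ot m\pr{0}$ to absorb the correction term $-(m\cdot Y)_\d \ot \td f \ot \td u$ in $\widetilde{ad}Y$. So the architecture of your argument is sound and matches the paper.

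There is, however, one concrete error that would derail the computation for the cyclic operators, which you flag as an identity to be ``used repeatedly'': you assert $S \circ adY = -\,adY \circ S$. Since $Y$ is primitive, $S(Y) = -Y$ and $S$ is an algebra antihomomorphism, so $S([Y,h]) = S(h)S(Y) - S(Y)S(h) = Y S(h) - S(h) Y = [Y, S(h)]$; that is, $S \circ adY = +\,adY \circ S$ (on $\Fc$ this is exactly $S(Y \rhd f) = Y \rhd S(f)$, which the paper cites from \cite[Lemma 1.1]{MoscRang11}). The verification for $\hta$ (and $\vta$) hinges on this \emph{commutation}: the contribution where $adY$ lands on $S(f^1\ps{2})$ must be rewritten as $S(adY(f^1\ps{2}))$ so that it combines with the contribution on $f^1\ps{1}$ into $\D(adY(f^1))$ via $adY \circ \D = \D \circ adY$; with your sign the two pieces come with opposite signs and do not reassemble into $\hta(\widetilde{ad}Y(\cdot))$. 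One smaller gloss: the identity ${adY(u)}^{\pr{-1}} \ot {adY(u)}^{\pr{0}} = adY(u^{\pr{-1}}) \ot u^{\pr{0}} + u^{\pr{-1}} \ot adY(u^{\pr{0}})$ is not purely formal consequence of coassociativity; the paper derives it from the bicrossed-product coaction--multiplication compatibility together with the specific facts $Y^{\pr{0}} \ot Y^{\pr{1}} = Y \ot 1$ (from $Z \rhd Y = 0$) and $S(Y \rhd f) = Y \rhd S(f)$. With the sign corrected and that derivation supplied, your outline reproduces the paper's proof.
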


\begin{proof}
We start with the horizontal operators. For the first horizontal coface, we have
\begin{align}
\begin{split}
& \hd_{0}(\widetilde{ad}Y(m_{\delta} \ot \widetilde{f} \ot \widetilde{u})) = \hd_0(m_{\delta} \ot adY(\widetilde{f} \ot \widetilde{u}) - (m \cdot Y)_{\delta} \ot \widetilde{f} \ot \widetilde{u}) = \\
& m_{\delta} \ot 1 \ot adY(\widetilde{f} \ot \widetilde{u}) - (m \cdot Y)_{\delta} \ot 1 \ot \widetilde{f} \ot \widetilde{u} = \\
& \widetilde{ad}Y (\hd_0(m_{\delta} \ot \widetilde{f} \ot \widetilde{u}))).
\end{split}
\end{align}
For  $\hd_{i}$ with $1 \leq i \leq n$, the commutativity  is a consequence of $adY \circ \D = \D \circ adY$ on $\Fc$. To see this, we notice that
\begin{align}
\begin{split}
& \D(adY(f)) = \D(Y \rhd f) = {Y\ps{1}}^{\pr{0}} \rhd f\ps{1} \ot {Y\ps{1}}^{\pr{1}}(Y\ps{2} \rhd f\ps{2}) \\
& = adY(f\ps{1}) \ot f\ps{2} + f\ps{1} \ot adY(f\ps{2}) = adY(\D(f)).
\end{split}
\end{align}

\nin For the commutation with the last horizontal coface operator, we proceed as follows. First we observe
\begin{align}
\begin{split}
& \widetilde{ad}Y(\hd_{n+1}(m_{\delta} \ot \widetilde{f} \ot \widetilde{u})) = \widetilde{ad}Y({m\pr{0}}_{\delta} \ot \widetilde{f} \ot \widetilde{u}^{\pr{-1}}m\pr{-1} \rhd 1_{\Fc} \ot \widetilde{u}^{\pr{0}}) = \\
& {m\pr{0}}_{\delta} \ot adY(\widetilde{f}) \ot [\widetilde{u}^{\pr{-1}}m\pr{-1}] \ot \widetilde{u}^{\pr{0}} + {m\pr{0}}_{\delta} \ot \widetilde{f} \ot adY(\widetilde{u}^{\pr{-1}}m\pr{-1} \rhd 1_{\Fc})\ot \widetilde{u}^{\pr{0}} \\
& + {m\pr{0}}_{\delta} \ot \widetilde{f} \ot [\widetilde{u}^{\pr{-1}}m\pr{0}] \ot adY(\widetilde{u}^{\pr{0}}) - (m\pr{0} \cdot Y)_{\delta} \ot \widetilde{f} \ot [\widetilde{u}^{\pr{-1}}m\pr{-1}] \ot \widetilde{u}^{\pr{0}}.
\end{split}
\end{align}

\nin Next, for any $h = g \acl u \in \mathcal{H}$ and $f \in \Fc$, on one hand we have
\begin{equation}
adY(h \rhd f) = adY(g(u \rhd f)) = adY(g)(u \rhd f) + g(Yu \rhd f),
\end{equation}
and on the other hand,
\begin{align*}
&adY(h) \rhd f = (adY(g) \acl u + g \acl Yu - g \acl uY) \rhd f \\
&\hspace{1.9cm}= adY(g)(u \rhd f) + g(Yu \rhd f) - g(uY \rhd f).
\end{align*}

\nin In other words,
\begin{equation}
adY(h \rhd f) = adY(h) \rhd f + h \rhd adY(f).
\end{equation}

\nin Therefore we have
\begin{align}
\begin{split}
& {m\pr{0}}_{\delta} \ot \widetilde{f} \ot adY(\widetilde{u}^{\pr{-1}}m\pr{-1} \rhd 1_{\Fc})\ot \widetilde{u}^{\pr{0}} = \\
& {m\pr{0}}_{\delta} \ot \widetilde{f} \ot adY(\widetilde{u}^{\pr{-1}})m\pr{-1} \rhd 1_{\Fc}\ot \widetilde{u}^{\pr{0}} + \\
& {m\pr{0}}_{\delta} \ot \widetilde{f} \ot \widetilde{u}^{\pr{-1}}adY(m\pr{-1}) \rhd 1_{\Fc}\ot \widetilde{u}^{\pr{0}}.
\end{split}
\end{align}

\nin Recalling \eqref{aux-65} and the coaction - multiplication compatibility on a bicrossed product, we observe that
\begin{align}
\begin{split}
& {adY(u)}^{\pr{-1}} \ot {adY(u)}^{\pr{0}} = S({adY(u)}^{\pr{1}}) \ot {adY(u)}^{\pr{0}} = \\
& S({Y\ps{1}}^{\pr{1}}(Y\ps{2} \rhd u^{\pr{1}})) \ot {Y\ps{1}}^{\pr{0}}u^{\pr{0}} - S({u\ps{1}}^{\pr{1}}(u\ps{2} \rhd Y^{\pr{1}})) \ot {u\ps{1}}^{\pr{0}}Y^{\pr{0}} = \\
& S(u^{\pr{1}}) \ot Yu^{\pr{0}} + S(Y \rhd u^{\pr{1}}) \ot u^{\pr{0}} - S(u^{\pr{1}}) \ot u^{\pr{0}}Y,
\end{split}
\end{align}
where we have used $Y^{\pr{0}} \ot Y^{\pr{1}} = Y \ot 1$. This follows from $[Z,Y] = Z$ implying $Z \rhd Y = 0$.

\medskip
\nin By \cite[Lemma 1.1]{MoscRang11} we also have $S(Y \rhd f) = Y \rhd S(f)$ for any $f \in \Fc$. Hence we can conclude
\begin{equation}\label{aux-72}
{adY(u)}^{\pr{-1}} \ot {adY(u)}^{\pr{0}} = adY(u^{\pr{-1}}) \ot u^{\pr{0}} + u^{\pr{-1}} \ot adY(u^{\pr{0}}),
\end{equation}
which implies immediately that,
\begin{equation}
{adY(\widetilde{u})}^{\pr{-1}} \ot {adY(\widetilde{u})}^{\pr{0}} = adY(\widetilde{u}^{\pr{-1}}) \ot \widetilde{u}^{\pr{0}} + \widetilde{u}^{\pr{-1}} \ot adY(\widetilde{u}^{\pr{0}}).
\end{equation}

\nin Finally, by the right-left AYD compatibility of $M$ over $\Hc$ we have
\begin{equation}
(m \cdot Y)\pr{-1} \ot (m \cdot Y)\pr{0} = m\pr{-1} \ot m\pr{0} \cdot Y - adY{m\pr{-1}} \ot m\pr{0}.
\end{equation}

\nin So, $\widetilde{ad}Y$ commutes with the last horizontal coface  $\hd_{n+1}$ as
\begin{align}
\begin{split}
& \widetilde{ad}Y(\hd_{n+1}(m_{\delta} \ot \widetilde{f} \ot \widetilde{u})) = {m\pr{0}}_{\delta} \ot adY(\widetilde{f}) \ot \widetilde{u}^{\pr{-1}}m\pr{-1} \rhd 1_{\Fc} \ot \widetilde{u}^{\pr{0}} + \\
& {m\pr{0}}_{\delta} \ot \widetilde{f} \ot {adY(\widetilde{u})}^{\pr{-1}}m\pr{-1} \rhd 1_{\Fc}\ot {adY(\widetilde{u})}^{\pr{0}} - \\
& {(m \cdot Y)\pr{0}}_{\delta} \ot \widetilde{f} \ot \widetilde{u}^{\pr{-1}}(m \cdot Y)\pr{0} \rhd 1_{\Fc} \ot \widetilde{u}^{\pr{0}} = \\
& \hd_{n+1}(\widetilde{ad}Y(m_{\delta} \ot \widetilde{f} \ot \widetilde{u})).
\end{split}
\end{align}

\nin It is immediate to observe the commutation $\sigma_j \circ \widetilde{ad}Y = \widetilde{ad}Y \circ \sigma_j$ with the horizontal degeneracy operators.

\medskip
\nin We now consider the horizontal cyclic operator. Let us first note that
\begin{equation}
m_{\delta} \cdot f = (m \cdot f)_{\delta}, \qquad f\in \Fc.
\end{equation}
We then have
\begin{align}
\begin{split}
& \widetilde{ad}Y(\hta(m_{\delta} \ot \widetilde{f} \ot \widetilde{u})) = \\
& \widetilde{ad}Y(({m\pr{0}} \cdot f^1\ps{1})_{\delta} \ot S(f^1\ps{2}) \cdot (f^2 \odots f^p \ot \widetilde{u}^{\pr{-1}}m\pr{-1} \rhd 1_{\Fc} \ot \widetilde{u}^{\pr{0}})) = \\
& ({m\pr{0}} \cdot f^1\ps{1})_{\delta} \ot adY(S(f^1\ps{2})) \cdot (f^2 \odots f^p \ot \widetilde{u}^{\pr{-1}}m\pr{-1} \rhd 1_{\Fc} \ot \widetilde{u}^{\pr{0}}) + \\
& ({m\pr{0}} \cdot f^1\ps{1})_{\delta} \ot S(f^1\ps{2}) \cdot (adY(f^2 \odots f^p) \ot \widetilde{u}^{\pr{-1}}m\pr{-1} \rhd 1_{\Fc} \ot \widetilde{u}^{\pr{0}}) + \\
& ({m\pr{0}} \cdot f^1\ps{1})_{\delta} \ot S(f^1\ps{2}) \cdot (f^2 \odots f^p \ot adY(\widetilde{u}^{\pr{-1}})m\pr{-1} \rhd 1_{\Fc} \ot \widetilde{u}^{\pr{0}}) + \\
& ({m\pr{0}} \cdot f^1\ps{1})_{\delta} \ot S(f^1\ps{2}) \cdot (f^2 \odots f^p \ot \widetilde{u}^{\pr{-1}}adY(m\pr{-1}) \rhd 1_{\Fc} \ot \widetilde{u}^{\pr{0}}) + \\
& ({m\pr{0}} \cdot f^1\ps{1})_{\delta} \ot S(f^1\ps{2}) \cdot (f^2 \odots f^p \ot \widetilde{u}^{\pr{-1}}m\pr{-1} \rhd 1_{\Fc} \ot adY(\widetilde{u}^{\pr{0}})) - \\
& (({m\pr{0}} \cdot f^1\ps{1}) \cdot Y)_{\delta} \ot S(f^1\ps{2}) \cdot (f^2 \odots f^p \ot \widetilde{u}^{\pr{-1}}m\pr{-1} \rhd 1_{\Fc} \ot \widetilde{u}^{\pr{0}}).
\end{split}
\end{align}

\nin Next, by the commutativity of $adY$ with the left $\mathcal{H}$-coaction on $\Uc$ as well as with the antipode, we can immediately conclude
\begin{align}
\begin{split}
& \widetilde{ad}Y(\hta(m_{\delta} \ot \widetilde{f} \ot \widetilde{u})) = \\
& ({m\pr{0}} \cdot f^1\ps{1})_{\delta} \ot S(adY(f^1\ps{2})) \cdot (f^2 \odots f^p \ot \widetilde{u}^{\pr{-1}}m\pr{-1} \rhd 1_{\Fc} \ot \widetilde{u}^{\pr{0}}) + \\
& ({m\pr{0}} \cdot f^1\ps{1})_{\delta} \ot S(f^1\ps{2}) \cdot (adY(f^2 \odots f^p) \ot \widetilde{u}^{\pr{-1}}m\pr{-1} \rhd 1_{\Fc} \ot \widetilde{u}^{\pr{0}}) + \\
& ({m\pr{0}} \cdot f^1\ps{1})_{\delta} \ot S(f^1\ps{2}) \cdot (f^2 \odots f^p \ot {adY(\widetilde{u})}^{\pr{-1}}m\pr{-1} \rhd 1_{\Fc} \ot {adY(\widetilde{u})}^{\pr{0}}) + \\
& ({m\pr{0}} \cdot f^1\ps{1})_{\delta} \ot S(f^1\ps{2}) \cdot (f^2 \odots f^p \ot \widetilde{u}^{\pr{-1}}adY(m\pr{-1}) \rhd 1_{\Fc} \ot \widetilde{u}^{\pr{0}}) - \\
& (({m\pr{0}} \cdot f^1\ps{1}) \cdot Y)_{\delta} \ot S(f^1\ps{2}) \cdot (f^2 \odots f^p \ot \widetilde{u}^{\pr{-1}}m\pr{-1} \rhd 1_{\Fc} \ot \widetilde{u}^{\pr{0}}).
\end{split}
\end{align}

\nin Then by the module compatibility over the bicrossed product $\mathcal{H} = \Fc \acl \Uc$, we have
\begin{equation}
(m \cdot Y) \cdot f = (m \cdot f) \cdot Y + m \cdot adY(f).
\end{equation}
Therefore,
\begin{align}
\begin{split}
& \widetilde{ad}Y(\hta(m_{\delta} \ot \widetilde{f} \ot \widetilde{u})) = \\
& ({m\pr{0}})_{\delta} \cdot adY(f^1\ps{1}) \ot S(f^1\ps{2}) \cdot (f^2 \odots f^p \ot \widetilde{u}^{\pr{-1}}m\pr{-1} \rhd 1_{\Fc} \ot \widetilde{u}^{\pr{0}}) + \\
& {m\pr{0}}_{\delta} \cdot f^1\ps{1} \ot S(adY(f^1\ps{2})) \cdot (f^2 \odots f^p \ot \widetilde{u}^{\pr{-1}}m\pr{-1} \rhd 1_{\Fc} \ot \widetilde{u}^{\pr{0}}) + \\
& {m\pr{0}}_{\delta} \cdot f^1\ps{1} \ot S(f^1\ps{2}) \cdot (adY(f^2 \odots f^p) \ot \widetilde{u}^{\pr{-1}}m\pr{-1} \rhd 1_{\Fc} \ot \widetilde{u}^{\pr{0}}) + \\
& {m\pr{0}}_{\delta} \cdot f^1\ps{1} \ot S(f^1\ps{2}) \cdot (f^2 \odots f^p \ot {adY(\widetilde{u})}^{\pr{-1}}m\pr{-1} \rhd 1_{\Fc} \ot {adY(\widetilde{u})}^{\pr{0}}) - \\
& {(m \cdot Y)\pr{0}}_{\delta} \cdot f^1\ps{1} \ot S(f^1\ps{2}) \cdot (f^2 \odots f^p \ot \widetilde{u}^{\pr{-1}}(m \cdot Y)\pr{-1} \rhd 1_{\Fc} \ot \widetilde{u}^{\pr{0}}).
\end{split}
\end{align}

\nin Finally, by the commutativity $adY \circ \D = \D \circ adY$ on $\Fc$ we finish as
\begin{align}
\begin{split}
& \widetilde{ad}Y(\hta(m_{\delta} \ot \widetilde{f} \ot \widetilde{u})) = \\
& {m\pr{0}}_{\delta} \cdot {ad(f^1)}\ps{1} \ot S({ad(f^1)}\ps{2}) \cdot (f^2 \odots f^p \ot \widetilde{u}^{\pr{-1}}m\pr{-1} \rhd 1_{\Fc} \ot \widetilde{u}^{\pr{0}}) + \\
& {m\pr{0}}_{\delta} \cdot f^1\ps{1} \ot S(f^1\ps{2}) \cdot (adY(f^2 \odots f^p) \ot \widetilde{u}^{\pr{-1}}m\pr{-1} \rhd 1_{\Fc} \ot \widetilde{u}^{\pr{0}}) + \\
& {m\pr{0}}_{\delta} \cdot f^1\ps{1} \ot S(f^1\ps{2}) \cdot (f^2 \odots f^p \ot {adY(\widetilde{u})}^{\pr{-1}}m\pr{-1} \rhd 1_{\Fc} \ot {adY(\widetilde{u})}^{\pr{0}}) - \\
& {(m \cdot Y)\pr{0}}_{\delta} \cdot f^1\ps{1} \ot S(f^1\ps{2}) \cdot (f^2 \odots f^p \ot \widetilde{u}^{\pr{-1}}(m \cdot Y)\pr{-1} \rhd 1_{\Fc} \ot \widetilde{u}^{\pr{0}}) \\
& = \hta(\widetilde{ad}Y(m_{\delta} \ot \widetilde{f} \ot \widetilde{u})).
\end{split}
\end{align}

\nin We continue with the vertical operators. We see that
\begin{equation}
\vd_i \circ \widetilde{ad}Y = \widetilde{ad}Y \circ \vd_i, \quad 0 \leq i \leq n
\end{equation}
are similar to their  horizontal counterparts. One notes that this time the commutativity $adY \circ \D = \D \circ adY$ on $\Uc$ is needed.

\medskip
\nin Commutativity with the last vertical coface operator follows, similarly as the horizontal case, from the AYD compatibility on $M$ over $\Hc$. Indeed,
\begin{align}
\begin{split}
& \widetilde{ad}Y(\vd_{n+1}(m_{\delta} \ot \widetilde{f} \ot \widetilde{u})) = \widetilde{ad}Y({m\pr{0}}_{\delta} \ot \widetilde{f} \ot \widetilde{u} \ot \wbar{m\pr{-1}}) = \\
& {m\pr{0}}_{\delta} \ot adY(\widetilde{f} \ot \widetilde{u}) \ot \wbar{m\pr{-1}} + {m\pr{0}}_{\delta} \ot \widetilde{f} \ot \widetilde{u} \ot \wbar{adY(m\pr{-1})} \\
& - {(m\pr{0} \cdot Y)}_{\delta} \ot \widetilde{f} \ot \widetilde{u} \ot \wbar{m\pr{-1}} = \\
& {m\pr{0}}_{\delta} \ot adY(\widetilde{f} \ot \widetilde{u}) \ot \wbar{m\pr{-1}} - {(m \cdot Y)\pr{0}}_{\delta} \ot \widetilde{f} \ot \widetilde{u} \ot \wbar{(m \cdot Y)\pr{-1}} = \\
& \vd_{n+1}(\widetilde{ad}Y(m_{\delta} \ot \widetilde{f} \ot \widetilde{u})).
\end{split}
\end{align}

\nin Finally, we show the commutativity of $\widetilde{ad}Y$  with the vertical cyclic operator. First, we notice that we can rewrite it as
\begin{align}
\begin{split}
&\vta(m_{\delta} \ot \widetilde{f} \ot \widetilde{u}) = ({m\pr{0}}_{\delta} \cdot u^1\ps{4}) \cdot S^{-1}(u^1\ps{3} \rhd 1_{\Fc}) \, \ot \\
& S(S^{-1}(u^1\ps{2}) \rhd 1_{\Fc}) \cdot \left( S^{-1}(u^1\ps{1}) \rhd \widetilde{f} \ot S(u^1\ps{5}) \cdot (u^2 \odots u^q \ot \wbar{m\pr{-1}})
\right) \\
& = {m\pr{0}}_{\delta} \cdot u^1\ps{2} \ot S^{-1}(u^1\ps{1}) \rhd \widetilde{f} \ot S(u^1\ps{3}) \cdot (u^2 \odots u^q \ot \wbar{m\pr{-1}}) \\
& = {(m\pr{0} \cdot u^1\ps{3})}_{\delta}\delta(u^1\ps{2}) \ot S^{-1}(u^1\ps{1}) \rhd \widetilde{f} \ot S(u^1\ps{4}) \cdot (u^2 \odots u^q \ot \wbar{m\pr{-1}}).
\end{split}
\end{align}

\nin Therefore we have
\begin{align}
\begin{split}
& \widetilde{ad}Y(\vta(m_{\delta} \ot \widetilde{f} \ot \widetilde{u})) = \\
& \widetilde{ad}Y({m\pr{0}}_{\delta} \cdot u^1\ps{2} \ot S^{-1}(u^1\ps{1}) \rhd \widetilde{f} \ot S(u^1\ps{3}) \cdot (u^2 \odots u^q \ot \wbar{m\pr{-1}})) = \\
& {m\pr{0}}_{\delta} \cdot u^1\ps{2} \ot adY(S^{-1}(u^1\ps{1}) \rhd \widetilde{f}) \ot S(u^1\ps{3}) \cdot (u^2 \odots u^q \ot \wbar{m\pr{-1}}) + \\
& {m\pr{0}}_{\delta} \cdot u^1\ps{2} \ot S^{-1}(u^1\ps{1}) \rhd \widetilde{f} \ot adY(S(u^1\ps{3})) \cdot (u^2 \odots u^q \ot \wbar{m\pr{-1}}) + \\
& {m\pr{0}}_{\delta} \cdot u^1\ps{2} \ot S^{-1}(u^1\ps{1}) \rhd \widetilde{f} \ot S(u^1\ps{3}) \cdot (adY(u^2 \odots u^q) \ot \wbar{m\pr{-1}}) + \\
& {m\pr{0}}_{\delta} \cdot u^1\ps{2} \ot S^{-1}(u^1\ps{1}) \rhd \widetilde{f} \ot S(u^1\ps{3}) \cdot (u^2 \odots u^q \ot \wbar{adY(m\pr{-1})}) - \\
& {(m\pr{0} \cdot u^1\ps{3}Y)}_{\delta}\delta(u^1\ps{2}) \ot S^{-1}(u^1\ps{1}) \rhd \widetilde{f} \ot S(u^1\ps{4}) \cdot (u^2 \odots u^q \ot \wbar{m\pr{-1}}).
\end{split}
\end{align}

\nin Recalling that
\begin{equation}\label{aux-71}
adY(h \rhd f) = adY(h) \rhd f + h \rhd adY(f),
\end{equation}
we then straightforwardly extend it to
\begin{equation}
adY(h \rhd \widetilde{f}) = adY(h) \rhd \widetilde{f} + h \rhd adY(\widetilde{f}).
\end{equation}

\nin As a result, we have
\begin{align}
\begin{split}
{m\pr{0}}_{\delta} \cdot u^1\ps{2} \ot adY(S^{-1}(u^1\ps{1}) \rhd \widetilde{f}) \ot S(u^1\ps{3}) \cdot (u^2 \odots u^q \ot \wbar{m\pr{-1}}) = \\
{m\pr{0}}_{\delta} \cdot u^1\ps{2} \ot S^{-1}(adY(u^1\ps{1})) \rhd \widetilde{f} \ot S(u^1\ps{3}) \cdot (u^2 \odots u^q \ot \wbar{m\pr{-1}}) + \\
{m\pr{0}}_{\delta} \cdot u^1\ps{2} \ot S^{-1}(u^1\ps{1}) \rhd adY(\widetilde{f}) \ot S(u^1\ps{3}) \cdot (u^2 \odots u^q \ot \wbar{m\pr{-1}}).
\end{split}
\end{align}

\nin Next, we observe that
\begin{align}
\begin{split}
& - {(m\pr{0} \cdot u^1\ps{3}Y)}_{\delta}\delta(u^1\ps{2}) \ot S^{-1}(u^1\ps{1}) \rhd \widetilde{f} \ot S(u^1\ps{4}) \cdot (u^2 \odots u^q \ot \wbar{m\pr{-1}}) = \\
& {(m\pr{0} \cdot adY(u^1\ps{3}))}_{\delta}\delta(u^1\ps{2}) \ot S^{-1}(u^1\ps{1}) \rhd \widetilde{f} \ot S(u^1\ps{4}) \cdot (u^2 \odots u^q \ot \wbar{m\pr{-1}}) -\\
& {(m\pr{0} \cdot Y)}_{\delta} \cdot u^1\ps{2} \ot S^{-1}(u^1\ps{1}) \rhd \widetilde{f} \ot S(u^1\ps{3}) \cdot (u^2 \odots u^q \ot \wbar{m\pr{-1}}),
\end{split}
\end{align}
where,
\begin{equation}\label{aux-73}
(m \cdot adY(u\ps{2}))_{\delta} \delta(u\ps{1}) = m_{\delta} \cdot adY(u).
\end{equation}

\nin Therefore we have
\begin{align}
\begin{split}
& \widetilde{ad}Y(\vta(m_{\delta} \ot \widetilde{f} \ot \widetilde{u})) = \\
& {m\pr{0}}_{\delta} \cdot u^1\ps{2} \ot S^{-1}(u^1\ps{1}) \rhd adY(\widetilde{f}) \ot S(u^1\ps{3}) \cdot (u^2 \odots u^q \ot \wbar{m\pr{-1}}) + \\
& {m\pr{0}}_{\delta} \cdot u^1\ps{2} \ot S^{-1}(adY(u^1\ps{1})) \rhd \widetilde{f} \ot S(u^1\ps{3}) \cdot (u^2 \odots u^q \ot \wbar{m\pr{-1}}) + \\
& {m\pr{0}}_{\delta} \cdot adY(u^1\ps{2}) \ot S^{-1}(u^1\ps{1}) \rhd \widetilde{f} \ot S(u^1\ps{4}) \cdot (u^2 \odots u^q \ot \wbar{m\pr{-1}}) + \\
& {m\pr{0}}_{\delta} \cdot u^1\ps{2} \ot S^{-1}(u^1\ps{1}) \rhd \widetilde{f} \ot S(adY(u^1\ps{3})) \cdot (u^2 \odots u^q \ot \wbar{m\pr{-1}}) + \\
& {m\pr{0}}_{\delta} \cdot u^1\ps{2} \ot S^{-1}(u^1\ps{1}) \rhd \widetilde{f} \ot S(u^1\ps{3}) \cdot (adY(u^2 \odots u^q) \ot \wbar{m\pr{-1}}) - \\
& {(m \cdot Y)\pr{0}}_{\delta} \cdot u^1\ps{2} \ot S^{-1}(u^1\ps{1}) \rhd \widetilde{f} \ot S(u^1\ps{3}) \cdot (u^2 \odots u^q \ot \wbar{(m \cdot Y)\pr{-1}}).
\end{split}
\end{align}

\nin Then the commutativity $\D \circ adY = adY \circ \D$ on $\Uc$ finishes the proof as
\begin{align}
\begin{split}
& \widetilde{ad}Y(\vta(m_{\delta} \ot \widetilde{f} \ot \widetilde{u})) = \\
& {m\pr{0}}_{\delta} \cdot u^1\ps{2} \ot S^{-1}(u^1\ps{1}) \rhd adY(\widetilde{f}) \ot S(u^1\ps{3}) \cdot (u^2 \odots u^q \ot \wbar{m\pr{-1}}) + \\
& {m\pr{0}}_{\delta} \cdot adY(u^1)\ps{2} \ot S^{-1}(adY(u^1)\ps{1}) \rhd \widetilde{f} \ot S(adY(u^1)\ps{3}) \cdot (u^2 \odots u^q \ot \wbar{m\pr{-1}}) + \\
& {m\pr{0}}_{\delta} \cdot u^1\ps{2} \ot S^{-1}(u^1\ps{1}) \rhd \widetilde{f} \ot S(u^1\ps{3}) \cdot (adY(u^2 \odots u^q) \ot \wbar{m\pr{-1}}) - \\
& {(m \cdot Y)\pr{0}}_{\delta} \cdot u^1\ps{2} \ot S^{-1}(u^1\ps{1}) \rhd \widetilde{f} \ot S(u^1\ps{3}) \cdot (u^2 \odots u^q \ot \wbar{(m \cdot Y)\pr{-1}}) \\
& = \vta(\widetilde{ad}Y(m_{\delta} \ot \widetilde{f} \ot \widetilde{u})).
\end{split}
\end{align}
\end{proof}

\nin For the generators $X,Y,\delta_1 \in \mathcal{H}$, it is already known that
\begin{equation}
adY(Y) = 0, \quad adY(X) = X, \quad adY(\delta_1) = \delta_1.
\end{equation}

\nin We recall here the action of $Y \in sl_2$ as
\begin{equation}
1_M \lhd Y = 0, \quad R^X \lhd Y = R^X, \quad R^Y \lhd Y = 0, \quad R^Z \lhd Y = -R^Z.
\end{equation}

\nin Hence we define the following weight on the cyclic complex by
\begin{align}
\begin{split}
& \nm{Y} = 0, \quad \nm{X} = 1, \quad \nm{\delta_1} = 1, \\
& \nm{1_M} = 0, \quad \nm{R^X} = -1, \quad \nm{R^Y} = 0, \quad \nm{R^Z} = 1,
\end{split}
\end{align}
we can express the following property of the operator $\widetilde{ad}Y$;
\begin{equation}
\widetilde{ad}Y(m_{\delta} \ot \widetilde{f} \ot \widetilde{u}) = \nm{m_{\delta} \ot \widetilde{f} \ot \widetilde{u}}m_{\delta} \ot \widetilde{f} \ot \widetilde{u},
\end{equation}
where $\nm{m_{\delta} \ot \widetilde{f} \ot \widetilde{u}} := \nm{m} + \nm{\widetilde{f}} + \nm{\widetilde{u}}$.

\medskip
\nin Hence, the operator $\widetilde{ad}Y$ acts as a grading (weight) operator. Extending the above grading to the cocyclic complex $\FZ^{\bullet, \bullet}$, we have
\begin{equation}
\FZ^{\bullet, \bullet} = \bigoplus_{k \in \mathbb{Z}}\FZ[k]^{\bullet, \bullet},
\end{equation}
where
\begin{equation}
\FZ[k] = \Big\{m_{\delta} \ot \widetilde{f} \ot \widetilde{u} \, \Big| \, \nm{m_{\delta} \ot \widetilde{f} \ot \widetilde{u}}= k\Big\}.
\end{equation}

\nin As a result of Lemma \ref{aux-69}, we can say that $\FZ[k]$ is a subcomplex for any $k \in \mathbb{Z}$, and hence the cohomology inherits the grading. Namely,
\begin{equation}
HP(\mathcal{H},M_{\delta}) = \bigoplus_{k \in \mathbb{Z}}H(\FZ[k]).
\end{equation}

\nin Moreover, using Lemma \ref{aux-68} we conclude the following analogous of Corollary 3.10 in \cite{MoscRang07}.

\begin{corollary}\label{corollary-weight-1}
The cohomology is captured by the weight 1 subcomplex, \ie
\begin{equation}
H(\FZ[1]) = HP(\mathcal{H},M_{\delta}), \quad H(\FZ[k]) = 0, \quad k \neq 1.
\end{equation}
\end{corollary}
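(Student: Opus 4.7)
The plan is to exploit the Cartan homotopy formula together with the grading property of $\widetilde{ad}Y$ to produce an explicit contracting homotopy on each weight subcomplex $\FZ[k]$ with $k\neq 1$.

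First I would observe that by Lemma~\ref{aux-69}, the operator $\widetilde{ad}Y$ commutes with all the (co)face, degeneracy, and cyclic operators, hence with both $b$ and $B$. Consequently, the decomposition $\FZ^{\bullet,\bullet}=\bigoplus_{k\in\Zb}\FZ[k]^{\bullet,\bullet}$ is a decomposition of the total mixed complex into subcomplexes under $b+B$, and the cohomology splits accordingly as
\begin{equation}
HP(\Hc,M_\d)=\bigoplus_{k\in\Zb}H(\FZ[k]).
\end{equation}

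Next I would specialize Lemma~\ref{aux-68} to the subcomplex $\FZ[k]$: since $\widetilde{ad}Y$ acts there as multiplication by the scalar $k$, one has
\begin{equation}
\mathcal{L}_{D_Y}\big|_{\FZ[k]}=(1-k)\,\mathrm{Id}.
\end{equation}
Combined with the Cartan-type homotopy formula $[E_{D_Y}+e_{D_Y},\,b+B]=\mathcal{L}_{D_Y}$ that was established just before Lemma~\ref{aux-68}, this yields, for every $k\neq 1$, the identity
\begin{equation}
\mathrm{Id}_{\FZ[k]}=\frac{1}{1-k}\bigl[E_{D_Y}+e_{D_Y},\,b+B\bigr]\big|_{\FZ[k]}.
\end{equation}
Thus $\tfrac{1}{1-k}(E_{D_Y}+e_{D_Y})$ is an explicit contracting homotopy, proving $H(\FZ[k])=0$ for all $k\neq 1$. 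The remaining equality $H(\FZ[1])=HP(\Hc,M_\d)$ is then immediate from the direct sum decomposition above.

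There is no real obstacle in this argument: the hard technical work — verifying that $E_{D_Y}+e_{D_Y}$ gives a Cartan homotopy, and showing that $\widetilde{ad}Y$ commutes with every cocyclic structure map — has already been done in Lemmas~\ref{aux-68} and~\ref{aux-69}. The only mild subtlety to mention is that one must check the homotopy operators $E_{D_Y}$, $e_{D_Y}$ individually preserve the weight grading (so that division by the scalar $1-k$ is performed within $\FZ[k]$), but this follows immediately from the fact that $D_Y$ is right multiplication by the weight-zero element $Y$, so neither operator alters the total weight of a chain.
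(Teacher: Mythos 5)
Your argument is correct and is essentially the paper's own: the paper simply invokes Lemma \ref{aux-68} together with the analogue of Corollary 3.10 of \cite{MoscRang07}, which is exactly the computation you spell out — on $\FZ[k]$ the null-homotopic operator $\mathcal{L}_{D_Y}$ equals $(1-k)\,\mathrm{Id}$, forcing $H(\FZ[k])=0$ for $k\neq 1$. Your closing remark about the homotopy operators preserving the weight is a harmless bonus but not needed: since $\mathcal{L}_{D_Y}$ itself preserves the grading and induces the zero map on the total cohomology, it already acts as both $0$ and $(1-k)$ on each summand $H(\FZ[k])$.
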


\begin{proposition}
The odd and even periodic Hopf cyclic cohomology of $\Hc_{1\rm S }\cop$ with coefficients in $M_\d$ are both one dimensional. Their classes approximately are given by the following cocycles in the  $E_1$ term of the natural spectral sequence associated to $M_\d$.
\begin{align}
&c^{\rm odd}=\one \ot \d_1 \in E_1^{1,{\rm odd}}\\
&c^{\rm even} = \one \ot X \ot Y - \one \ot Y \ot X + \one \ot Y \ot \delta_1Y \in E_1^{1,{\rm even}}.
\end{align}
Here, $\one := 1_M \ot \Cb_\d$.
\end{proposition}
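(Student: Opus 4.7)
The plan is to combine Theorem \ref{aux-63} with the weight reduction coming from Lemmas \ref{aux-68}--\ref{aux-69} and Corollary \ref{corollary-weight-1}.

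First, to obtain the dimension count, apply Theorem \ref{aux-63} to $(\Fg_1,\Fg_2)=(gl_1^{\rm aff},\Cb\langle Z\rangle)$ with $\Fh=0$ (admissible since $\Fg_2$ is abelian). Unimodularity of $M=S(sl_2^\ast)\nsb{2}$ is Example \ref{ex-2}, and the stability condition \eqref{aux-47} for $\,^\s M_\d$ is precisely what was checked coordinate-wise in the previous subsection during the construction of $M_\d$. The isomorphism
$$HP(\Hc_{1\rm S}\cop,M_\d)\cong\widetilde{HP}(sl_2,M)$$
then yields one-dimensional even and odd periodic cohomology by the calculation of $\widetilde{HP}(sl_2,M)$ given immediately before, with generators $\one\in\widetilde{HP}^0$ and $(2\theta^X\ot\bfR^Z-\theta^Y\ot\bfR^Y,\;\theta^X\wg\theta^Y\wg\theta^Z\ot\one)\in\widetilde{HP}^1$.

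Next, by Corollary \ref{corollary-weight-1} every class admits a representative in the weight-one subcomplex $\FZ[1]$. From the diagonalization $m_\d\lhd Y=-\nm{m}m_\d$ and $adY(h)=\nm{h}h$ one reads off the weights $\nm{Y}=\nm{1_M}=\nm{\bfR^Y}=0$, $\nm{X}=\nm{\d_1}=\nm{\bfR^Z}=1$, $\nm{\bfR^X}=-1$, using $[Y,X]=X$, $[Y,\d_1]=\d_1$ and the explicit action tables for $M_\d$. Every monomial of $c^{\rm odd}=\one\ot\d_1$ has weight $0+1=1$, and each of the three summands of $c^{\rm even}$ has weight $1$ as well (using additivity of the weight on products, which follows from $adY$ being a derivation, applied to the factor $\d_1Y$). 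Hence both candidates lie in $\FZ[1]$, as required.

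The final and most delicate step is to identify the $E_1$-classes that these cochains carry. The only nontrivial associated-graded piece of the filtration is $F_1M_\d/F_0M_\d\cong\Cb\langle\one\rangle$, which carries trivial $\Hc$-action and trivial $\Hc$-coaction; on this slice the $E_1$ page of the Hopf-cyclic spectral sequence reduces, via \cite[Theorem 4.7]{RangSutl} and \cite[Proposition 5.1]{RangSutl}, to the Chevalley--Eilenberg bicomplex $\wg^\bullet\Fg_1^\ast\ot R(\Fg_2)^{\ot\bullet}$ with trivial coefficients, whose total cohomology is $H^0(sl_2,\Cb)\oplus H^3(sl_2,\Cb)$ by the Whitehead lemmas. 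Tracing $c^{\rm odd}=\one\ot\d_1$ through the composite quasi-isomorphism $\Theta=\Phi_2\circ\Phi_1\circ\Psi$ of \cite{MoscRang09} combined with antisymmetrization, one recognizes on the $E_1$-level the class pairing with the volume form $\theta^X\wg\theta^Y\wg\theta^Z\in H^3(sl_2)$ after the $B$-reduction to the lowest filtration, hence the odd generator of $\widetilde{HP}^1(sl_2,M)$; similarly the three terms of $c^{\rm even}$ assemble into the class $\one\in H^0(sl_2)$, the even generator of $\widetilde{HP}^0(sl_2,M)$. The main obstacle is exactly this bookkeeping: formulas \eqref{aux-76}--\eqref{aux-78} shuffle $\Fc$-coactions with antipodes and, together with the nontrivial coaction $\Db(\one)=1\ot\one+X\ot\bfR^X+Y\ot\bfR^Y$, they produce the lower-filtration $\bfR^\bullet$-corrections visible in the full cocycles displayed in the introduction. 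Since the present proposition asserts agreement only at the $E_1$ level, all such corrections lie in $F_0M_\d$ and may be dropped, reducing the identification to the trivially-coacted slice where it becomes the classical Chevalley--Eilenberg matching, to be completed in full in the following subsection.
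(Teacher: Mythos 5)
Your argument is correct in substance, but it reaches the dimension count by a genuinely different route than the paper. The paper's own proof never invokes Theorem \ref{aux-63} here: it runs the Jara--Stefan filtration spectral sequence directly on the Hopf-cyclic complex $C(\Uc\cl\Fc,M_\d)$, observes that $E_1^{0,i}\cong H^i(W(sl_2,F_0M))=0$ by Whitehead (since $F_0M_\d$ is the irreducible three-dimensional $sl_2$-module), that $E_1^{1,i}=H^i(C(\Uc\cl\Fc,\Cb_\d))$ because $F_1M_\d/F_0M_\d\cong\Cb_\d$ is trivial as a module and comodule, and that all higher rows vanish, so the sequence collapses at $E_2$ and $HP^i\cong E_1^{1,i}$, which is one-dimensional in each parity by the trivial-coefficient van Est computation. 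You instead import the dimension count from Theorem \ref{aux-63} together with the computation of $\widetilde{HP}(sl_2,M)$ from the preceding subsection. This is logically valid (your verification of the hypotheses --- $\Fh=0$ is the Levi subalgebra of the abelian $\Fg_2$, unimodular SAYD from Example \ref{ex-2}, local finiteness and conilpotency, and the stability \eqref{aux-47} checked in the construction of $M_\d$ --- is right), and it is shorter; but it sacrifices the stated purpose of this section, which is to compute the two sides of Theorem \ref{aux-63} independently as an illustration, so the paper deliberately avoids your shortcut. Your weight computation ($\nm{X}=\nm{\d_1}=\nm{\bfR^Z}=1$, $\nm{Y}=\nm{1_M}=\nm{\bfR^Y}=0$, $\nm{\bfR^X}=-1$) agrees with the paper's and correctly places both candidate cochains in $\FZ[1]$. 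Your final paragraph identifying the $E_1$-classes by tracing through $\Theta$ and antisymmetrization is admittedly sketchy, but no more so than the paper, which likewise leaves the recognition of $\one\ot\d_1$ and $\one\ot X\ot Y-\one\ot Y\ot X+\one\ot Y\ot\d_1Y$ as the known generators of $HP(\Hc_{1\rm S}\cop,\Cb_\d)$ implicit and defers the honest cocycle construction to the following subsubsections; since the proposition only claims the identification ``approximately'' at the $E_1$ level, this is acceptable.
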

\begin{proof}
We have seen that all cohomology classes are concentrated in the weight 1 subcomplex.
On the other hand, $E_1$ term of the spectral sequence associated to the above mentioned filtration on $M_{\delta}$ is
\begin{equation}
E_1^{j,i}(\mathcal{H},M_{\delta}) = H^{i+j}(C(\Uc \cl \Fc, F_jM_{\delta} / F_{j-1}M_{\delta})),
\end{equation}
where $F_{0}M_{\delta} / F_{-1}M_{\delta} \cong F_{0}M_{\delta}$, $F_{1}M_{\delta} / F_0M_{\delta} \cong \mathbb{C}_{\delta}$ and $F_{j+1}M_{\delta} / F_jM_{\delta} = 0$ for $j \geq 1$.

\medskip
\nin Therefore,
\begin{equation}
E_1^{0,i}(\mathcal{H},M_{\delta}) = 0, \quad E_1^{1,i}(\mathcal{H},M_{\delta}) = H^i(C(\Uc \cl \Fc, \mathbb{C}_{\delta})), \quad E_1^{j,i}(\mathcal{H},M_{\delta}) = 0, \quad j \geq 1.
\end{equation}

\nin So the spectral sequence collapses at the $E_2$ term and we get
\begin{equation}
E_2^{0,i}(\mathcal{H},M_{\d}) \cong E_{\infty}^{0,i}(\mathcal{H},M_{\d}) = 0,
\end{equation}
\begin{equation}\label{aux-70}
E_2^{1,i}(\mathcal{H}, M_\d) \cong E_{\infty}^{1,i}(\mathcal{H}) = F_1H^i(C(\Uc \cl \Fc, M_{\delta})) / F_0H^i(C(\Uc \cl \Fc, M_{\delta})),
\end{equation}
and
\begin{equation}
E_2^{j,i}(\mathcal{H}, M_\d) \cong E_{\infty}^{j,i}(\mathcal{H},M_\d) = 0, \quad j \geq 2.
\end{equation}

\nin By definition of the induced filtration on the cohomology groups, we have
\begin{align}
\begin{split}
& F_1H^i(C(\Uc \cl \Fc, M_{\delta})) = H^i(C(\Uc \cl \Fc, F_1M_{\delta})) = \\
& H^i(C(\Uc \cl \Fc, M_{\delta})),
\end{split}
\end{align}
and
\begin{align}
\begin{split}
& F_0H^i(C(\Uc \cl \Fc, M_{\delta})) = H^i(C(\Uc \cl \Fc, F_0M_{\delta})) \cong \\
& H^i(W(sl_2, F_0M)) = 0,
\end{split}
\end{align}
where the last equality follows from the Whitehead's theorem.
\end{proof}

\subsubsection{Construction of a representative cocycle for the odd class}
In this subsection we first compute the odd cocycle in the total complex    ${\rm{Tot}}^{\bullet}(\Fc,\Uc,M_\d)$  of the bicomplex  \eqref{bicocyclic-bicomplex}.
Let us  recall the total mixed complex
\begin{equation}
{\rm{Tot}}^{\bullet}(\Fc,\Uc,M_\d):=\bigoplus_{p+q = \bullet}M_\d \ot \Fc^{\ot\;p} \ot \Uc^{\ot\;q},
\end{equation}
with the operators
\begin{align}
& \hb_p=\sum_{i=0}^{p+1} (-1)^{i}\hd_i, \qquad \vb_q=\sum_{i=0}^{q+1} (-1)^{i}\vd_i, \qquad b_T=\sum_{p+q=n}\hb_p +(-1)^p \vb_q \\
& \hB_p=(\sum_{i=0}^{p-1}(-1)^{(p-1)i}\hta^i)\overset{\ra}{\sigma}_{p-1}\hta, \quad \vB_q=(\sum_{i=0}^{q-1}(-1)^{(q-1)i}\vta^i)\uparrow\hspace{-4pt}\sigma_{q-1}\vta, \\\notag
& \hspace{3cm} \quad B_T=\sum_{p+q=n}\hB_p+(-1)^p\vB_q.
\end{align}

\begin{proposition}
Let
\begin{equation}
c':=\one \ot \d_1 \in M_\d \ot \Fc
\end{equation}
and
\begin{equation}
c''':= \bfR^Y \ot X + 2\bfR^Z \ot Y \in M_\d \ot \Uc.
\end{equation}
Then $c'+c''' \in {\rm{Tot}}^1(\Fc,\Uc,M_\d)$ is a Hochschild cocycle.
\end{proposition}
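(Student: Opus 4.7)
The plan is to verify $b_T(c'+c''')=0$ by separately computing the four pieces $\hb(c')$, $\vb(c')$, $\hb(c''')$, and $\vb(c''')$, and then checking that in the sum
\[
b_T(c'+c''') = \hb(c') - \vb(c') + \hb(c''') + \vb(c'''),
\]
(where the signs come from the bidegrees $(1,0)$ and $(0,1)$ respectively), everything cancels. Since the arguments are very small, we can just use the explicit horizontal and vertical coface formulas \eqref{horizontal-operators}, \eqref{vertical-operators}, together with the explicit $\Hc$-action/coaction on $M_\d$ and the standard left $\Hc$-coaction $u^{\pr{-1}} \otimes u^{\pr{0}} = S(u^{\pr{1}})\otimes u^{\pr{0}}$ on $\Uc$.

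First I would handle $c' = \one \otimes \delta_1$. For $\hb(c')$ one applies $\hd_0, \hd_1, \hd_2$; since $\delta_1$ is primitive the first two cofaces cancel in the usual way, while $\hd_2$ involves $\one\pr{0}\otimes \delta_1 \otimes \one\pr{-1}\rhd 1_\Fc$, in which the nonunital parts $X,Y$ of the coaction are killed by $X\rhd 1_\Fc = Y\rhd 1_\Fc = 0$, leaving $\one\otimes\delta_1\otimes 1$ which then cancels the residual term from $\hd_1$. Thus $\hb(c')=0$. For $\vb(c')$, the term $\vd_0$ inserts $1_\Uc$, while $\vd_1$ uses $\overline{\one\pr{-1}}$; because $\overline{\,\cdot\,}$ applied to the $X$- and $Y$-components of the coaction gives $X$ and $Y$ themselves, one obtains
\[
\vb(c') = -\bfR^X\otimes\delta_1\otimes X - \bfR^Y\otimes\delta_1\otimes Y.
\]

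Next I would compute $\vb(c''')$ and $\hb(c''')$ for $c''' = \bfR^Y\otimes X + 2\bfR^Z\otimes Y$. The vertical part $\vb(c''')$ is straightforward and vanishes, because the coactions on $\bfR^Y$ and $\bfR^Z$ land in $\Fc$ so $\overline{\bfR^Y\pr{-1}}$ and $\overline{\bfR^Z\pr{-1}}$ reduce to $1_\Uc$, making the vertical coface identity trivially satisfied. The horizontal part $\hb(c''')$ is the delicate step: here I will need the nontrivial left $\Hc$-coaction on $X$, namely $X^{\pr{-1}}\otimes X^{\pr{0}} = 1\otimes X - \delta_1\otimes Y$, together with the explicit coactions of $\bfR^Y, \bfR^Z$. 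A short computation using $f\rhd 1_\Fc = f$ for $f\in\Fc$ produces
\[
\hb(c''') = -\bfR^X\otimes\delta_1\otimes X - \bfR^Y\otimes\delta_1\otimes Y,
\]
after the $\delta_1^2$-terms coming from $\bfR^Y\otimes X$ and $2\bfR^Z\otimes Y$ cancel against each other.

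Finally I would add everything up: $-\vb(c') + \hb(c''') = (\bfR^X\otimes\delta_1\otimes X + \bfR^Y\otimes\delta_1\otimes Y) + (-\bfR^X\otimes\delta_1\otimes X - \bfR^Y\otimes\delta_1\otimes Y) = 0$, while $\hb(c') = 0 = \vb(c''')$. Hence $b_T(c'+c''')=0$. The main obstacle is purely bookkeeping: tracking the $\Hc$-coaction on $X$ (which injects a cross term into $\Uc$), evaluating $\overline{\cdot}$ correctly on the mixed-degree parts of the $M_\d$-coaction, and making sure the cross-cancellation between the horizontal contribution of $2\bfR^Z\otimes Y$ (which produces a $\frac12 \bfR^X\otimes\delta_1^2\otimes Y$ term) and the horizontal contribution of $\bfR^Y\otimes X$ (which produces $\bfR^X\otimes\delta_1^2\otimes Y$ with the right sign) is handled carefully.
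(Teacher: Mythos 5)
Your proof is correct and follows essentially the same route as the paper's: compute the four components $\hb(c')$, $\vb(c')$, $\hb(c''')$, $\vb(c''')$ explicitly, observe that the two ``pure'' ones vanish, and cancel the two mixed-bidegree contributions $-\vb(c')$ and $\hb(c''')=-\bfR^X\ot\d_1\ot X-\bfR^Y\ot\d_1\ot Y$ against each other in the total differential. The only discrepancy is notational: your assignment of $\hb$ (adding an $\Fc$-factor) versus $\vb$ (adding a $\Uc$-factor) agrees with the operator definitions \eqref{horizontal-operators}--\eqref{vertical-operators}, whereas the paper's proof text has the two labels interchanged, so your version is in fact the more carefully stated one.
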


\begin{proof}
We start with the element $
c':=\one \ot \d_1 \in M_\d \ot \Fc .$

\medskip
\nin The equality $\vb(c')=0$ is immediate to notice. Next, we observe that
\begin{align}
\begin{split}
& \hb(c') = -\bfR^X \ot \d_1 \ot X - \bfR^Y \ot \d_1 \ot Y \\
& = -\bfR^X \ot \d_1 \ot X + \bfR^Y \ot \d_1 \ot Y + \bfR^X \ot {\d_1}^2 \ot Y - \bfR^X \ot {\d_1}^2 \ot Y - 2\bfR^Y \ot \d_1 \ot Y \\
& = \vb(\bfR^Y \ot X+ 2\bfR^Z \ot Y).
\end{split}
\end{align}
So, for the element $c''':=\bfR^Y \ot X + 2\bfR^Z \ot Y \in M_\d \ot \Uc$, we have $\hb(c') - \vb(c''') = 0$. Finally we notice $\hb(c''')=0$.
\end{proof}

\begin{proposition}
The element $c'+c''' \in {\rm{Tot}}^1(\Fc,\Uc,M_\d)$ is a Connes cycle.
\end{proposition}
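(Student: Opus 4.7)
The plan is to verify $B_T(c'+c''')=0$ by reducing it to the essential terms and checking their cancellation. Since $c'\in M_\d\ot\Fc$ lives in bidegree $(p,q)=(1,0)$ and $c'''\in M_\d\ot\Uc$ lives in bidegree $(0,1)$, the definition $B_T=\sum_{p+q=n}\hB_p+(-1)^p\vB_q$ reduces on $c'+c'''$ to the single pair $\hB_1(c')+\vB_1(c''')\in M_\d$. The cross-contributions $\vB_0(c')$ and $\hB_0(c''')$ vanish automatically, since the corresponding cyclic operators require at least one entry in that direction. So the problem reduces to showing $\hB_1(c')+\vB_1(c''')=0$.

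The first summand $\hB_1(c')=\hs_0\,\hta(c')$ is straightforward. Plugging $\one\ot\d_1$ into the horizontal cyclic formula and invoking primitivity of $X,Y$ in $\Uc$ gives $X\rhd 1_\Fc=Y\rhd 1_\Fc=0$, so only the leading component $1\ot\one$ of the coaction $\Db(\one)=1\ot\one+X\ot\bfR^X+Y\ot\bfR^Y$ contributes. Together with $\D(\d_1)=\d_1\ot 1+1\ot\d_1$, $S(\d_1)=-\d_1$, and the tabulated action $\one\lhd\d_1=\bfR^Z$, one computes $\hta(c')=\bfR^Z\ot 1-\one\ot\d_1$. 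The degeneracy $\hs_0$, which evaluates the $\Fc$-counit on the single $\Fc$-entry, then yields $\hB_1(c')=\bfR^Z$.

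The second summand $\vB_1(c''')=\vs_0\,\vta(c''')$ requires more care, because the vertical cyclic formula involves the iterated coproduct $\D^{(4)}(u^1)$ with nested antipodes $S,S^{-1}$. The calculation is nonetheless tractable, since: (i) $X$ and $Y$ are primitive in $\Uc$, reducing $\D^{(4)}(u^1)$ to a sum of five tensors each having a single non-unit slot; (ii) the factors $S^{-1}(u^1\ps{3}\rhd 1_\Fc)$ and $S(S^{-1}(u^1\ps{2})\rhd 1_\Fc)$ force $u^1\ps{2}=u^1\ps{3}=1$ lest the whole term vanish; (iii) the projected coactions $\overline{\bfR^Y\pr{-1}}$ and $\overline{\bfR^Z\pr{-1}}$ in $\Uc\cong\Hc/\Hc\Fc^+$ collapse to $1$, since the $\d_1$- and $\tfrac{1}{2}\d_1^2$-contributions die under the projection. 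Combining these simplifications with the actions $\bfR^Y\lhd X=-\bfR^Z$ and $\bfR^Z\lhd Y=-\bfR^Z$ and tracking the sign from $S(u^1\ps{5})$ on $\overline{m\pr{-1}}=1$, I expect $\vB_1(c''')=-\bfR^Z$, which cancels $\hB_1(c')$ precisely.

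The main obstacle is the combinatorial bookkeeping inside $\vta$, whose explicit formula carries five-fold coproducts and mixed antipodes $S,S^{-1}$, along with clarifying how the $\Fc$-valued scalar $S(S^{-1}(u^1\ps{2})\rhd 1_\Fc)$ is to be interpreted in the degenerate case $p=0$ (it must reduce to application of $\ve_\Fc$, since there is no $\Fc$-entry for it to act on). Once this interpretive point is settled, the simplifications listed above reduce the calculation to a handful of terms that can be checked by direct substitution of the tabulated action and coaction of $M_\d$, and the cancellation $\hB_1(c')+\vB_1(c''')=\bfR^Z-\bfR^Z=0$ finishes the proof.
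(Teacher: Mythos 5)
Your overall strategy coincides with the paper's: since $c'$ and $c'''$ sit in complementary bidegrees of ${\rm Tot}^1(\Fc,\Uc,M_\d)$, the total Connes boundary reduces to the two extra-degeneracy contractions $M_\d\ot\Fc\to M_\d$ and $M_\d\ot\Uc\to M_\d$, and the proof is the cancellation $\bfR^Z-\bfR^Z=0$. Your computation of the $\Fc$-side contraction (giving $\bfR^Z$ via $\one\lhd\d_1=\bfR^Z$, with the $X\ot\bfR^X$ and $Y\ot\bfR^Y$ components of the coaction killed by $X\rhd 1_\Fc=Y\rhd 1_\Fc=0$) is correct, as is your reduction of the $\Uc$-side cyclic operator to the plain action $m\ot u\mapsto m\cdot u$. (The paper labels these two contributions $\vB(c')$ and $\hB(c''')$, the opposite of your labels; this is only a naming discrepancy, since the paper itself uses the horizontal/vertical labels of Section 4 inconsistently with the displayed definitions of the operators.)

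The step that fails as written is the evaluation of the $\Uc$-contraction on the term $2\bfR^Z\ot Y$. You quote $\bfR^Z\lhd Y=-\bfR^Z$, which is the value of the untwisted action of $Y$ on $R^Z\in M=S({sl_2}^\ast)\nsb{2}$. The cyclic complex is built on the twisted module $M_\d=M\ot\Cb_\d$ with $\d(Y)=1$, so the relevant action is $\bfR^Z\lhd Y=(R^Z\lhd Y)_\d+\d(Y)\,\bfR^Z=-\bfR^Z+\bfR^Z=0$, exactly as in the paper's action table. The distinction matters precisely because of the coefficient $2$: with your quoted value the term contributes $-2\bfR^Z$, the contraction of $c'''$ becomes $-3\bfR^Z$, and the cancellation against $\bfR^Z$ fails; with the twisted value the term contributes $0$ and one gets $\vB_1(c''')=\bfR^Y\lhd X=-\bfR^Z$ as required (this second value happens to be insensitive to the twist since $\d(X)=0$). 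There is no compensating sign coming from $S(u^1\ps{5})$ acting on $\wbar{m\pr{-1}}$ --- after the degeneracy that factor only produces counits --- so the argument goes through only once the quoted action is replaced by the $M_\d$-action.
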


\begin{proof}
Using the action of $\Fc$ and $\Uc$ on $M_\d$, we directly conclude that on one hand side we have $\vB(c') = \bfR^Z$, and on the other hand  $\hB(c''') = -\bfR^Z$.
\end{proof}

\bigskip

\nin Our next task is to send this cocycle to the cyclic complex $C^1(\Hc,M_\d)$. This is a two step process. We first use the Alexander-Whitney map
\begin{equation}\label{AW}
\, AW:= \bigoplus_{p+q=n} AW_{p,q}: \Tot^n(\Fc,\Uc,M_\d)\ra \FZ^{n,n},
\end{equation}
\begin{equation*}
AW_{p,q}: \Fc^{\ot p}\ot  \Uc^{\ot q}\longrightarrow \Fc^{\ot
p+q}\ot  \Uc^{\ot p+q}
\end{equation*}
\begin{equation*}
AW_{p,q}=(-1)^{p+q}\underset{
p\;\text{times}}{\underbrace{\vd_0\vd_0\dots
\vd_0}}\hd_n\hd_{n-1}\dots \hd_{p+1} \, .
\end{equation*}
to pass to the diagonal complex $\FZ^{\bullet,\bullet}(\Hc,\Fc,M_\d)$. It is checked that
\begin{equation}
AW_{1,0}(c') = - \one \ot \d_1 \ot 1 - \bfR^X \ot \d_1 \ot X - \bfR^Y \ot \d_1 \ot Y,
\end{equation}
as well as
\begin{equation}
AW_{0,1}(c''') = - \bfR^Y \ot 1 \ot X - 2\bfR^Z \ot 1 \ot Y.
\end{equation}

\nin Summing them up we get
\begin{equation}
c^{\rm odd}_{\rm diag} := - \one \ot \d_1 \ot 1 - \bfR^X \ot \d_1 \ot X - \bfR^Y \ot \d_1 \ot Y - \bfR^Y \ot 1 \ot X - 2\bfR^Z \ot 1 \ot Y.
\end{equation}

\nin Finally, via the quasi-isomorphism
 \begin{align}\label{PSI-1}
 \begin{split}
& \Psi: \FZ^{\bullet,\bullet}(\Hc,\Fc,M_\d) \longrightarrow C^\bullet(\Hc, M_\d) \\
&\Psi(m\ot f^1\odots f^n \ot u^1\odots u^n)\\
&=\sum m\ot f^1\acl u^1\ns{0}\ot f^2u^1\ns{1}\acl u^2\ns{0}\odots \\
& \odots f^n u^1\ns{n-1} \dots u^{n-1}\ns{1}\acl u^n ,
 \end{split}
\end{align}
which is recalled from \cite{RangSutl},
we carry the element $c^{\rm odd}_{\rm diag} \in \FZ^{2,2}(\Hc,\Fc,M_\d)$ to
\begin{equation}\label{c-odd}
c^{\rm odd} = - \Big(\one \ot \delta_1 + \bfR^Y \ot X + \bfR^X \ot \delta_1X + \bfR^Y \ot \delta_1Y + 2 \bfR^Z \ot Y \Big) \in C^1(\Hc, M_\d).
\end{equation}

\begin{proposition}
The element $c^{\rm odd}$ defined in \eqref{c-odd} is a Hochschild cocycle.
\end{proposition}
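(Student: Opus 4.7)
The plan is to invoke functoriality rather than grind out the coboundary $b(c^{\rm odd})$ by hand. Recall that $c^{\rm odd}$ was defined as the image $\Psi(AW(c'+c'''))$, where $c'+c''' \in \Tot^1(\Fc,\Uc,M_\d)$ was shown in the two preceding propositions to satisfy $b_T(c'+c''')=0$: the only potentially nonzero cross term $\hb(c')$ matched $\vb(c''')$ exactly, while $\vb(c')$ and $\hb(c''')$ vanished separately. Here $AW$ is the Alexander--Whitney map \eqref{AW} going from the total complex of the bicocyclic bicomplex \eqref{bicocyclic-bicomplex} to its diagonal, and $\Psi$ is the quasi-isomorphism \eqref{PSI-1} going from the diagonal complex $\FZ^{\bullet,\bullet}(\Hc,\Fc,M_\d)$ to the standard Hopf cyclic complex $C^\bullet(\Hc,M_\d)$.

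Granted the standard fact, built into the bicocyclic machinery of \cite{MoscRang09} and also used in \cite{RangSutl}, that both $AW$ and $\Psi$ are morphisms of mixed complexes and in particular intertwine their respective Hochschild coboundaries, one obtains at once
\[
b(c^{\rm odd}) = b\bigl(\Psi(AW(c'+c'''))\bigr) = \Psi\bigl(AW(b_T(c'+c'''))\bigr) = \Psi(AW(0)) = 0,
\]
which is exactly the assertion of the proposition. There is no real obstacle here: the two required intertwining properties are essentially the reason $AW$ and $\Psi$ were written down in the first place, and they were verified in the references cited above independently of the specific coefficient module.

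If one instead prefers a self-contained direct verification, the only alternative is to expand $b(c^{\rm odd})$ term by term using the coproducts $\Delta(\delta_1)=\delta_1\ot 1+1\ot\delta_1$, $\Delta(Y)=Y\ot 1+1\ot Y$ and $\Delta(X)=X\ot 1+1\ot X+Y\ot\delta_1$ in $\Hc_{1\rm S}\cop$ (together with the products $\delta_1X$ and $\delta_1Y$), and the explicit $\Hc_{1\rm S}\cop$-coaction on $\one,\bfR^X,\bfR^Y,\bfR^Z$ recorded in the previous subsection, checking that the resulting contributions cancel. The main nuisance along that route would be bookkeeping the interaction between the non-primitive term $Y\ot\delta_1$ in $\Delta(X)$ and the non-trivial action $\one \lhd \delta_1 = \bfR^Z$, neither of which contributes any new conceptual content once the chain-map viewpoint is adopted.
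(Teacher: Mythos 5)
Your argument is correct, but it proves the proposition by a genuinely different route from the paper. The paper verifies $b(c^{\rm odd})=0$ by brute force: it expands $b$ on each of the five terms of $c^{\rm odd}$ using the coproducts in $\Hc_{1\rm S}\cop$ and the explicit coaction on $\one,\bfR^X,\bfR^Y,\bfR^Z$, and checks that everything cancels. You instead transport the cocycle property of $c'+c'''$ in ${\rm Tot}^1(\Fc,\Uc,M_\d)$ through $AW$ and $\Psi$, using that both intertwine the Hochschild coboundaries; since $AW$ is a $b$-chain map by the (co)simplicial Eilenberg--Zilber machinery and $\Psi$ is part of an isomorphism of cocyclic modules (as the paper itself uses in the proof of Lemma \ref{aux-68}, where $\Theta=\Phi_2\circ\Phi_1\circ\Psi$ is an isomorphism of cocyclic modules), the identity $b(\Psi(AW(c'+c''')))=\pm\Psi(AW(b_T(c'+c''')))=0$ is legitimate and shorter than the paper's computation. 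One caution: your phrase that $AW$ is a ``morphism of mixed complexes'' overstates the standard fact --- $AW$ intertwines the Hochschild differentials on the nose but commutes with the Connes $B$-operator only up to homotopy (this is precisely why the cyclic Eilenberg--Zilber theorem needs cyclic shuffles). Your proof of the present statement only uses the $b$-intertwining, so it stands, but the same shortcut would not automatically yield the companion proposition that $B(c^{\rm odd})=0$, which the paper (necessarily) checks by a separate direct computation. The trade-off, then, is the usual one: your argument is conceptual and essentially free once the transfer maps are trusted, while the paper's computation is self-contained and doubles as a consistency check on the explicit formula \eqref{c-odd}.
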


\begin{proof}
We first calculate its images under the Hochschild coboundary $b:C^1(\mathcal{H},M_{\delta}) \to C^2(\mathcal{H},M_{\delta})$.
\begin{align}
\begin{split}
& b(\one \ot \delta_1) = \one \ot 1_{\mathcal{H}} \ot \delta_1 - \one \ot \D(\delta_1) + \one \ot \delta_1 \ot 1 + \bfR^Y \ot \delta_1 \ot Y + \bfR^X \ot \delta_1 \ot X \\
& = \bfR^Y \ot \delta_1 \ot Y + \bfR^X \ot \delta_1 \ot X, \\[.2cm]
& b(\bfR^Y \ot X) = \bfR^Y \ot 1_{\mathcal{H}} \ot X - \bfR^Y \ot \D(X) + \bfR^Y \ot X \ot 1_{\mathcal{H}} + \bfR^X \ot X \ot \delta_1 \\
& = \bfR^X \ot X \ot \delta_1 - \bfR^Y \ot Y \ot \delta_1, \\[.2cm]
& b(\bfR^X \ot \delta_1X) = \bfR^X \ot 1_{\mathcal{H}} \ot \delta_1X - \bfR^X \ot \D(\delta_1X) + \bfR^X \ot \delta_1X \ot 1_{\mathcal{H}} \\
& = - \bfR^X \ot \delta_1 \ot X - \bfR^X \ot \delta_1Y \ot \delta_1 - \bfR^X \ot X \ot \delta_1 - \bfR^X \ot Y \ot {\delta_1}^2, \\[.2cm]
& b(\bfR^Y \ot \delta_1Y) = \bfR^Y \ot 1_{\mathcal{H}} \ot \delta_1Y - \bfR^Y \ot \D(\delta_1Y) + \bfR^Y \ot \delta_1Y \ot 1_{\mathcal{H}} + \bfR^X \ot \delta_1Y \ot \delta_1 \\
& = \bfR^X \ot \delta_1Y \ot \delta_1 - \bfR^Y \ot \delta_1 \ot Y - \bfR^Y \ot Y \ot \delta_1, \\[.2cm]
& b(\bfR^Z \ot Y) = \bfR^Z \ot 1_{\mathcal{H}} \ot Y - \bfR^Z \ot \D(Y) + \bfR^Z \ot Y \ot 1_{\mathcal{H}} \\
& + \bfR^Y \ot Y \ot \delta_1 + \frac{1}{2} \bfR^X \ot Y \ot {\delta_1}^2 \\
& = \bfR^Y \ot Y \ot \delta_1 + \frac{1}{2} \bfR^X \ot Y \ot {\delta_1}^2.
\end{split}
\end{align}

\nin Now, summing up we get
\begin{equation}
b(\one \ot \delta_1 + \bfR^Y \ot X + \bfR^X \ot \delta_1X + \bfR^Y \ot \delta_1Y + 2 \cdot \bfR^Z \ot Y) = 0.
\end{equation}
\end{proof}

\begin{proposition}
The Hochschild cocycle $c^{\rm odd}$ defined in \eqref{c-odd} vanishes under the Connes boundary map.
\end{proposition}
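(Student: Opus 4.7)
The plan is to observe that, under the stability hypothesis on $M_\d$, the Connes boundary operator $B\colon C^1(\Hc,M_\d)\to C^0(\Hc,M_\d)$ vanishes identically, so that the proposition is immediate. Indeed, from the formula $B=\bigl(\sum_{i=0}^{1}(-1)^{i}\tau^{i}\bigr)\sigma_0\tau$ one reads off $B=(1-\tau_0)\sigma_0\tau_1$, where $\sigma_0\tau_1$ first lands in $C^0=M_\d$ and then $\tau_0\colon M_\d\to M_\d$ is the bottom cyclic operator, acting by $m\mapsto m\pr{0}\cdot m\pr{-1}$. By the stability axiom of $M_\d$ as an SAYD module over $\Hc$, this map is the identity, so $1-\tau_0\equiv 0$ and therefore $B\,c^{\rm odd}=0$.

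A second, conceptual route is available using the preceding proposition, where $c'+c'''$ was shown to be a Connes cycle in $\Tot^{\bullet}(\Fc,\Uc,M_\d)$. The Alexander--Whitney map \eqref{AW} and the quasi-isomorphism $\Psi$ of \eqref{PSI-1} are constructed to intertwine the mixed--complex structures of the total, diagonal, and Hopf cyclic complexes, and in particular they commute with the respective Connes boundaries up to the appropriate chain homotopies on the normalized sub-complexes. Transporting $B_T(c'+c''')=0$ through $AW$ and then through $\Psi$ therefore yields $B\,c^{\rm odd}=0$ as well.

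If one prefers a purely hands-on verification, the calculation proceeds term by term. For each of the five summands $m\ot h$ of $c^{\rm odd}$ in \eqref{c-odd}, apply $\tau_1(m\ot h)=m\pr{0}\,h\ps{1}\ot S(h\ps{2})\,m\pr{-1}$ using $\D(X)=1\ot X+X\ot 1+Y\ot\d_1$, $\D(Y)=1\ot Y+Y\ot 1$, $\D(\d_1)=1\ot\d_1+\d_1\ot 1$, the explicit $\Hc$-coactions on $\one,\bfR^X,\bfR^Y,\bfR^Z$, and the antipode of $\Hc$. Then $\sigma_0$ extracts the component carrying $\ve$ on the second slot, producing in each case a linear combination of $\bfR^X,\bfR^Y,\bfR^Z$; summing the five contributions and applying $1-\tau_0$ collapses everything to zero again by stability. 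I do not anticipate any real obstacle, since the content of the proposition is essentially a reflection of the stability condition on the SAYD coefficient $M_\d$.
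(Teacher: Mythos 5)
Your central premise --- that $B\colon C^1(\Hc,M_\d)\to C^0(\Hc,M_\d)$ vanishes identically because of stability --- is false, and all three of your routes lean on it. The norm operator in Connes' boundary acts on the \emph{target} $C^{q-1}$, where $\tau^{q}=\Id$, so it consists of the $q$ terms $i=0,\dots,q-1$; for $q=1$ the target is $C^0$ and the norm is the single term $\Id$, not $1-\tau_0$. Hence on the normalized complex $B=\sigma_{-1}=\sigma_0\tau_1\colon m\ot h\mapsto m\cdot h\ps{1}\,\ve(h\ps{2})=m\cdot h$, with no factor of $(1-\tau_0)$ in front. (The paper's even-degree computation, which uses $B=(\Id-\tau)\circ\sigma_{-1}$ from $C^2$ to $C^1$, confirms this indexing.) If $B$ really annihilated all of $C^1$, the precise coefficients in $c^{\rm odd}$ would be irrelevant to its $B$-closedness; they are not: replacing $\one\ot\d_1$ by $2\,\one\ot\d_1$ would give $B=\bfR^Z\neq 0$. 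The actual proof is the direct evaluation
\[
B(c^{\rm odd})=\one\cdot\d_1+\bfR^Y\cdot X+\bfR^X\cdot(\d_1X)+\bfR^Y\cdot(\d_1Y)+2\,\bfR^Z\cdot Y=\bfR^Z-\bfR^Z+0+0+0=0,
\]
a genuine cancellation read off the action table, not a consequence of stability.

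Your second route does not close the gap either: the Alexander--Whitney map commutes with the Hochschild coboundaries on the nose but with the Connes boundaries only up to chain homotopy, so $B_T(c'+c''')=0$ yields only that $B(c^{\rm odd})$ is a coboundary (i.e., that the class survives in cyclic cohomology), not the exact vanishing the proposition asserts. Your third, hands-on route ends by applying the spurious factor $1-\tau_0$, which collapses every input to zero and therefore proves nothing about $c^{\rm odd}$ specifically; once that factor is removed you are left with exactly the five evaluations above, and the proof must exhibit their cancellation.
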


\begin{proof}
The Connes boundary is defined on the normalized bi-complex by the formula
\begin{equation}
B = \sum_{i = 0}^n (-1)^{ni}\tau^i \circ \sigma_{-1},
\end{equation}
where
\begin{equation}
\sigma_{-1}(m_{\delta} \ot h^1 \ot \ldots \ot h^{n+1}) = m_{\delta} \cdot h^1\ps{1} \ot S(h^1\ps{2}) \cdot (h^2 \ot \ldots \ot h^{n+1})
\end{equation}
is the extra degeneracy. Accordingly,
\begin{align}
\begin{split}
& B(\one \ot \delta_1 + \bfR^Y \ot X + \bfR^X \ot \delta_1X + \bfR^Y \ot \delta_1Y + 2 \cdot \bfR^Z \ot Y) = \\
& \one \cdot \delta_1 + \bfR^Y \cdot X + \bfR^X \cdot \delta_1X + \bfR^Y \cdot \delta_1Y + 2 \cdot \bfR^Z \cdot Y = \\
& \bfR^Z - \bfR^Z = 0.
\end{split}
\end{align}
\end{proof}

\subsubsection{Construction of  a representative cocycle for the even class}

\begin{proposition}
Let
\begin{align}
\begin{split}
& c := \one \ot X \ot Y - \one \ot Y \ot X - \bfR^X \ot XY \ot X - \bfR^X \ot Y \ot X^2 + \bfR^Y \ot XY \ot Y \\
& \hspace{1.2cm} + \bfR^Y \ot X \ot Y^2 - \bfR^Y \ot Y \ot X \in M_\d \ot \Uc^{\ot\;2}
\end{split}
\end{align}
and
\begin{align}
\begin{split}
& c'' := - \bfR^X \ot \d_1 \ot XY^2 + \frac{2}{3}\bfR^X \ot {\d_1}^2 \ot Y^3 +\frac{1}{3}\bfR^Y \ot \d_1 \ot Y^3 \\
& \hspace{1.2cm} - \frac{1}{4}\bfR^X \ot {\d_1}^2 \ot Y^2 - \frac{1}{2}\bfR^Y \ot \d_1 \ot Y^2 \in M_\d \ot \Fc \ot \Uc.
\end{split}
\end{align}
Then  $c+c'' \in {\rm{Tot}}^2(\Fc,\Uc,M_\d)$ is a Hochschild cocycle.
\end{proposition}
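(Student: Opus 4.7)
The strategy is direct verification of $b_T(c+c'')=0$, exploiting the bigraded structure of $\Tot^{\bullet}(\Fc,\Uc,M_\d)$. Since $c$ lives in bidegree $(0,2)$ and $c''$ in bidegree $(1,1)$, the identity $b_T(c+c'')=0$ decomposes into three independent components, one in each of the bidegrees $(0,3)$, $(1,2)$, $(2,1)$. Recalling $b_T = \sum_{p+q=n}\hb_p + (-1)^p \vb_q$, the three equations to establish are
\begin{equation}
\vb_2(c)=0, \qquad \hb_0(c)-\vb_1(c'')=0, \qquad \hb_1(c'')=0.
\end{equation}

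For the first identity, note that $X$ and $Y$ are primitive in $\Uc=U(gl_1^{\rm aff})$, so the middle $\vd_i$'s just duplicate factors. The paired terms $\one\ot X\ot Y-\one\ot Y\ot X$ combine to produce $\one\ot[Y,X]=\one\ot X$ on the interior face, which cancels against boundary contributions coming from the $\bfR^X$ and $\bfR^Y$ pieces via $\bfR^X\lhd Y=\bfR^X$, $\bfR^X\lhd X=-\bfR^Y$, $\bfR^Y\lhd X=-\bfR^Z$ in the last vertical face operator $\vd_{q+1}$ (involving $\overline{m\pr{-1}}$, where the $\Uc$-coaction on $M_\d$ is used). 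I would simply enumerate the seven terms and check the cancellations term by term, in the same style as the verification done for $c'+c'''$ in the odd-class computation above.

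For the third identity, the Hopf subalgebra $\Fc=\Cb[\delta_1]$ is primitively generated by $\delta_1$, hence $\Delta(\delta_1^2)=\delta_1^2\ot 1+2\delta_1\ot\delta_1+1\ot\delta_1^2$. Plugging each of the five terms of $c''$ into $\hb_1=\hd_0-\hd_1+\hd_2$, the first and third cofaces always produce compensating boundary terms (the third coface invokes the $\Fc$-coaction on $M_\d$, which sends $\bfR^Y\mapsto \delta_1\ot\bfR^X$ and $\bfR^Z\mapsto \delta_1\ot\bfR^Y+\tfrac12\delta_1^2\ot\bfR^X$), while the middle coface extracts the mixed $\delta_1\ot\delta_1$ and $\delta_1\ot\delta_1^2$ pieces. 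The rather curious numerical coefficients $-1,\tfrac23,\tfrac13,-\tfrac14,-\tfrac12$ appearing in $c''$ are calibrated precisely to annihilate this combination.

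The main obstacle is the middle identity $\hb_0(c)=\vb_1(c'')$, which is the genuinely mixed equation coupling the $\Fc$- and $\Uc$-directions. The left-hand side is computed using $\hd_0(c)=\one\ot 1\ot(\cdots)$ cancelled by $\hd_1$, which via \eqref{aux-65} applies the left $\Fc$-coaction on $\Uc^{\ot 2}$ with the only nontrivial ingredient $X^{\pr{-1}}\ot X^{\pr{0}}=1\ot X-\delta_1\ot Y$ (using $S(\delta_1)=-\delta_1$), together with the $\Fc$-coactions on $\bfR^X,\bfR^Y$; the right-hand side is the $\Uc$-Hochschild coboundary applied to $c''$, producing $\delta_1\ot u$-shaped correction terms. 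Rather than grind through this by hand, the conceptual explanation is that $c''$ is the lower-filtration correction forced by the spectral-sequence argument in Theorem \ref{aux-63}: the class $c^{\rm even}=\one\ot X\ot Y-\one\ot Y\ot X+\one\ot Y\ot\delta_1 Y$ in $E_1^{1,\rm even}$ pulls back to a genuine total cocycle in $F_1\Tot/F_0\Tot$, and the coboundary of its natural lift lies in $F_0$, where it must be killed by adjusting by an element of $F_0\Tot^2$; the explicit cocycle $c+c''$ is the result. Once the bookkeeping in the three bidegrees is carried out, the proof is complete.
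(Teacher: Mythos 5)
Your reduction of $b_T(c+c'')=0$ to three independent identities, one in each of the bidegrees $(0,3)$, $(1,2)$, $(2,1)$, is exactly the skeleton of the paper's proof, and your sign bookkeeping $\hb_0(c)-\vb_1(c'')=0$ is right. But the proposal has a genuine gap precisely where the proposition has its content. The mixed identity in bidegree $(1,2)$ is the only place where the specific coefficients $-1,\tfrac23,\tfrac13,-\tfrac14,-\tfrac12$ of $c''$ enter, and you explicitly decline to verify it, substituting the remark that $c''$ ``is the lower-filtration correction forced by the spectral-sequence argument.'' That argument only shows that \emph{some} element of $M_\d\ot\Fc\ot\Uc$ corrects the naive lift of the $E_1$-class; it does not show that the displayed $c''$ is that element, and the proposition is precisely the assertion that it is. The paper does the computation you skip: it expands $\hb_0(c)$ using the $\Fc$-coactions $\Db(XY)=XY\ot 1+Y^2\ot\d_1$ and $\Db(X^2)=X^2\ot 1+2XY\ot\d_1+X\ot\d_1+Y^2\ot\d_1^2+\tfrac12 Y\ot\d_1^2$ together with the $\Hc$-coaction on $\bfR^Y$, obtains a sixteen-term expression in $M_\d\ot\Fc\ot\Uc^{\ot 2}$, and checks that it coincides with $\vb_1(c'')$. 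Without this computation (or an equivalent one) the proposition is not proved.

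A secondary problem: your description of why the $(0,3)$-component $\vb_2(c)$ vanishes is not how the cancellation actually works. The last vertical coface $\vd_{q+1}$ appends $\wbar{m\pr{-1}}$ using the \emph{comodule} structure of $M_\d$ (for $\one$ this contributes the tail terms $\bfR^X\ot\cdots\ot X$ and $\bfR^Y\ot\cdots\ot Y$ from $\Db(\one)=1\ot\one+X\ot\bfR^X+Y\ot\bfR^Y$); the action $\lhd$ and the identities $\bfR^X\lhd X=-\bfR^Y$, etc., play no role in any Hochschild coface, and no commutator $\one\ot[Y,X]$ can arise from interior cofaces, which are comultiplications. The identity $\vb_2(c)=0$ is nevertheless true, by pairwise cancellation of the coproduct cross-terms against these tail terms (e.g.\ the $-\bfR^X\ot X\ot Y\ot X-\bfR^Y\ot X\ot Y\ot Y$ produced by $\one\ot X\ot Y$ is absorbed by the $\bfR^X$- and $\bfR^Y$-terms of $c$), so your fallback of enumerating all seven terms would succeed — but the mechanism you propose to organize that enumeration is the wrong one, which suggests the check was not actually carried out. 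Your treatment of the $(2,1)$-component, by contrast, is essentially correct: the $\d_1\ot\d_1$ and $\d_1\ot\d_1^2$ contributions from $\D(\d_1^2)$ and from the last coface (via $(XY^2)^{\pr{-1}}=1\ot XY^2-\d_1\ot Y^3$ and $\Db(\bfR^Y)$, $\Db(\bfR^Z)$) cancel exactly because of the stated coefficients.
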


\begin{proof}
We start with the element
\begin{align}
\begin{split}
& c := \one \ot X \ot Y - \one \ot Y \ot X - \bfR^X \ot XY \ot X - \bfR^X \ot Y \ot X^2 + \bfR^Y \ot XY \ot Y \\
& + \bfR^Y \ot X \ot Y^2 - \bfR^Y \ot Y \ot X.
\end{split}
\end{align}

\nin It is immediate that $\hb(c) = 0$ . To be able to compute $\vb(c)$, we need to determine the following $\Fc$-coaction.
\begin{align}
\begin{split}
& \bfR^X \ot (XY)^{\pr{-1}}X^{\pr{-1}} \ot (XY)^{\pr{0}} \ot X^{\pr{0}} \\
& + \bfR^X \ot (X^2)^{\pr{-1}} \ot Y \ot (X^2)^{\pr{0}} - {\bfR^Y}\pr{0} \ot {\bfR^Y}\pr{-1}(XY)^{\pr{-1}} \ot (XY)^{\pr{0}} \ot Y \\
& - {\bfR^Y}\pr{0} \ot {\bfR^Y}\pr{-1}X^{\pr{-1}} \ot X^{\pr{0}} \ot Y^2 + {\bfR^Y}\pr{0} \ot {\bfR^Y}\pr{-1}X^{\pr{-1}} \ot Y \ot X^{\pr{0}}.
\end{split}
\end{align}

\nin Hence, observing
\begin{align}
\begin{split}
& \Db(X^2) = (X^2)^{\pr{0}} \ot (X^2)^{\pr{1}} = {X\ps{1}}^{\pr{0}}X^{\pr{0}} \ot {X\ps{1}}^{\pr{1}}(X\ps{2} \rhd X^{\pr{1}}) \\
& = X^2 \ot 1 + 2XY \ot \d_1 + X \ot \d_1 + Y^2 \ot {\d_1}^2 + \frac{1}{2}Y \ot {\d_1}^2,
\end{split}
\end{align}
and
\begin{equation}
\Db(XY) = (XY)^{\pr{0}} \ot (XY)^{\pr{1}} = X^{\pr{0}}Y \ot X^{\pr{1}}, \quad XY \mapsto XY \ot 1 + Y^2 \ot \d_1,
\end{equation}
we have
\begin{align}
\begin{split}
& \vb_0(c) = -\bfR^X \ot \d_1 \ot Y^2 \ot X - \bfR^X \ot \d_1 \ot XY \ot Y + \bfR^X \ot {\d_1}^2 \ot Y^2 \ot Y \\
& - 2\bfR^X \ot \d_1 \ot Y \ot XY - \bfR^X \ot \d_1 \ot Y \ot X + \bfR^X \ot {\d_1}^2 \ot Y \ot Y^2 \\
& + \frac{1}{2}\bfR^X \ot {\d_1}^2 \ot Y \ot Y - \bfR^X \ot \d_1 \ot XY \ot Y + \bfR^Y \ot \d_1 \ot Y^2 \ot Y \\
& + \bfR^X \ot {\d_1}^2 \ot Y^2 \ot Y - \bfR^X \ot \d_1 \ot X \ot Y^2 + \bfR^Y \ot \d_1 \ot Y \ot Y^2 \\
& + \bfR^X \ot {\d_1}^2 \ot Y \ot Y^2 + \bfR^X \ot \d_1 \ot Y \ot X - \bfR^Y \ot \d_1 \ot Y \ot Y - \bfR^X \ot {\d_1}^2 \ot Y \ot Y.
\end{split}
\end{align}

\nin It is now clear that
\begin{align}
\begin{split}
& \vb(c) = \hb\big(\bfR^X \ot \d_1 \ot XY^2 - \frac{2}{3}\bfR^X \ot {\d_1}^2 \ot Y^3 -\frac{1}{3}\bfR^Y \ot \d_1 \ot Y^3 \\
& \hspace{1.6cm} + \frac{1}{4}\bfR^X \ot {\d_1}^2 \ot Y^2 + \frac{1}{2}\bfR^Y \ot \d_1 \ot Y^2 \big).
\end{split}
\end{align}

\nin Therefore, for the element
\begin{align}
\begin{split}
& c'' := - \bfR^X \ot \d_1 \ot XY^2 + \frac{2}{3}\bfR^X \ot {\d_1}^2 \ot Y^3 +\frac{1}{3}\bfR^Y \ot \d_1 \ot Y^3 \\
& \hspace{1.2cm} - \frac{1}{4}\bfR^X \ot {\d_1}^2 \ot Y^2 - \frac{1}{2}\bfR^Y \ot \d_1 \ot Y^2,
\end{split}
\end{align}
we have $\hb(c'') + \vb(c) = 0$.

\nin Finally we observe that,
\begin{align}
\begin{split}
& \vb(c'') = \bfR^X \ot \d_1 \ot \d_1 \ot Y^3 - \frac{4}{3}\bfR^X \ot \d_1 \ot \d_1 \ot Y^3 + \frac{1}{3}\bfR^X \ot \d_1 \ot \d_1 \ot Y^3 \\
& \hspace{1.5cm} + \frac{1}{2}\bfR^X \ot \d_1 \ot \d_1 \ot Y^2 - \frac{1}{2}\bfR^X \ot \d_1 \ot \d_1 \ot Y^2 = 0.
\end{split}
\end{align}
\end{proof}

\begin{proposition}
The element $c+c'' \in {\rm{Tot}}^2(\Fc,\Uc,M_\d)$ vanishes under the  Connes boundary map.
\end{proposition}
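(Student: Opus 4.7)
The plan is to compute $B_T(c+c'')$ bidegree by bidegree and show that all components vanish. Since $c$ sits in bidegree $(p,q) = (0,2)$ and $c''$ in bidegree $(1,1)$, the operator $B_T = \sum_{p+q=n} \hB_p + (-1)^p \vB_q$ produces potential contributions in bidegrees $(0,1)$ and $(1,0)$. Explicitly, $B_T(c) = \vB(c) \in M_\d \ot \Uc$ (there is no $\Fc$-factor on $c$, so $\hB$ vanishes on it), while $B_T(c'') = \hB(c'') - \vB(c'')$ with $\hB(c'') \in M_\d \ot \Uc$ and $\vB(c'') \in M_\d \ot \Fc$. Vanishing of $B_T(c+c'')$ therefore reduces to the two separate identities
\begin{equation}
\vB(c'') = 0 \quad \text{and} \quad \vB(c) + \hB(c'') = 0.
\end{equation}

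First, I would establish $\vB(c'') = 0$. Each summand of $c''$ has the form $m \ot f \ot u$ with $f \in \{\d_1, {\d_1}^2\}$ and $u$ a monomial of positive order in $Y$ or in $X,Y$. The vertical Connes boundary $\vB = (1+\vta)\vs_{-1}\vta$ cycles the single $\Uc$-entry $u$ through the module action (twisted by $\d$ and by $\s = 1$) and then contracts via the extra degeneracy. Because $M_\d$ is stable over $\Uc$ and over $\Fc$, and because each $u$ is a product in $\{X,Y\}$ whose action on $\one, \bfR^X, \bfR^Y$ lands in the four-dimensional basis in a controlled way, I would check term-by-term that the two summands produced by $(1+\vta)$ cancel. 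The weight grading from Corollary \ref{corollary-weight-1} is useful here: every monomial summand of $c''$ has weight $1$, and $\vB$ preserves weight, so only weight-$1$ terms need to be tracked.

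Next I would verify the identity $\vB(c) + \hB(c'') = 0$ in bidegree $(0,1)$. For $\vB(c)$ I would apply $\vB = (1 + \vta)\vs_{-1}\vta$ to each of the seven summands of $c$, using the $\Fc$-coaction formulas $\Db(X) = X \ot 1 + Y \ot \d_1$, $\Db(Y) = Y \ot 1$, together with the resulting coactions on $X^2, XY, Y^2$ computed in the preceding proof. These produce terms with $\d_1$ or ${\d_1}^2$ arising from the coaction as well as terms with $\bfR^Z$ from the action $\one \lhd \d_1 = \bfR^Z$ after cyclically bringing $\Uc$-entries to the $M_\d$-slot. On the other side, $\hB(c'')$ is easier to compute: each summand of $c''$ has a single $\Fc$-factor $\d_1$ or ${\d_1}^2$, and $\hta$ moves this factor to the $M_\d$-side via the action and the primitive (respectively almost-primitive) coproduct of $\d_1$ and ${\d_1}^2$. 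I expect matching coefficients of the monomials $\bfR^\ast \ot X^a Y^b$ in both sides to yield exactly the fractions $2/3$, $1/3$, $1/4$, $1/2$ fixed in $c''$ — indeed, these fractions are precisely what was forced on $c''$ in the preceding proposition in order to kill the mismatch between $\hb$ and $\vb$.

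The principal obstacle is the sheer combinatorial bookkeeping in $\vB(c)$: each of the seven summands produces, after expansion of $S$, $\D$, the $\Fc$-coaction on $\Uc$, and the right $\Uc$-action on $M_\d$, a modest number of monomials in $\{X,Y,\d_1,{\d_1}^2\}$, and the total cyclic sum must align precisely with the explicit $\hB(c'')$. The weight-preservation of $\vB$ and $\hB$, combined with the fact that only weight-$1$ components can be nonzero (Corollary \ref{corollary-weight-1}), drastically cuts down the verification: one reduces to matching coefficients of a short list of weight-$1$ monomials in $M_\d \ot \Uc$. Once those coefficients are compared the equality $\vB(c) = -\hB(c'')$ becomes mechanical.
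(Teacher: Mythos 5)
Your bidegree bookkeeping is right: $B_T(c+c'')$ has one component in $M_\d\ot\Fc$, coming from the $\Uc$-direction boundary of $c''$, and one component in $M_\d\ot\Uc$, coming from the $\Uc$-direction boundary of $c$ together with the $\Fc$-direction boundary of $c''$; and the method you propose (explicit evaluation of the extra degeneracies and cyclic operators on each summand) is exactly what the paper carries out. The substantive difference is organizational: no cross-cancellation between $c$ and $c''$ is needed, because all three components vanish \emph{separately}. The extra degeneracy $\overset{\ra}{\sigma}_{1}\hta$ already annihilates $c$ by internal cancellation among its seven summands; the $\Fc$-direction boundary of $c''$ dies for the one-line reason that $\d_1$ and ${\d_1}^2$ act trivially on $\bfR^X$ and $\bfR^Y$ (every module leg of $c''$ lies in $F_0M_\d$); and the $\Uc$-direction boundary of $c''$ dies because $(\bfR^X\ot{\d_1}^2)\cdot Y=0$ and $(\bfR^Y\ot\d_1)\cdot Y=0$. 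Consequently the coefficient matching you anticipate between the two pieces landing in $M_\d\ot\Uc$ never materializes, and your suggestion that the fractions $\frac{2}{3},\frac{1}{3},\frac{1}{4},\frac{1}{2}$ are ``forced'' by the $B$-cancellation is off the mark: those coefficients were fixed by the Hochschild cocycle condition in the preceding proposition, and the $B$-vanishing of $c''$ holds for reasons entirely independent of them. Your plan would still close --- you would simply discover that both summands of your second identity are zero --- but two details need correcting along the way: the norm operator in $\vB_2$ is $1-\vta$ rather than $1+\vta$, and stability of $M_\d$ is not the mechanism behind the vanishing of the $\Uc$-direction boundary of $c''$; the two explicit action computations above are.
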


\begin{proof}
As above, we start with
\begin{align}
\begin{split}
& c := \one \ot X \ot Y - \one \ot Y \ot X - \bfR^X \ot XY \ot X - \bfR^X \ot Y \ot X^2 + \bfR^Y \ot XY \ot Y \\
& + \bfR^Y \ot X \ot Y^2 - \bfR^Y \ot Y \ot X.
\end{split}
\end{align}

\nin To compute $\hB$, it suffices to consider the horizontal extra degeneracy operator $\overset{\ra}{\sigma}_{-1}:=\overset{\ra}{\sigma}_{1}\hta$. We have,
\begin{equation}
\overset{\ra}{\sigma}_{-1}(\one \ot X \ot Y) = \one \cdot X\ps{1} \ot S(X\ps{2})Y = -\one \ot XY,
\end{equation}
and
\begin{equation}
\overset{\ra}{\sigma}_{-1}(\one \ot Y \ot X) = \one \cdot Y\ps{1} \ot S(Y\ps{2})X = \one \ot X - \one \ot YX = -\one \ot XY.
\end{equation}
Therefore we proved that $\overset{\ra}{\sigma}_{-1}(\one \ot X \ot Y - \one \ot Y \ot X) = 0$. For the other  terms in $c \in M_\d \ot \Uc^{\ot\;2}$ we proceed by
\begin{align}
\begin{split}
& \overset{\ra}{\sigma}_{-1}(\bfR^X \ot XY \ot X) = \bfR^X \cdot X\ps{1}Y\ps{1} \ot S(Y\ps{2})S(X\ps{2})X \\
& = \bfR^X \cdot XY \ot X + \bfR^X \ot YX^2 - \bfR^X \cdot X \ot YX - \bfR^X \cdot Y \ot X^2 \\
& = \bfR^Y \ot XY + \bfR^X \ot YX^2 - 2\bfR^X \ot X^2,
\end{split}
\end{align}
\begin{align}
\begin{split}
& \overset{\ra}{\sigma}_{-1}(\bfR^Y \ot XY \ot Y) = \bfR^Y \cdot XY \ot Y + \bfR^Y \ot YXY - \bfR^Y \cdot X \ot Y^2 - \bfR^Y \cdot Y \ot XY \\
& = \bfR^Y \ot XY^2 + \bfR^Z \ot Y^2,
\end{split}
\end{align}
\begin{equation}
\overset{\ra}{\sigma}_{-1}(\bfR^X \ot Y \ot X^2) = 2\bfR^X \ot X^2 - \bfR^X \ot YX^2,
\end{equation}
\begin{equation}
\overset{\ra}{\sigma}_{-1}(\bfR^Y \ot X \ot Y^2) = -\bfR^Z \ot Y^2 - \bfR^Y \ot XY^2,
\end{equation}
and finally
\begin{equation}
\overset{\ra}{\sigma}_{-1}(\bfR^Y \ot Y \ot X) = \bfR^Y \ot X - \bfR^Y \ot YX = - \bfR^Y \ot XY.
\end{equation}

\nin This way we prove
\begin{equation}
\overset{\ra}{\sigma}_{-1}(- \bfR^X \ot XY \ot X - \bfR^X \ot Y \ot X^2 + \bfR^Y \ot XY \ot Y + \bfR^Y \ot X \ot Y^2 - \bfR^Y \ot Y \ot X) = 0.
\end{equation}

\nin Hence we conclude that $\hB(c)=0$.

\medskip

\nin Since the action of $\d_1$ on $F_0M_\d = \Cb\Big\langle \bfR^X, \bfR^Y, \bfR^Z \Big\rangle$ is trivial, we have $\vB(c'')=0$. The horizontal counterpart $\hB(c'') = 0$ follows from the following observations. First we notice
\begin{equation}
(\bfR^X \ot {\d_1}^2) \cdot Y = \bfR^X \cdot Y\ps{1} \ot S(Y\ps{2}) \rhd {\d_1}^2 = 0,
\end{equation}
and secondly,
\begin{equation}
(\bfR^Y \ot \d_1) \cdot Y = \bfR^Y \cdot Y\ps{1} \ot S(Y\ps{2}) \rhd \d_1 = 0.
\end{equation}
\end{proof}

\nin Next, we send the element $c+c'' \in {\rm Tot}^2(\Fc, \Uc,M_\d)$ to the cyclic complex $C(\Hc,M_\d)$. As before, on the first step we use the Alexander-Whitney map to land in the diagonal complex $\FZ(\Hc,\Fc,M_\d)$. To this end, we have
\begin{align}
\begin{split}
& AW_{0,2}(c) = \; \vd_0\vd_0(c) \\
& = \one \ot 1 \ot 1 \ot X \ot Y - \one \ot 1 \ot 1 \ot Y \ot X - \bfR^X \ot 1 \ot 1 \ot XY \ot X \\
&  - \bfR^X\ot 1 \ot 1 \ot Y \ot X^2 + \bfR^Y \ot 1 \ot 1 \ot XY \ot Y + \bfR^Y \ot 1 \ot 1 \ot X \ot Y^2 \\
& - \bfR^Y \ot 1 \ot 1 \ot Y \ot X,
\end{split}
\end{align}
and
\begin{align}
\begin{split}
& AW_{1,1}(c) =\; \vd_0\hd_1(c) \\
& = - \bfR^X \ot 1 \ot \d_1 \ot XY^2 \ot 1 + \frac{2}{3}\bfR^X \ot 1 \ot {\d_1}^2 \ot Y^3 \ot 1 +\frac{1}{3}\bfR^Y \ot 1 \ot \d_1 \ot Y^3 \ot 1 \\
& - \frac{1}{4}\bfR^X \ot 1 \ot {\d_1}^2 \ot Y^2 \ot 1 - \frac{1}{2}\bfR^Y \ot 1 \ot \d_1 \ot Y^2 \ot 1.
\end{split}
\end{align}

\nin As a result, we obtain the element
\begin{align}
\begin{split}
& c^{{\rm even}}_{{\rm diag}} = \one \ot 1 \ot 1 \ot X \ot Y - \one \ot 1 \ot 1 \ot Y \ot X - \bfR^X \ot 1 \ot 1 \ot XY \ot X \\
& - \bfR^X\ot 1 \ot 1 \ot Y \ot X^2 + \bfR^Y \ot 1 \ot 1 \ot XY \ot Y + \bfR^Y \ot 1 \ot 1 \ot X \ot Y^2 \\
& - \bfR^Y \ot 1 \ot 1 \ot Y \ot X - \bfR^X \ot 1 \ot \d_1 \ot XY^2 \ot 1 + \frac{2}{3}\bfR^X \ot 1 \ot {\d_1}^2 \ot Y^3 \ot 1 \\
& +\frac{1}{3}\bfR^Y \ot 1 \ot \d_1 \ot Y^3 \ot 1 - \frac{1}{4}\bfR^X \ot 1 \ot {\d_1}^2 \ot Y^2 \ot 1 - \frac{1}{2}\bfR^Y \ot 1 \ot \d_1 \ot Y^2 \ot 1.
\end{split}
\end{align}

\nin On the second step, we use the map \eqref{PSI-1} to obtain
\begin{align}\label{even-cocycle}
\begin{split}
& c^{{\rm even}}:= \Psi\big(c^{{\rm even}}_{{\rm diag}}\big) = \one \ot X \ot Y - \one \ot Y \ot X + \one \ot Y \ot \d_1Y - \bfR^X \ot XY \ot X \\
& - \bfR^X \ot Y^2 \ot \d_1X - \bfR^X \ot Y \ot X^2 + \bfR^Y \ot XY \ot Y + \bfR^Y \ot Y^2 \ot \d_1Y \\
& + \bfR^Y \ot X \ot Y^2 + \bfR^Y \ot Y \ot \d_1Y^2 - \bfR^Y \ot Y \ot X - \bfR^X \ot XY^2 \ot \d_1 \\
& - \frac{1}{3}\bfR^X \ot Y^3 \ot {\d_1}^2 + \frac{1}{3} \bfR^Y \ot Y^3 \ot \d_1 - \frac{1}{4} \bfR^X \ot Y^2 \ot {\d_1}^2 - \frac{1}{2} \bfR^Y \ot Y^2 \ot \d_1,\\
\end{split}
\end{align}
in  $ C^2(\Hc, M_\d)$.
\begin{proposition}
The element  $c^{\rm even}$ defined in \eqref{even-cocycle} is a Hochschild cocycle.
\end{proposition}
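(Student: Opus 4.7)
The plan is to avoid a brute-force computation and instead exploit the fact that $c^{\rm even}$ was produced as the image of an already-verified Hochschild cocycle under a composition of chain maps. Concretely, the preceding proposition establishes that $c+c''\in \mathrm{Tot}^2(\Fc,\Uc,M_\d)$ is a Hochschild cocycle with respect to the total Hochschild coboundary $b_T=\hb+(-1)^p\vb$. The Alexander--Whitney map $AW$ in \eqref{AW} is the standard chain map from the normalized total complex of a bicocyclic module to its diagonal complex $\FZ^{\bullet,\bullet}(\Hc,\Fc,M_\d)$, intertwining $b_T$ with the diagonal Hochschild coboundary; and $\Psi$ in \eqref{PSI-1}, as recalled from \cite{RangSutl}, is a chain (in fact quasi-iso\-morphism of) map from $\FZ^{\bullet,\bullet}(\Hc,\Fc,M_\d)$ to the Hopf-cyclic complex $C^\bullet(\Hc,M_\d)$ that intertwines the two Hochschild coboundaries. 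Composing,
\[
b(c^{\rm even}) \;=\; b\bigl(\Psi(AW(c+c''))\bigr) \;=\; \Psi\bigl(AW(b_T(c+c''))\bigr) \;=\; 0.
\]

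If instead a term-by-term verification is desired, one expands $b(c^{\rm even})=\sum_{i=0}^{3}(-1)^i\p_i(c^{\rm even})$ using the coproducts
\[
\D(X)=X\ot 1+1\ot X+Y\ot\d_1,\qquad \D(Y)=Y\ot 1+1\ot Y,\qquad \D(\d_1)=\d_1\ot 1+1\ot\d_1,
\]
iterated on the products $XY$, $Y^2$, $X^2$, $XY^2$, $Y^3$, $\d_1Y$, $\d_1Y^2$ that appear in \eqref{even-cocycle}, together with the $\Hc$-coaction
\[
\Db(\one)=1\ot\one+X\ot\bfR^X+Y\ot\bfR^Y,\qquad \Db(\bfR^i)=1\ot\bfR^i,\ \ i=X,Y,Z,
\]
which governs the outer face $\p_3$. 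The cancellations come in two groups: middle-face contributions paired against each other by coassociativity, and boundary contributions from $\p_0$ and $\p_3$ paired against the middle faces via the explicit form of the coaction.

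The main obstacle in the direct approach is combinatorial bookkeeping rather than any genuine difficulty: $c^{\rm even}$ has sixteen summands, each contributing four three-tensors under $b$, so one must verify the vanishing of sixty-four terms, several of which involve high-degree monomials in $X,Y,\d_1$ whose iterated coproducts already contain many summands. For this reason I would present the abstract chain-map argument in the proof and, if a sanity check is called for, spot-check the highest-weight terms (e.g.\ those involving $\bfR^X\ot Y^3\ot\d_1^2$ and $\bfR^Y\ot Y^3\ot\d_1$) to confirm that the signs and normalizations introduced by $AW$ and $\Psi$ have been propagated correctly from the total-complex calculation.
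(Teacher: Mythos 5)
Your proposal is correct, but it takes a genuinely different route from the paper. The paper proves the statement by brute force: it expands $b$ on each of the sixteen summands of $c^{\rm even}$ using the coproducts of $X$, $Y$, $\d_1$ and the $\Hc$-coaction on $M_\d$, and checks that everything cancels. You instead observe that $c^{\rm even}=\Psi\bigl(AW(c+c'')\bigr)$ is the image of the total-complex Hochschild cocycle $c+c''$ (established in the preceding proposition) under the composition of two $b$-chain maps — the cosimplicial Alexander--Whitney map, which intertwines $b_T$ with the diagonal Hochschild coboundary, and the (iso)morphism $\Psi$ of cocyclic modules from \cite{RangSutl,MoscRang09} — so $b(c^{\rm even})=\Psi(AW(b_T(c+c'')))=0$. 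This is sound and much more economical, and it is arguably the conceptual reason the construction works at all. What the paper's direct verification buys, and what your abstract argument does not, is a certification of the \emph{displayed formula} \eqref{even-cocycle} itself: your argument shows that whatever $\Psi(AW(c+c''))$ is, it is a cocycle, but it does not protect against an arithmetic slip in the transfer computation producing the explicit sixteen-term expression; the term-by-term check in the paper serves as an independent confirmation that no such slip occurred. Your suggestion to spot-check a few high-weight terms addresses exactly this point, so the two ingredients of your write-up together are a reasonable substitute for the paper's computation. (A minor quibble: under $b=\sum_{i=0}^{3}(-1)^i\p_i$ each summand contributes considerably more than four three-tensors, since $\p_1$ and $\p_2$ expand the iterated coproducts of monomials such as $XY^2$ and $Y^3$ into many terms; the bookkeeping is heavier than your count suggests, which only strengthens the case for the chain-map argument.)
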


\begin{proof}
We first recall that
\begin{align}
\begin{split}
& b(\one \ot X \ot Y - \one \ot Y \ot X + \one \ot Y \ot \d_1Y) = \\
& -\bfR^X \ot X \ot Y \ot X - \bfR^Y \ot X \ot Y \ot Y + \bfR^X \ot Y \ot X \ot X + \bfR^Y \ot Y \ot X \ot Y \\
& - \bfR^X \ot Y \ot \d_1Y \ot X - \bfR^Y \ot Y \ot \d_1Y \ot Y.
\end{split}
\end{align}

\nin Next we compute
\begin{align}
\begin{split}
& b(\bfR^X \ot XY \ot X) = - \bfR^X \ot X \ot Y \ot X - \bfR^X \ot Y \ot X \ot X - \bfR^X \ot Y^2 \ot \d_1 \ot X \\
& - \bfR^X \ot Y \ot \d_1Y \ot X + \bfR^X \ot XY \ot Y \ot \d_1, \\[.2cm]
& b(\bfR^X \ot Y^2 \ot \d_1X) = - 2\bfR^X \ot Y \ot Y \ot \d_1X + \bfR^X \ot Y^2 \ot \d_1 \ot X + \bfR^X \ot Y^2 \ot X \ot \d_1 \\
& + \bfR^X \ot Y^2 \ot \d_1Y \ot \d_1 + \bfR^X \ot Y^2 \ot Y \ot {\d_1}^2, \\[.2cm]
& b(\bfR^X \ot Y \ot X^2) = 2 \bfR^X \ot Y \ot X \ot X + \bfR^X \ot Y \ot XY \ot \d_1 + \bfR^X \ot Y \ot Y \ot X\d_1 \\
& + \bfR^X \ot Y \ot YX \ot \d_1 + \bfR^X \ot Y \ot Y \ot \d_1X + \bfR^X \ot Y \ot Y^2 \ot {\d_1}^2, \\[.2cm]
& b(\bfR^Y \ot XY \ot Y) = - \bfR^Y \ot X \ot Y \ot Y - \bfR^Y \ot Y \ot X \ot Y - \bfR^Y \ot Y^2 \ot \d_1 \ot Y \\
& - \bfR^Y \ot Y \ot \d_1Y \ot Y - \bfR^X \ot XY \ot Y \ot \d_1, \\[.2cm]
& b(\bfR^Y \ot Y^2 \ot \d_1Y) = -2 \bfR^Y \ot Y \ot Y \ot \d_1Y + \bfR^Y \ot Y^2 \ot \d_1 \ot Y + \bfR^Y \ot Y^2 \ot Y \ot \d_1 \\
& - \bfR^X \ot Y^2 \ot \d_1Y \ot \d_1,
\end{split}
\end{align}
as well as
\begin{align}
\begin{split}
& b(\bfR^Y \ot X \ot Y^2) = - \bfR^Y \ot Y \ot \d_1 \ot Y^2 + 2\bfR^Y \ot X \ot Y \ot Y - \bfR^X \ot X \ot Y^2 \ot \d_1, \\[.2cm]
& b(\bfR^Y \ot Y \ot \d_1Y^2) = \bfR^Y \ot Y \ot \d_1 \ot Y^2 + \bfR^Y \ot Y \ot Y^2 \ot \d_1 + 2\bfR^Y \ot Y \ot \d_1Y \ot Y \\
& + 2\bfR^Y \ot Y \ot Y \ot \d_1Y - \bfR^X \ot Y \ot \d_1Y^2 \ot \d_1, \\[.2cm]
& b(\bfR^Y \ot Y \ot X) = \bfR^Y \ot Y \ot Y \ot \d_1 - \bfR^X \ot Y \ot X \ot \d_1, \\[.2cm]
& b(\bfR^X \ot XY^2 \ot \d_1) = - \bfR^X \ot X \ot Y^2 \ot \d_1 - \bfR^X \ot Y^2 \ot X \ot \d_1 - 2 \bfR^X \ot XY \ot Y \ot \d_1 \\
& -2 \bfR^X \ot Y \ot XY \ot \d_1 - 2\bfR^X \ot Y^2 \ot \d_1Y \ot \d_1 - \bfR^X \ot Y^3 \ot \d_1 \ot \d_1 \\
& - \bfR^X \ot Y \ot \d_1Y^2 \ot \d_1, \\[.2cm]
& b(\bfR^X \ot Y^3 \ot {\d_1}^2) = -3 \bfR^X \ot Y^2 \ot Y \ot {\d_1}^2 - 3\bfR^X \ot Y \ot Y^2 \ot {\d_1}^2 + 2 \bfR^X \ot Y^3 \ot \d_1 \ot \d_1, \\[.2cm]
& b(\bfR^Y \ot Y^3 \ot \d_1) = -3 \bfR^Y \ot Y^2 \ot Y \ot \d_1 - 3 \bfR^Y \ot Y \ot Y^2 \ot \d_1 - \bfR^X \ot Y^3 \ot \d_1 \ot \d_1, \\[.2cm]
& b(\bfR^X \ot Y^2 \ot {\d_1}^2) = -2 \bfR^X \ot Y \ot Y \ot {\d_1}^2 + 2 \bfR^X \ot Y^2 \ot \d_1 \ot \d_1, \\[.2cm]
& b(\bfR^Y \ot Y^2 \ot \d_1) = -2 \bfR^Y \ot Y \ot Y \ot \d_1 - \bfR^X \ot Y^2 \ot \d_1 \ot \d_1
\end{split}
\end{align}
Summing up, we get the result.
\end{proof}

\begin{proposition}
The Hochschild cocycle $c^{\rm even}$ defined in  \eqref{even-cocycle} vanishes under the  Connes boundary map.
\end{proposition}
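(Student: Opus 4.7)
The plan is to compute $B(c^{\rm even})$ directly by the same method used for $c^{\rm odd}$ in the preceding proposition. Since $c^{\rm even}\in C^2(\mathcal{H},M_\d)$, the Connes boundary on the normalised cocyclic module reads $B=(1-\tau)\sigma_{-1}:C^2(\mathcal{H},M_\d)\to C^1(\mathcal{H},M_\d)$, with extra degeneracy
\[
\sigma_{-1}(m_\d\ot h^1\ot h^2)=m_\d\cdot h^1\ps{1}\ot S(h^1\ps{2})h^2.
\]
I will apply $\sigma_{-1}$ to each of the sixteen summands of \eqref{even-cocycle}, then subtract the cyclic rotation, and verify that the outcome is identically zero.

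First I would compute $\sigma_{-1}$ on each summand. In the bicrossed product $\mathcal{H}=\Fc\acl\Uc$, the generators $Y$ and $\d_1$ are primitive, while $\Delta(X)=X\ot 1+1\ot X+Y\ot\d_1$ because of the right $\Fc$-coaction $X\mapsto X\ot 1+Y\ot\d_1$ on $\Uc$. Together with $S(X)=-X-\d_1Y$, $S(Y)=-Y$ and $S(\d_1)=-\d_1$, this yields manageable expansions of $\sigma_{-1}$ on every compound monomial ($XY$, $XY^2$, $Y^2\d_1$, $Y^3\d_1^2$, etc.) appearing in $c^{\rm even}$. The only non-trivial simplifications use the action table displayed in the proposition of Subsection~4.1, most importantly $\one\lhd\d_1=\bfR^Z$, $\bfR^X\lhd X=-\bfR^Y$, $\bfR^Y\lhd X=-\bfR^Z$, together with the coaction $\Db:M_\d\to\mathcal{H}\ot M_\d$.

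Next I would apply $\tau(m_\d\ot h)={m\pr{0}}_\d h\ps{1}\ot S(h\ps{2})m\pr{-1}$ to each simplified $1$-cochain and subtract. Organising the output according to the coefficient in the basis $\{\one,\bfR^X,\bfR^Y,\bfR^Z\}$ of $M_\d$ splits the verification into four independent term-by-term cancellations, each of which can be checked by collecting coefficients of a basis monomial in $\mathcal{H}$.

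The main obstacle is purely combinatorial: sixteen input terms, each producing several terms under $\Delta$ and still more under $\tau$, lead to a substantial bookkeeping exercise similar in spirit to but larger than that of the $b$-cocycle verification. A decisive organising principle is the weight grading introduced just before Corollary~\ref{corollary-weight-1}: assigning $|Y|=0$, $|X|=|\d_1|=1$, $|\bfR^X|=-1$, $|\one|=|\bfR^Y|=0$, $|\bfR^Z|=1$, a direct inspection shows that every summand of $c^{\rm even}$ has total weight $1$, and since $\sigma_{-1}$ and $\tau$ both preserve this grading (by Lemma~\ref{aux-69}), $B(c^{\rm even})$ must lie in the weight-$1$ part of $C^1(\mathcal{H},M_\d)$. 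This drastically restricts which monomials can survive and provides a strong internal consistency check on the computation at every stage.
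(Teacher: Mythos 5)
Your strategy is the same as the paper's: the paper also proves the vanishing by computing the extra degeneracy $\sigma_{-1}$ on each summand of $c^{\rm even}$ and then applying $B=(\Id-\tau)\circ\sigma_{-1}$. The only organizational difference is that the paper splits $c^{\rm even}$ into the eleven terms coming from $c\in M_\d\ot\Uc^{\ot 2}$, on which $\sigma_{-1}$ already vanishes identically (so no $\tau$ is needed there), and the five $\Fc$-tailed terms coming from $c''$, for which $\sigma_{-1}$ is nonzero and one must check the cancellation $(\Id-\tau)\sigma_{-1}(c'')=0$; your plan of pushing all sixteen terms through $(\Id-\tau)\sigma_{-1}$ at once would arrive at the same place. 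Your weight-grading remark is a reasonable sanity check but cannot substitute for the computation, since $B$ preserves the weight and the weight-one part of $C^1(\Hc,M_\d)$ is far from zero.

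One concrete correction before you carry this out: your antipode formula for $X$ is wrong. In $\Hc={\Hc_{1\rm S}}\cop=\Fc\acl\Uc$ the counit axiom applied to $\D(X)=X\ot 1+1\ot X+Y\ot\d_1$ forces
\begin{equation*}
S(X)=-X-S(Y)\d_1=-X+Y\d_1=-X+\d_1Y+\d_1,
\end{equation*}
not $-X-\d_1Y$. Since $S(X)$ enters the expansion of $\sigma_{-1}$ on every summand whose middle leg contains an $X$ (e.g.\ $\bfR^X\ot XY\ot X$, $\bfR^Y\ot XY\ot Y$, $\bfR^Y\ot X\ot Y^2$, $\bfR^X\ot XY^2\ot\d_1$), using your formula would destroy the cancellations and you would not obtain zero. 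With the corrected antipode (and the $M_\d$-action, which differs from the $M$-action by the twist $\d$, e.g.\ $\bfR^X\lhd Y=2\bfR^X$), the term-by-term verification goes through as in the paper.
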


\begin{proof}
We will first prove that the extra degeneracy operator $\s_{-1}$ vanishes on
\begin{align}
\begin{split}
& c := \one \ot X \ot Y - \one \ot Y \ot X + \one \ot Y \ot \d_1Y - \bfR^X \ot XY \ot X \\
& - \bfR^X \ot Y^2 \ot \d_1X - \bfR^X \ot Y \ot X^2 + \bfR^Y \ot XY \ot Y + \bfR^Y \ot Y^2 \ot \d_1Y \\
& + \bfR^Y \ot X \ot Y^2 + \bfR^Y \ot Y \ot \d_1Y^2 - \bfR^Y \ot Y \ot X \in C^2(\Hc, M_\d).
\end{split}
\end{align}
We observe that
\begin{align}
\begin{split}
&\s_{-1}(\one \ot X \ot Y - \one \ot Y \ot X + \one \ot Y \ot \d_1Y) = 0,\\[.2cm]
&\s_{-1}(\bfR^X \ot XY \ot X) = \bfR^Y \ot XY + \bfR^X \ot X^2Y - \bfR^X \ot \d_1XY^2,\\[.2cm]
&\s_{-1}(\bfR^X \ot Y^2 \ot \d_1X) = \bfR^X \ot \d_1XY^2,\\[.2cm]
&\s_{-1}(\bfR^X \ot Y \ot X^2) = -\bfR^X \ot X^2Y,\\[.2cm]
&\s_{-1}(\bfR^Y \ot XY \ot Y) = \bfR^Z \ot Y^2 + \bfR^Y \ot XY^2 - \bfR^Y \ot \d_1Y^3,\\[.2cm]
&\s_{-1}(\bfR^Y \ot Y^2 \ot \d_1Y) = \bfR^Y \ot \d_1Y^3,\\[.2cm]
&\s_{-1}(\bfR^Y \ot X \ot Y^2) = -\bfR^Z \ot Y^2 - \bfR^Y \ot XY^2 + \bfR^Y \ot \d_1Y^3,\\[.2cm]
&\s_{-1}(\bfR^Y \ot Y \ot \d_1Y^2) = - \bfR^Y \ot \d_1Y^3,\\[.2cm]
&\s_{-1}(\bfR^Y \ot Y \ot X) = -\bfR^Y \ot XY.
\end{split}
\end{align}

\nin As a result, we obtain $\s_{-1}(c) = 0$. On the second step, we prove that Connes boundary map $B$ vanishes on
\begin{align}
\begin{split}
& c'' := - \bfR^X \ot XY^2 \ot \d_1 - \frac{1}{3}\bfR^X \ot Y^3 \ot {\d_1}^2 + \frac{1}{3} \bfR^Y \ot Y^3 \ot \d_1 \\
& - \frac{1}{4} \bfR^X \ot Y^2 \ot {\d_1}^2 - \frac{1}{2} \bfR^Y \ot Y^2 \ot \d_1 \in C^2(\Hc, M_\d).
\end{split}
\end{align}

\nin Indeed, as in this case $B = (\Id - \tau) \circ \s_{-1}$, it suffices to observe that
\begin{align}
\begin{split}
&\s_{-1}(\bfR^X \ot XY^2 \ot \d_1)\\
& = - \bfR^Y \ot \d_1Y^2 - \bfR^X \ot \d_1XY^2 - \frac{1}{2}\bfR^X \ot
 {\d_1}^2Y^2 + \bfR^X \ot {\d_1}^2Y^3,\\[.2cm]
&\s_{-1}(\bfR^X \ot Y^3 \ot {\d_1}^2) = - \bfR^X \ot {\d_1}^2Y^3,\\[.2cm]
&\s_{-1}(\bfR^Y \ot Y^3 \ot \d_1) = - \bfR^Y \ot \d_1Y^3,\\[.2cm]
&\s_{-1}(\bfR^X \ot Y^2 \ot {\d_1}^2) = \bfR^X \ot {\d_1}^2Y^2,\\[.2cm]
&\s_{-1}(\bfR^Y \ot Y^2 \ot \d_1) = \bfR^Y \ot \d_1Y^2,\\[.2cm]
\end{split}
\end{align}
together with
\begin{align}
\begin{split}
&\tau(\bfR^Y \ot \d_1Y^2) = - \bfR^Y \ot \d_1Y^2 - \bfR^X \ot {\d_1}^2Y^2,\\[.2cm]
&\tau(\bfR^X \ot {\d_1}^2Y^2) = \bfR^X \ot {\d_1}^2Y^2,\\[.2cm]
&\tau(\bfR^Y \ot \d_1Y^3) = \bfR^Y \ot \d_1Y^3 + \bfR^X \ot {\d_1}^2Y^3,\\[.2cm]
&\tau(\bfR^X \ot {\d_1}^2Y) = - \bfR^Y \ot {\d_1}^2Y,\\[.2cm]
&\tau(\bfR^X \ot {\d_1}^2Y^3) = - \bfR^Y \ot {\d_1}^2Y^3,\\[.2cm]
&\tau(\bfR^X \ot \d_1XY^2) = \bfR^Y \ot \d_1Y^2 + \bfR^X \ot \d_1XY^2 + \frac{1}{2}\bfR^X \ot {\d_1}^2Y^2 - \bfR^X \ot {\d_1}^2Y^3.
\end{split}
\end{align}
\end{proof}

\nin We summarize our results in this section by the following theorem.

\begin{theorem}
The odd and even periodic Hopf cyclic cohomology of the Schwarzian Hopf algebra $\Hc_{\rm 1S}\cop$ with coefficients in the 4-dimensional SAYD module $M_\d = S({sl_2}^\ast)\nsb{2}$ are given by
\begin{equation}
HP^{\rm odd}(\Hc_{\rm 1S}\cop,M_\d) = \Cb \Big\langle \one \ot \delta_1 + \bfR^Y \ot X + \bfR^X \ot \delta_1X + \bfR^Y \ot \delta_1Y + 2 \bfR^Z \ot Y \Big\rangle,
\end{equation}
and
\begin{align}
\begin{split}
& HP^{\rm even}(\Hc_{\rm 1S}\cop,M_\d) = \Cb \Big\langle \one \ot X \ot Y - \one \ot Y \ot X + \one \ot Y \ot \d_1Y - \bfR^X \ot XY \ot X \\
& - \bfR^X \ot Y^2 \ot \d_1X - \bfR^X \ot Y \ot X^2 + \bfR^Y \ot XY \ot Y + \bfR^Y \ot Y^2 \ot \d_1Y \\
& + \bfR^Y \ot X \ot Y^2 + \bfR^Y \ot Y \ot \d_1Y^2 - \bfR^Y \ot Y \ot X - \bfR^X \ot XY^2 \ot \d_1 \\
& - \frac{1}{3}\bfR^X \ot Y^3 \ot {\d_1}^2 + \frac{1}{3} \bfR^Y \ot Y^3 \ot \d_1 - \frac{1}{4} \bfR^X \ot Y^2 \ot {\d_1}^2 - \frac{1}{2} \bfR^Y \ot Y^2 \ot \d_1 \Big\rangle.
\end{split}
\end{align}
\end{theorem}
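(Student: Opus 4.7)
The strategy is to assemble the pieces already established in the preceding subsection. The dimensional count and the explicit cocycle verifications have been separated into several intermediate results, so the proof of the final theorem amounts to organizing these into a coherent statement.

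First I would invoke the weight decomposition from Lemma \ref{aux-68} and Lemma \ref{aux-69}, together with Corollary \ref{corollary-weight-1}, to reduce the computation of $HP(\Hc_{\rm 1S}\cop,M_\d)$ to the weight-$1$ subcomplex $\FZ[1]$. Combined with the spectral sequence associated to the Jara-Stefan filtration $F_\bullet M_\d$, where $F_0M_\d = \Cb\langle \bfR^X,\bfR^Y,\bfR^Z\rangle\ot\Cb_\d$ and $F_1M_\d = M_\d$, one obtains that $E_1^{j,i}(\mathcal{H},M_\d) = 0$ for $j \ne 1$ (using Whitehead's lemmas for the $j=0$ case and triviality for $j\ge 2$), while $E_1^{1,i}(\mathcal{H},M_\d) = H^i(C(\Uc\cl\Fc,\Cb_\d))$. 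Thus the spectral sequence collapses at $E_2$ and yields a one-dimensional class in each parity.

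Next I would identify the explicit representative cocycles. The odd generator is produced by lifting the $\Tot^1(\Fc,\Uc,M_\d)$-element $c' + c''' = \one \ot \d_1 + \bfR^Y \ot X + 2\bfR^Z \ot Y$ through the Alexander-Whitney map \eqref{AW} to the diagonal complex $\FZ^{\bullet,\bullet}$ and then through the quasi-isomorphism $\Psi$ of \eqref{PSI-1} to yield $c^{\rm odd}\in C^1(\Hc,M_\d)$. The analogous procedure applied to the $\Tot^2$-cocycle $c + c''$ produces $c^{\rm even}\in C^2(\Hc,M_\d)$. The preceding propositions already establish that $b(c^{\rm odd}) = 0 = B(c^{\rm odd})$ and $b(c^{\rm even}) = 0 = B(c^{\rm even})$, so both are genuine periodic cyclic cocycles.

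Finally one must verify nontriviality, i.e.\ that each cocycle survives to a nonzero class in the $E_\infty = E_2$ page. For this I would trace through the construction and observe that the top-weight contribution $\one\ot\d_1$ of $c^{\rm odd}$ maps under the projection $M_\d \to M_\d/F_0M_\d \cong \Cb_\d$ to the representative of the nontrivial class of $H^1(\mathcal{H},\Cb_\d)$ in $E_1^{1,0}$, and similarly that $\one\ot X\ot Y - \one\ot Y\ot X + \one\ot Y\ot\d_1Y$ in $c^{\rm even}$ projects to the nontrivial class in $E_1^{1,1}$. The main bookkeeping obstacle is keeping the signs and the interaction between the horizontal Hochschild differential $\hb$, the vertical Connes boundary $\vB$, and the Alexander-Whitney shuffle under control; but since the intermediate propositions have already discharged the most delicate computations, the theorem follows by direct assembly.
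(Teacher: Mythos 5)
Your proposal is correct and follows essentially the same route as the paper: reduction to the weight-one subcomplex via $\widetilde{ad}Y$, collapse of the Jara--Stefan spectral sequence at $E_2$ with the $j=0$ column killed by Whitehead and $E_1^{1,\bullet}$ giving one class in each parity, followed by lifting the $\Tot$-cocycles $c'+c'''$ and $c+c''$ through the Alexander--Whitney map and $\Psi$ and checking nontriviality on the leading terms in $E_1$. This is precisely how the paper assembles its final theorem from the preceding lemmas and propositions.
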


\bibliographystyle{amsplain}
\bibliography{Rangipour-Sutlu-References}{}

\end{document}